\documentclass[11pt,leqno]{amsart}

\usepackage{enumerate}
\usepackage{graphicx}
\usepackage{epsfig}
\usepackage{amssymb,amsmath,amsthm,amsfonts}
\usepackage {latexsym}

\usepackage{bbm}

\setlength{\textwidth}{6truein}
\setlength{\textheight}{8.5truein}
\setlength{\oddsidemargin}{10.0mm}
\setlength{\evensidemargin}{10.0mm}
\hyphenation{di-men-sion-al}
\linespread{1.4}
\allowdisplaybreaks




\newcommand{\nc}{\newcommand}
\nc{\les}{\lesssim}
\nc{\nit}{\noindent}
\nc{\nn}{\nonumber}
\nc{\D}{\partial}
\nc{\diff}[2]{\frac{d #1}{d #2}}
\nc{\diffn}[3]{\frac{d^{#3} #1}{d {#2}^{#3}}}
\nc{\pdiff}[2]{\frac{\partial #1}{\partial #2}}
\nc{\pdiffn}[3]{\frac{\partial^{#3} #1}{\partial{#2}^{#3}}}
\nc{\abs}[1] {\lvert #1 \rvert}
\nc{\cAc}{{\cal A}_c}
\nc{\cE}{{\cal E}}
\nc{\cF}{{\cal F}}
\nc{\cP}{{\cal P}}
\nc{\cV}{{\cal V}}
\nc{\cQ}{{\cal Q}}
\nc{\cGin}{{\cal G}_{\rm in}}
\nc{\cGout}{{\cal G}_{\rm out}}
\nc{\cO}{{\cal O}}
\nc{\Lav}{{\cal L}_{\rm av}}
\nc{\cL}{{\cal L}}
\nc{\cB}{{\cal B}}
\nc{\cZ}{{\cal Z}}
\nc{\cR}{{\cal R}}
\nc{\cT}{{\cal T}}
\nc{\cY}{{\cal Y}}
\nc{\cX}{{\cal X}}
\nc{\cXT}{{{\cal X}(T)}}
\nc{\cBT}{{{\cal B}(T)}}
\nc{\vD}{{\vec \mathcal{D}}}
\nc{\efield}{\mathcal{E}}
\nc{\vE}{{\vec \efield}}
\nc{\vB}{{\vec \mathcal{B}}}
\nc{\vH}{{\vec \mathcal{H}}}
\nc{\ty}{{\tilde y}}
\nc{\tu}{{\tilde u}}
\nc{\tV}{{\tilde V}}
\nc{\Pc}{{\bf P_c}}
\nc{\bx}{{\bf x}}
\nc{\bX}{{\bf X}}
\nc{\bXYZ}{{\bf XYZ}}
\nc{\bY}{{\bf Y}}
\nc{\bF}{{\bf F}}
\nc{\bS}{{\bf S}}
\nc{\dV}{{\delta V}}
\nc{\dE}{{\delta E}}
\nc{\TT}{{\Theta}}
\nc{\dPsi}{{\delta\Psi}}
\nc{\order}{{\cal O}}
\nc{\Rout}{R_{\rm out}}
\nc{\eplus}{e_+}
\nc{\eminus}{e_-}
\nc{\epm}{e_\pm}
\nc{\eps}{\varepsilon}
\nc{\vnabla}{{\vec\nabla}}
\nc{\G}{\Gamma}
\nc{\w}{\omega}
\nc{\mh}{h}
\nc{\mg}{g}
\nc{\vphi}{\varphi}
\nc{\tlambda}{\tilde\lambda}
\nc{\be}{\begin{equation}}
\nc{\ee}{\end{equation}}
\nc{\ba}{\begin{eqnarray}}
\nc{\ea}{\end{eqnarray}}

\nc{\g}{\gamma}
\nc{\ol}{\overline}

\newtheorem{theorem}{Theorem}[section]
\newtheorem{lemma}[theorem]{Lemma}
\newtheorem{prop}[theorem]{Proposition}
\newtheorem{corollary}[theorem]{Corollary}
\newtheorem{defin}[theorem]{Definition}
\newtheorem{rmk}[theorem]{Remark}

\nc{\pT}{\partial_T}
\nc{\pz}{\partial_z}
\nc{\pt}{\partial_t}
\nc{\la}{\langle}
\nc{\ra}{\rangle}
\nc{\infint}{\int_{-\infty}^{\infty}}
\nc{\halfwidth}{6.5cm}
\nc{\figwidth}{10cm}
\newcommand{\f}{\frac}

\nc{\nlayers}{L} \nc{\nsectors}{M}
\nc{\indicator}{\mathbf{1}}
\nc{\Rhole}{R_{\rm hole}}
\nc{\Rring}{R_{\rm ring}}
\nc{\neff}{n_{\rm eff}}
\nc{\Frem}{F_{\rm rem}}
\nc{\R}{\mathbb R}
\nc{\Z}{\mathbb Z}
\nc{\DD}{\Delta}
\nc{\cD}{\mathcal D}
\nc{\lnorm}{\left\|}
\nc{\rnorm}{\right\|}
\nc{\rnormp}{\right\|_{\ell^{p,\eps}}}
\nc{\rar}{\rightarrow}
\sloppy

\begin{document}

\begin{abstract}

	We investigate dispersive estimates for the
	Schr\"odinger operator $H=-\Delta +V$ with $V$ is
	a real-valued decaying potential when there
	are zero energy resonances and eigenvalues in four
	spatial dimensions.  If there is a zero energy
	obstruction, we establish the low-energy expansion
	$$
		e^{itH}\chi(H) P_{ac}(H)=O(1/(\log t)) A_0+
		O(1/t)A_1+O((t\log t)^{-1})A_2+ O(t^{-1}(\log t)^{-2})A_3.	
	$$
	Here $A_0,A_1:L^1(\R^n)\to L^\infty (\R^n)$,
	while
	$A_2,A_3$ are operators between logarithmically
	weighted spaces, with $A_0,A_1,A_2$ finite rank
	operators, further the operators are independent
	of time.  We show that similar expansions
	are valid for the solution operators to Klein-Gordon
	and wave equations.  Finally, we show that
	under certain orthogonality conditions, if there is
	a zero energy eigenvalue one can recover the
	 $|t|^{-2}$ bound as an
	operator from $L^1\to L^\infty$.  Hence, recovering
	the same dispersive bound as the free evolution
	in spite of the zero energy eigenvalue.
	
\end{abstract}

\title[Decay estimates for Schr\"odinger, Klein-Gordon and wave equations]{\textit{Decay estimates for 
four dimensional Schr\"odinger, Klein-Gordon and wave equations with 
obstructions at zero energy}}

\author[Green, Toprak]{William~R. Green and Ebru Toprak}
\thanks{The second author was partially supported by NSF grant DMS-1201872 and DMS-1501041 and would like to thank Burak Erdogan for the financial support. }

\address{Department of Mathematics\\
Rose-Hulman Institute of Technology \\
Terre Haute, IN 47803, U.S.A.}
\email{green@rose-hulman.edu}
\address{Department of Mathematics \\
University of Illinois \\
Urbana, IL 61801, U.S.A.}
\email{toprak2@illinois.edu}

\maketitle

\section{Introduction}

The free Schr\"odinger evolution on $\R^n$, $e^{-it\Delta}$ maps $L^1(\R^n)$ to $L^\infty(\R^n)$ with norm bounded by
$C_n |t|^{-n/2}$.  This can be seen by the triangle
inequality and the representation 
$$
	e^{-it\Delta}f(x)= \frac{1}{(4\pi i t)^{\frac{n}{2}}} \int_{\R^n} e^{-i|x-y|^2/4t}f(y)\, dy.
$$
In this paper we study the dispersive
properties of the operator $e^{itH}$ where $H=-\Delta+V$
is a Schr\"odinger operator perturbed by a real-valued,
decaying potential $V$.  Formally, this defines the
solution operator to the perturbed Schr\"odinger equation
\begin{align}\label{Schr eqn}
	iu_t+Hu=0, \qquad u(x,0)=f(x).
\end{align}
That is, the solution to
\eqref{Schr eqn} may be expressed as $u(x,t)=e^{itH}f(x)$.

Quantifying the dispersive properties of the solution operator is a well-studied problem.  In general, 
with $P_{ac}$ the projection onto the absolutely continuous
spectral subspace of $L^2(\R^n)$ associated to the
Schr\"odinger operator $H$,
the
dispersive estimates are expressed as
\begin{align}\label{dispgoal}
	\|e^{itH}P_{ac}(H)\|_{L^1\to L^\infty}\les |t|^{-\f n2}.
\end{align}
One requires the projection onto the absolutely continuous spectrum
 as the perturbed Schr\"odinger operator
often possesses point spectrum, for which large time decay
cannot occur.  Under weak pointwise assumptions on the
potential, say $|V(x)|\les \la x\ra^{-\beta}$ for some
$\beta>1$, we have $\sigma(H)=\sigma_{ac}(H)\cup \sigma_p (H)$.  Here the absolutely continuous spectrum
$\sigma_{ac}(H)=[0,\infty)$, and the point spectrum consists
of a finite collection of non-positive eigenvalues, 
\cite{RS1}.  One may alternatively seek to quantify the
dispersive properties in terms of Strichartz norms,
$L^p$ bounds, or micro-local estimates.  In this paper,
we focus on proving point-wise bounds for the
evolution when there are zero energy obstructions.
The obstructions
can be related to solutions to $H\psi=0$.  If $\psi \in L^2(\R^n)$, there is a zero energy eigenvalue, while $\psi\notin L^2(\R^n)$ is a resonance
if it is in a different space which depends on the dimension.
When $n=4$, $\psi$ is a resonance if $\la \cdot \ra^{0-}\psi \in L^{2}(\R^4)$.

Local dispersive estimates were first studied treating
$e^{itH}P_{ac}$ as an operator between weighted $L^2(\R^n)$
spaces.  The study was begun by Rauch in \cite{Rauch} on 
exponentially weighted spaces when $n=3$.  Jensen and Kato \cite{JenKat} for $n=3$, and Jensen \cite{Jen,Jen2}
for $n>3$ proved estimates on polynomially weighted $L^2$ spaces that decay at a rate of $|t|^{-\f n2}$.
Murata, \cite{Mur}, studied local dispersive estimates for a wide class of Schr\"odinger-like equations.  In these
works, it was shown that threshold obstructions can effect
the time decay of the solution operator even though
$P_{ac}(H)$ explicitly projects away from the zero
energy eigenspace.

In recent years, there has been much interest in global
dispersive estimates, in which one seeks to bound
$e^{itH}P_{ac}(H)$ as an operator from $L^1(\R^n)$ to
$L^\infty (\R^n)$.  The study began with the seminal paper
of Journ\'e, Soffer and Sogge \cite{JSS}, with much of
the recent work having its roots in the approach of Rodnianski and
Schlag, \cite{RodSch}.  If zero energy is regular, that
is if there are no zero-energy eigenvalues
of resonances, \eqref{dispgoal} has been established in all dimensions, see 
\cite{Wed,GS,Sc2,GV,CCV,EG1,Gr}, a more thorough
history may be found in \cite{Scs}.

When there is an obstruction at zero energy, either a
resonance or an eigenvalue, in general, the time decay
is slower.  The effect of these threshold obstructions is
dimension-specific, we note \cite{GS,ES2,Yaj,golde,EG2,Bec,EGG,GGodd,GGeven,Top} in which the effects
were studied in all dimensions $n\geq 1$ in the sense of
global dispersive estimates.

In this article we provide refined dispersive
bounds for the Schr\"odinger operator with zero-energy
resonances and/or eigenvalues in dimension $n=4$.  
We recall the result of Erdo\smash{\u{g}}an, Goldberg
and the first author, \cite{EGG}, it was shown that
if zero is not regular, one has the low energy expansion
$$
	\|e^{itH}\chi(H)P_{ac}(H)-F_t\|_{L^1\to L^\infty}\les |t|^{-1}, \qquad |t|>2
$$
where $F_t$ is a finite rank operator satisfying
$\|F_t\|_{L^1\to L^\infty} \les (\log |t|)^{-1}$ for
$|t|>2$.  We improve these results in several directions.
First of all, we provide a more detailed expansion the
evolution with at most three slowly decaying terms with an error term that is integrable for large time.
In particular, we show
that as $t\to \infty$,
the non-integrable time decay is ``small" in the 
sense of being attached to finite-rank operators.

To state the results, we define the functions
$\log^+(x)= \chi_{\{x>1\}} \log x$ and
$w(x)=1+\log^+|x|$, and $\varphi(t)$ is a function that satisfies the bound
$\varphi(t)=O(1/\log t)$ for $t>2$.  Further,
$\chi$ is an smooth, even cut-off function supported in $[-2\lambda_1,2\lambda_1]$ for a fixed sufficiently small $\lambda_1>0$ and it is equal to one if  $|\lambda| \leq \lambda_1$.
We also define 
the logarithmically weighted $L^p$ spaces
$$
	L^1_{w^k}(\R^4)=\{f \, :\, \int_{\R^4}w^k(x) |f(x)| \, dx<\infty
	\}, \qquad
	L^\infty_{w^{-k}}(\R^4)=\{ g\, :\, \|w^{-k}(\cdot)g\|_\infty <\infty \}.
$$

\begin{theorem}\label{thm:major}
	
	Suppose that $|V(x)|\les \la x\ra^{-\beta-}$.
	If zero energy is not regular, 
	for $t>2$,
	$$
		e^{itH}\chi(H) P_{ac}(H)=\varphi(t) A_0+
		O(1/t)A_1+O((t\log t)^{-1})A_2+ O(t^{-1}(\log t)^{-2})A_4,
	$$	
	with $A_0:L^1\to L^\infty$ is a finite rank
	operator, $A_1:L^1\to L^\infty$,
	$A_2:L^{1}_w\to L^{\infty}_{w^{-1}}$ finite rank operators,
	and $A_4:L^1_{w^3}\to L^{\infty}_{w^{-3}}$,
	provided $\beta>0$ is large enough.  
	Furthermore, the operators $A_1,A_2$ are independent of
	time.  	In particular,
	\begin{enumerate}[i)]
	\item If there is a resonance but no eigenvalue at zero and $\beta>4$, the above expansion
	is valid.
	
	\item If there is an eigenvalue but no resonance at zero and $\beta>8$, the above expansion is valid with
	$A_0=0$.
	
	\item If there is a resonance and an eigenvalue at zero and $\beta>8$, the above expansion is valid.
	
\end{enumerate}

\end{theorem}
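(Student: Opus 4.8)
The plan is to analyze the low-energy part through the Stone formula
\[
  e^{itH}\chi(H)P_{ac}(H)=\frac{1}{\pi i}\int_0^\infty e^{it\lambda^2}\,\lambda\,\chi(\lambda)\bigl[R_V^+(\lambda^2)-R_V^-(\lambda^2)\bigr]\,d\lambda ,
\]
where $R_V^\pm(\lambda^2)=(H-(\lambda^2\pm i0))^{-1}$, and to extract the stated asymptotics by feeding the symmetric resolvent expansion into stationary-phase estimates. With $v=|V|^{1/2}$ and $U=\operatorname{sgn}(V)$, the symmetric resolvent identity reads $R_V^\pm(\lambda^2)=R_0^\pm(\lambda^2)-R_0^\pm(\lambda^2)\,v\,(M^\pm(\lambda))^{-1}v\,R_0^\pm(\lambda^2)$ with $M^\pm(\lambda)=U+vR_0^\pm(\lambda^2)v$. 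In four dimensions the free kernel has the small-$\lambda$ expansion $R_0^\pm(\lambda^2)=G_0+\lambda^2\log\lambda\,G_1+\lambda^2 G_2^\pm+\cdots$, in which $G_0$ has kernel $c_0|x-y|^{-2}$ and, crucially, $G_1$ has a \emph{constant} kernel, so $vG_1v$ is the rank-one operator $c_1\langle\,\cdot\,,v\rangle v$; hence $M^\pm(\lambda)=M_0+c_1\lambda^2\log\lambda\,\langle\,\cdot\,,v\rangle v+\lambda^2 M_2^\pm+\cdots$ with $M_0=U+vG_0v$.

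First I would record the classification of the obstruction in terms of nested orthogonal projections $S_1\ge S_2\ge\cdots$ onto subspaces of $\ker M_0$: the range of $S_1$ is the finite-dimensional resonance-plus-eigenvalue space, while the smaller projections isolate the genuine $L^2$-eigenfunctions, characterized (after $v$-weighting) by $\langle\phi,v\rangle=0$, so that $G_0v\phi\sim|x|^{-3}\in L^2(\R^4)$ rather than $|x|^{-2}$. Iterating the Jensen--Kato/Feshbach inversion lemma down this hierarchy yields the needed expansion of $(M^\pm(\lambda))^{-1}$ near $\lambda=0$. When $S_1\ominus S_2\ne 0$ the rank-one $\lambda^2\log\lambda$-perturbation is nondegenerate on the residual subspace, so the leading singularity of $(M^\pm(\lambda))^{-1}$ is $(\lambda^2\log\lambda)^{-1}$ times the finite-rank resonance projection; after cancellation between the two boundary values this produces a $\lambda^{-2}(\log\lambda)^{-2}$ term in $R_V^+-R_V^-$. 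The subsequent pieces of $(M^\pm(\lambda))^{-1}$ are less singular and are attached either to $S_1$, $S_2$, or to the full eigenprojection $P_0$, with a remainder controlled in the relevant logarithmically-weighted operator norm.

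Next I would substitute these expansions into the Stone formula, expand the two outer factors $R_0^\pm(\lambda^2)$, and evaluate the resulting oscillatory $\lambda$-integrals, using $\chi$ to localize. The slowly decaying term is $\varphi(t)A_0$ with $\varphi(t)=\int_0^\infty e^{it\lambda^2}\lambda^{-1}(\log\lambda)^{-2}\chi(\lambda)\,d\lambda=O(1/\log t)$ and $A_0$ built from the finite-rank kernel $\Phi(x)\overline{\Phi(y)}$, $\Phi=G_0v\phi$ the resonance function. The non-oscillatory contributions at $\lambda=0$ yield the time-independent finite-rank operators: $A_1$ is built from the zero-energy eigenprojection $P_0$ (hence bounded $L^1\to L^\infty$ once $\beta$ is large, as its kernel is a finite sum $\psi_j(x)\psi_j(y)$ with bounded $\psi_j$), while $A_2$ involves compositions such as $G_0P_0$ and, when a resonance is also present, $G_0$ applied to a density $\sim|x|^{-2}$, whose convolution grows like $\log|x|$ and forces the single weight in $L^1_w\to L^\infty_{w^{-1}}$. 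The oscillatory remainders are handled by van der Corput/integration-by-parts, e.g.\ $\int_0^\infty e^{it\lambda^2}\lambda\chi(\lambda)\,d\lambda=O(t^{-1})$ and $\int_0^\infty e^{it\lambda^2}\lambda^3\log\lambda\,\chi(\lambda)\,d\lambda=O((t\log t)^{-1})$, the $\log t$ gain coming from integrating $\log\lambda$ against the oscillation; the number of $\lambda$-derivatives forced onto the $vR_0^\pm R_0^\pm v$ tails in the worst surviving term is three, producing the loss $L^1_{w^3}\to L^\infty_{w^{-3}}$ in $A_4$.

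The three cases then follow by keeping or discarding pieces of the hierarchy: with a resonance but no eigenvalue one has $S_2=0$, only the resonance inversion is needed, and $\beta>4$ suffices to make $vG_0v$ and its $\lambda$-derivative corrections Hilbert--Schmidt with the required regularity; with an eigenvalue present, the eigenfunctions and the deeper iterated inversions demand control of polynomial $x$-weights, which needs the stronger decay $\beta>8$; and when $S_2=S_1$ the rank-one $\lambda^2\log\lambda$-perturbation degenerates on $S_1L^2$, there is no $(\log\lambda)^{-2}$ singularity, and $A_0=0$. The main obstacle I anticipate is the bookkeeping in the iterated Feshbach inversion: one must verify that \emph{every} term carrying a non-integrable time factor is genuinely attached to a finite-rank operator, identify which of these are independent of $t$, and simultaneously confine the unavoidable logarithmic growth of $G_0$ against slowly decaying densities to the error term. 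This forces a carefully chosen granularity in the decomposition of $(M^\pm(\lambda))^{-1}$ and a delicate verification that the weighted-space norms close at each stage.
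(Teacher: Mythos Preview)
Your outline is the machinery behind Theorem~\ref{thm:main}, not Theorem~\ref{thm:major}, and it misses the one extra idea that turns the former into the latter. The direct resolvent analysis you describe produces an error term $A_4$ mapping $L^{1,1/2}\to L^{\infty,-1/2}$ with \emph{polynomial} weights and decay $O(t^{-1-})$: each $\lambda$-derivative of $R_0^\pm(\lambda^2)(x,y)$ costs a factor of $|x-y|$, not $\log|x-y|$, so your claim that ``three $\lambda$-derivatives forced onto the $vR_0^\pm R_0^\pm v$ tails'' yield the weights $w^3$ is wrong. The logarithmic weights in Theorem~\ref{thm:major} do not come out of the resolvent expansion at all.

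What the paper actually does is an interpolation. From \cite{EGG} one already knows
\[
e^{itH}\chi(H)P_{ac}(H)-\varphi(t)A_0=O(1/t)\quad\text{as }L^1\to L^\infty,
\]
so after subtracting the finite-rank pieces $\frac{w(x)B_1w(y)}{t\log t}+\frac{B_2}{t(\log t)^2}$ (which are themselves bounded by $w(x)w(y)/t$) the remainder satisfies simultaneously
\[
|R(t,x,y)|\lesssim \frac{w(x)w(y)}{t}\quad\text{and}\quad |R(t,x,y)|\lesssim \frac{\langle x\rangle^{1/2}\langle y\rangle^{1/2}}{t^{1+}},
\]
the second bound being the output of the analysis you sketched. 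Taking the minimum and using the elementary inequality $\min(1,a/b)\lesssim(\log a)^2/(\log b)^2$ for $a,b>2$ converts polynomial weights and extra time decay into logarithmic weights with the $t^{-1}(\log t)^{-2}$ rate. This step is short but indispensable; without it there is no route from the spectral expansion to the space $L^1_{w^3}\to L^\infty_{w^{-3}}$.

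Two smaller points: the $(t\log t)^{-1}$ contribution does not arise from an integral like $\int e^{it\lambda^2}\lambda^3\log\lambda\,\chi\,d\lambda$; it comes from terms of the form $\lambda^2[f^+-f^-](\lambda)$ with $f^\pm(\lambda)\sim(\lambda^2(a\log\lambda+z))^{-1}$, which behave like $\widetilde O_2((\log\lambda)^{-2})$ and are handled by an estimate of the type $\int_0^\infty e^{it\lambda^2}\lambda\chi(\lambda)\widetilde O_2((\log\lambda)^{-k})\,d\lambda=O(t^{-1}(\log t)^{1-k})$. And the logarithmic weight on $A_2$ comes from the operator $G_1$ with kernel $c\log|x-y|$ appearing in the $\lambda^2$ term of the resolvent expansion, not from $G_0$ acting on a slowly decaying density.
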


The polynomially weighted $L^p$ spaces are defined by
$$
  L^{p,\sigma}(\R^n)=\{f\, : \|\la x\ra^{\sigma} f\|_p<\infty\}.
$$
To prove this theorem, 
we employ an interpolation argument
between the results of \cite{EGG} and the
three parts of the following
theorem which we prove in Sections~\ref{sec:first},
\ref{sec:second} and \ref{sec:third} respectively.

\begin{theorem}\label{thm:main}
	
	Suppose that $|V(x)|\les \la x\ra^{-\beta-}$.
	\begin{enumerate}[i)]
	\item If there is a resonance but no eigenvalue at zero,
		then if $\beta>4$ for $t>2$, 
	$$
		e^{itH}\chi(H) P_{ac}(H)=\varphi(t) A_0+
		O(1/t)A_1+O((t\log t)^{-1})A_2+ O(t^{-1}(\log t)^{-2})A_3+O(t^{-1-})A_4
	$$
	with $A_0:L^1\to L^\infty$ a rank one
	operator, $A_1:L^1\to L^\infty$,
	$A_2:L^{1}_w\to L^{\infty}_{w^{-1}}$ finite rank operators, $A_3:L^1\to L^\infty$
	and $A_4:L^{1,\f12}\to L^{\infty,-\f12}$.  
	
	\item If there is an eigenvalue but no resonance at zero,
	then if $\beta>8$, for $t>2$, 
	$$
		e^{itH}\chi(H) P_{ac}(H)=
		O(1/t)A_1+O(t^{-1-})A_4
	$$
	with $A_1:L^1\to L^\infty$ a finite rank operators,
	and $A_4:L^{1,\f12}\to L^{\infty,-\f12}$.
	
	\item If there is a resonance and an eigenvalue at zero,
	then if $\beta>8$ for $t>2$, 
	$$
		e^{itH}\chi(H) P_{ac}(H)=\varphi(t) A_0+
		O(1/t)A_1+O((t\log t)^{-1})A_2+ O(t^{-1}(\log t)^{-2})A_3+O(t^{-1-})A_4
	$$
	with $A_0,A_1,A_3:L^1\to L^\infty$,
	$A_2:L^{1}_w\to L^{\infty}_{w^{-1}}$ finite rank operators,
	and $A_4:L^{1,\f12}\to L^{\infty,-\f12}$.
	
\end{enumerate}
Furthermore, the operators $A_1,A_2,A_3$ are independent of time.  

\end{theorem}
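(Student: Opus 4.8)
The plan is to run the standard spectral-theoretic machinery in the presence of threshold obstructions: the Stone formula together with a staged low-energy expansion of the perturbed resolvent. Writing $R_V^{\pm}(\lambda^2)=(H-(\lambda^2\pm i0))^{-1}$, and using that $\chi(H)$ localizes to $\lambda\in(0,\lambda_1)$ while $H$ has no singular continuous spectrum, the Stone formula and the substitution $\mu=\lambda^2$ give
\begin{align*}
e^{itH}\chi(H)P_{ac}(H)=\frac{1}{\pi i}\int_0^\infty e^{it\lambda^2}\,\lambda\,\chi(\lambda)\,\big[R_V^{+}(\lambda^2)-R_V^{-}(\lambda^2)\big]\,d\lambda .
\end{align*}
I then insert the symmetric resolvent identity $R_V^{\pm}(\lambda^2)=R_0^{\pm}(\lambda^2)-R_0^{\pm}(\lambda^2)\,v\,M^{\pm}(\lambda)^{-1}\,v\,R_0^{\pm}(\lambda^2)$, with $V=Uv^2$, $v=|V|^{1/2}$, $U=\operatorname{sgn}V$, and $M^{\pm}(\lambda)=U+vR_0^{\pm}(\lambda^2)v$. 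In $\R^4$ the free resolvent has the small-$\lambda$ expansion
\begin{align*}
R_0^{\pm}(\lambda^2)(x,y)=G_0(x,y)+\lambda^2\log\lambda\,G_1+\lambda^2 G_2^{\pm}(x,y)+E^{\pm}(\lambda)(x,y),
\end{align*}
with $G_0(x,y)=c|x-y|^{-2}$, $G_1$ the operator with constant kernel, $G_2^{\pm}(x,y)=c'\log|x-y|\pm ic''$, and $E^{\pm}(\lambda)$ an error that, with its $\lambda$-derivatives, gains a factor $\lambda^{2+}$ while losing at most a power of $|x-y|$. Hence $M^{\pm}(\lambda)=T+\lambda^2\log\lambda\,vG_1v+\lambda^2 vG_2^{\pm}v+vE^{\pm}(\lambda)v$ with $T=U+vG_0v$; since $G_2^{+}-G_2^{-}$ has constant kernel, $M^{+}(\lambda)-M^{-}(\lambda)$ is a fixed nonzero multiple of $\lambda^2 vG_1v$ modulo $O(\lambda^{4}\log\lambda)$, which governs the relevant imaginary parts below.

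The heart of the proof is the inversion of $M^{\pm}(\lambda)$ for small $\lambda>0$. When zero energy is not regular, $T$ is not invertible; let $S_1$ be the finite-rank orthogonal projection onto $\ker T$. Since $T+S_1$ is invertible, the Jensen--Nenciu inversion lemma (a Feshbach/Schur-complement reduction) reduces the problem to inverting, on $S_1L^2$, an operator whose leading part is $\lambda^2\log\lambda\,S_1vG_1vS_1$ modulo $\lambda^2$-terms. Because $G_1$ has constant kernel, $S_1vG_1vS_1$ is a multiple of the rank-$(\le 1)$ projection onto $S_1v$, and one splits $S_1=(S_1-S_2)+S_2$ where $S_2$ projects onto $S_1L^2\cap\{S_1v\}^{\perp}$; then $S_1-S_2$ is the resonance space and $S_2L^2$ is essentially the zero-energy eigenspace. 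On the resonance piece the leading term is invertible, producing in $M^{\pm}(\lambda)^{-1}$ a term $\frac{1}{\lambda^2 g^{\pm}(\lambda)}\Gamma$ with $g^{\pm}(\lambda)=a\log\lambda+z^{\pm}$, $a\ne 0$, $\operatorname{Im}(z^{+}-z^{-})\ne 0$ (the last from the $M^{+}-M^{-}$ computation above), $\Gamma$ a fixed finite-rank operator; the nonvanishing imaginary part makes $\frac1{g^{+}}-\frac1{g^{-}}$ of size $(\log\lambda)^{-2}$ rather than worse. On $S_2L^2$ there is no logarithm in the leading term, so one iterates the inversion lemma once or twice more using the $\lambda^2 vG_2^{\pm}v$ and higher terms; this consumes the extra decay $\beta>8$, and one checks that the eigenvalue contribution to $M^{\pm}(\lambda)^{-1}$, after cancellation in $M^{+}-M^{-}$, is $\lambda$-regular and so only produces $O(1/t)$ contributions. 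The output is an expansion of $M^{\pm}(\lambda)^{-1}$ into: the singular resonance term $\frac1{\lambda^2 g^{\pm}(\lambda)}\Gamma$; finitely many terms of the form (scalar function of $\lambda$ with logarithms, of order $O(1)$ or $O((\log\lambda)^{-k})$) times a fixed finite-rank operator; and an error of size $\lambda^{0+}$ with derivative bounds.

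It remains to substitute into the resolvent identity, form $R_V^{+}-R_V^{-}$, and estimate the oscillatory integrals. The analytic input is: (a) $\int_0^\infty e^{it\lambda^2}\lambda\chi(\lambda)\mathcal E(\lambda)\,d\lambda=O(1/t)$ when $\mathcal E$ is $O(1)$ with integrable first-derivative bound; (b) $\int_0^\infty e^{it\lambda^2}\lambda\chi(\lambda)(\log\lambda)^{-k}\,d\lambda=O\!\big(t^{-1}(\log t)^{-k}\big)$; (c) $\int_0^\infty e^{it\lambda^2}\lambda\chi(\lambda)\frac1{\lambda^2 g^{\pm}(\lambda)}\,d\lambda=\varphi(t)$ with $\varphi(t)=O(1/\log t)$ --- each obtained by splitting at $\lambda\sim t^{-1/2}$ and integrating by parts, after the substitutions $u=\lambda^2$ and then $u=e^{2s}$ near the origin. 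Matching each term against this list: the resonance term contributes $\varphi(t)A_0$ with $A_0\simeq G_0v\Gamma vG_0$ rank one and bounded $L^1\to L^\infty$ (since $G_0v$ is bounded $L^1\to L^\infty$ in $\R^4$); the $O(1)$-in-$\lambda$ pieces give $O(1/t)A_1$ with $A_1$ finite rank $L^1\to L^\infty$; pieces carrying a $(\log\lambda)^{-1}$ together with a $G_2^{\pm}$ factor give $O((t\log t)^{-1})A_2$, the logarithmic weights $L^1_w\to L^\infty_{w^{-1}}$ being forced by the $\log|x-y|$ kernel with $A_2$ finite rank; the $(\log\lambda)^{-2}$ pieces give $O(t^{-1}(\log t)^{-2})A_3$; and the $\lambda^{0+}$ errors (and the $R_0^{+}-R_0^{-}$ term, which is even $O(t^{-2})$) give $O(t^{-1-})A_4$ between polynomially weighted spaces. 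The operators $A_1,A_2,A_3$ are time-independent, being the $\lambda\to 0$ Laurent/Taylor coefficients. In the eigenvalue-only case $S_1=S_2$, $S_1v=0$, so the resonance term and the $(\log\lambda)^{-k}$ pieces are absent, leaving exactly the $O(1/t)A_1+O(t^{-1-})A_4$ of part (ii).

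The main obstacle is the middle step: executing the staged inversion of $M^{\pm}(\lambda)$ cleanly, correctly identifying the resonance and eigenvalue subspaces, and verifying the imaginary-part and sign structure that simultaneously keeps the resonance singularity at $(\lambda^2\log\lambda)^{-1}$ and prevents a zero-energy eigenvalue from degrading the $O(1/t)$ rate --- all while tracking the (logarithmically or polynomially) weighted mapping properties of every error operator, which is precisely what dictates the decay thresholds $\beta>4$ in part (i) and $\beta>8$ in parts (ii)--(iii).
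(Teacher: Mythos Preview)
Your overall strategy---Stone's formula, symmetric resolvent identity, Jensen--Nenciu/Feshbach inversion of $M^\pm(\lambda)$ on the threshold subspace, and oscillatory integral lemmas---matches the paper's. However, there is a genuine gap in the way you pass from $M^\pm(\lambda)^{-1}$ back to $L^1\to L^\infty$ bounds.

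You use the non-iterated identity $R_V^\pm=R_0^\pm-R_0^\pm vM^\pm(\lambda)^{-1}vR_0^\pm$ and then assert that $A_0\simeq G_0v\,\Gamma\,vG_0$ is bounded $L^1\to L^\infty$ ``since $G_0v$ is bounded $L^1\to L^\infty$ in $\R^4$.'' This is false: $G_0(x,y)=c|x-y|^{-2}$, so the needed bound $\sup_x\|G_0(x,\cdot)v(\cdot)\|_{L^2}<\infty$ would require $\int |x-y|^{-4}v(y)^2\,dy<\infty$, and $|x-y|^{-4}$ is not locally integrable in $\R^4$. More generally, $R_0^\pm(\lambda^2)v$ fails to map $L^2\to L^\infty$, so sandwiching $M^\pm(\lambda)^{-1}$ (which you analyze on $L^2$) by a single free resolvent on each side does not produce $L^1\to L^\infty$ operators. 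The paper addresses exactly this: it iterates the resolvent identity to put \emph{two} free resolvents on each side of $M^\pm(\lambda)^{-1}$, i.e.
\[
R_V^\pm=R_0^\pm-R_0^\pm VR_0^\pm+R_0^\pm VR_0^\pm VR_0^\pm-R_0^\pm VR_0^\pm v\,M^\pm(\lambda)^{-1}\,vR_0^\pm VR_0^\pm,
\]
and proves (Lemma~\ref{lem:locL2}) that $R_0^\pm VR_0^\pm v:L^2\to L^\infty$ is bounded. The leading operator then becomes $P_r=G_0VG_0vS_1vG_0VG_0$ (which, using $S_1=-wG_0vS_1$, collapses to $G_0vS_1vG_0$), and the extra Born-series terms are handled separately (Lemma~\ref{bsprop}). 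Without this iteration, none of your claimed mapping properties for $A_0,\dots,A_4$ are justified.

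Two smaller points: your labels for the subleading free-resolvent terms are swapped relative to the paper (in the paper the constant-kernel piece carries $g_1^\pm(\lambda)$ and $G_1$ has kernel $c\log|x-y|$), and your oscillatory bound (b) is stated as $O(t^{-1}(\log t)^{-k})$ whereas the paper's Lemma~\ref{log decay2} gives $O(t^{-1}(\log t)^{-(k-1)})$; the bookkeeping of which $(\log\lambda)^{-k}$ feeds which $A_j$ should be redone accordingly.
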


The operators  $A_3,A_4$ need not be finite rank.  However,
their
contribution is integrable for large time.
To establish Theorem~\ref{thm:major}, we recall that
in \cite{EGG}, it was shown that
\begin{align}\label{EGG bound} 
	e^{itH}\chi(H) P_{ac}(H)=\varphi(t) A_0+O(1/t),	
\end{align} 
with $A_0:L^1\to L^\infty$ finite rank, and the error
term is understood as an operator mapping $L^1\to L^\infty$ which is not finite rank.
In Theorem~\ref{thm:main}, we show
$$
	e^{itH}P_{ac}(H)-\varphi(t)A_0
	-\frac{w(x)B_1w(y)}{t\log t}
	-\frac{B_2}{t(\log t)^2}=O\left(\frac{\la x\ra^{\f12}
	\la y\ra^{\f12}}{t^{1+}}\right).
$$
Here $B_1,B_2$ are finite rank and bounded independent of $x,y$.  We can
subtract them off of the bound \eqref{EGG bound} to 
conclude that
$$
	e^{itH}P_{ac}(H)-\varphi(t)A_0
	-\frac{w(x)B_1w(y)}{t\log t}
	-\frac{B_2}{t(\log t)^2}=O\left(
	\frac{w(x)w(y)}{t}\right).
$$
So that, 
$$
	e^{itH}P_{ac}(H)-\varphi(t)A_0
	-\frac{w(x)B_1w(y)}{t\log t}
	-\frac{B_2}{t(\log t)^2}=O\left(\frac{w(x)w(y)}{t}
	\min\bigg( 1, \frac{\la x\ra^{\f12}
		\la y\ra^{\f12}}{t^{0+}}\bigg)
	\right).
$$
Using $\min(1,\f ab)\les (\log a)^2/(\log b)^2$ if $a,b>2$,
we can obtain an error term bounded by $t^{-1}(\log t)^{-2}$ 
as an operator between
logarithmically weighted spaces.

We can combine the low energy estimates, for which the
spectral parameter $\lambda$ is in a sufficiently
small neighborhood of zero, proven above with large energy
estimates from, for example \cite{YajEven} or \cite{CCV}. 
To apply these
results, one requires additional regularity on the potential, which is expected from the counterexample
constructed in \cite{GV}.  This counterexample showed that
the high energy portion of the evolution need not satisfy
the desired dispersive bound if $V$ does not have
$\frac{n-3}{2}$ continuous derivatives when $n>3$.
The smoothness when $n=4$ may be
expressed in terms of a weighted Fourier transform as in \cite{YajEven}, or
explicitly requiring $V\in C^{\f12+}(\R^4)$ as in \cite{CCV}.  We note that the
goal of \cite{YajEven} was not to prove dispersive bounds directly, but was instead
concerned with the $L^p$-boundedness of the wave operators, which are defined by
$$
  W_{\pm}=s-\lim_{t\to\pm \infty} e^{itH}e^{it\Delta}.
$$
The $L^p$ boundedness of the wave operators allows one to deduce bounds on the
perturbed operator from the free operator $H_0=-\Delta$.  That is, for any Borel
function $f$,
$$
  f(H)P_{ac}=W_\pm f(-\Delta) W_\pm^*.
$$
If $W_\pm$ is bounded on $L^\infty$, then $W_\pm^*$ is bounded on $L^1$ and one
can deduce the $|t|^{-\f n2}$ dispersive bound for the evolution by  using the natural bound for the free equation.
It is known that the if zero is not regular, the range of
$p$ for which the wave operators are bounded 
shrinks from $1\leq p\leq \infty$ when zero is
regular to $\frac{4}{3}<p<4$ when there is an eigenvalue but no resonance at zero, \cite{JY4}.  We expect that the range of $p$ can
be expanded to $1\leq p<4$
in light of the recent works \cite{YajNew,GGwaveop,YajNew2}.\footnote{During the review period of this paper, the first author and Goldberg proved this result, see \cite{GGwaveop4}.}

A second direction in which we improve the known results
is to establish a global dispersive bound that matches the
natural $|t|^{-2}$ decay of the free evolution even in
the presence of a zero-energy eigenvalue.
In \cite{GGeven,GGodd} 
$L^1\to L^\infty$ dispersive bounds for the perturbed evolution
were established in the presence of zero energy eigenvalues with the full
$|t|^{-\f n2}$ time decay, assuming additional orthogonality conditions between the
zero energy eigenspace and the potential.  In these papers, 
dimensions $n\geq 5$ were studied, where zero energy resonances do not occur.  
We show in Section~\ref{sec:t-2 decay} that such a
bound holds in dimension $n=4$.  The more complicated
structure of zero energy obstructions when $n=4$ leads to
significant technical difficulties in the analysis, even when there
is not a zero energy resonance.

\begin{theorem}\label{thm:evalcancel}
 
  Assume that $|V(x)|\les \la x\ra^{-12-}$, and that zero is an eigenvalue of $H=-\Delta+V$, but
not a resonance.  Further, suppose that $\int_{\R^4} V\psi \, dx=0$ 
and $\int_{\R^4} x_jV\psi \, dx=0$ for each $\psi\in$Null $H$ and all $1\leq j\leq 4$.  Then,
$$
  \|e^{itH}\chi(H) P_{ac}(H)\|_{L^1\to L^\infty} \les |t|^{-2}.
$$

\end{theorem}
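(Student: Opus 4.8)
The plan is to run the standard low-energy spectral scheme, but to redo the threshold analysis with the two orthogonality conditions built in from the start. Since $\chi(H)$ localizes to a small neighbourhood of zero energy, everything is a statement about $\lambda$ near $0$ in Stone's formula
$$
  e^{itH}\chi(H)P_{ac}(H)=\frac{1}{2\pi i}\int_0^\infty e^{it\lambda^2}\,\lambda\,\chi(\lambda)\big[R_V^+(\lambda^2)-R_V^-(\lambda^2)\big]\,d\lambda,
$$
with $R_V^\pm(\lambda^2)=(H-\lambda^2\mp i0)^{-1}$ (the real pole of $R_V^\pm$ at the eigenvalue cancels in the difference, which is why $P_{ac}(H)$ is automatically present). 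Into this I would substitute the symmetric resolvent identity $R_V^\pm(\lambda^2)=R_0^\pm(\lambda^2)-R_0^\pm(\lambda^2)\,v\,\big(M^\pm(\lambda)\big)^{-1}\,v\,R_0^\pm(\lambda^2)$, where $v=|V|^{1/2}$, $U=\mathrm{sign}\,V$, and $M^\pm(\lambda)=U+vR_0^\pm(\lambda^2)v$. The term built from $R_0^\pm$ alone reproduces a low-energy cut-off of the free evolution $e^{it(-\Delta)}$, which maps $L^1\to L^\infty$ with the $|t|^{-2}$ bound by four-dimensional stationary phase, so the theorem reduces to estimating the contribution of $R_0^\pm\,v\,(M^\pm)^{-1}\,v\,R_0^\pm$.

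The crux is the expansion of $\big(M^\pm(\lambda)\big)^{-1}$ as $\lambda\to0$. Because zero is an eigenvalue but not a resonance, the iterated inversion procedure of \cite{EGG} (in the form already used in Section~\ref{sec:second}) yields an absolutely convergent expansion whose singular part is carried by the finite-rank projection $S$ onto $v\cdot\mathrm{Null}(H)$, together with the first coefficients $G_0,G_1,G_2,\dots$ of the four-dimensional free expansion $R_0^\pm(\lambda^2)=G_0+\lambda^2\log\lambda\,G_1+\lambda^2 G_2+\order(\lambda^4\log\lambda)$, with $G_0(x,y)=c_4|x-y|^{-2}$. The new ingredient is that the hypotheses $\int_{\R^4}V\psi\,dx=0$ and $\int_{\R^4}x_jV\psi\,dx=0$ for all $\psi\in\mathrm{Null}(H)$ say exactly that each zero-energy eigenfunction is orthogonal to $V$ against the constant and the coordinate functions; equivalently $Sv$ annihilates $1$ and $x_j$ in the natural pairing, which forces the low-order sandwiched operators $Sv\,G_k\,vS$ to vanish and upgrades the pointwise decay of eigenfunctions from the generic $\psi(x)=\order(|x|^{-2})$ to $\psi(x)=\order(|x|^{-4})$. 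I would therefore rerun the inversion, tracking the extra powers of $\lambda$ that these cancellations introduce.

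With the refined expansion of $(M^\pm)^{-1}$ in hand, I would insert it into $R_0^\pm v(M^\pm)^{-1}vR_0^\pm$ and organize the result as a finite sum of terms of the schematic form $\lambda^{a}(\log\lambda)^{b}\,\Gamma(\lambda)(x,y)$, each with $\Gamma(\lambda)$ $\lambda$-differentiable and with kernel bounded uniformly in $x,y$ — the local singularity $|x-y|^{-2}$ of $G_0$ being absorbed through its $L^1_{\mathrm{loc}}(\R^4)$ integrability and the decay of $v$, while its slow decay at infinity is harmless since $|x-y|^{-2}\le1$ there. For each term the oscillatory integral $\int_0^\infty e^{it\lambda^2}\lambda^{1+a}(\log\lambda)^b\chi(\lambda)\,\Gamma(\lambda)\,d\lambda$ is estimated by splitting at $\lambda\sim|t|^{-1/2}$: on $\lambda\les|t|^{-1/2}$ one bounds directly, and on $\lambda\gtrsim|t|^{-1/2}$ one integrates by parts in $\lambda$ via $e^{it\lambda^2}=(2it\lambda)^{-1}\D_\lambda e^{it\lambda^2}$. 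The cancellations from the two orthogonality conditions are precisely what raise the effective exponent $a$ past the threshold where the logarithms first obstruct a second integration by parts, so every term is $\order(|t|^{-2})$ as an operator from $L^1$ to $L^\infty$; in particular the finite-rank operator $A_1$ and the slowly decaying, weighted part of $A_4$ of Theorem~\ref{thm:main}(ii) are annihilated.

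\textbf{Main obstacle.} I expect the hard part to be the second step: producing the inversion of $M^\pm(\lambda)$ with enough terms made explicit, and then verifying that the two orthogonality conditions kill exactly the operators responsible for the non-integrable $1/t$ behaviour and for the spatial weights in Theorem~\ref{thm:main}(ii), not less and not more. The bookkeeping is delicate because in four dimensions the free expansion carries $\lambda^2\log\lambda$ corrections, so the coefficients $G_k$ are coupled to logarithmically-modulated powers of $\lambda$, and one must check that when a cancellation shifts the expansion to higher order it does not manufacture a fresh logarithmic divergence in the $\lambda$-integral. This is genuinely harder than the analogous $n\ge5$ arguments of \cite{GGeven,GGodd}, where neither zero-energy resonances nor logarithmic corrections are present, and reconciling the more intricate four-dimensional threshold structure with the orthogonality cancellations is the main technical content of the argument.
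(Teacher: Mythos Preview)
Your overall architecture---Stone's formula, symmetric resolvent identity, refined expansion of $(M^\pm)^{-1}$ using the two orthogonality conditions---matches the paper's. The cancellation you anticipate inside the $(M^\pm)^{-1}$ expansion is exactly the identity $D_2vG_2vD_2=0$ the paper derives in \eqref{P_eVx cancel} from $P_eVx=0$, which removes the most singular $\lambda^{-2}g_2^\pm(\lambda)$ term and yields Proposition~\ref{prop:Minvcanc}.

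However, there is a real gap in your third step. You propose to organise the tail as a finite sum $\lambda^a(\log\lambda)^b\,\Gamma(\lambda)(x,y)$ with $\Gamma$ $\lambda$-differentiable and \emph{uniformly bounded in $x,y$}, and then extract $|t|^{-2}$ by two integrations by parts against $e^{it\lambda^2}$. This does not work: the leading and lagging free resolvents $R_0^\pm(\lambda^2)(x,z_1)$ and $R_0^\pm(\lambda^2)(z_4,y)$ carry the oscillatory factors $e^{\pm i\lambda|x-z_1|}$, $e^{\pm i\lambda|z_4-y|}$ when their arguments are large. Each $\lambda$-derivative landing on one of these brings down a factor $|x-z_1|\sim|x|$ (or $|z_4-y|\sim|y|$), so after two integrations by parts $\Gamma$ is not bounded uniformly in $x,y$---this is precisely the mechanism that produces the polynomial weights $\la x\ra^{1/2}\la y\ra^{1/2}$ in Theorem~\ref{thm:main}(ii), and it persists even after the cancellations inside $(M^\pm)^{-1}$.

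The paper's resolution is not more bookkeeping in the $(M^\pm)^{-1}$ expansion but a \emph{second} use of the cancellation $P_eV1=0$, now at the level of the outer resolvents: since $VG_0vD_2vG_0V=VP_eV$ annihilates constants on either side, one may subtract $z_1$-independent (resp.\ $z_4$-independent) functions from $R_0^\pm(\lambda^2)(x,z_1)$ (resp.\ from $[R_0^+-G_0](z_4,y)$), see \eqref{eqn:PV1trick}. These subtractions are encoded in the auxiliary functions $F,G,\widetilde G$ of Lemmas~\ref{lem:FG bounds}--\ref{lem:Gtilde}, whose differences are controlled by $\la z_1\ra$, $\la z_4\ra$ or $k(z_4,y)$ rather than by $\la x\ra$, $\la y\ra$, and can therefore be absorbed by the decay of $V$. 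Even then one cannot simply integrate by parts twice: for $\lambda|x-z_1|\gtrsim1$ the phase $e^{it\lambda^2\pm i\lambda r}$ must be handled by a stationary-phase argument (Lemma~\ref{stat phase}), and the worst case---both endpoints in the large-argument regime with opposing phases---requires the separate case analysis of Lemma~\ref{lem:highhigh}. This is the ``significant technical difficulty'' flagged in the introduction, and it is missing from your outline.
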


One can alternatively state the orthogonality hypotheses as $P_eVx,P_eV1=0$ where
$P_e$ is the projection onto the zero energy eigenspace.

The perturbed resolvent operators are defined by 
$$R_V^{\pm}(\lambda^2) = R_V^{\pm}(\lambda^2 \pm i0 )= \lim_{\epsilon \to 0^+}(H - ( \lambda^2 \pm i\epsilon))^{-1}.$$

By Agmon's well-known limiting absorptions principle, \cite{agmon}, these limits are well-defined as bounded  operators between weighted $L^2$ spaces. Treating $e^{itH}\chi(H)P_{ac}(H)$ as an element of functional calculus, Stone's formula yields the representation
  \begin{align}
\label{stone}
 \ e^{itH}  \chi(H) P_{ac}(H) f(x) =\frac1{2\pi i}  \int_0^{\infty} e^{it\lambda^2} \lambda  \chi( \lambda )  [R_V^+(\lambda^2)-R_V^{-}(\lambda^2)]  f(x) \,d\lambda. 
 \end{align}
 
   Here the difference of the resolvents provides the absolutely continuous spectral measure.   
  As usual (cf. \cite{RodSch,GS,Sc2,EG2,EGG}) the proofs of Theorem~\ref{thm:main} and Theorem~\ref{thm:evalcancel}  relies on the formula \eqref{stone} and the expansion of the spectral density $[R_V^+(\lambda^2)-R_V^{-}(\lambda^2)]$ around zero energy which varies depending on which kind of obstruction one has at zero energy.

The final direction in which we improve the known results
is to prove dispersive bounds for a wide class of
wave-like equations.
The 
Klein-Gordon equation
\begin{align}
	u_{tt}-\Delta u+m^2 u+Vu=0, \qquad u(0)=f, \qquad\partial_t u(0)=g \label{eq:KG}
\end{align}
is formally solved by
\begin{align}\label{eq:KGsoln}
	u(x,t)=\cos(t\sqrt{H+m^2})f(x)
	+\frac{\sin(t\sqrt{H+m^2})}{\sqrt{H+m^2}}g(x).
\end{align}
The formal solution
is valid for the wave equation, when $m^2=0$.  We restrict
our attention to $m^2\geq 0$ so that the unperturbed
operator $-\Delta+m^2$ is positive.  

In the
free case, when $V=0$, one has the natural dispersive
bounds
\begin{align*} 
	\|u(\cdot, t)\|_{\infty} \les |t|^{-\f32}(\|f\|_{W^{k+1,1}}+\|g\|_{W^{k,1}})
\end{align*}
for $k>\f32$ in four dimensions.  The derivative loss
on the initial data is strictly a high-energy phenomenon, see \cite{Gr2,EGG}. 
$L^\infty$ bounds on solutions to the wave equation has
been studied, \cite{BS,Beals,Cucc,DP,BeGol,Gr2}.  The dispersive
nature of the Klein-Gordon has been studied in various
senses \cite{MSW,KK,GLS}.  The effect of threshold
eigenvalues and resonances for wave-like equations has
been studied in dimensions $n\leq 4$, see
\cite{KS,Gr,EGG}.

We prove low-energy dispersive bounds by taking advantage
of the representation
\begin{multline*}
	\cos(t\sqrt{H+m^2})P_{ac}
	+\frac{\sin(t\sqrt{H+m^2})}{\sqrt{H+m^2}}P_{ac}\\
	=\frac{1}{\pi i} \int_0^\infty\bigg(\cos(t\sqrt{\lambda^2+m^2})+  \frac{\sin(t\sqrt{\lambda^2+m^2})}{\sqrt{\lambda^2+m^2}}\bigg) \lambda
	[R_V^+(\lambda^2)-R_V^-(\lambda^2)]\, d\lambda.
\end{multline*}
This allows us to extend our analysis of the spectral
measure and the perturbed resolvents we develop for
the Schr\"odinger evolution in Sections~\ref{sec:first},
\ref{sec:second} and \ref{sec:third} below.

\begin{theorem}\label{thm:KG}

Let $m^2>0$ then the results of Theorem~\ref{thm:main} are valid if
the operator $e^{itH}$ is replaced by either
$\cos(t\sqrt{H+m^2})$ or $\frac{\sin(t\sqrt{H+m^2})}{\sqrt{H+m^2}}$.

\end{theorem}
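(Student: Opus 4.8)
The plan is to reduce Theorem~\ref{thm:KG} to the Schr\"odinger analysis already carried out for Theorem~\ref{thm:main} by exploiting the representation
\begin{multline*}
	\cos(t\sqrt{H+m^2})P_{ac}
	+\frac{\sin(t\sqrt{H+m^2})}{\sqrt{H+m^2}}P_{ac}\\
	=\frac{1}{\pi i} \int_0^\infty\bigg(\cos(t\sqrt{\lambda^2+m^2})+  \frac{\sin(t\sqrt{\lambda^2+m^2})}{\sqrt{\lambda^2+m^2}}\bigg) \lambda
	[R_V^+(\lambda^2)-R_V^-(\lambda^2)]\, d\lambda,
\end{multline*}
together with the cutoff $\chi(\lambda)$ localizing to $|\lambda|<2\lambda_1$. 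First I would observe that the spectral density $[R_V^+(\lambda^2)-R_V^-(\lambda^2)]$ is exactly the same object whose low-energy expansion (in each of the three resonance/eigenvalue cases) was established in Sections~\ref{sec:first}, \ref{sec:second}, \ref{sec:third}; the only change in passing from Schr\"odinger to Klein-Gordon/wave is that the oscillatory factor $e^{it\lambda^2}$ is replaced by $m(t,\lambda):=\cos(t\sqrt{\lambda^2+m^2})+\sin(t\sqrt{\lambda^2+m^2})/\sqrt{\lambda^2+m^2}$. Since $m^2>0$, on the support of $\chi$ the function $\sqrt{\lambda^2+m^2}$ is smooth, bounded away from zero, and the phase $\phi(\lambda)=\sqrt{\lambda^2+m^2}$ has $\phi'(\lambda)=\lambda/\sqrt{\lambda^2+m^2}$, which vanishes to first order at $\lambda=0$ exactly like the phase $\lambda^2$ of the Schr\"odinger problem. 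So the stationary-phase structure is identical, and the oscillatory-integral estimates used to extract the $\varphi(t)A_0$, $O(1/t)A_1$, $O((t\log t)^{-1})A_2$, $O(t^{-1}(\log t)^{-2})A_3$, and $O(t^{-1-})A_4$ contributions go through verbatim after this substitution.

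The key steps, in order, are: (1) substitute $m(t,\lambda)$ for $e^{it\lambda^2}$ in the Stone-type formula and split, as in the Schr\"odinger case, according to whether $\lambda$ lies in a tiny neighborhood of zero or in the bulk of $\mathrm{supp}\,\chi$; (2) on the tiny neighborhood, insert the singular expansion of the spectral density and perform the oscillatory integral in $\lambda$ against $m(t,\lambda)$, checking that each integration by parts and each Van der Corput estimate only used the facts that the phase is smooth with a simple critical point at $\lambda=0$ and that $|\partial_\lambda^k m(t,\lambda)|\lesssim \langle t\rangle^{k}\langle\lambda\rangle^{-k}$-type bounds hold (which follow from $\phi\in C^\infty$ near $0$ and $\phi'(0)=0$, $\phi''(0)=1/m\ne 0$); (3) on the bulk region, where the spectral density has its regular expansion and the phase $\phi$ is non-stationary, repeated integration by parts gives rapid decay, so these terms are absorbed into $A_4$; (4) assemble the pieces to recover exactly the expansion of Theorem~\ref{thm:main} with the same operators $A_0,\dots,A_4$ and the same mapping properties and time-independence statements. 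Because the two branches $\cos(t\sqrt{H+m^2})$ and $\sin(t\sqrt{H+m^2})/\sqrt{H+m^2}$ correspond to replacing $m(t,\lambda)$ by $\cos(t\sqrt{\lambda^2+m^2})$ alone or by $\sin(t\sqrt{\lambda^2+m^2})/\sqrt{\lambda^2+m^2}$ alone — each still smooth and bounded on $\mathrm{supp}\,\chi$ with the same critical-point structure — the identical argument handles both, which is why the theorem is stated for either operator.

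The main obstacle I anticipate is bookkeeping rather than conceptual: I must verify that every oscillatory estimate from Sections~\ref{sec:first}--\ref{sec:third} was proved in a way that is robust to replacing the pure Gaussian phase $\lambda^2$ by a general smooth phase $\phi(\lambda)$ with $\phi'(0)=0$, $\phi''(0)\ne0$. In places where the Schr\"odinger analysis used the exact self-similar scaling $e^{it\lambda^2}$ (for instance, explicit evaluations of $\int_0^\infty e^{it\lambda^2}\lambda^k\,d\lambda$ to identify the precise coefficient functions $\varphi(t)$, $1/t$, $1/(t\log t)$, etc.), I will need to check that the corresponding Klein-Gordon integrals $\int_0^\infty m(t,\lambda)\lambda^k\chi(\lambda)\,d\lambda$ have the same leading asymptotics — which they do, by stationary phase at $\lambda=0$, up to harmless constants depending on $m$ and a bounded smooth error; the logarithmic factors arise from the $\lambda$-integration near $0$ of the resonance expansion and are phase-independent. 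A secondary point is that the derivative losses on the initial data noted for the free Klein-Gordon/wave equations are a high-energy effect and so do not appear here, since $\chi$ restricts us to low energies; this is consistent with the statement, which only asserts the low-energy expansion of Theorem~\ref{thm:main} and carries no $W^{k,1}$ norms. Once these routine checks are in place, the proof is complete.
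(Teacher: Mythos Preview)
Your proposal is correct and follows essentially the same approach as the paper: the spectral-measure expansions from Sections~\ref{sec:first}--\ref{sec:third} are reused unchanged, and the work reduces to re-proving the handful of scalar oscillatory-integral lemmas (Lemmas~\ref{log decay}, \ref{log decay2}, \ref{lem:fauxIBP}) with $e^{it\lambda^2}$ replaced by $\cos(t\sqrt{\lambda^2+m^2})$ or $\sin(t\sqrt{\lambda^2+m^2})/\sqrt{\lambda^2+m^2}$, using Van der Corput with the phase $\pm t\sqrt{\lambda^2+m^2}$ (whose second derivative is bounded below by $t/m$). The paper also treats the Born-series terms \eqref{bs finite} separately (Proposition~\ref{KGbsprop}), obtaining $|t|^{-3/2}$ there, which you should note explicitly rather than folding into your ``bulk region'' step; otherwise your outline matches the paper's proof.
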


  Note that Theorem~\ref{thm:KG} is stated for $m^2>0$. When $m^2=0$,  \eqref{eq:KGsoln} corresponds to the solution of the wave equation. This formula suggests a similar statement for the solution of the wave equation. However, the behavior of  $ \frac{\sin(t \sqrt {\lambda^2+m^2})} {\sqrt{\lambda^2+m^2}} $ when $m=0$  for small $\lambda$ is singular.  In \cite{EGG}, 
  to ensure integrability as $\lambda \to 0$
  we had to use
    $$ 
    \bigg|\frac{\sin(t\lambda)}{\lambda}\bigg| \les |t|  \quad \textrm{whereas} \quad \bigg| \frac{\sin(t \sqrt {\lambda^2+m^2})} {\sqrt{\lambda^2+m^2}}\bigg| \les \frac{1}{m}\les 1\quad \textrm{for} \quad m>0. $$

 Considering this fact together with $|\cos(\lambda t)| \les 1$, the results of Theorem~\ref{thm:KG} apply 
 to the operator $\cos(t\sqrt{H})$, but 
 the bounds for the sine operator must all be multiplied 
 by $t$. 

As in our analysis for the Schr\"odinger evolution, we consider only the low energy
portion of the evolution.  High energy bounds, with a loss of $\f52+$ derivatives of the initial data, 
for the wave equation are proven in
\cite{CV} under the assumption that $V\in C^{\f12}(\R^4)$, and 
$|\partial_x^\alpha V(x)|\les \la x\ra^{-5/2-}$ for $|\alpha|\leq \frac{1}{2}$.

\section{Resolvent expansions around zero}\label{sec:exp}

Much of this discussion appears in \cite{EGG}, we include
and expand upon it for completeness.  
The notation
$$
f(\lambda)=\widetilde O(g(\lambda))
$$
denotes
$$
\frac{d^j}{d\lambda^j} f = O\big(\frac{d^j}{d\lambda^j} g\big),\,\,\,\,\,j=0,1,2,3,...
$$
Unless otherwise specified, the notation refers only to derivatives with respect to the spectral variable $\lambda$.
If the derivative bounds hold only for the first $k$ derivatives we  write $f=\widetilde O_k (g)$.  
If we write $f(\lambda)=\widetilde O_k (\lambda^j)$, it 
should be understood that differentiation is comparable
to division by $\lambda$.  That is, $|\frac{d^\ell}{d\lambda^\ell}f(\lambda)|\les \lambda^{j-\ell}$, even for $\ell>j$.
In this
paper we use that notation for operators as well as
scalar functions, the meaning should be clear from
context.

Most properties of the low-energy expansion for $R_V^\pm(\lambda^2)$ are
inherited in some way from the free resolvent
$R_0^\pm(\lambda^2) = (-\Delta-(\lambda^2\pm i0))^{-1}$.  In this section we gather facts
about $R_0^\pm(\lambda^2)$ and examine the  relationship between $R_V^\pm(\lambda^2)$
and $R_0^\pm(\lambda^2)$.

Recall that the free resolvent in four dimensions has the integral kernel
\begin{align}\label{resolv def}
	R_0^\pm(\lambda^2)(x,y)=\pm\frac{i}{4}\frac{\lambda}{2\pi |x-y|} H_1^\pm(\lambda|x-y|)
\end{align}
where $H_1^\pm$ are the Hankel functions of order one:
\begin{align}\label{H0}
H_1^{\pm}(z)=J_1(z)\pm iY_1(z).
\end{align}
From the series expansions for the Bessel functions, see \cite{AS}, as $z\to 0$ we have
\begin{align}
	J_1(z)&=\frac{1}{2}z-\frac{1}{16}z^3
	+\widetilde O_2(z^5),\label{J0 def}\\
	Y_1(z)&=-\frac{2}{\pi z}+\frac{2}{\pi}\log (z/2)J_1(z)+b_1z+b_2 z^3 +\widetilde O_2(z^5)
	\label{Y0 def}\\
         &= -\frac{2}{\pi z}+\frac{1}{\pi}z\log (z/2)
	+b_1z-\frac{1}{8\pi}z^3\log (z/2)+b_2 z^3
	+\widetilde O_2(z^5\log z).\label{Y0 def2}
\end{align}
Here $b_1,b_2\in \R$.
Further, for $|z|\gtrsim 1 $, we have the representation (see, {\em e.g.}, \cite{AS})
\begin{align}\label{JYasymp2}
	&H_1^\pm(z)= e^{\pm iz} \omega_\pm(z),\,\,\,\, |\omega_{\pm}^{(\ell)}(z)|\lesssim (1+|z|)^{-\frac{1}{2}-\ell},\,\,\,\ell=0,1,2,\ldots.
\end{align}
This implies, among the various expansions we develop, that  
(with $r=|x-y|$)
\begin{align}\label{R0repr}
	R_0^{\pm}(\lambda^2)(x,y)=r^{-2}\rho_-(\lambda r)+
	r^{-1}\lambda e^{\pm i\lambda r}\rho_+(\lambda r).
\end{align}
Here $\rho_-$ is supported on $[0,\frac{1}{2}]$, 
$\rho_+$ is supported on $[\frac{1}{4},\infty)$ 
satisfying the estimates 
$|\rho_-(z)|\les 1$  and
$\rho_+(z)=\widetilde O(z^{-\frac{1}{2}})$.

To obtain expansions for $R_V^\pm(\lambda^2)$ around zero energy  we utilize the symmetric resolvent identity.
Define $U(x)=1$ if $V(x)\geq 0$ and $U(x)=-1$ if $V(x)<0$, and let $v=|V|^{1/2}$, $w=Uv$ so that $V=Uv^2=wv$. 
Then the formula 
\be\label{res_exp}
R_V^\pm(\lambda^2)=  R_0^\pm(\lambda^2)-R_0^\pm(\lambda^2)vM^\pm(\lambda)^{-1}vR_0^\pm(\lambda^2),
\ee
is valid for $\Im (  \lambda) > 0$, where $M^\pm(\lambda)=U+vR_0^\pm(\lambda^2)v$.  

Note that the statements of Theorem~\ref{thm:main} control operators from $L^1(\R^4)$ to $L^\infty(\R^4)$,
while our analysis of $M^\pm(\lambda^2)$ and its inverse will be conducted in $L^2(\R^4)$.
The free resolvents are not locally $L^2(\R^n)$ when
$n>3$.  This requires us to iterate the standard resolvent
identities, as we show that iterated resolvents have better
local integrability. 
To use the 
symmetric resolvent identity, we need two resolvents on
either side of $M^{\pm}(\lambda)^{-1}$.
Accordingly, from the standard
resolvent identity we have:
\begin{align}
	R_V^\pm(\lambda^2)=&R_0^\pm(\lambda^2)\label{resolvid1}
	-R_0^\pm(\lambda^2)VR_0^\pm(\lambda^2) 
	+R_0^\pm(\lambda^2)V R_V^\pm(\lambda^2)
	VR_0^\pm(\lambda^2).
\end{align}
Combining this with \eqref{res_exp}, we have 
\begin{align}
	R_V^{\pm}(\lambda^2)=&R_0^{\pm}(\lambda^2)\label{bs finite}
	-R_0^{\pm}(\lambda^2)VR_0^{\pm}(\lambda^2)
	+R_0^{\pm}(\lambda^2)VR_0^\pm(\lambda^2)VR_0^{\pm}(\lambda^2)\\
	&-R_0^{\pm}(\lambda^2)VR_0^\pm(\lambda^2)vM^\pm(\lambda)^{-1}vR_0^\pm(\lambda^2)VR_0^{\pm}(\lambda^2).\label{bs tail}
\end{align}
Provided $V(x)$ decays sufficiently,  $[R_0^{\pm}(\lambda^2)VR_0^\pm(\lambda^2)v](x,\cdot)\in L^2(\R^4)$ uniformly in $x$, and that $M^\pm(\lambda)$ is invertible in $L^2(\R^4)$.  We recall the following lemma
from \cite{EGG},

\begin{lemma}\label{lem:locL2}

	If $|V(x)|\les \la x\ra^{-\beta-}$ for
	some $\beta>2$, then for any 
	$\sigma>\max(\frac{1}{2},3-\beta)$ we have
	$$
		\sup_{x\in\R^4}
		\| [ R_0^{\pm}(\lambda^2)V R_0^{\pm}(\lambda^2)]
		(x,y)\|_{L^{2,-\sigma}_y}\les \la \lambda
		\ra.
	$$
	Consequently $\lnorm R_0^\pm(\lambda^2)VR_0^\pm(\lambda^2)v\rnorm_{L^2\to L^\infty} \les \la \lambda \ra$.
\end{lemma}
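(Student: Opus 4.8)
The plan is to work directly from the kernel representation \eqref{R0repr}, writing $R_0^\pm(\lambda^2)(x,y) = |x-y|^{-2}\rho_-(\lambda|x-y|) + |x-y|^{-1}\lambda e^{\pm i\lambda|x-y|}\rho_+(\lambda|x-y|)$, and splitting the composed kernel into a low-frequency piece and a high-frequency piece. Since $|\rho_-|\les 1$ and $\rho_+(z)=\widetilde O(z^{-1/2})$, the first term is bounded pointwise by $|x-y|^{-2}$ and the second by $\lambda^{1/2}|x-y|^{-3/2}$; hence $|R_0^\pm(\lambda^2)(x,y)| \les |x-y|^{-2} + \la\lambda\ra^{1/2}|x-y|^{-3/2} \les \la\lambda\ra\big(|x-y|^{-2}+|x-y|^{-3/2}\big)$ for all $\lambda$ in the relevant range. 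The strategy is to reduce the $L^2_y$ bound on $[R_0^\pm V R_0^\pm](x,y)$ to the statement that convolution against the locally $L^{3/2-}$, globally decaying kernels $|z|^{-2}$ and $|z|^{-3/2}$ maps $L^{2,-\sigma}$-type spaces appropriately.

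First I would write
$$
[R_0^\pm(\lambda^2) V R_0^\pm(\lambda^2)](x,y) = \int_{\R^4} R_0^\pm(\lambda^2)(x,z)\,V(z)\,R_0^\pm(\lambda^2)(z,y)\,dz,
$$
insert the pointwise bound above, and use $|V(z)|\les\la z\ra^{-\beta-}$. The task becomes to bound, uniformly in $x$, the $L^{2,-\sigma}_y$ norm of
$$
\int_{\R^4} \frac{\la z\ra^{-\beta-}}{(|x-z|^{2}+|x-z|^{3/2})(|z-y|^{2}+|z-y|^{3/2})}\,dz.
$$
The singular parts $|x-z|^{-2}$ and $|z-y|^{-2}$ are the worst; each is locally $L^{3/2-}(\R^4)$ and the large-distance parts $|x-z|^{-3/2}$, $|z-y|^{-3/2}$ are not integrable at infinity, so the $\la z\ra^{-\beta-}$ weight (with $\beta$ sufficiently large) must absorb the decay loss. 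I would estimate the inner $z$-integral by a Schur-type argument: split into the regions $|z-x|\les 1$, $|z-y|\les 1$, and the complementary region where both kernels are smooth, controlling the first two by local $L^{3/2-}$ integrability of the singularity against the bounded factors, and the third by the decay of $V$. This yields a pointwise bound on the composed kernel of the schematic form $\la\lambda\ra^2 \la x\ra^{0-}\la y\ra^{0-}(\cdots)$, or more precisely a bound whose $y$-dependence decays like a negative power just barely faster than $\la y\ra^{-1/2}$, which is exactly what is needed for membership in $L^{2,-\sigma}_y$ with $\sigma>\max(\tfrac12, 3-\beta)$; the constraint $\sigma>3-\beta$ is the bookkeeping of how much decay is lost to the two non-integrable tails when $\beta$ is only moderately large, while $\sigma>\tfrac12$ is the borderline $L^2(\R^4)$ requirement for the $|y|^{-1/2}$-type tail. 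For the final sentence, once the kernel bound is in hand, $\|R_0^\pm(\lambda^2) V R_0^\pm(\lambda^2) v\|_{L^2\to L^\infty} \les \la\lambda\ra$ follows by Cauchy--Schwarz: for $f\in L^2$, $|[R_0^\pm V R_0^\pm v f](x)| \le \|[R_0^\pm V R_0^\pm](x,\cdot)\,v(\cdot)\|_{L^2_y}\|f\|_2 \le \|[R_0^\pm V R_0^\pm](x,\cdot)\|_{L^{2,-\sigma}_y}\|\la\cdot\ra^{\sigma}v\|_{L^\infty}\|f\|_2 \les\la\lambda\ra\|f\|_2$, using that $v=|V|^{1/2}$ decays faster than any $\la y\ra^{-\sigma}$ when $\beta$ is large.

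The main obstacle is the careful tracking of the two distinct scales in the free resolvent kernel — the local $|x-y|^{-2}$ singularity and the non-decaying-enough $|x-y|^{-3/2}$ oscillatory tail — through the convolution, in particular verifying that the worst combination of a singular factor at one end and a slowly-decaying factor at the other is still controlled after integrating against $V$, and identifying the sharp threshold $\sigma>\max(\tfrac12,3-\beta)$ rather than something lossier. A secondary point requiring care is uniformity in $x$: the bound must hold with a constant independent of $x\in\R^4$, so the Schur-type estimate has to be arranged so that the $x$-integral of the singularity is genuinely bounded (not merely locally integrable), which again is where the competing $|x-z|^{-2}$ and $|x-z|^{-3/2}$ behaviors must be balanced against each other and against the decay of $V$.
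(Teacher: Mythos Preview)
The paper does not actually prove this lemma; it is merely recalled from \cite{EGG}, so there is no proof in the present paper to compare against.  Your plan is the standard one and is essentially how the argument goes in \cite{EGG}: bound $|R_0^\pm(\lambda^2)(x,y)|$ pointwise via \eqref{R0repr}, then control the composition by the spatial-integral estimates of the type in Lemma~\ref{EG:Lem}.

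Two points of care.  First, do \emph{not} pass from the sharp bound $|R_0^\pm|\les |x-y|^{-2}+\la\lambda\ra^{1/2}|x-y|^{-3/2}$ to the weaker $\la\lambda\ra(|x-y|^{-2}+|x-y|^{-3/2})$: the latter would produce $\la\lambda\ra^{2}$ in the composed kernel rather than the claimed $\la\lambda\ra$.  Keep the half-powers and note that the worst cross term is $\la\lambda\ra^{1/2}\cdot\la\lambda\ra^{1/2}=\la\lambda\ra$.  Second, the term $|x-z|^{-2}|z-y|^{-2}$ is the borderline $k+\ell=n$ case excluded from Lemma~\ref{EG:Lem}; you should say explicitly how you handle it (it produces at worst a logarithmic local singularity and $|x-y|^{-2}$ decay at infinity, both strictly better than the $|x-z|^{-3/2}|z-y|^{-3/2}$ term, so it does not affect the threshold on~$\sigma$).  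The binding constraint $\sigma>\max(\tfrac12,3-\beta)$ indeed comes from the $r^{-3/2}\cdot r^{-3/2}$ piece via Lemma~\ref{EG:Lem} with $k=\ell=\tfrac32$, and the final Cauchy--Schwarz step requires $\sigma<\beta/2$, which together with $\sigma>3-\beta$ is exactly the hypothesis $\beta>2$.
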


To invert  $M^{\pm}(\lambda)$ in $L^2$ under various spectral assumptions on the zero energy we need  several different expansions for $M^{\pm}(\lambda)$. The  following operators arise naturally in these expansions 
(see \eqref{J0 def}, \eqref{Y0 def}):
\begin{align}
	G_0f(x)&=-\frac{1}{4\pi^2}\int_{\R^4} 
	\frac{f(y)}{|x-y|^2}\,dy=(-\Delta)^{-1} f(x) , \label{G0 def}\\
	G_1f(x)&=-\frac{1}{8\pi^2}\int_{\R^4} \log(|x-y|) f(y)\, dy,\label{G1 def}\\
	\label{Gj def}
	G_j f(x) &= \begin{cases} 
     c_j \int_{\R^4} |x-y|^j f(y)
	\, dy & \textrm{for } j=2,4,\dots \\
c_j \int_{\R^4}|x-y|^{j-1}
	\log(|x-y|)f(y) \, dy & \textrm{for } j=3,5,\dots \\
   \end{cases}	
\end{align}

Here $c_j$ are certain real-valued constants, the
exact values are unimportant for our analysis.  We will use $G_j(x,y)$ to denote the integral kernel of the operator $G_j$.  In 
addition, the following functions appear  naturally,
\begin{align}
\label{gi def}
g^+_j(\lambda)=\overline{g_j^-(\lambda)}&=\lambda^{2j}(a_j\log(\lambda)+z_j), \qquad j=1,2,3,\dots
    \end{align}
Here  $a_j\in \R\setminus\{0\}$ and $z_j\in \mathbb C
\setminus \R$.  In our expansions, we move any imaginary
parts to these functions of the spectral variable so that
the operators all have real-valued kernels.
To gain a more detailed low energy expansion 
than in \cite{EGG}, we go further into the expansions of
the resolvents, $M^\pm(\lambda)$ and $M^{\pm}(\lambda)^{-1}$ respectively.

 We also define the operators
 \begin{align}
\label{TandP} T:= M^\pm(0) = U+vG_0v,\,\,\,\,\,\,\,\,\,P:= \|V\|_1^{-1} v\la v, \cdot\ra.
\end{align} 
Recall the definition of the Hilbert-Schmidt
norm of an operator $K$ with kernel $K(x,y)$,
$$
	\| K\|_{HS}:=\bigg(\iint_{\R^{2n}}
	|K(x,y)|^2\, dx\, dy
	\bigg)^{\f12}
$$

\begin{lemma}\label{lem:M_exp}

	Assuming that $v(x)\les
	\la x\ra^{-\beta}$. If $\beta>2$, then we have 
	\begin{align}\label{Mexp0}
	 M^{\pm}(\lambda)=T+M_0^\pm(\lambda), \qquad \qquad
	 \sum_{j=0}^1\|\sup_{0<\lambda<\lambda_1} \lambda^{ -2+j+}  \partial_\lambda^j 
	 M_0^{\pm}(\lambda)\|_{HS}\les 1,
	\end{align}
	and
	\begin{multline}\label{Mexp1}
		M^{\pm}(\lambda) =T+\|V\|_1 g_1^{\pm}(\lambda) P
		+\lambda^2 vG_1v+M_1^{\pm}(\lambda),\\
		 \sum_{j=0}^2\|\sup_{0<\lambda<\lambda_1} \lambda^{ -2+j-}  \partial_\lambda^j 
		 	 M_1^{\pm}(\lambda)\|_{HS}\les 1. 
	\end{multline}
	If $\beta> 4 $, we have
	\begin{multline}\label{Mexp2}
		M^{\pm}(\lambda) =T+\|V\|_1 g_1^{\pm}(\lambda) P
		+\lambda^2 vG_1v+g_2^\pm(\lambda) vG_2v
		+ \lambda^4 vG_3v
		+M_2^{\pm}(\lambda),\\
		\sum_{j=0}^2\|\sup_{0<\lambda<\lambda_1} 
		\lambda^{ -4+j-}  \partial_\lambda^j 
		M_2^{\pm}(\lambda)\|_{HS}\les 1. 
	\end{multline}
	If $\beta> 6 $, we have
	\begin{multline}\label{Mexp3}
		M^{\pm}(\lambda) =T+\|V\|_1 g_1^{\pm}(\lambda) P
		+\lambda^2 vG_1v+g_2^\pm(\lambda) vG_2v
		+ \lambda^4 vG_3v\\+g_3^\pm(\lambda) vG_4v+
		\lambda^6 vG_5v
		+M_3^{\pm}(\lambda), \qquad\qquad
		\sum_{j=0}^2\|\sup_{0<\lambda<\lambda_1} \lambda^{ -6+j-}  \partial_\lambda^j 
		M_3^{\pm}(\lambda)\|_{HS}\les 1. 
	\end{multline}
\end{lemma}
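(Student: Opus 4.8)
The plan is to expand $M^\pm(\lambda)=U+vR_0^\pm(\lambda^2)v$ directly from the kernel formula \eqref{resolv def} by inserting the small-argument Bessel series \eqref{J0 def}, \eqref{Y0 def2}, collecting the resulting terms according to their joint dependence on $\lambda$ and $r=|x-y|$, and then bounding the Hilbert--Schmidt norm of the remainder using only the pointwise decay of $v$. Since $R_0^-(\lambda^2)=\overline{R_0^+(\lambda^2)}$ and $U,v$ are real, $M^-=\overline{M^+}$, so it suffices to treat $M^+$; the operators $T$, $vG_jv$, $P$ have real kernels and the relation $g_j^+=\overline{g_j^-}$ is built into \eqref{gi def}.

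By \eqref{resolv def}, $R_0^\pm(\lambda^2)(x,y)=\pm\frac{i\lambda}{8\pi r}H_1^\pm(\lambda r)$ with $H_1^\pm=J_1\pm iY_1$. On the range $\lambda r\le\lambda_1$, substituting \eqref{J0 def}, \eqref{Y0 def2} and using $\log(\lambda r/2)=\log\lambda+\log r-\log 2$ writes this kernel, through any prescribed order in $\lambda$, as a finite sum of terms of three types plus an error: (a) the $\lambda$-independent Coulomb term $\propto r^{-2}$; (b) $r$-independent terms $\lambda^{2j}(a_j\log\lambda+z_j)$, where the $\log\lambda$ comes from the $z\log(z/2)$ pieces of $Y_1$, the real part of $z_j$ from the $b_jz$ pieces, and the imaginary part of $z_j$ from the $J_1$ contribution together with the $\pm i$ in $H_1^\pm$ (forcing $z_j\notin\R$); (c) terms $\lambda^{2j}r^{2j-2}$ and $\lambda^{2j}r^{2j-2}\log r$, the latter from the $\log r$ pieces of $Y_1$. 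Sandwiching with $v(\cdot)v(\cdot)$: type (a) gives $vG_0v$, which with $U$ is $T$; type (b) with $j=1$ gives $\|V\|_1 g_1^\pm(\lambda)P$ by the definition \eqref{TandP} of $P$, and with $j=2,3$ the $\log r$-free parts of $g_2^\pm(\lambda)vG_2v$ and $g_3^\pm(\lambda)vG_4v$; type (c) gives $\lambda^2vG_1v$, $\lambda^4vG_3v$, $\lambda^6vG_5v$ (the $\log r$ parts, via \eqref{G1 def}, \eqref{Gj def}) and the $\log r$-free parts of $g_2^\pm vG_2v$, $g_3^\pm vG_4v$. On the complementary range $\lambda r\gtrsim\lambda_1$ I would instead use \eqref{R0repr}, \eqref{JYasymp2}, which bound $R_0^\pm(\lambda^2)(x,y)$ and its $\lambda$-derivatives by a power of $\lambda$ (gained via $r\gtrsim\lambda_1\lambda^{-1}$) times a power of $r$.

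It remains to estimate $M_k^\pm(\lambda):=M^\pm(\lambda)-(\text{displayed explicit terms})$ in $\|\cdot\|_{HS}$ together with the stated number of $\lambda$-derivatives. On $\lambda r\le\lambda_1$ the truncation error in the Bessel series at the relevant order is $\widetilde O(\lambda^{2k+2}r^{2k}(1+|\log\lambda r|))$; the key elementary inequality is that $\lambda^ar^b=\lambda^{a-c}(\lambda r)^cr^{b-c}\lesssim\lambda_1^c\,\lambda^{a-c}r^{b-c}$ for any $0\le c\le b$ on this region, which trades powers of $r$ that $v$ cannot absorb for powers of $\lambda$. Choosing $c$ so the residual $r$-power sits just below the threshold allowed by $|v(x)|\lesssim\langle x\rangle^{-\beta}$ produces exactly the exponents in \eqref{Mexp0}--\eqref{Mexp3}: the unsubtracted $g_1^\pm$-term in \eqref{Mexp0} is already $\widetilde O_1(\lambda^{2-})$; in \eqref{Mexp1}, where $\beta>2$ leaves room only for an $r^{0+}$ factor, the $\lambda^4r^2$-order error becomes $\widetilde O(\lambda^{2+}r^{0+})$, whence $M_1^\pm=\widetilde O_2(\lambda^{2+})$; in \eqref{Mexp2}, $\beta>4$ leaves room for an $r^2$ factor, so the $\lambda^6r^4$-order error (modulo logs) becomes $\widetilde O(\lambda^{4-}r^2)$, whence $M_2^\pm=\widetilde O_2(\lambda^{4-})$; \eqref{Mexp3} is identical with $\beta>6$. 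On $\lambda r\gtrsim\lambda_1$ every remaining term is likewise $\widetilde O$ of a high power of $\lambda$ times a $v$-integrable power of $r$. In all cases the HS bound reduces to $\iint\langle x\rangle^{-2\beta}\langle y\rangle^{-2\beta}|x-y|^{2\gamma}(1+|\log|x-y||)^2\,dx\,dy<\infty$, valid for $2\beta>2\gamma+4$; this is precisely where $\beta>2,4,6$ and the $\pm$ in the exponents enter. The $\lambda$-derivatives are handled identically after differentiating the series termwise, the $\widetilde O$ convention being respected since every surviving term is a monomial in $\lambda$ times $\log\lambda$.

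The main obstacle is the bookkeeping at the transition $\lambda r\sim\lambda_1$ and the matching of the truncation order to $\beta$: in the coarser expansions \eqref{Mexp0}, \eqref{Mexp1} one cannot afford to subtract the full next-order term without stronger decay, so the remainders are only $\widetilde O(\lambda^{2\mp})$ rather than the naive $\widetilde O(\lambda^{4-})$, and one must verify that, after the $r$-for-$\lambda$ trade and after absorbing every logarithmic factor, enough room is left to close the Hilbert--Schmidt estimates up to the number of derivatives needed in the later oscillatory-integral arguments ($\partial_\lambda^1$ for \eqref{Mexp0}, $\partial_\lambda^2$ for \eqref{Mexp1}--\eqref{Mexp3}). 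Once this is arranged, everything else reduces to substitution into \eqref{J0 def}--\eqref{Y0 def2} and the asymptotics \eqref{JYasymp2}, together with the elementary integral above.
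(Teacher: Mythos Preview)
Your proposal is correct and follows essentially the same route as the paper: split according to the size of $\lambda|x-y|$, on $\lambda|x-y|\ll 1$ insert the Bessel series \eqref{J0 def}--\eqref{Y0 def2} and read off the operators $T,P,vG_jv$ together with a remainder of the form $\widetilde O_2(\lambda^{2k+2}r^{2k}\log(\lambda r))$, on $\lambda|x-y|\gtrsim 1$ use the large-argument asymptotics \eqref{JYasymp2} (the paper packages this as \eqref{R0high} with a free parameter $\alpha$, which is exactly your ``$r$-for-$\lambda$ trade'' $\lambda^ar^b\lesssim\lambda^{a-c}r^{b-c}$ on the support $\lambda r\gtrsim 1$), and then close the Hilbert--Schmidt bound via $\iint\langle x\rangle^{-2\beta}\langle y\rangle^{-2\beta}|x-y|^{2\gamma}\,dx\,dy<\infty$ for $\beta>\gamma+2$. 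One small slip: in \eqref{Mexp2} your trade should keep $r^{2+}$ (not $r^2$), using that $\beta>4$ strictly, which yields $M_2^\pm=\widetilde O_2(\lambda^{4+})$ as the lemma requires rather than the $\lambda^{4-}$ you wrote; the same remark applies to \eqref{Mexp3}.
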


\begin{proof}

Using the notation introduced in \eqref{G0 def}--\eqref{gi def} in \eqref{resolv def}, \eqref{J0 def}, and \eqref{Y0 def}, we obtain (for $ \lambda|x-y|\ll 1$)
\begin{align}\label{resolv expansion1}
	R_0^{\pm}(\lambda^2)(x,y) &=G_0(x,y)+ 
	\widetilde O_2(\lambda^{2}(1+\log (\lambda|x-y|)))\\ \label{resolv expansion2}
	R_0^{\pm}(\lambda^2)(x,y)  &=G_0(x,y)+g_1^{\pm}(\lambda)
	 +\lambda^2 G_1(x,y)+  
	\widetilde O_1(\lambda^{4}|x-y|^2 \log(\lambda|x-y|)).\\ \label{resolv expansion3}
	R_0^{\pm}(\lambda^2)(x,y)  &=G_0(x,y)+g_1^{\pm}(\lambda)
	 +\lambda^2 G_1(x,y)+g_2^\pm(\lambda)G_2(x,y)+\lambda^4G_3(x,y)\\
&\qquad \qquad \qquad	 + 
	\widetilde O_2(\lambda^{6}|x-y|^4 \log(\lambda|x-y|)). \nn\\
 \label{resolv expansion4}
	R_0^{\pm}(\lambda^2)(x,y)  &=G_0(x,y)+g_1^{\pm}(\lambda)
	 +\lambda^2 G_1(x,y)+g_2^\pm(\lambda)G_2(x,y)+\lambda^4G_3(x,y)\\
&\qquad \qquad + g_3^{\pm}(\lambda)G_4(x,y)
	+\lambda^6 G_5(x,y)+
	\widetilde O_2(\lambda^{8}|x-y|^6 \log(\lambda|x-y|)). \nn
\end{align} 
The $\widetilde O_2$ notation refers to derivatives with
respect to $\lambda$ in all cases.

In light of these expansions and using the notation in \eqref{TandP}, we define $M_j^\pm(\lambda)$ by the identities
\begin{align}\label{M0 exp}
M^{\pm}(\lambda) & =U+vR_0^{\pm}(\lambda^2)v =T+  M_0^{\pm}(\lambda).\\\label{M1 exp}
	M_0^{\pm}(\lambda) &= \|V\|_1g_1^{\pm}(\lambda)P
	+\lambda^2 vG_1v  + M_1^{\pm}(\lambda).\\ \label{M2 exp}
M_1^{\pm}(\lambda) &=g_2^\pm(\lambda)vG_2v+\lambda^4vG_3v + M_2^{\pm}(\lambda).\\ \label{M3 exp}
	M_2^{\pm}(\lambda) &=g_3^\pm (\lambda) vG_4v+\lambda^6 vG_5v
	+ M_3^{\pm}(\lambda).
\end{align} 
When $\lambda |x-y|\ll 1$, the bounds follow  from \eqref{resolv expansion1}, \eqref{resolv expansion2}, \eqref{resolv expansion3},
	and \eqref{resolv expansion4}.  	
	On the other hand, when $\lambda|x-y|\gtrsim 1$, we use \eqref{resolv def} and
	\eqref{JYasymp2} to see (for any $\alpha\geq 0$ and $k=0,1,2$)
	\begin{align}\label{R0high}
		|\partial_\lambda^k R_0(\lambda^2)(x,y)|
		=\bigg|\partial_\lambda^k \bigg[\frac{\lambda e^{i\lambda|x-y|}\omega(\lambda|x-y|)}{|x-y|}  \bigg]
		\bigg| 	\les 
		(\lambda |x-y|)^{\frac{1}{2}+\alpha}|x-y|^{k-2}.
	\end{align}
	Using \eqref{resolv expansion1}, \eqref{M0 exp}, and \eqref{R0high}, and choosing $\alpha=\frac32-k$, we see that
	\begin{align*}
	M_0(\lambda)(x,y)&=\left\{ \begin{array}{lc} 
	v(x)v(y) \log|x-y|\widetilde O_1(\lambda^{2-}), & \lambda|x-y|\ll 1\\
	v(x)v(y) [G_0(x,y)+  \widetilde O_1(\lambda^{2})], &\lambda |x-y|\gtrsim 1
	\end{array}\right.\\
	& = v(x)v(y)(1+\log|x-y|)\widetilde O_1(\lambda^{2-} ).
	\end{align*}
This yields the bounds in \eqref{Mexp0} as $v(x)\les \la x\ra^{-2-}$.

	The other assertions of the lemma follow from 
	similar arguments. Taking $\alpha=\frac32-k+$ in \eqref{R0high} and using
	$$ \widetilde O_2(\lambda^{4}|x-y|^2 \log(\lambda|x-y|)) = \widetilde O_2(\lambda^{2 } (\lambda |x-y|)^{0+}  ),\,\,\,\,\,\,\text{ for } \lambda|x-y|\ll 1
	$$
	we obtain \eqref{Mexp1}, whereas taking $\alpha=\frac72-k-$ in \eqref{R0high} and using
	$$ \widetilde O_2(\lambda^{6}|x-y|^4 \log(\lambda|x-y|)) = \widetilde O_2(\lambda^{2 } (\lambda |x-y|)^{2+}  ),\,\,\,\,\,\,\text{ for } \lambda|x-y|\ll 1
	$$
	we obtain \eqref{Mexp2}.   Finally, an operator with integral kernel $v(x)|x-y|^{\gamma}v(y)$ or $v(x)|x-y|^{\gamma}\log |x-y|v(y)$, $\gamma > 0$,  is Hilbert-Schmidt provided $\beta >2+\gamma$.

\end{proof}

The following corollary
is useful.

\begin{corollary}\label{iteratedBSexp}

	We have the expansion
	\begin{align}
		R_0^\pm(\lambda^2)V R_0^\pm(\lambda^2)(x,y)&=
		K_0
		+\widetilde E^\pm_0(\lambda)(x,y),\nn
	\end{align}
	here the operators $K_j$ have real-valued kernels.
	Furthermore, the error term $\widetilde{E}^\pm_0(\lambda)$ satisfies
	$$
		\widetilde E^\pm_0(\lambda)(x,y)=
		(1+\log^-|x-\cdot|+\log^-|\cdot-y|)
		\widetilde O_{1}(\lambda^{2-}).
	$$
	Furthermore, if one wishes to have $2$ derivatives, the extended expansion
	$$
		\widetilde E^\pm_0(\lambda)(x,y)
		=g_1^\pm(\lambda) K_{1}
				+\lambda^{n-2}K_{2}
		+\widetilde E^\pm_1(\lambda)(x,y),
	$$
	satisfies the bound
	$$
		\widetilde E^\pm_1(\lambda)(x,y)=
		\la x\ra^{\f12} \la y \ra^{\f12}
		\widetilde O_{2}(\lambda^{\f52}).
	$$

\end{corollary}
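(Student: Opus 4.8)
\emph{Approach.} The plan is to substitute the pointwise expansions for $R_0^\pm(\lambda^2)(x,y)$ established above into the kernel
$$
  [R_0^\pm(\lambda^2)VR_0^\pm(\lambda^2)](x,y)=\int_{\R^4}R_0^\pm(\lambda^2)(x,z)\,V(z)\,R_0^\pm(\lambda^2)(z,y)\,dz.
$$
Using the representation \eqref{R0repr} I split each free resolvent as $R_0^\pm=R_{\rm near}^\pm+R_{\rm far}^\pm$, where $R_{\rm near}^\pm(x,z)=|x-z|^{-2}\rho_-(\lambda|x-z|)$ is supported on $\lambda|x-z|\le\frac12$ and $R_{\rm far}^\pm(x,z)=|x-z|^{-1}\lambda e^{\pm i\lambda|x-z|}\rho_+(\lambda|x-z|)$ is supported on $\lambda|x-z|\ge\frac14$. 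The product then breaks into four bilinear pieces — near--near, near--far, far--near, far--far — which I estimate separately.

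\emph{Near--near piece: extraction of the explicit terms.} On this piece both $\lambda|x-z|$ and $\lambda|z-y|$ are $\les 1$, so I insert the Taylor expansions \eqref{resolv expansion1} (for the $\widetilde E_0^\pm$ bound) or \eqref{resolv expansion3} (for the refined $\widetilde E_1^\pm$ bound). The product of the two leading $G_0$ terms yields, after extending the $z$-integration to all of $\R^4$ (the tail is harmless since $|x-z|^{-2}\les\lambda^2$ on $\lambda|x-z|\gtrsim1$), the time-independent operator $K_0=G_0VG_0$, whose kernel is real. The cross terms pairing one $g_1^\pm(\lambda)$ factor with one $G_0$ factor produce $g_1^\pm(\lambda)K_1$ with $K_1(x,y)=(G_0V\indicator)(x)+(\indicator VG_0)(y)$, and those pairing one $\lambda^2G_1$ with one $G_0$ produce $\lambda^{n-2}K_2$ ($n=4$) with $K_2=G_0VG_1+G_1VG_0$; both kernels are real since $G_j$ and $V$ are real-valued. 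Every remaining near--near contribution — the $(g_1^\pm)^2$, $g_1^\pm\lambda^2G_1$, $\lambda^4G_1VG_1$, $g_2^\pm(\lambda)G_2$, $\lambda^4G_3$ cross terms and the Taylor remainder — is absorbed into the error: on the near region $|x-z|,|z-y|\les\lambda^{-1}$, so the polynomial growth of the kernels $G_2,G_3$ is traded for negative powers of $\lambda$, and $\lambda^{2j}\log\lambda\les\lambda^{2j-}$; after integrating against the decaying $V(z)$ these terms carry only the mild weight $\la x\ra^{1/2}\la y\ra^{1/2}$ and leave integrable logarithmic singularities at $z=x$ and $z=y$, which is the source of the factor $1+\log^-|x-\cdot|+\log^-|\cdot-y|$ in the cruder expansion. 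Subtracting $K_0$ alone one is left with a \emph{once}-differentiable $\widetilde O_1(\lambda^{2-})$ remainder, since the surviving $g_1^\pm(\lambda)\sim\lambda^2\log\lambda$ has a second $\lambda$-derivative that is unbounded as $\lambda\to0$; after also removing $g_1^\pm(\lambda)K_1$ and $\lambda^{n-2}K_2$ (the latter's second derivative is bounded but does not decay) the remainder becomes twice-differentiable with the gain claimed for $\widetilde E_1^\pm$.

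\emph{Mixed and far--far pieces.} On any piece carrying a far factor the corresponding spatial variable satisfies $|x-z|\gtrsim\lambda^{-1}$, and $|\rho_+(w)|\les|w|^{-1/2}$ gives $|R_{\rm far}^\pm(x,z)|\les\lambda^{1/2}|x-z|^{-3/2}$ — equivalently \eqref{R0high} with the free parameter $\alpha$ chosen according to the number of $\lambda$-derivatives. Trading excess spatial decay for powers of $\lambda$ via $|x-z|^{-2}\les\lambda^2$ on the support, the far factor is $\les\lambda^{5/2}|x-z|^{1/2}\les\lambda^{5/2}\la x\ra^{1/2}\la z\ra^{1/2}$; composing with $V(z)$ and the remaining factor ($\les|z-y|^{-2}$) and integrating yields a contribution $\les\lambda^{5/2}\la x\ra^{1/2}\la y\ra^{1/2}$, the $\la z\ra^{1/2}$ being absorbed by $V$ once $\beta$ is large. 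Each $\lambda$-derivative of a far factor costs a power of either $|x-z|$ (from $e^{\pm i\lambda|x-z|}$) or $\lambda^{-1}$ (from $\rho_\pm$ and the explicit $\lambda$); the former is the worse, but since $|x-z|^{-2}\les\lambda^2$ on the support, $k$ derivatives cost precisely $\lambda^{-k}$, so the bound degrades exactly to $\lambda^{5/2-k}\la x\ra^{1/2}\la y\ra^{1/2}$, whence the far contributions are genuinely $\widetilde O_2(\lambda^{5/2})$ with the stated weight. Collecting all far contributions, together with the surviving near--near remainder, into $\widetilde E_1^\pm$ (respectively $\widetilde E_0^\pm$ for the cruder statement) finishes the proof.

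\emph{Main obstacle.} The technical heart is the two-scale error accounting: one must verify that \emph{exactly} the three terms $K_0$, $g_1^\pm(\lambda)K_1$, $\lambda^{n-2}K_2$ need be separated out, and that every one of the numerous cross terms and Taylor remainders — in particular those built from the polynomially growing kernels $G_1,G_2,G_3$, which are bounded only when restricted to the near region or composed against $v$ — really fits into the error $\la x\ra^{1/2}\la y\ra^{1/2}\widetilde O_2(\lambda^{5/2})$, and that the cost of $\lambda$-differentiation — above all differentiating the oscillatory exponentials $e^{\pm i\lambda|x-z|}$ on the far region, where $|x-z|$ is unbounded — is compensated exactly by the negative powers of $\lambda$ available on the support of the far cutoff. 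This is precisely where the hypothesis that $\beta$ be sufficiently large enters: it absorbs the growing factors $\la z\ra^k$ produced both by the spatial-weight trades on the far region and by differentiating $G_1,G_2,G_3$. Working throughout with the smooth cutoffs $\rho_\pm$ of \eqref{R0repr}, rather than sharp cutoffs, avoids spurious distributional contributions upon differentiating in $\lambda$.
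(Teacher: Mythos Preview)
Your proposal is correct and follows essentially the same approach as the paper. The paper's proof is extremely terse---it simply says the result follows from multiplying out the resolvent expansions established in Lemma~\ref{lem:M_exp}, and identifies $K_0=G_0VG_0$, $K_1=1VG_0+G_0V1$, $K_2=G_0VG_1+G_1VG_0$---while you have made explicit the near/far decomposition (via \eqref{R0repr} and \eqref{R0high}) that is already implicit in the proof of Lemma~\ref{lem:M_exp}, and carried out the term-by-term accounting in detail.
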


\begin{proof}

	This follows from the expansions for 
	$R_0^\pm(\lambda^2)$ in Lemma~\ref{lem:M_exp}.  
	For the iterated resolvents, the desired bounds come
	from simply multiplying out the terms.  In particular,
	$$
		K_0= G_0 V G_0, \qquad
		K_1=1 VG_0+G_0V1
	$$
	where $1$ is the operator with integral kernel a
	scalar multiple of $1(x,y)=1$,
	and
	$$
		K_2=G_0 V G_1+ G_1VG_0.
	$$
	These are all real-valued, 
	operators.

\end{proof}

\begin{rmk} \label{rmk:33}
The spatially weighted bound $|\partial_\lambda^{2}\widetilde{E}_1^\pm(\lambda)(x,y)|
\les \la x \ra^{\f 12}\lambda^{\frac{1}{2}}$ is only needed both derivatives act on the
leading resolvent, $R_0^\pm(\lambda^2)(x,z_1)$, in the product.   Similarly, the upper bound $\la y \ra^{\f 12}\lambda^{\frac{1}{2}}$ is
only needed if all derivatives act on the lagging resolvent, $R_0^\pm(\lambda^2)(z_1,y)$, in the product.  All other expressions that
arise would be consistent with $\widetilde{E}_1^\pm(\lambda)$ belonging to the class 
$\widetilde{O}_{2}(\lambda^{2-})$.  A very similar bound
holds for $R_0^\pm(\lambda^2)$ rather than iterated
resolvents.
\end{rmk}

One can see
that the invertibility of $M^{\pm}(\lambda)$ as an operator on $L^2$ for small
$\lambda$ depends upon the
invertibility of the operator $T$  on $L^2$, see  \eqref{TandP}.   
We now recall the definition of resonances at zero energy, following \cite{JN,EGG}.  This definition and subsequent
discussion also appear in \cite{EGG}.

\begin{defin}\label{resondef}\begin{enumerate}
\item We say zero is a regular point of the spectrum
of $H = -\Delta+ V$ provided $T=U+vG_0v$ is invertible on $ L^2(\mathbb R^4)$.

\item Assume that zero is not a regular point of the spectrum. Let $S_1$ be the Riesz projection
onto the kernel of $ T $ as an operator on $ L^2(\mathbb R^4)$.
Then $ T +S_1$ is invertible on $ L^2(\mathbb R^4)$.  Accordingly, we define $D_0=( T +S_1)^{-1}$ as an operator
on $ L^2(\R^4)$.
We say there is a resonance of the first kind at zero if the operator $T_1:= S_1 P S_1$ is invertible on
$S_1L^2(\mathbb R^4)$.

\item  Assume that $T_1$ is not invertible on
$S_1L^2(\mathbb R^4)$. Let $S_2$ be the Riesz projection onto the kernel of $T_1$ as an operator on $S_1L^2(\R^4)$.   Then $T_1+S_2$ is invertible on
$S_1 L^2(\R^4)$.
We say there is a resonance of the second kind at zero if $S_2=S_1$.
If $S_1\neq S_2$,  we say there is a resonance of the third kind. 
\end{enumerate}
\end{defin}

\noindent
{\bf Remarks.  i)} To relate the projections $S_j$ to the type of obstruction, we use the characterization proven in
\cite{EGG}.
In particular, $S_1-S_2\neq 0$ corresponds to the existence of a resonance
at zero energy, and $S_2\neq 0$ corresponds to the existence of an
eigenvalue at zero energy.  A resonance of
the first kind indicates that there is a resonance at zero
only, for a resonance of the second kind there is
an eigenvalue at zero only, and a resonance of the third
kind means there is both a resonance and an 
eigenvalue at zero energy.  For technical reasons, we 
need to employ different tools to invert 
$M^{\pm}(\lambda)$ for the different types of resonances.
It is well-known that different types of resonances at
zero energy lead to different expansions for 
$M^{\pm}(\lambda)^{-1}$ in other dimensions, see
\cite{ES2,EG2,EGG}.
Accordingly, we will develop
different expansions for $M^{\pm}(\lambda)^{-1}$ in
the following sections.\\
{ \bf ii)} Noting that $ T $ is self-adjoint, $S_1$ is the orthogonal projection onto the kernel of $ T $, and we have
(with $D_0=( T +S_1)^{-1}$) $$S_1D_0=D_0S_1=S_1.$$
This statement also valid for $S_2$ and $(T_1+S_2)^{-1}$. \\
{\bf iii)} $S_1$ and $S_2$ are finite-rank projections in all cases.  This follows by the observation that $T$ is a compact perturbation of the invertible operator $U$, and
invoking the Fredholm alternative. 
 See Section~\ref{sec:spec} below for a
full characterization of the spectral subspaces of $L^2$ associated
to $H=-\Delta+V$.

\begin{defin}
	We say an operator $K:L^2(\R^4)\to L^2(\R^4)$ with kernel
	$K(\cdot,\cdot)$ is absolutely bounded if the operator with kernel
	$|K(\cdot,\cdot)|$ is bounded from $L^2(\R^4)$ to $L^2(\R^4)$.
\end{defin}
Note that  Hilbert-Schmidt and
finite rank operators are absolutely bounded.  In 
\cite{EGG}, it was proven that

\begin{lemma}\label{d0bounded}
 The operator $ D_0 $ is absolutely bounded on $L^2$.
\end{lemma}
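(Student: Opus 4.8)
The plan is to exploit the algebraic structure $D_0 = (T+S_1)^{-1}$ together with the fact that $T = U + vG_0v$ is a sum of the \emph{absolutely bounded} (in fact unitary) operator $U$ and the Hilbert–Schmidt operator $vG_0v$ (note $G_0(x,y) = -\frac{1}{4\pi^2}|x-y|^{-2}$, and $v(x)\les \la x\ra^{-\beta}$ with $\beta$ large makes $v(x)|x-y|^{-2}v(y)$ square-integrable on $\R^8$; $S_1$ is finite rank hence Hilbert–Schmidt as well). The key identity is the Neumann-type resolvent expansion: since $U$ is invertible with $U^{-1}=U$, write $T+S_1 = U(I + U(vG_0v + S_1))$, so that
$$
  D_0 = (T+S_1)^{-1} = \bigl(I + U(vG_0v+S_1)\bigr)^{-1} U .
$$
Thus it suffices to show that $\bigl(I + U(vG_0v+S_1)\bigr)^{-1}$ is absolutely bounded, since $U$ is absolutely bounded and the product of absolutely bounded operators is absolutely bounded.

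First I would set $K := U(vG_0v+S_1)$, a Hilbert–Schmidt operator on $L^2(\R^4)$, and note that $I+K$ is invertible (as $T+S_1$ is invertible and $U$ is a bijection). By the Fredholm alternative / analytic Fredholm theory, $(I+K)^{-1} = I - K + K(I+K)^{-1}K$; equivalently $(I+K)^{-1} = I + Q$ where $Q = -(I+K)^{-1}K$. The point is that $Q$ is again Hilbert–Schmidt: writing $Q = -(I+K)^{-1}K$, the operator $(I+K)^{-1}$ is bounded on $L^2$ and $K$ is Hilbert–Schmidt, and the Hilbert–Schmidt operators form a two-sided ideal in the bounded operators, so $Q\in HS$. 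Then $(I+K)^{-1} = I + Q$ with $I$ absolutely bounded and $Q$ Hilbert–Schmidt, hence absolutely bounded; so $D_0 = (I+Q)U$ is absolutely bounded. The one genuinely delicate point — and the step I expect to be the main obstacle — is verifying that $vG_0v$ is Hilbert–Schmidt, i.e. that $\iint v(x)^2|x-y|^{-4}v(y)^2\,dx\,dy<\infty$; the inner integral $\int |x-y|^{-4}v(y)^2\,dy$ has a (non-integrable in $4$D) local singularity at $y=x$ that must be controlled. This forces one to use a decay assumption on $V$ strictly stronger than $\beta>2$, or alternatively to observe that $G_0 v$ maps into a space on which $v$ acts with enough decay; in the companion paper \cite{EGG} this is handled by noting $v\in L^{2+}\cap L^{2-}$ type bounds and using the $L^2$-boundedness of the Riesz-potential-type kernel $|x-y|^{-4+\eps}$ against the compensating weight, so that $vG_0v$ is a norm-limit of Hilbert–Schmidt operators, or directly absolutely bounded via fractional integration (Hardy–Littlewood–Sobolev) combined with Hölder in the weights.

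Alternatively, and perhaps more cleanly, one avoids the Hilbert–Schmidt claim for $vG_0v$ altogether: it is enough to know that $vG_0v$ is \emph{absolutely bounded} on $L^2(\R^4)$, which follows because $|G_0(x,y)| = \frac{1}{4\pi^2}|x-y|^{-2}$ and the operator with kernel $v(x)|x-y|^{-2}v(y)$ is bounded on $L^2(\R^4)$ by the Schur test (the row and column sums $\int v(x)|x-y|^{-2}v(y)\,dy$ are bounded uniformly in $x$ once $\beta>1$, say, splitting into $|x-y|<1$ and $|x-y|\ge 1$). Then $T = U + vG_0v$ and $T+S_1$ are absolutely bounded, but absolute boundedness is \emph{not} preserved under inversion in general, so one still needs the Neumann argument above applied to $K = U(vG_0v + S_1)$: here one uses that $K$ has an absolutely bounded kernel and that, because $(I+K)^{-1}$ exists and $K$ is a compact (absolutely bounded) perturbation of the identity, the identity $(I+K)^{-1} = I - K(I+K)^{-1}$ expresses $(I+K)^{-1}$ as $I$ plus an operator whose kernel is dominated in absolute value by $|K|$ composed with a bounded operator — making it absolutely bounded. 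Carrying out this last domination carefully (tracking that $|(I+K)^{-1}K|(x,y) \le [\,|(I+K)^{-1}|\,|K|\,](x,y)$ is a bounded kernel) is the technical heart, and it is exactly the argument given for the analogous operators in \cite{EGG}, so I would invoke or mirror that.
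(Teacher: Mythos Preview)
The paper does not give a proof here; it simply invokes \cite{EGG}. Your strategy---factor $T+S_1 = U(I+K)$ with $K = U(vG_0v+S_1)$ and analyze $(I+K)^{-1}$ via a resolvent identity---is the standard one and is essentially what that reference does.

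You correctly pinpoint the obstruction: $vG_0v$ is \emph{not} Hilbert--Schmidt in $\R^4$ because the local singularity $|x-y|^{-4}$ is non-integrable, so your first argument fails as stated. But your fallback is circular: from $(I+K)^{-1} = I - K(I+K)^{-1}$ you want to dominate the kernel of $K(I+K)^{-1}$ by $|K|$ composed with ``a bounded operator,'' yet the needed pointwise domination $\int |K(x,z)|\,|(I+K)^{-1}(z,y)|\,dz$ already presupposes that $|(I+K)^{-1}|$ is $L^2$-bounded, which is the conclusion you are after. The fix is to iterate once more, writing $(I+K)^{-1} = I - K + K^2(I+K)^{-1}$, and to observe that $K^2$ \emph{is} Hilbert--Schmidt. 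Indeed, the kernel of $(vG_0v)^2$ is dominated by $v(x)v(y)\int |V(z)|\,|x-z|^{-2}|z-y|^{-2}\,dz \les v(x)v(y)(1+\log^-|x-y|)$, which lies in $L^2(\R^8)$ once $\beta>2$; the cross-terms involving $S_1$ are trivially Hilbert--Schmidt since $S_1$ has finite rank. Then $K^2(I+K)^{-1}$ is Hilbert--Schmidt by the ideal property, $K$ is absolutely bounded by the Schur test you indicated, and $D_0=(I+K)^{-1}U$ is a sum of absolutely bounded operators.
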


To invert $M^\pm(\lambda)=U+vR_0^\pm(\lambda^2)v$  for small $\lambda$, we use Lemma 2.1  in \cite{JN}.
\begin{lemma}\label{JNlemma}
Let $A$ be a closed operator on a Hilbert space $\mathcal{H}$ and $S$ a projection. Suppose $A+S$ has a bounded
inverse. Then $A$ has a bounded inverse if and only if
$$
B:=S-S(A+S)^{-1}S
$$
has a bounded inverse in $S\mathcal{H}$, and in this case
$$
A^{-1}=(A+S)^{-1}+(A+S)^{-1}SB^{-1}S(A+S)^{-1}.
$$
\end{lemma}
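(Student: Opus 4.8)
The plan is to reduce the invertibility of $A$ to that of $B$ through a Neumann-type factorization followed by a passage to the subspace $S\mathcal H$. Since $S$ is bounded, $A+S$ is closed with $D(A+S)=D(A)$, and the hypothesis furnishes an everywhere-defined bounded inverse $C:=(A+S)^{-1}$ whose range is exactly $D(A)$. On $D(A)$ one has the identity $A=(A+S)-S$, which gives $CA=I-CS$ on $D(A)$ and $AC=I-SC$ on $\mathcal H$. Consequently, whenever $I-CS$ (equivalently $I-SC$) is boundedly invertible, $(I-CS)^{-1}C$ is a left inverse and $C(I-SC)^{-1}$ a right inverse of $A$, so $A$ then has a bounded two-sided inverse given by either expression. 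The simultaneous invertibility of $I-CS$ and $I-SC$, together with the relation $(I-CS)^{-1}=I+C(I-SC)^{-1}S$, are standard facts obtained by multiplying out the candidate inverses, so the whole question reduces to the invertibility of $I-SC$.

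To analyze $I-SC$ I would pass to the direct sum splitting $\mathcal H=\ker S\oplus\operatorname{ran} S$ induced by the projection $S$. A short computation shows that $I-SC$ is block lower-triangular with respect to this splitting: for $v\in\ker S$ the image $(I-SC)v=v-SCv$ has $\ker S$-component $v$ and $\operatorname{ran} S$-component $-SCv$, while for $v\in\operatorname{ran} S$ one has $(I-SC)v=v-SCv=(S-SCS)v=Bv\in\operatorname{ran} S$. Thus in block form $I-SC=\left(\begin{smallmatrix}I&0\\ \ast&B\end{smallmatrix}\right)$, which is boundedly invertible precisely when its diagonal block $B$ is boundedly invertible on $S\mathcal H$. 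This establishes the equivalence ``$A$ has a bounded inverse $\iff$ $B$ has a bounded inverse in $S\mathcal H$''.

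For the explicit formula I would read everything off the same block structure. Inverting a lower-triangular block operator and then multiplying by $S$ on the right annihilates the off-diagonal block, so $(I-SC)^{-1}S=SB^{-1}S$, where $B^{-1}$ denotes the inverse of $B$ on $S\mathcal H$ extended by zero. Feeding this into $(I-CS)^{-1}=I+C(I-SC)^{-1}S$ yields $(I-CS)^{-1}=I+CSB^{-1}S$, and therefore
$$
A^{-1}=(I-CS)^{-1}C=C+CSB^{-1}SC=(A+S)^{-1}+(A+S)^{-1}SB^{-1}S(A+S)^{-1},
$$
using $S^2=S$ and $B=S-S(A+S)^{-1}S$. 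One then confirms directly that this operator is a genuine two-sided inverse of $A$, which is automatic once both $I-CS$ and $I-SC$ are known to be invertible (a left inverse and a right inverse of $A$ necessarily coincide).

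Every computation above is elementary bounded-operator algebra; the only point demanding real care is the bookkeeping forced by $A$ being merely closed rather than bounded — that $A=(A+S)-S$ is an identity of operators on $D(A)$, that ``bounded inverse'' means an everywhere-defined bounded two-sided inverse, and that the left inverse $(I-CS)^{-1}C$ and the right inverse $C(I-SC)^{-1}$ agree on all of $\mathcal H$. Once these domain matters are settled there is no substantive obstacle, and in particular no spectral or analytic input beyond the stated hypotheses is needed.
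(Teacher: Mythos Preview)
The paper does not prove this lemma; it simply quotes it as Lemma~2.1 of Jensen--Nenciu \cite{JN}. Your argument is a correct self-contained proof: the factorization $A=(A+S)(I-CS)$ on $D(A)$ together with $AC=I-SC$ reduces everything to the invertibility of $I-SC$, and your block-triangular analysis with respect to $\mathcal H=\ker S\oplus S\mathcal H$ cleanly identifies this with the invertibility of $B$ on $S\mathcal H$. The derivation of the explicit formula via $(I-SC)^{-1}S=SB^{-1}S$ and $(I-CS)^{-1}=I+C(I-SC)^{-1}S$ is accurate, and you are right to flag the domain bookkeeping as the only delicate point. There is nothing to compare against in the paper itself.
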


We will apply this lemma with $A=M^\pm(\lambda)$ and $S=S_1$,
the orthogonal projection onto the kernel of
$T$. Thus, we need to show that $M^{\pm}(\lambda)+S_1$
has a bounded inverse in $L^2(\mathbb R^4)$ and
\begin{align}\label{B defn}
  B_{\pm}(\lambda) =S_1-S_1(M^\pm(\lambda)+S_1)^{-1}S_1
\end{align}
has a bounded inverse in $S_1L^2(\mathbb R^4)$.

The invertibility of the operator $B_\pm$ is established using
different techniques, which depend  on the type of resonance  at zero energy. 
We recall Lemma~2.9 in \cite{EGG}.

\begin{lemma}\label{M+S1inverse}

	Suppose that zero is not a regular point of the spectrum of  $H=-\Delta+V$, and let $S_1$ be the corresponding
	Riesz projection. Then for   sufficiently small $\lambda_1>0$, the operators
	$M^{\pm}(\lambda)+S_1$ are invertible for all $0<\lambda<\lambda_1$ as bounded operators on $L^2(\R^4)$.
	Further, one has (with $\widetilde g_1^\pm (\lambda)=\|V\|_1g_1^\pm(\lambda)$)
	\begin{align}\label{M plus S}
        (M^{\pm}(\lambda)+S_1)^{-1}&=
        D_0-\widetilde g_1(\lambda)D_0PD_0-\lambda^2D_0vG_1vD_0
        +\widetilde O_2(\lambda^{2+})\\\label{M plus Sv2}
        &=D_0 
        +\widetilde O_2(\lambda^{2-})
	\end{align}
	as an absolutely bounded operator on $L^2(\R^4)$ 
	provided $v(x)\lesssim \langle x\rangle^{-2-}$.

\end{lemma}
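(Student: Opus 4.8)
The plan is to invert $M^\pm(\lambda)+S_1$ by perturbing off the $\lambda$-independent invertible operator $T+S_1$, following \cite{EGG} but keeping one more explicit term.  Using the expansion \eqref{Mexp1}, write
$$
  M^\pm(\lambda)+S_1=(T+S_1)+\Gamma^\pm(\lambda),\qquad \Gamma^\pm(\lambda):=\|V\|_1 g_1^\pm(\lambda)P+\lambda^2 vG_1v+M_1^\pm(\lambda).
$$
Since $P$ is rank one with $\|P\|_{HS}=1$, the operator $vG_1v$ is Hilbert--Schmidt when $v(x)\les\la x\ra^{-2-}$ (as in the proof of Lemma~\ref{lem:M_exp}), and $\|M_1^\pm(\lambda)\|_{HS}\les\lambda^{2+}$ by \eqref{Mexp1}, so $\|\Gamma^\pm(\lambda)\|_{HS}\to 0$ as $\lambda\to0$, uniformly in the sign.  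Factoring $M^\pm(\lambda)+S_1=(T+S_1)\big(I+D_0\Gamma^\pm(\lambda)\big)$ with $D_0=(T+S_1)^{-1}$ and choosing $\lambda_1>0$ so small that $\|D_0\Gamma^\pm(\lambda)\|_{L^2\to L^2}\le\tfrac12$ for all $0<\lambda<\lambda_1$, the operator $N^\pm(\lambda):=\big(I+D_0\Gamma^\pm(\lambda)\big)^{-1}$ exists and is bounded on $L^2(\R^4)$ uniformly in $\lambda$, so $M^\pm(\lambda)+S_1$ is invertible there with $(M^\pm(\lambda)+S_1)^{-1}=N^\pm(\lambda)D_0$.

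To read off \eqref{M plus S} I would use the algebraic identity $N^\pm=I-D_0\Gamma^\pm+(D_0\Gamma^\pm)^2N^\pm$, which yields
$$
  (M^\pm(\lambda)+S_1)^{-1}=D_0-D_0\Gamma^\pm(\lambda)D_0+(D_0\Gamma^\pm(\lambda))^2N^\pm(\lambda)D_0.
$$
Substituting the definition of $\Gamma^\pm$ and writing $\widetilde g_1^\pm(\lambda)=\|V\|_1 g_1^\pm(\lambda)$, the middle term produces the explicit corrections $\widetilde g_1^\pm(\lambda)D_0PD_0$ and $\lambda^2 D_0vG_1vD_0$ together with $D_0M_1^\pm(\lambda)D_0$.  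Here $D_0M_1^\pm(\lambda)D_0$ is Hilbert--Schmidt (a bounded operator times a Hilbert--Schmidt operator times a bounded operator), with $\|\partial_\lambda^j(D_0M_1^\pm D_0)\|_{HS}\les\lambda^{2-j+}$ for $j=0,1,2$ by \eqref{Mexp1}, and $(D_0\Gamma^\pm(\lambda))^2N^\pm(\lambda)D_0$ is likewise Hilbert--Schmidt and, by the power-counting below, lies in $\widetilde O_2(\lambda^{4-})\subseteq\widetilde O_2(\lambda^{2+})$.  Thus the error term is $\widetilde O_2(\lambda^{2+})$, and since $(M^\pm(\lambda)+S_1)^{-1}$ is a sum of $D_0$ (absolutely bounded by Lemma~\ref{d0bounded}) and Hilbert--Schmidt operators, it is absolutely bounded uniformly in $\lambda<\lambda_1$; this proves \eqref{M plus S}.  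Finally \eqref{M plus Sv2} follows because $\widetilde g_1^\pm(\lambda)=\widetilde O_2(\lambda^{2-})$ (as $g_1^\pm(\lambda)=\lambda^2(a_1\log\lambda+z_1)$) and $\lambda^2=\widetilde O_2(\lambda^{2-})$, so all three correction terms can be absorbed into a single $\widetilde O_2(\lambda^{2-})$ error.

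The remaining point is the derivative bookkeeping.  From $g_1^\pm(\lambda)=\lambda^2(a_1\log\lambda+z_1)$ one has $|\partial_\lambda^j g_1^\pm(\lambda)|\les\lambda^{2-j-}$ for $j=0,1,2$, the logarithm costing only an arbitrarily small power of $\lambda$, so $\|\partial_\lambda^j\Gamma^\pm(\lambda)\|_{HS}\les\lambda^{2-j-}$ with the $g_1^\pm$-term dominant; and $N^\pm(\lambda)$ together with its first two $\lambda$-derivatives is uniformly bounded on $L^2$, since $\partial_\lambda N^\pm=-N^\pm(D_0\partial_\lambda\Gamma^\pm)N^\pm$ and similarly for the second derivative.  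Differentiating $(D_0\Gamma^\pm(\lambda))^2N^\pm(\lambda)D_0$ by the product rule and using these bounds, the worst term places one derivative on each $\Gamma^\pm$-factor and is $O(\lambda^{1-}\cdot\lambda^{1-})=O(\lambda^{2-})$, consistent with $\widetilde O_2(\lambda^{4-})$.  I expect this bookkeeping, rather than any conceptual difficulty, to be the main obstacle: the substantive inputs (the Hilbert--Schmidt bounds and decay rates for the pieces of $M^\pm(\lambda)$, and the absolute boundedness of $D_0$) are already supplied by Lemma~\ref{lem:M_exp} and Lemma~\ref{d0bounded}, and the argument is otherwise identical in structure to the proof of Lemma~2.9 in \cite{EGG}, with one additional explicit term retained.
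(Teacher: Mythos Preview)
Your proposal is correct and takes essentially the same approach as the paper, which recalls this lemma from \cite{EGG} without reproducing the proof; the argument there (and the one implicitly used in \eqref{MS eval} and Lemma~\ref{M+S1inverse long}) is precisely the Neumann series expansion of $(T+S_1+\Gamma^\pm(\lambda))^{-1}$ around $D_0=(T+S_1)^{-1}$ that you describe.  Your bookkeeping is accurate: the quadratic remainder $(D_0\Gamma^\pm)^2N^\pm D_0$ is $\widetilde O_2(\lambda^{4-})\subset\widetilde O_2(\lambda^{2+})$, and $D_0M_1^\pm D_0=\widetilde O_2(\lambda^{2+})$ by \eqref{Mexp1}, so \eqref{M plus S} follows, with \eqref{M plus Sv2} an immediate consequence.
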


\begin{corollary}

	We have the following expansion(s) for $B^{\pm}(\lambda)$.  If $v(x)\lesssim \langle x\rangle^{-2-}$,
	\begin{align}\label{B1}
        B^{\pm}(\lambda)&=-\widetilde g_1(\lambda)S_1PS_1-\lambda^2S_1vG_1vS_1
        +\widetilde O_2(\lambda^{2+})\\\label{B2}
        &=\widetilde O_2(\lambda^{2-})
	\end{align}

\end{corollary}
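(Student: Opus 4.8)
The plan is to derive the expansion for $B^{\pm}(\lambda)$ directly from the definition \eqref{B defn} together with the expansion \eqref{M plus S} of $(M^{\pm}(\lambda)+S_1)^{-1}$ from Lemma~\ref{M+S1inverse}. The key structural fact to exploit is that $S_1$ is the Riesz projection onto the kernel of $T$, so by Remark ii) above we have $S_1 D_0 = D_0 S_1 = S_1$. First I would write
$$
B^{\pm}(\lambda) = S_1 - S_1(M^{\pm}(\lambda)+S_1)^{-1}S_1
$$
and substitute \eqref{M plus S}:
$$
B^{\pm}(\lambda) = S_1 - S_1\Big(D_0 - \widetilde g_1(\lambda)D_0 P D_0 - \lambda^2 D_0 v G_1 v D_0 + \widetilde O_2(\lambda^{2+})\Big)S_1.
$$
Now I would simplify each term by pushing the outer $S_1$'s through the $D_0$'s: $S_1 D_0 S_1 = S_1$, which cancels the leading $S_1$; $S_1 D_0 P D_0 S_1 = S_1 P S_1$; and $S_1 D_0 v G_1 v D_0 S_1 = S_1 v G_1 v S_1$. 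The error term is handled by noting that $S_1$ is bounded (indeed finite rank, hence absolutely bounded) so $S_1 \widetilde O_2(\lambda^{2+}) S_1 = \widetilde O_2(\lambda^{2+})$. This yields \eqref{B1} immediately.

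For \eqref{B2}, I would simply observe that each surviving term in \eqref{B1} is controlled: $\widetilde g_1(\lambda) = \widetilde O_2(\lambda^{2-})$ since $g_1^{\pm}(\lambda) = \lambda^2(a_1\log\lambda + z_1)$ and differentiation is comparable to division by $\lambda$ in the $\widetilde O$ sense with a logarithmic loss absorbed into the $\lambda^{2-}$; the operator $S_1 P S_1$ is $\lambda$-independent and bounded; likewise $\lambda^2 S_1 v G_1 v S_1 = \widetilde O_2(\lambda^{2})$ with $S_1 v G_1 v S_1$ bounded because $G_1$ has a logarithmic kernel and $v$ decays fast enough for the Hilbert-Schmidt bound (as in the proof of Lemma~\ref{lem:M_exp}, $v(x)\log|x-y|v(y)$ is Hilbert-Schmidt when $\beta > 2$); and the remaining error is $\widetilde O_2(\lambda^{2+})$. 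Combining, every term is $\widetilde O_2(\lambda^{2-})$, giving \eqref{B2}.

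The main point requiring care — though it is not really an obstacle since the ingredients are all in place — is tracking the $\widetilde O_2$ bookkeeping: one must verify that differentiating the $\lambda$-dependent scalar factors $\widetilde g_1(\lambda)$ and $\lambda^2$ up to two times produces bounds consistent with $\lambda^{2-}$, using the convention stated after the definition of $\widetilde O$ that for a quantity written $\widetilde O_k(\lambda^j)$ differentiation costs a power of $\lambda$. Since $\widetilde g_1'(\lambda) \sim \lambda \log\lambda = \widetilde O(\lambda^{1-})$ and $\widetilde g_1''(\lambda) \sim \log\lambda = \widetilde O(\lambda^{0-})$, and the logarithmic factors are absorbed by the $-$ in the exponent, the claim goes through. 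The only other subtlety is confirming that the error term in Lemma~\ref{M+S1inverse} is genuinely stable under composition with the finite-rank projection $S_1$ on both sides, which follows since finite-rank operators are absolutely bounded and the $\widetilde O_2$ class is closed under left and right multiplication by a fixed bounded operator.
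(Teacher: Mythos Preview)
Your approach is correct and is exactly the intended one: the paper states this result as a corollary of Lemma~\ref{M+S1inverse} with no separate proof, and the derivation is precisely the substitution of \eqref{M plus S} into \eqref{B defn} together with $S_1D_0=D_0S_1=S_1$, as you describe. One minor remark: if you carry out the arithmetic carefully you will find the first two terms come out with positive signs, $+\widetilde g_1(\lambda)S_1PS_1+\lambda^2 S_1vG_1vS_1$, consistent with the expansion \eqref{B3 gammas} used later in Section~\ref{sec:third}; the minus signs in \eqref{B1} appear to be a typo in the paper and do not affect any subsequent argument.
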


The contribution of 
\eqref{bs finite}, the 
finite terms of the Born series, to the Stone formula \eqref{stone} is controlled by the following lemma.  The lemma, which is Proposition~3.1 in
\cite{GGeven}, was proven in great generality and applies
in our case by taking $n=4$. A similar proposition is
proven for odd $n$ in \cite{GGodd}.

\begin{lemma}\label{bsprop}

	If $|V(x)|\les \la x\ra^{-3-}$, then the following
	bound holds.
		$$
			\sup_{x,y\in \R^4}\bigg|
			\int_0^\infty e^{it\lambda^2} \lambda \chi(\lambda)\bigg[\sum_{k=0}^{2m+1}(-1)^k\big\{
			R_0^+(VR_0^+)^k
			-R_0^-(VR_0^-)^k\big\}\bigg](\lambda^2)(x,y)\, 
			d\lambda\bigg| \les |t|^{-2}.
		$$

\end{lemma}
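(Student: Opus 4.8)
The plan is to treat the finite Born series term by term, so it suffices to bound
$\int_0^\infty e^{it\lambda^2}\lambda\chi(\lambda)\big[R_0^+(VR_0^+)^k-R_0^-(VR_0^-)^k\big](\lambda^2)(x,y)\,d\lambda$
uniformly in $x,y$ for each fixed $0\le k\le 2m+1$. First I would insert the representation \eqref{R0repr}, writing each free resolvent kernel as a sum of a ``low'' piece $r^{-2}\rho_-(\lambda r)$ supported on $\lambda r\le \tfrac12$ and a ``high'' piece $r^{-1}\lambda e^{\pm i\lambda r}\rho_+(\lambda r)$ supported on $\lambda r\ge \tfrac14$, where $|\rho_-(z)|\les 1$ and $\rho_+(z)=\widetilde O(z^{-1/2})$. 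Expanding the product $(k+1)$-fold produces a finite sum of terms, each indexed by a choice of low/high for each of the $k+1$ resolvent factors; the difference of $+$ and $-$ resolvents means the phase is $e^{\pm i\lambda(\pm r_1\pm r_2\pm\cdots)}$, and subtracting the two sign choices of the overall Hankel argument will be what produces the oscillation needed for decay.

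The main tool is a stationary-phase / integration-by-parts estimate for oscillatory integrals of the form $\int_0^\infty e^{it\lambda^2\pm i\lambda r}a(\lambda)\,d\lambda$ where $a$ is a symbol-type amplitude supported near $\lambda=0$ (here $r=\sum r_j$ is the accumulated ``path length'' through the chain of potentials). The key scalar lemma — which in this circle of papers is typically proved by writing $\lambda = \lambda_0 + s$ around the critical point $\lambda_0 = -r/(2t)$ when it lies in the support, and by plain repeated integration by parts in $\lambda$ otherwise — gives a gain of $|t|^{-1}$ per pair of $\lambda$-derivatives one can afford to place on the amplitude, with the residual $r$-dependence absorbed by the decay of $V$. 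The reason one gets $|t|^{-2}$ rather than only $|t|^{-1}$ is that the measure $\lambda\,d\lambda$ together with the two extra factors of $\lambda$ available (one can always extract, e.g., from $\chi$ being even and the cancellation between $R_0^+$ and $R_0^-$, or simply from the $\lambda$ prefactors in the high-energy pieces) lets one integrate by parts twice; for the purely low-energy terms one uses that $\rho_-$ is bounded and the free kernels $r^{-2}$ are locally $L^1$ in dimension $4$, so the spatial integrals $\int (VR_0\cdots)$ converge and contribute only $r$-powers that are killed by $\la x\ra^{-3-}$-type decay of $V$ after using $r_j\le \la z_{j-1}\ra+\la z_j\ra$.

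Concretely, the steps in order are: (1) reduce to a single $k$ and insert \eqref{R0repr}, getting a finite sum of ``path'' terms; (2) in each term, carry out the $z_1,\dots,z_k$ spatial integrations against $V(z_j)$, checking using $|V|\les \la x\ra^{-3-}$ and the local $L^1$ bound on $|x-y|^{-2}$ (and the $\widetilde O$-type bounds on $\rho_\pm$) that the result is an amplitude $a(\lambda)$ satisfying $|\partial_\lambda^j a(\lambda)|\les \lambda^{1-j}\cdot\chi(\lambda)$-type bounds with the accumulated length $r$ entering only through harmless positive powers that the potential decay controls; (3) apply the oscillatory-integral lemma to $\int_0^\infty e^{it\lambda^2}a(\lambda)\big(e^{i\lambda r}-e^{-i\lambda r}\big)\,d\lambda$ (or the appropriate sum/difference of phases arising from the $\pm$ subtraction) to extract $|t|^{-2}$; (4) sum the finitely many path terms and the finitely many $k$. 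The step I expect to be the genuine obstacle is (2)–(3) together: verifying that after the spatial integrations the amplitude really does admit two $\lambda$-derivatives with the right $\lambda^{-1}$-per-derivative behaviour \emph{uniformly in} $r$ and $x,y$ — in particular handling the ``all low'' term (no oscillation from the Hankel asymptotics, so one must rely entirely on the $e^{it\lambda^2}$ phase and the $\lambda$ prefactors, which is exactly where the evenness of $\chi$ and the cancellation structure of $R_0^+-R_0^-$ have to be used carefully) and the boundary term at $\lambda r\sim 1$ where the low and high representations are glued. Since the statement asserts only the already-known Proposition~3.1 of \cite{GGeven} specialized to $n=4$, I would in the write-up simply cite that proposition after the reduction in step (1), noting that its hypotheses ($|V|\les\la x\ra^{-3-}$, $n=4$) are exactly what we assume.
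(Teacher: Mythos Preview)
Your proposal is correct and, in its final sentence, matches exactly what the paper does: the paper does not give an independent proof of this lemma at all but simply states that it is Proposition~3.1 of \cite{GGeven} specialized to $n=4$. Your preceding sketch of the underlying argument (low/high decomposition via \eqref{R0repr}, path terms, two integrations by parts against $e^{it\lambda^2}$ using the $\lambda$ smallness from the $R_0^+-R_0^-$ cancellation) is an accurate outline of how that proposition is proved in \cite{GGeven}, though the paper itself includes none of this detail here. One small correction to your sketch: the evenness of $\chi$ plays no role in extracting the extra $\lambda^2$ in the all-low term; it comes entirely from $[R_0^+-R_0^-](\lambda^2)=c\lambda^2+\widetilde O(\lambda^{2+})$ together with the algebraic identity \eqref{alg fact}.
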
  

This allows us now to focus only on the more singular
part of the resolvent expansion in \eqref{resolvid1}.
As the expansion for \eqref{bs tail} depends on the
type of obstruction at zero energy, we control its
contribution to the Stone formula separately in the
next sections.

\begin{rmk}

	We note that if zero is regular, 
	if $|V(x)|\les \la x\ra^{-4-}$ the low energy dispersive bound 
	$$
		\|e^{itH}\chi(H) P_{ac}(H)\|_{L^1\to L^\infty}\les |t|^{-2}
	$$	
	holds.  This can be done by using the expansion for $M^{\pm}(\lambda)^{-1}$ obtained
	in Lemma~\ref{M+S1inverse}, specifically \eqref{M plus S} is valid when $S_1=0$.  The
	time decay is obtained by an analysis that mirrors that of the Born series.

\end{rmk}

\section{Resonance of the first kind}\label{sec:first}
In this section we develop the tools necessary to
prove the first claim of Theorem~\ref{thm:main} when
there is a resonance of the first
kind.
That is, there is a resonance but no eigenvalue at zero energy. Further, $S_1\neq 0$ and $S_2=0$, and $S_1$ is of rank one by 
Corollary~\ref{ranks}. 
  In particular,
we prove
\begin{theorem}\label{thm:res1}
	
	Suppose that $|V(x)|\les \la x\ra^{-4-}$.
	If there is a resonance of the first kind at zero,
	then for $t>2$, 
	$$
		e^{itH}\chi(H) P_{ac}(H)=\varphi(t) P_r+
		O(1/t)A_1+O((t\log t)^{-1})A_2+ O(t^{-1}(\log t)^{-2})A_3+O(t^{-1-})A_4
	$$
	with $P_r=G_0VG_0vS_1vG_0VG_0:L^1\to L^\infty$ a rank one
	operator, $A_1:L^1\to L^\infty$,
	$A_2:L^{1}_w\to L^{\infty}_{w^{-1}}$ finite rank operators, $A_3:L^1\to L^\infty$
	and $A_4:L^{1,\f12}\to L^{\infty,-\f12}$.

\end{theorem}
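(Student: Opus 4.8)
The plan is to analyze the Stone formula \eqref{stone}, where the low-energy cutoff $\chi(H)$ confines the spectral parameter $\lambda$ to a small neighbourhood of zero.  By Lemma~\ref{bsprop} (applicable since $\beta>4>3$) the contribution of the finite Born series \eqref{bs finite} is $O(|t|^{-2})$, so it suffices to estimate the contribution of the tail term \eqref{bs tail}, and for this we produce a sufficiently fine expansion of its three factors.  For the outer iterated resolvents we invoke Corollary~\ref{iteratedBSexp}: $R_0^\pm(\lambda^2)VR_0^\pm(\lambda^2)=K_0+g_1^\pm(\lambda)K_1+\lambda^2 K_2+\widetilde E_1^\pm(\lambda)$ with $K_0,K_1,K_2$ the fixed real operators listed there, where $\widetilde E_1^\pm(\lambda)(x,y)=\widetilde O_2(\lambda^{2-})$ unweighted except in the lone configuration, flagged in Remark~\ref{rmk:33}, where both $\lambda$-derivatives strike a single outer free resolvent, in which case one pays $\la x\ra^{\f12}$ (resp.\ $\la y\ra^{\f12}$) and gains the stronger $\lambda^{\f52}$ decay.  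For $M^\pm(\lambda)^{-1}$ we use the Jensen--Nenciu formula (Lemma~\ref{JNlemma}) with $S=S_1$: the operator $(M^\pm(\lambda)+S_1)^{-1}$ is given by Lemma~\ref{M+S1inverse}, and since there is a resonance of the first kind the operator $T_1=S_1PS_1$ is invertible on the rank-one space $S_1L^2$, so by \eqref{B1} $B^\pm(\lambda)$ acts on that one-dimensional space as multiplication by $-\widetilde g_1^\pm(\lambda)p-\lambda^2 g+\widetilde O_2(\lambda^{2+})$ with $p=\|V\|_1^{-1}|\la v,\phi\ra|^2\neq0$ and $g=\la v\phi,G_1v\phi\ra$.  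Hence $B^\pm(\lambda)^{-1}=\beta^\pm(\lambda)S_1$, where the scalar $\beta^\pm(\lambda)$ is produced by a Neumann series in powers of $(\widetilde g_1^\pm(\lambda))^{-1}=\widetilde O(\lambda^{-2}(\log\lambda)^{-1})$; in particular the entire singular part of $M^\pm(\lambda)^{-1}$ (the second term of the Jensen--Nenciu formula) has rank one, which is ultimately what makes the error estimates affordable.

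The next ingredient is a set of resonance identities used to collapse the operator factors.  Let $\phi$ span $S_1L^2$ and let $\psi=-G_0v\phi$ be the associated zero-energy resonance.  From $T\phi=0$ one obtains $v\phi=V\psi$, hence $G_0(v\phi)=-\psi$ and $K_0(v\phi)=G_0VG_0(v\phi)=\psi$; moreover the first-kind hypothesis is precisely $\int V\psi=\la v,\phi\ra\neq0$, which forces $\psi(x)=O(|x|^{-2})$ as $|x|\to\infty$ so that $\psi\in L^\infty$.  Consequently $K_0vS_1vK_0=\psi\otimes\psi$ (using that $K_0$ is self-adjoint), which is exactly the operator $P_r=G_0VG_0vS_1vG_0VG_0$ of the statement; it has rank one and maps $L^1\to L^\infty$ since $\|P_rf\|_\infty\le\|\psi\|_\infty^2\|f\|_1$.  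Likewise $K_1v\phi$ is an explicit bounded function (a combination of the constant function and $G_0V\mathbf 1$), while $G_1(v\phi)$ grows like $\log\la x\ra$; this last fact is the source of the logarithmically weighted target space of $A_2$.

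The main step is to substitute the three expansions into \eqref{bs tail}, multiply out, and insert the result into \eqref{stone}.  Each resulting term is a scalar function of $\lambda$ — a monomial in $\lambda$, $\log\lambda$, $g_j^\pm(\lambda)$ and $(\widetilde g_1^\pm(\lambda))^{-1}$, possibly carrying a spatial weight from $\widetilde E_1^\pm$ — multiplying a fixed, $\pm$-independent, absolutely bounded operator.  Forming the $+/-$ difference required by \eqref{stone} annihilates every term whose scalar prefactor is $\pm$-independent; in particular the most singular prefactor $(\widetilde g_1^\pm)^{-1}\sim(\lambda^2\log\lambda)^{-1}$ and the $(\log\lambda)^{-1}$-leading pieces all drop out, leaving prefactors whose $\log\lambda$-structure is one power lower.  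One then applies the standard oscillatory-integral estimates (as in \cite{EGG}) for $\int_0^\infty e^{it\lambda^2}\lambda\chi(\lambda)E(\lambda)\,d\lambda$ with $E$ supported in $[0,2\lambda_1]$: up to faster-decaying remainders this equals a constant times $t^{-1}$ if $E=\widetilde O_2(1)$, times $(t\log t)^{-1}$ if $E\sim(\log\lambda)^{-1}$, times $t^{-1}(\log t)^{-2}$ if $E\sim(\log\lambda)^{-2}$, it is $O((\log t)^{-1})$ if $E\sim\lambda^{-2}(\log\lambda)^{-2}$, and it is $O(t^{-1-})$ if $E=\widetilde O_2(\lambda^{\f12})$.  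The outcome: the surviving most singular prefactor $\sim\lambda^{-2}(\log\lambda)^{-2}$ multiplies $K_0vS_1vK_0=P_r$ and gives $\varphi(t)P_r$ (all lower $(\log\lambda)^{-k}$ corrections are again multiples of $P_r$ and are absorbed into $\varphi(t)$); the smooth-prefactor terms multiply finite-rank operators built from $K_0,K_1,D_0$ and $\psi$ and give $O(1/t)A_1$; the $(\log\lambda)^{-1}$-type prefactors multiply the $G_1$-bearing (hence $w$-growing) finite-rank operators, which assemble into a single $w(x)B_1w(y)$, giving $O((t\log t)^{-1})A_2$; the $(\log\lambda)^{-2}$-type prefactors multiply the remaining $L^1\to L^\infty$ operators, giving $O(t^{-1}(\log t)^{-2})A_3$; and the contributions of $\widetilde E_1^\pm$ and of the $\widetilde O_2(\lambda^{2+})$ remainders of Lemma~\ref{M+S1inverse} and \eqref{B1} — whose prefactor becomes $\widetilde O_2(\lambda^{\f12})$ once the $\lambda^{-2}$ singularity of $\beta^\pm$ is consumed against the $\lambda^{\f52}$ and the integrability of $\la z\ra^{\f12}v(z)\phi(z)$ — give $O(t^{-1-})A_4$ with $A_4:L^{1,\f12}\to L^{\infty,-\f12}$.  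The operators $A_1,A_2,A_3$ are time-independent because their prefactors are pure functions of $\lambda$, so the associated oscillatory integrals equal an explicit constant multiple of the stated power of $t$ (and $\log t$) plus faster-decaying remainders which are reabsorbed downstream.

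The main obstacle is the bookkeeping.  Multiplying four asymptotic expansions produces a large number of cross-terms, and for each one must (i) read off the exact powers of $\lambda$ and $\log\lambda$ in the scalar prefactor, (ii) decide whether that prefactor cancels under the $+/-$ difference, (iii) use the resonance identities ($v\phi=V\psi$, $K_0v\phi=\psi$, $G_0V\psi=-\psi$) both to reduce the operator factors and — the genuinely delicate point — to verify that the potentially $\log$-growing contributions arising at a fixed time scale either cancel against one another or gather into the single weighted finite-rank operator $w(x)B_1w(y)$, so that at the $t^{-1}(\log t)^{-2}$ scale only bona fide $L^1\to L^\infty$ operators survive, and (iv) establish the weighted two-derivative bounds on $\widetilde E_1^\pm$ and the remainder terms, paying the weight $\la x\ra^{\f12}$ or $\la y\ra^{\f12}$ only in the lone worst configuration isolated in Remark~\ref{rmk:33}.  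Everything else — the oscillatory-integral lemmas, the limiting absorption bounds, the absolute boundedness of $D_0$ (Lemma~\ref{d0bounded}), the Hilbert--Schmidt estimates for the $M^\pm(\lambda)$ remainders — is supplied by the lemmas above or by \cite{EGG}.
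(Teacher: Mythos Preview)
Your overall strategy coincides with the paper's: reduce to \eqref{bs tail} via Lemma~\ref{bsprop}, expand $M^\pm(\lambda)^{-1}$ through Lemma~\ref{JNlemma} and Lemma~\ref{M+S1inverse} (this is exactly Proposition~\ref{pwave exp}), expand the outer resolvents, form the $+/-$ difference, and read off the time decay from the oscillatory-integral lemmas.  The paper organizes the $+/-$ difference via the telescoping identity \eqref{alg fact}, placing the difference on one factor at a time, rather than multiplying out first; and it expands the four free resolvents individually via \eqref{resolv wtd} rather than grouping them through Corollary~\ref{iteratedBSexp}.  These are organizational rather than substantive differences, and your use of the resonance identities to collapse $K_0vS_1vK_0$ to $P_r$ is exactly what the paper does.

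There is, however, a concrete bookkeeping error in your oscillatory-integral dictionary.  Lemma~\ref{log decay2} gives $\int e^{it\lambda^2}\lambda\chi(\lambda)\mathcal E(\lambda)\,d\lambda=O(t^{-1}(\log t)^{-(k-1)})$ for $\mathcal E=\widetilde O_2((\log\lambda)^{-k})$, $k\ge2$; so $(\log\lambda)^{-2}$ produces $(t\log t)^{-1}$ and $(\log\lambda)^{-3}$ produces $t^{-1}(\log t)^{-2}$, each one power of $\log t$ weaker than you state.  (For $k=1$ one does not get $(t\log t)^{-1}$ but only $O(1/t)$, since $\int_0^{\lambda_1}\lambda^{-1}|\log\lambda|^{-1}\,d\lambda$ diverges; cf.\ the remark after Lemma~\ref{log decay2}.)  Tracing this through: the $A_2$ contribution (log-weighted, decay $(t\log t)^{-1}$) in fact arises from the prefactor $\lambda^2[f^+-f^-](\lambda)=\widetilde O_2((\log\lambda)^{-2})$ hitting the $G_1$-bearing operator $C_3$, and the $A_3$ contribution (unweighted $L^1\to L^\infty$, decay $t^{-1}(\log t)^{-2}$) arises from the $\widetilde O_2((\log\lambda)^{-3})$ error of Proposition~\ref{pwave exp} sandwiched by $G_0VG_0v$.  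Your identification of which operators carry log-growth is correct, but the prefactor indices are shifted by one throughout; once corrected, your argument matches the paper's.
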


We note here that the operator $P_r$ can be viewed as a sort of projection onto the
canonical resonance function $\psi\in L^{2,0-}(\R^4)$, which is chosen from the one-dimensional resonance space
so that $\la v\psi,v\psi\ra=1$.  From the representation $P_r=G_0VG_0vS_1vG_0VG_0$, using
Lemma~\ref{EG:Lem} and the fact that $S_1$ is rank one, we can see that $P_r$ is a bounded
operator from $L^1$ to $L^\infty$.

 We recall
Lemma~3.2 in \cite{EGG}, 
 \begin{lemma}\label{log decay}
 
 	If $\mathcal E(\lambda)=\widetilde O_1((\lambda \log \lambda)^{-2})$, then 
 	$$
 		\bigg|\int_0^\infty e^{it\lambda^2}
 		\lambda \chi(\lambda)
 		\mathcal E(\lambda)\, d\lambda\bigg| 
 		\les \frac{1}{\log t}, \qquad t>2.
 	$$
 
 \end{lemma}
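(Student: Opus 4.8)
The plan is to split the $\lambda$-integral at the natural scale $\lambda=t^{-1/2}$ and estimate the two pieces separately. We may assume $t$ is large enough that $t^{-1/2}<\lambda_1$ (so $\chi\equiv 1$ on $[0,t^{-1/2}]$); for $2<t\les \lambda_1^{-2}$ the claim is trivial since $|\int_0^\infty e^{it\lambda^2}\lambda\chi(\lambda)\mathcal E(\lambda)\,d\lambda|\les \int_0^{2\lambda_1}\lambda^{-1}|\log\lambda|^{-2}\,d\lambda<\infty$ uniformly in $t$, while $1/\log t$ is bounded below on that range.

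On the low piece $0<\lambda<t^{-1/2}$ I would simply use $|e^{it\lambda^2}|=1$ together with $|\mathcal E(\lambda)|\les(\lambda\log\lambda)^{-2}$, so that the contribution is bounded by $\int_0^{t^{-1/2}}\lambda^{-1}|\log\lambda|^{-2}\,d\lambda$. The substitution $u=\log\lambda$ turns this into $\int_{-\infty}^{-\frac12\log t}u^{-2}\,du\les(\log t)^{-1}$, as desired.

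On the high piece $t^{-1/2}<\lambda<2\lambda_1$ I would integrate by parts once, writing $\lambda e^{it\lambda^2}=\frac{1}{2it}\frac{d}{d\lambda}e^{it\lambda^2}$, to get
$$
\int_{t^{-1/2}}^{2\lambda_1}e^{it\lambda^2}\lambda\chi(\lambda)\mathcal E(\lambda)\,d\lambda=\frac{1}{2it}\Big[e^{it\lambda^2}\chi(\lambda)\mathcal E(\lambda)\Big]_{t^{-1/2}}^{2\lambda_1}-\frac{1}{2it}\int_{t^{-1/2}}^{2\lambda_1}e^{it\lambda^2}\big(\chi(\lambda)\mathcal E(\lambda)\big)'\,d\lambda.
$$
The boundary term at $2\lambda_1$ vanishes since $\chi$ is supported in $[-2\lambda_1,2\lambda_1]$, and the boundary term at $t^{-1/2}$ is $O(t^{-1})|\mathcal E(t^{-1/2})|=O(t^{-1})\cdot O\big(t(\log t)^{-2}\big)=O\big((\log t)^{-2}\big)$ by the $\widetilde O_1$ bound evaluated at $\lambda=t^{-1/2}$. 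For the remaining integral I would use $|e^{it\lambda^2}|=1$ and $|(\chi\mathcal E)'(\lambda)|\les|\chi'(\lambda)|\,|\mathcal E(\lambda)|+|\chi(\lambda)|\,|\mathcal E'(\lambda)|\les\mathbf{1}_{[\lambda_1,2\lambda_1]}(\lambda)+\lambda^{-3}|\log\lambda|^{-2}$; the first term contributes $O(t^{-1})$, and the second contributes $\frac1t\int_{t^{-1/2}}^{2\lambda_1}\lambda^{-3}|\log\lambda|^{-2}\,d\lambda$. Splitting this last integral at $\lambda=t^{-1/4}$: on $[t^{-1/2},t^{-1/4}]$ bound $|\log\lambda|^{-2}\les(\log t)^{-2}$ and $\int\lambda^{-3}\,d\lambda\les t$, while on $[t^{-1/4},2\lambda_1]$ the integrand is $\les t^{3/4}$; so the whole expression is $O\big(t^{-1}\cdot t(\log t)^{-2}\big)=O\big((\log t)^{-2}\big)$. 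Adding the two pieces yields the stated $O(1/\log t)$ bound.

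The only mildly delicate point is the bookkeeping of logarithmic weights: one must check that division by $\lambda$ (differentiation in the $\widetilde O_1$ sense) preserves the $|\log\lambda|^{-2}$ factor, and that the $\lambda^{-3}$ singularity produced by differentiating $\mathcal E$ is exactly absorbed by the $t^{-1}$ gain from the integration by parts together with the $|\log\lambda|^{-2}$ damping near $\lambda=t^{-1/2}$. Everything else is routine, and this is the same mechanism used for the analogous estimates in \cite{EGG}.
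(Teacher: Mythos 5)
Your proof is correct and takes essentially the same route as the paper: the paper cites [EGG, Lemma~3.2] for this statement, and the underlying mechanism there (and in the paper's own proof of the closely related Lemma~\ref{log decay2}) is exactly your split at $\lambda=t^{-1/2}$, a direct bound of the low piece, and a single integration by parts on the high piece, with the boundary term and the $\lambda^{-3}(\log\lambda)^{-2}$ error each coming in at the better rate $(\log t)^{-2}$. The only difference from Lemma~\ref{log decay2} is that you integrate by parts once rather than twice, which is the correct adjustment given that only one derivative of $\mathcal E$ is available under the $\widetilde O_1$ hypothesis.
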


This lemma essentially defines the function $\varphi(t)$
in the statement of Theorem~\ref{thm:main}. 
To  invert $M^{\pm}(\lambda)$ using Lemma~\ref{JNlemma}, we need to compute $B_\pm(\lambda)$, we use Lemma~3.3 
in \cite{EGG}.

\begin{lemma}

	In the case of a resonance of the first kind at
	zero, under the hypotheses of Theorem~\ref{thm:res1}
	the operator
	$B_{\pm}(\lambda)$ is invertible for small $\lambda$ and
	\begin{align}\label{Binv def}
		B_{\pm}(\lambda)^{-1}= 
		f^{\pm}(\lambda)S_1,
	\end{align}
where
\begin{align}\label{f defn}
	f^{+}(\lambda)=\frac{1}{\lambda^2}
	\frac{1}{a\log\lambda+z +\widetilde O_2(\lambda^{0+})}=\overline{f^-(\lambda)} 
\end{align}
for some $a\in \R/\{0\}$ and $z\in \mathbb C / \R$.
\end{lemma}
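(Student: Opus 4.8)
The plan is to apply Lemma~\ref{JNlemma} one level down: we already know from the Corollary that $B^{\pm}(\lambda)=-\widetilde g_1(\lambda)S_1PS_1-\lambda^2 S_1vG_1vS_1+\widetilde O_2(\lambda^{2+})$, and in the resonance-of-the-first-kind case $T_1=S_1PS_1$ is invertible on $S_1L^2$. Since $S_1$ has rank one (Corollary~\ref{ranks}), $S_1PS_1$ acts on a one-dimensional space, and because $P=\|V\|_1^{-1}v\langle v,\cdot\rangle$ is a nonzero rank-one positive operator while $\psi$ (the canonical resonance function with $\langle v\psi,v\psi\rangle=1$) is not orthogonal to $v$ — this is exactly the content of the resonance-of-the-first-kind hypothesis — we get $S_1PS_1=c\,S_1$ for some nonzero real constant $c$; normalizing as in the remark after the theorem we may take $c=1$, i.e. $S_1PS_1=S_1$. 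So the leading term of $B^\pm(\lambda)$ on $S_1L^2$ is the scalar $-\widetilde g_1^\pm(\lambda)=-\|V\|_1 g_1^\pm(\lambda)=-\|V\|_1\lambda^2(a_1\log\lambda+z_1)$ times the identity $S_1$.

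First I would write $B^{\pm}(\lambda)=-\|V\|_1\lambda^2(a_1\log\lambda+z_1)S_1 + \lambda^2 E^\pm(\lambda)$, where $E^\pm(\lambda)=-S_1vG_1vS_1+\widetilde O_2(\lambda^{0+})$ collects the subleading pieces; note $E^\pm(\lambda)=\widetilde O_2(1)$ after dividing through by $\lambda^2$ and absorbing the $\log$ into the $\widetilde O$ bookkeeping (recall $g_1^\pm(\lambda)=\widetilde O(\lambda^2\log\lambda)$ and differentiation is comparable to division by $\lambda$). Factoring out the scalar $-\|V\|_1\lambda^2(a_1\log\lambda+z_1)$ — which is nonvanishing for $0<\lambda<\lambda_1$ once $\lambda_1$ is small, since $z_1\notin\R$ — we obtain on $S_1L^2$
\[
B^{\pm}(\lambda) = -\|V\|_1\lambda^2\big(a_1\log\lambda+z_1\big)\Big(S_1 + \widetilde O_2(\lambda^{0+})\Big),
\]
where the error term here is $S_1 v G_1 v S_1$ divided by $(a_1\log\lambda+z_1)$ plus lower order, which since $|a_1\log\lambda+z_1|\gtrsim |\log\lambda|\to\infty$ is indeed $\widetilde O_2(\lambda^{0+})$ (any negative power of $\log\lambda$ is $O(\lambda^{0+})$, and this survives two $\lambda$-derivatives by the convention that derivatives divide by $\lambda$). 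Then $S_1+\widetilde O_2(\lambda^{0+})$ is invertible on $S_1L^2$ for small $\lambda$ by a Neumann series, with inverse $S_1+\widetilde O_2(\lambda^{0+})$, and inverting the scalar prefactor gives
\[
B^{\pm}(\lambda)^{-1} = \frac{-1}{\|V\|_1\lambda^2(a_1\log\lambda+z_1)}\Big(S_1+\widetilde O_2(\lambda^{0+})\Big) = \frac{1}{\lambda^2}\,\frac{S_1}{a\log\lambda + z + \widetilde O_2(\lambda^{0+})},
\]
with $a=-\|V\|_1 a_1\in\R\setminus\{0\}$ and $z=-\|V\|_1 z_1\in\mathbb C\setminus\R$, which is the claimed form \eqref{f defn}. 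The conjugation relation $f^+(\lambda)=\overline{f^-(\lambda)}$ follows from $g_1^+(\lambda)=\overline{g_1^-(\lambda)}$ together with the fact that $S_1$, $P$, $vG_1v$ have real kernels and $\overline{M^+(\lambda)}=M^-(\lambda)$.

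The main obstacle is not analytic but bookkeeping: one must verify carefully that dividing the $\widetilde O_2(\lambda^{2+})$ remainder from the Corollary by the scalar $\lambda^2(a_1\log\lambda+z_1)$ produces something that is genuinely $\widetilde O_2(\lambda^{0+})$ — i.e. that two $\lambda$-derivatives do not destroy the smallness — and that the Neumann-series inversion of $S_1+\widetilde O_2(\lambda^{0+})$ on the one-dimensional space $S_1L^2$ preserves the $\widetilde O_2$ class (derivatives of $(1+\epsilon(\lambda))^{-1}$ in terms of derivatives of $\epsilon$). This is routine given the derivative conventions set up in Section~\ref{sec:exp}, but it is where the actual work lies; since this lemma is quoted from \cite{EGG}, I would simply cite that reference for the detailed verification. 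The only genuinely case-specific input is the identification $S_1PS_1=$ (nonzero scalar)$\cdot S_1$, which is precisely the invertibility of $T_1$ on the rank-one space $S_1L^2$ assumed in the definition of a resonance of the first kind.
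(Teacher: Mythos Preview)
Your argument is correct in substance and is essentially the natural proof: since $S_1$ has rank one, every operator on $S_1L^2$ is a scalar multiple of $S_1$, so $B^\pm(\lambda)=b^\pm(\lambda)S_1$ for a scalar $b^\pm(\lambda)$, and the expansion \eqref{B1} identifies $b^\pm(\lambda)=-\lambda^2\big(a\log\lambda+z+\widetilde O_2(\lambda^{0+})\big)$, which one then inverts as a scalar. The paper does not supply its own proof here; it simply cites Lemma~3.3 of \cite{EGG}, so there is nothing to compare against beyond this.

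Two minor points of cleanup. First, your opening line about applying Lemma~\ref{JNlemma} ``one level down'' is misleading: you never actually invoke that abstract inversion lemma, and you don't need to --- the one-dimensionality of $S_1L^2$ does all the work. Second, the normalization $\la v\psi,v\psi\ra=1$ in the paper fixes $\|\phi\|_2=1$ for $\phi=w\psi$, which gives $S_1PS_1=\|V\|_1^{-1}|\la v,\phi\ra|^2\,S_1$, not $S_1PS_1=S_1$; the constant $c=\|V\|_1^{-1}|\la v,\phi\ra|^2$ is nonzero by the first-kind hypothesis but need not equal $1$. This is harmless since $c$ gets absorbed into $a$ and $z$, but you should not claim $c=1$. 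With those adjustments your write-up is fine.
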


In
particular we note that for $0<\lambda <\lambda_1$,
\begin{align}\label{f diff} 
	f^+(\lambda)-f^-(\lambda)
	 =\frac{1}{\lambda^2}\bigg(\frac{(a\log \lambda+z)
	-(a\log\lambda +\overline{z})+\widetilde O_1(\lambda^{0+})}
	{(a\log \lambda+z)(a\log\lambda +\overline{z})+\widetilde O_1(\lambda^{0+})}
	\bigg)=\widetilde{O}_1((\lambda \log \lambda)^{-2}).
\end{align} 
We are now ready to use Lemma~\ref{JNlemma} to obtain
an expansion for $M^\pm(\lambda)^{-1}$.  This 
expansion is longer than the corresponding
expansion in \cite{EGG}, which allows us to give a more detailed long-time expansion
for the evolution.  

\begin{prop}\label{pwave exp}

	If there is a resonance of the first
	kind at zero, then for small $\lambda$
	\begin{align*}
		M^{\pm}(\lambda)^{-1} = f^\pm(\lambda)S_1+K_0
		+f_1^\pm(\lambda)K_1
		+f_2^\pm(\lambda)K_2
		+\widetilde O_2\bigg(\frac{1}{(\log\lambda)^3}\bigg), 
	\end{align*}
where $K_j$ are $\lambda$ independent, finite rank, absolutely bounded operators, and
$$
	f_j^\pm (\lambda) =\widetilde O_2\bigg(\frac{1}{(\log\lambda)^j}\bigg), \qquad
	j=1,2.
$$
\end{prop}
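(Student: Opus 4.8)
The plan is to apply Lemma~\ref{JNlemma} with $A = M^\pm(\lambda)$ and $S = S_1$, using the formula
$$
M^\pm(\lambda)^{-1} = (M^\pm(\lambda)+S_1)^{-1} + (M^\pm(\lambda)+S_1)^{-1} S_1 B_\pm(\lambda)^{-1} S_1 (M^\pm(\lambda)+S_1)^{-1},
$$
and then to feed in the three ingredients already assembled in the excerpt: the expansion of $(M^\pm(\lambda)+S_1)^{-1}$ from Lemma~\ref{M+S1inverse} (equation \eqref{M plus S}), the identity $B_\pm(\lambda)^{-1} = f^\pm(\lambda) S_1$ from \eqref{Binv def}, and the scalar behavior of $f^\pm(\lambda)$ from \eqref{f defn}. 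The point of the proposition — getting three $\lambda$-dependent scalar corrections $f^\pm, f_1^\pm, f_2^\pm$ rather than the shorter expansion in \cite{EGG} — is that we must carry the $(M^\pm(\lambda)+S_1)^{-1}$ expansion to one more order and then expand the scalar $f^\pm(\lambda)$ in inverse powers of $\log\lambda$.

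First I would substitute \eqref{M plus S} into the second term. Since $S_1 D_0 = D_0 S_1 = S_1$ (Remark ii after Definition~\ref{resondef}), the leading piece of $S_1(M^\pm(\lambda)+S_1)^{-1}$ is just $S_1$, with corrections $-\widetilde g_1(\lambda) S_1 P D_0$, $-\lambda^2 S_1 v G_1 v D_0$, and $\widetilde O_2(\lambda^{2+})$; similarly on the right. Multiplying the two expansions and sandwiching $f^\pm(\lambda) S_1$, the main term is $f^\pm(\lambda) S_1$, and every correction carries either a factor $\widetilde g_1(\lambda) = \|V\|_1 g_1^\pm(\lambda) = \widetilde O_2(\lambda^2\log\lambda)$ or a factor $\lambda^2$, multiplied by $f^\pm(\lambda) = \widetilde O_2(\lambda^{-2}(\log\lambda)^{-1})$. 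So e.g. $f^\pm(\lambda)\,\widetilde g_1(\lambda)\, S_1 P D_0 = \widetilde O_2(\log\lambda) \cdot$(finite rank)$\cdot \widetilde O_2(\lambda^{-2}(\log\lambda)^{-1})$ — this naively looks like it could grow, so the care here is to track the $\log\lambda$ powers honestly. In fact $f^\pm(\lambda) \cdot \lambda^2 = \widetilde O_2((\log\lambda)^{-1})$ and $f^\pm(\lambda)\cdot\widetilde g_1(\lambda) = \widetilde O_2(1)$, and expanding $1/(a\log\lambda + z + \widetilde O_2(\lambda^{0+}))$ as $\frac{1}{a\log\lambda}(1 - \frac{z}{a\log\lambda} + \dots)$ produces the scalar functions $f_1^\pm(\lambda)=\widetilde O_2((\log\lambda)^{-1})$ and $f_2^\pm(\lambda)=\widetilde O_2((\log\lambda)^{-2})$ attached to fixed finite-rank operators $K_1, K_2$ built from $S_1, P, D_0, vG_1v$. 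Collecting: $K_0$ is the $\lambda$-independent part coming from $D_0$ (the first term $(M^\pm(\lambda)+S_1)^{-1}$ contributes $D_0$ plus $\widetilde O_2(\lambda^{2-})$, the latter absorbed in the error), together with the $\lambda^0(\log\lambda)^0$ piece of the second term; $K_1, K_2$ collect the $(\log\lambda)^{-1}$ and $(\log\lambda)^{-2}$ pieces; everything of size $\widetilde O_2((\log\lambda)^{-3})$ or smaller (including all genuine powers of $\lambda$, which are $\widetilde O_2(\lambda^{0+})$ hence a fortiori $\widetilde O_2((\log\lambda)^{-3})$) goes into the error term.

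The main obstacle is bookkeeping under the $\widetilde O_2$ convention, in which differentiation costs a power of $\lambda$: one must verify that taking up to two $\lambda$-derivatives of products like $f^\pm(\lambda)\widetilde g_1(\lambda)$ does not degrade the claimed orders. This is routine since $f^\pm$ is of the form $\lambda^{-2} h(\lambda)$ with $h = (a\log\lambda + z + \widetilde O_2(\lambda^{0+}))^{-1} = \widetilde O_2((\log\lambda)^{-1})$, and $\widetilde g_1 = \widetilde O_2(\lambda^2\log\lambda)$, so each $\lambda$-derivative of the product of $\lambda^2$ (or $\widetilde g_1$) with $f^\pm$ lands with the right power — the division-by-$\lambda$ rule for $\widetilde O$ is exactly compatible with the product being $\widetilde O_2((\log\lambda)^{-j})$. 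A secondary point is to confirm the operators $K_j$ are absolutely bounded: $S_1$ is finite rank, $D_0$ is absolutely bounded by Lemma~\ref{d0bounded}, $P$ is rank one, $vG_1v$ is Hilbert--Schmidt by Lemma~\ref{lem:M_exp}, and products and finite sums of absolutely bounded operators are absolutely bounded, so each $K_j$ is finite rank and absolutely bounded as claimed.
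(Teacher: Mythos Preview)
Your proposal is correct and follows essentially the same route as the paper's proof: apply Lemma~\ref{JNlemma} with $B_\pm^{-1}=f^\pm(\lambda)S_1$, use the expansion \eqref{M plus S} together with $S_1D_0=D_0S_1=S_1$ to isolate the correction terms $f^\pm\widetilde g_1^\pm[D_0PS_1+S_1PD_0]$ and $f^\pm\lambda^2[D_0vG_1vS_1+S_1vG_1vD_0]$, and then expand the scalars $f^\pm\widetilde g_1^\pm$ and $f^\pm\lambda^2$ in inverse powers of $\log\lambda$ (the paper writes this as $a_0+\frac{a_1}{\log\lambda+z_2^\pm}+\frac{a_2}{(\log\lambda+z_2^\pm)^2}+\widetilde O_2((\log\lambda)^{-3})$, which is just a repackaging of your $\frac{1}{a\log\lambda}(1-\frac{z}{a\log\lambda}+\dots)$). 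The only bookkeeping point to flag is that $K_0$ contains $D_0$ and so is absolutely bounded but not literally finite rank; this is harmless for the downstream application since $K_0$ is real-valued and $\lambda$-independent, hence cancels in $M^+(\lambda)^{-1}-M^-(\lambda)^{-1}$.
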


\begin{proof}

	Using Lemma~\ref{JNlemma} and \eqref{Binv def}, we see
	that
	\begin{align*}
	M^{\pm}(\lambda)^{-1}& =(M^\pm(\lambda)+S_1)^{-1}
	+(M^\pm(\lambda)+S_1)^{-1}S_1B_{\pm}(\lambda)^{-1}S_1
	(M^\pm(\lambda)+S_1)^{-1}\\
	& =(M^\pm(\lambda)+S_1)^{-1}
	+f^\pm(\lambda) (M^\pm(\lambda)+S_1)^{-1} S_1
	(M^\pm(\lambda)+S_1)^{-1}.
	\end{align*}

	 The representation \eqref{M plus Sv2} in Lemma~\ref{M+S1inverse}   takes care of the first summand.  Using \eqref{M plus S}, and $S_1D_0=D_0S_1=S_1$, we have
	\begin{align*}
		(M^\pm(\lambda)+S_1)^{-1}S_1
		&=S_1
		-\widetilde g_1^{\pm}(\lambda)D_0PS_1
		-\lambda^2 D_0vG_1vS_1
		+\widetilde O_2(\lambda^{2+}),\\
		S_1(M^\pm(\lambda)+S_1)^{-1}
		&=S_1
		-\widetilde g_1^{\pm}(\lambda)S_1PD_0
		-\lambda^2 S_1vG_1vD_0
		+\widetilde O_2(\lambda^{2+}).
	\end{align*}
	When an error term of size $\widetilde{O}_2(\lambda^{2+})$ interacts with 
$f^\pm(\lambda)$, the product satisfies $\widetilde O_2(\lambda^{2+})f^\pm(\lambda) = \widetilde O_2(\lambda^{0+})$, 
which satisfies the desired error bound for small $\lambda$.
	For the remaining terms
	$$-\widetilde g_1^{\pm}(\lambda)f^{\pm}(\lambda)
		[D_0PS_1+S_1PD_0]
		-\lambda^2 f^{\pm}(\lambda)[D_0vG_1vS_1+
		S_1vG_1vD_0]$$
For small $\lambda$, by a Taylor expansion,
\begin{align}
	\widetilde g_1^{\pm}(\lambda) f^{\pm}(\lambda)\nn
	&=\frac{\widetilde g_1^{\pm}(\lambda)}{c_1\widetilde g_1^{\pm}(\lambda)+c_2\lambda^2+\widetilde O_2
	(\lambda^{2+})}=\bigg(\frac{1}{c_1+\frac{c_2}{a_2\log \lambda+z_2^\pm}+\widetilde O_2(\lambda^{0+})}\bigg)\\
	&=a_0+\frac{a_1}{\log \lambda +z_2^\pm}\label{fgexp}
	+\frac{a_2}{(\log \lambda +z_2^\pm)^2}
	+ \widetilde O_2((\log\lambda)^{-3}), \text{ and}\\
	\lambda^2 f^{\pm}(\lambda)&=\frac{1}{a\log \lambda
	+z^{\pm}+\widetilde O_2(\lambda^{0+})}=
	\frac{b_1}{\log \lambda +z_2^\pm}
	+\frac{b_2}{(\log \lambda +z_2^\pm)^2}
	+ \widetilde O_2((\log\lambda)^{-3}).\nn
\end{align}
Where $a_j, b_j$ are real-valued constants, the exact values are unimportant for our
analysis.
Since $S_1$ is a rank one projection, the operators 
$K_j$ are all finite rank, and hence also absolutely bounded.

\end{proof}

To attain better time-decay, we use 

\begin{lemma}\label{log decay2}

	If $\mathcal E(\lambda)=\widetilde O_2((\log \lambda)^{-k})$, then  for $k\geq 2$,
	$$
		\bigg|\int_0^\infty e^{it\lambda^2}
		\lambda \chi(\lambda)
		\mathcal E(\lambda)\, d\lambda\bigg| 
		\les \frac{1}{t(\log t)^{k-1}}, \qquad t>2.
	$$

\end{lemma}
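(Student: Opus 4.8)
The plan is to reduce the oscillatory integral to a one-dimensional stationary phase estimate. Write $\mathcal E(\lambda)=\widetilde O_2((\log\lambda)^{-k})$ and integrate by parts once in the integral
$$
I(t)=\int_0^\infty e^{it\lambda^2}\lambda\chi(\lambda)\mathcal E(\lambda)\,d\lambda.
$$
Since $\frac{d}{d\lambda}e^{it\lambda^2}=2it\lambda e^{it\lambda^2}$, one has $\lambda e^{it\lambda^2}=\frac{1}{2it}\frac{d}{d\lambda}e^{it\lambda^2}$, so integrating by parts gives
$$
I(t)=-\frac{1}{2it}\int_0^\infty e^{it\lambda^2}\frac{d}{d\lambda}\big[\chi(\lambda)\mathcal E(\lambda)\big]\,d\lambda,
$$
with no boundary terms because $\chi$ is compactly supported away from $\lambda=2\lambda_1$ and, at $\lambda=0$, $\mathcal E(\lambda)=\widetilde O_2((\log\lambda)^{-k})\to 0$. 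First I would check that the boundary term at $\lambda=0$ genuinely vanishes: $|\mathcal E(\lambda)|\les |\log\lambda|^{-k}\to 0$ as $\lambda\to 0^+$ since $k\geq 2>0$, so this is fine.

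Next I would estimate the resulting integral. Using the $\widetilde O_2$ hypothesis, $\chi'(\lambda)\mathcal E(\lambda)$ is bounded and compactly supported in, say, $[\lambda_1,2\lambda_1]$ (away from zero), so its contribution to $I(t)$ is $O(1/t)$, which is better than the claimed bound for $t>2$. The main term is
$$
-\frac{1}{2it}\int_0^\infty e^{it\lambda^2}\chi(\lambda)\mathcal E'(\lambda)\,d\lambda,
$$
where $\mathcal E'(\lambda)=\widetilde O_1(\lambda^{-1}(\log\lambda)^{-k})$ by the $\widetilde O_2$ assumption on $\mathcal E$. The remaining task is to bound $\int_0^\infty e^{it\lambda^2}\chi(\lambda)\mathcal E'(\lambda)\,d\lambda$ by $(\log t)^{-(k-1)}$, which would give the full bound $t^{-1}(\log t)^{-(k-1)}$.

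For this last integral I would split the $\lambda$-interval at the critical scale $\lambda\sim t^{-1/2}$. On $\{0<\lambda<t^{-1/2}\}$ one estimates directly: $|\mathcal E'(\lambda)|\les \lambda^{-1}|\log\lambda|^{-k}$, and $\int_0^{t^{-1/2}}\lambda^{-1}|\log\lambda|^{-k}\,d\lambda$ is a convergent integral (since $k\geq 2$) whose value is $\les |\log(t^{-1/2})|^{-(k-1)}\sim (\log t)^{-(k-1)}$ — this is exactly the mechanism that produces the logarithmic gain, via $\int \frac{d\lambda}{\lambda|\log\lambda|^k}=\frac{-1}{(k-1)|\log\lambda|^{k-1}}$. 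On $\{\lambda>t^{-1/2}\}$ I would integrate by parts once more (again using $\lambda e^{it\lambda^2}=\frac{1}{2it}(e^{it\lambda^2})'$ after inserting a factor $\lambda/\lambda$), picking up a boundary term at $\lambda=t^{-1/2}$ of size $\frac{1}{t}\cdot t^{1/2}\cdot |\mathcal E'(t^{-1/2})|\les t^{-1/2}\cdot t^{1/2}|\log t|^{-k}\les(\log t)^{-k}$ and a bulk term $\frac1t\int_{t^{-1/2}}^{2\lambda_1}\big|\frac{d}{d\lambda}(\lambda^{-1}\chi\mathcal E')\big|\,d\lambda\les \frac1t\int_{t^{-1/2}}^{2\lambda_1}\lambda^{-2}|\log\lambda|^{-k}\,d\lambda\les \frac1t\cdot t^{1/2}|\log t|^{-k}=t^{-1/2}|\log t|^{-k}$, both of which are far smaller than $(\log t)^{-(k-1)}$ for $t>2$. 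Collecting the pieces yields $|I(t)|\les t^{-1}(\log t)^{-(k-1)}$.

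The main obstacle is bookkeeping rather than conceptual: one must track carefully that every time a $\lambda$-derivative lands on $\mathcal E$ it only costs a factor $\lambda^{-1}$ (this is precisely what the $\widetilde O_2(\lambda^{-1}(\log\lambda)^{-k})$-type statement, built into the $\widetilde O$ convention in Section~\ref{sec:exp}, guarantees), and that the endpoint contributions at $\lambda\sim t^{-1/2}$ never beat the $(\log t)^{-(k-1)}$ term. A secondary subtlety is ensuring the $\lambda\to 0$ boundary terms vanish at each integration by parts; this is automatic here because every factor present carries at least one power of $\lambda$ (from the original $\lambda\,d\lambda$ and from $\mathcal E'$'s structure) or tends to zero logarithmically, so no boundary term survives. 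Since $\mathcal E$ is only assumed to be $\widetilde O_2$, I would be careful to use at most two integrations by parts in total.
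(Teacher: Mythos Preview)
Your strategy is essentially the paper's: split at $\lambda\sim t^{-1/2}$, estimate the small-$\lambda$ piece directly using $\int_0^{t^{-1/2}}\frac{d\lambda}{\lambda|\log\lambda|^k}\les(\log t)^{-(k-1)}$, and integrate by parts on the rest. The only cosmetic difference is that you integrate by parts once on the whole integral first and then split, whereas the paper splits first and integrates by parts twice on $[t^{-1/2},\infty)$. Either ordering works, but two slips in your writeup need correcting.

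First, and more seriously: the contribution of $\chi'(\lambda)\mathcal E(\lambda)$ is \emph{not} ``better than the claimed bound.'' After your first integration by parts this term contributes $O(1/t)$, but the target is $t^{-1}(\log t)^{-(k-1)}$, which is \emph{smaller} than $1/t$ for $t>e$. You must integrate by parts once more on this piece (it is supported in $[\lambda_1,2\lambda_1]$ where $\lambda^{-1}$ is harmless, and you only need $\mathcal E'$, still within the $\widetilde O_2$ hypothesis) to get $O(t^{-2})$, which then suffices. The paper avoids this issue because its $\chi'$ terms appear only after two integrations by parts and already carry $t^{-2}$.

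Second, a bookkeeping slip: in the bulk term after your second integration by parts you claim $\big|\partial_\lambda(\lambda^{-1}\chi\mathcal E')\big|\les \lambda^{-2}|\log\lambda|^{-k}$, but since $|\mathcal E'|\les\lambda^{-1}|\log\lambda|^{-k}$ and $|\mathcal E''|\les\lambda^{-2}|\log\lambda|^{-k}$ the correct bound is $\lambda^{-3}|\log\lambda|^{-k}$. With the correct power one still gets (splitting at $t^{-1/4}$ as the paper does)
\[
\frac{1}{t}\int_{t^{-1/2}}^{2\lambda_1}\lambda^{-3}|\log\lambda|^{-k}\,d\lambda
\les (\log t)^{-k}+t^{-1/2}\les(\log t)^{-(k-1)},
\]
so the conclusion survives, but your intermediate estimate $t^{-1/2}|\log t|^{-k}$ is not what the computation actually gives.
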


\begin{proof}

	We first divide the integral into two pieces,
	\begin{align*}
		\int_0^\infty e^{it\lambda^2}
		\lambda \chi(\lambda)
		\mathcal E(\lambda)\, d\lambda
		=\int_0^{t^{-1/2}} e^{it\lambda^2}
		\lambda \chi(\lambda)
		\mathcal E(\lambda)\, d\lambda
		+\int_{t^{-1/2}}^\infty e^{it\lambda^2}
		\lambda \chi(\lambda)
		\mathcal E(\lambda)\, d\lambda
	\end{align*}
	For the first integral one cannot utilize the
	oscillation of the Gaussian, instead we use
	\begin{align*}
		\bigg| \int_0^{t^{-1/2}} e^{it\lambda^2}
		\lambda \chi(\lambda)
		\mathcal E(\lambda)\, d\lambda\bigg|
		&\les \int_0^{t^{-1/2}} \frac{\lambda}{(\log \lambda)^k}\, d\lambda
		\les \int_0^{t^{-1/2}} \frac{\lambda^2}{\lambda (\log \lambda)^k}\, d\lambda\\
		&\les \frac{1}{t}\int_0^{t^{-1/2}} \frac{1}{\lambda (\log \lambda)^k}\, d\lambda
		\les \frac{1}{t(\log t)^{k-1}}.
	\end{align*}
	For the second integral, we utilize the oscillation
	by integrating by parts twice
	to see
	\begin{multline*}
		\bigg| \int_{t^{-1/2}}^\infty e^{it\lambda^2}
		\lambda \chi(\lambda)
		\mathcal E(\lambda)\, d\lambda\bigg|
		= \bigg|\frac{\mathcal E(t^{-1/2})}{2it}+\frac{1}{2it}
		\int_{t^{-1/2}}^\infty e^{it\lambda^2}
		\frac{d}{d\lambda}\big(\chi(\lambda)
		\mathcal E(\lambda)\big) \, d\lambda\bigg|\\
		\les \frac{|\mathcal E(t^{-1/2})|}{t}+
		\frac{|\mathcal E'(t^{-1/2})|}{t^{\f32}}
		+\frac{1}{t^2}\int_{t^{-1/2}}^\infty
		\bigg| \partial_\lambda
		\bigg( \frac{\mathcal E'(\lambda)}{\lambda}
		\bigg) \bigg|\, d\lambda
		\\
		\les \frac{1}{t(\log t)^k}+\frac{1}{t(\log t)^{k+1}}+\frac{1}{t^2}\int_{t^{-1/2}}^{t^{-1/4}} \frac{1}{\lambda^3 |\log t|^k}\, d\lambda+
		\frac{1}{t^2} \int_{t^{-\f14}}^{\f12} \frac{1}{\lambda^3}\, d\lambda \les
		\frac{1}{t(\log t)^k}.
	\end{multline*}
	Here we used that the integral converges on
	$[\f12,\infty)$.

\end{proof}

One can similarly prove bounds with $(t\log(\log t))^{-1}$ if
$k=1$ and $1/t$ if $k=0$.

\begin{lemma}

	In the case of a resonance of the first kind,
	dim$\{\psi \in L^{2,0+} \, :\, H\psi =0\}=1$.  Furthermore,
	the integral kernel of the operator $P_r:=G_0VG_0vS_1vG_0VG_0$ satisfies the
	identity
	$$
		P_r(x,y)=\psi(x)\psi(y), \qquad 
		\textrm{where } H\psi =0, \quad  \textrm{and }
		\quad 	\la v\psi, v\psi \ra=1.
	$$

\end{lemma}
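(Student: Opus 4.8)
The plan is to identify the one‑dimensional space $S_1L^2(\R^4)=\text{Null}\,T$ with the zero‑energy resonance space through the standard correspondence $\phi\mapsto -G_0v\phi$, then extract the rank‑one kernel of $P_r$ by a short computation built on the relation $T\phi=0$, and finally verify the normalization $\la v\psi,v\psi\ra=1$. First I would invoke Corollary~\ref{ranks} to fix $\text{Null}\,T=\text{span}\{\phi\}$ with $\phi$ real‑valued (the kernel of $T$ is real) and $\|\phi\|_2=1$, so that $S_1=\la\phi,\cdot\ra\phi$. From $T\phi=(U+vG_0v)\phi=0$, multiplying by $U$ gives $\phi=-UvG_0v\phi$, and then multiplying by $v$ and using $vUv=V$ (as a multiplication operator) gives $v\phi=-VG_0v\phi$. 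Set $\psi:=-G_0v\phi$. Applying $-\Delta$ yields $-\Delta\psi=-v\phi=VG_0v\phi=-V\psi$, i.e.\ $H\psi=0$; the mapping properties of $G_0$ on the rapidly decaying function $v\phi$ (valid once $\beta$ is large enough) place $\psi$ in the weighted class $\la\cdot\ra^{0-}\psi\in L^2$, while, because a resonance of the first kind forces $\la v,\phi\ra=\int v\phi\neq 0$, the leading behavior of $\psi$ at infinity is a nonzero multiple of $|x|^{-2}$, so $\psi\notin L^2(\R^4)$. Conversely, any $\psi$ solving $H\psi=0$ in this class satisfies $-\Delta\psi=-V\psi$, and uniqueness of the decaying solution of Poisson's equation forces $\psi=-G_0V\psi=-G_0v(Uv\psi)$, so $\phi:=Uv\psi\in\text{Null}\,T$. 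These two maps are mutually inverse, so the resonance space has dimension $\dim\text{Null}\,T=\text{rank}\,S_1=1$.

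Next I would compute $P_r=G_0VG_0\,vS_1v\,G_0VG_0$ directly. Since $S_1$ has integral kernel $\phi(x)\phi(y)$, the operator $vS_1v$ has kernel $v(x)\phi(x)\,\phi(y)v(y)$, and composing with $G_0VG_0$ on each side and using that $G_0VG_0$ is symmetric gives
$$
  P_r(x,y)=\big(G_0VG_0v\phi\big)(x)\,\big(G_0VG_0v\phi\big)(y).
$$
Now $v\phi=-VG_0v\phi$, so applying $G_0$ yields $G_0VG_0v\phi=-G_0v\phi=\psi$. Hence $P_r(x,y)=\psi(x)\psi(y)$ with $H\psi=0$, as claimed.

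For the normalization, from $\phi=-UvG_0v\phi$ one gets $-vG_0v\phi=U\phi$, that is $v\psi=v(-G_0v\phi)=U\phi$; therefore $\la v\psi,v\psi\ra=\la U\phi,U\phi\ra=\|\phi\|_2^2=1$ with our choice of $\phi$. (Equivalently, one need not normalize $\phi$: writing $S_1=\|\phi\|_2^{-2}\la\phi,\cdot\ra\phi$ rescales $\psi$ by $\|\phi\|_2^{-1}$, which leaves $P_r(x,y)=\psi(x)\psi(y)$ intact and makes $\la v\psi,v\psi\ra=1$ automatic.) The algebra in all of this is short; the one genuinely delicate ingredient is the functional‑analytic content of the correspondence in the first paragraph — the precise decay of $\psi=-G_0v\phi$ and the uniqueness statement for $-\Delta\psi=-V\psi$ on the weighted $L^2$ scale, which is where the largeness of $\beta$ enters. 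Rather than reprove this, I would cite the four‑dimensional resonance characterization of \cite{EGG}, so that the only new content here is the explicit identification of the rank‑one kernel $P_r(x,y)=\psi(x)\psi(y)$ and the verification that the natural normalization of $\psi$ is exactly $\la v\psi,v\psi\ra=1$.
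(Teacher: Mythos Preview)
Your proof is correct and follows essentially the same route as the paper's: identify $S_1L^2$ with the one-dimensional resonance space via the correspondence $\phi\leftrightarrow\psi$ (with $\phi=w\psi$, equivalently $\psi=-G_0v\phi$), then read off the rank-one kernel of $P_r$ from $S_1=\phi\otimes\phi$. The only organizational difference is that the paper first collapses $G_0VG_0vS_1vG_0VG_0$ to $G_0vS_1vG_0$ using the operator identity $S_1=-wG_0vS_1=-S_1vG_0w$ (which follows from $TS_1=0$), and then computes $G_0v\phi=G_0V\psi=-\psi$; you instead compute $G_0VG_0v\phi=\psi$ directly from $v\phi=-VG_0v\phi$, which amounts to the same algebra. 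Your explicit verification of the normalization $\la v\psi,v\psi\ra=\la U\phi,U\phi\ra=\|\phi\|_2^2=1$ is a nice addition; in the paper this is left implicit in the choice $\|\phi\|_2=1$ together with $\phi=w\psi$.
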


\begin{proof}

	Corollary~\ref{ranks} and the fact that $S_2=0$ in this
	case establishes the first claim.  For the second,
	we first note that
	\begin{align}\label{S1trick}
		S_1(U+vG_0v)=0 \qquad \Rightarrow \qquad
		S_1=-S_1vG_0w, \qquad \textrm{ and }\qquad
		S_1=-wG_0vS_1.
	\end{align}
	So that $G_0VG_0vS_1vG_0VG_0=G_0vS_1vG_0$.  
	Now, since $S_1$ is a one dimensional projection, we
	have 
	$
		S_1 f(x)=\phi(x) \la f, \phi \ra 
	$
	where we take $\phi\in S_1L^2(\R^4)$ such that $\|\phi \|_2=1.$
	Furthermore, Lemma~\ref{lem:resonance} gives us 
	that $\phi=w\psi$ with $H\psi =0$.
	Noting that $(-\Delta+V)\psi=0$ is equivalent to
	$(I+G_0V)\psi=0$, we have
	\begin{align*}
		G_0vS_1vG_0 (x,y) &= G_0v(x,x_1)\phi(x_1) \phi(y_1)vG_0(y_1,y) 
		                                       = [G_0v \omega \psi ](x) [G_0v \omega \psi](y) \\
		                                 &= [G_0 V \psi ](x) [G_0 V \psi](y) = \psi(x) \psi(y)       
        \end{align*}

\end{proof}
We note that since $P_r$ is rank one, it is absolutely  bounded.
Noting that from
the expansion in Lemma~\ref{lem:M_exp} and its proof,
we have
\begin{align}\label{resolv wtd}
	R_0^\pm(\lambda^2)(x,y)=G_0(x,y)+g_1^\pm(\lambda)
	+\lambda^2 G_1(x,y)
	+\la x\ra^{\f12}\la y\ra^{\f12}
	\widetilde O_2(\lambda^{\f52}).
\end{align}
We are now ready to prove Theorem~\ref{thm:res1}.

\begin{proof}[Proof of Theorem~\ref{thm:res1}]

The proof follows from Lemma~\ref{bsprop} and the expansions in Proposition~\ref{pwave exp}.  
By \eqref{resolvid1}, and the discussion following 
Lemma~\ref{bsprop}, we need
now only bound the contribution of
\begin{align}
	[R_0^+VR_0^+vM^+(\lambda)vR_0^+VR_0^+
	-R_0^-VR_0^-vM^-(\lambda)vR_0^-VR_0^-](x,y)\label{mdiff1}
\end{align}
to the Stone formula, \eqref{stone}.  Using the algebraic fact,
\begin{align}\label{alg fact}
	\prod_{k=0}^MA_k^+-\prod_{k=0}^M A_k^-
	=\sum_{\ell=0}^M \bigg(\prod_{k=0}^{\ell-1}A_k^-\bigg)
	\big(A_\ell^+-A_\ell^-\big)\bigg(
	\prod_{k=\ell+1}^M A_k^+\bigg),
\end{align}
there are  two cases.  Either the `+/-'
difference acts on a free resolvent, or on 
$M^\pm(\lambda)^{-1}$.

By (\ref{resolv wtd}) we have
\begin{align}\label{R0diff}
	R_0^+(\lambda^2)(x,y)-R_0^-(\lambda^2)(x,y)=
	c\lambda^2+\la x\ra^{\f12}\la y\ra^{\f12}
	\widetilde O_2(\lambda^{\f52}).
\end{align}
When the `+/-' difference acts on a free resolvent,
we can write
$$
	M^\pm(\lambda)^{-1}=f^\pm(\lambda)S_1+\widetilde O_2(1).
$$
We get, in this case, contributions of the form
\begin{align}\label{C1 defn}
	\lambda^2 f^\pm(\lambda) C_1+\la x\ra^{\f12}
	\la y\ra^{\f12}\widetilde O_2(\lambda^{\f12-}),
\end{align}
here $C_1=1VG_0vS_1vG_0VG_0+G_0V1vS_1vG_0VG_0+G_0VG_0vS_1v1VG_0
+G_0VG_0vS_1vG_0V1$ is a finite rank operator.
By \eqref{fgexp} and a simple integration by parts, 
noting that $\partial_\lambda (\log \lambda)^{-1}$ is
integrable on $[0,\lambda_1]$,
we have that contribution
of $C_1$ to the Stone formula can be bounded by $t^{-1}$.
The error term's contribution to \eqref{stone} is of the
form
$$
	\la x\ra^{\f12}
		\la y\ra^{\f12}
	\int_0^\infty e^{it\lambda} \chi(\lambda)\widetilde{O}_2(\lambda^{\f32-})\, 
	d\lambda,
$$
which can be bounded by $\la x\ra^{\f12}\la y\ra^{\f12}t^{-1-}$ using
Lemma~\ref{lem:fauxIBP}.

On the other hand, if the '+/-' difference acts on 
$M^\pm(\lambda)^{-1}$, we have
\begin{align*}
	M^{+}(\lambda)^{-1} -M^{-}(\lambda)^{-1}&= [f^+-f^-](\lambda)S_1
	+[f_1^+-f_1^-](\lambda)K_1
	+[f_2^+-f_2^-](\lambda)K_2\\
	&+\widetilde O_2\bigg(\frac{1}{(\log\lambda)^3}\bigg), 
\end{align*}
There are now four subcases to consider.  
First, using the representation \eqref{resolv wtd},
if all the free resolvents contribute $G_0$, we have
to bound the contribution of
\begin{align*}
	[f^+-f^-](\lambda)P_r+\widetilde O_2(1/\log\lambda)C_2
	+\widetilde O_2\bigg(\frac{1}{(\log\lambda)^3}\bigg)
\end{align*}
Here $C_2=G_0VG_0vK_1vG_0VG_0+G_0VG_0vK_2vG_0VG_0$ is a
finite rank operator.  By Lemma~\ref{log decay}, the
first term's contribution to the Stone formula is bounded
by $1/\log t$.  The second term is bounded by $1/t$
and the final term is bounded by $t^{-1}(\log t)^{-2}$
by Lemma~\ref{log decay2}.  We note that the contribution
of this error term, when all of its surronding free resolvents contribute $G_0$ defines the operator
$A_3$ in Theorem~\ref{thm:res1}.  We can see from the
expansion for $M^{\pm}(\lambda)^{-1}$, that we cannot
expect this operator to be finite rank.

Another case to consider is when one free resolvent contributes $g_1(\lambda)$ while the other contribute
$G_0$.  Recall that $g_1(\lambda)$ comes with an operator whose integral kernel is a constant.  
In this case, we have to control
\begin{align*}
	g_1^\pm(\lambda)[f^+-f^-](\lambda) C_1+\widetilde{O}_2(\lambda^{2-}).
\end{align*}
Here $C_1$ the same operator encountered in \eqref{C1 defn}.
We note that $g_1^\pm(\lambda)[f^+-f^-](\lambda)=\widetilde O_1(1/\log \lambda)$, thus the first term's contribution to
the Stone formula is bounded by $1/t$ by Lemma~\ref{log decay}, while the contribution of the second term is
$O(t^{-1-})$ by Lemma~\ref{lem:fauxIBP}.

Another case to consider is when one free resolvent contributes $\lambda^2 G_1$ while the other contribute
$G_0$.  In this case, we have to control
\begin{align*}
	\lambda^2[f^+-f^-](\lambda) C_3+\widetilde{O}_2(\lambda^{2-}).
\end{align*}
Here $C_3=G_1VG_0vS_1vG_0VG_0+G_0VG_1vS_1vG_0VG_0+G_0VG_0vS_1vG_1VG_0
+G_0VG_0vS_1vG_0VG_1$ is a finite rank operator.
By Lemma~\ref{log decay2}, the first term's contribution 
to the Stone formula is bounded by $1/(t\log t)$.  
Due to the presence of the operator $G_1$, whose integral kernel is
$G_1(x,y)=-\frac{1}{8\pi^2} \log |x-y|$,
this bound
is understood as mapping logarithmically weighted spaces,
see the discussion around \eqref{log+ disc} below.

Finally, if the
error term in \eqref{resolv wtd} in any free resolvent
is encountered, or if less than three $G_0$'s are encountered,
its contribution is bounded by $\la x\ra^{\f12} \la y \ra^{\f12} \widetilde O_2(\lambda^{\frac{1}{2}})$, which
contributes $t^{-\f32}$ to the Stone formula as an operator
from $L^{1,\f12}$ to $L^{\infty,-\f12}$.

To close the proof, we must establish that the spatial
integrals converge.  For the contribution of
$C_1$, we note that due to the similarity of the four 
constitutent operators, and 
$$
	|1VG_0|(x,z_2)=C\int_{\R^4}\frac{V(z_1)}{|z_1-x|^2}
	\, dz_1 \les \int_{\R^4}\frac{\la z_1 \ra^{-4-}}{|z_1-x|^2}\, dz_1\les 1
$$
uniformly in $x$
by Lemma~\ref{EG:Lem}.  Similarly,
\begin{align*}
	|G_0VG_0|(z_3,y)&=C\int_{\R^4}\frac{V(z_4)}{|z_3-z_4|^2|z_4-y|^2}
	\, dz_4\\ 
	&\les \int_{\R^4}\frac{\la z_4 \ra^{-4-}}{|z_4-y|^2}\left(\frac{1}{|z_3-z_4|^{2+}}
	+\frac{1}{|z_3-z_4|^{2-}}
	\right)
	\, dz_4\les 1+\frac{1}{|z_3-y|^{0+}}.
\end{align*}
Under the assumptions on $V$, we have that
$\sup_{y\in \R^4} \|v(\cdot)(1+|y-\cdot|^{-0-})\|_2\les 1 $.  Thus,
\begin{align}
	\sup_{x,y}|1VG_0vS_1vG_0VG_0(x,y)|\les \sup_{x,y}
	\|1VG_0(x,\cdot)v\|_2 \||S_1|\|_{2\to 2} \|vG_0VG_0(\cdot, y)\|_2\les 1.
\end{align}
Similarly, one can bound the contributions of $P_r$ and
$C_2$.  For $C_3$ we must take some care to account
for the operator $G_1$.  Note that $G_1(x,z)=c \log|x-z|$,
when $G_1$ is contributed by the
leading or lagging free resolvent, we use that
$\log |x-z|=\log^-|x-z|+\log^+|x-z|$.  Since
$\log^{-}|x-z|\les |x-z|^{0-}$, we can control it as
in the previous operators.  
For $\log^+$, we note that
$\log^+$ is an increasing function and 
$|x-y|\leq |x|+|y|\leq 2\max(|x|,|y|)$.  Then, 
\begin{multline}\label{log+ disc}
	|G_1VG_0|(x,z_2)=C \int_{\R^4} \frac{\log |x-z_1| V(z_1)}{|z_1-z_2|^2}
	\, dz_1\\
	\les (1+\log^+ |x|) \int_{\R^4}
	\frac{V(z_1)}{|z_1-z_2|^2}\left(1+
	\log^+ |z_1|+\frac{1}{|z_1-x|^{0+}}\right)\, dz_1
	\les 1+\log^+|x|.
\end{multline}
Here we used the decay of the potential to control the
$\log^+|z_1|$ growth and Lemma~\ref{EG:Lem} to establish
the boundedness of the resulting integrals.  A similar
analysis holds for the polynomially weighted error terms.

\end{proof}

\section{Resonance of the second kind}\label{sec:second}

In this section we prove Theorem~\ref{thm:main} in the case of a resonance of the second 
kind, when $S_1\neq 0$, and $S_1-S_2=0$.  Recall that this means there is an eigenvalue at
zero energy, but no resonance. 
In particular,
we prove

\begin{theorem}\label{thm:res2}

	Suppose that $|V(x)|\les \la x\ra^{-8-}$.
	If there is a resonance of the second kind at zero,
	then 
	$$
		e^{itH}\chi(H) P_{ac}(H)=
		O(1/t) A_1+ O(t^{-1-})A_3 ,
		\qquad t>2.
	$$
	$A_1:L^1\to L^\infty$,
	is a finite rank operator,
	and $A_3:L^{1,\f12}\to L^{\infty,-\f12}$.

\end{theorem}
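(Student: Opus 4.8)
The plan is to insert the resolvent expansion \eqref{resolvid1}, equivalently the split \eqref{bs finite}--\eqref{bs tail}, into Stone's formula \eqref{stone}. By Lemma~\ref{bsprop} the finite Born terms \eqref{bs finite} contribute $O(|t|^{-2})$ as an operator $L^1\to L^\infty$, which is absorbed into $A_3$ since $|t|^{-2}\les|t|^{-1-}$ for $t>2$; so the entire problem is to estimate the contribution of the tail $\mathcal L^\pm(\lambda)M^\pm(\lambda)^{-1}\mathcal R^\pm(\lambda)$, with $\mathcal L^\pm(\lambda)=R_0^\pm(\lambda^2)VR_0^\pm(\lambda^2)v$ and $\mathcal R^\pm(\lambda)=vR_0^\pm(\lambda^2)VR_0^\pm(\lambda^2)$. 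For this I first record the facts forced by a resonance of the second kind: since $S_2=S_1$ the operator $T_1=S_1PS_1$ vanishes on $S_1L^2$, and because $P=\|V\|_1^{-1}v\la v,\cdot\ra$ this means $\la v,\phi\ra=0$ for all $\phi\in S_1L^2$, i.e. $\int_{\R^4}V\psi\,dx=0$ for every zero-energy eigenfunction $\psi$, and equivalently $S_1$ annihilates the function $v$ and $S_1P=PS_1=0$ as operators. Combining this with \eqref{S1trick} and $G_0v\phi=-\psi$ (with $\phi=w\psi$) shows $G_0vS_1$ maps into the span of the $L^2$-eigenfunctions, which together with $\int V\psi=0$ will be what removes the logarithmic growth of the $G_1$-type operators so that $A_1$ maps $L^1\to L^\infty$. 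Finally, the operator governing the second reduction, $T_2:=S_1vG_1vS_1$, is invertible on $S_1L^2$; this uses the mean-zero property $\la v,\phi\ra=0$ and the definiteness of the quadratic form $\phi\mapsto\la v\phi,G_1(v\phi)\ra$ (a Fourier computation using that $\log|\cdot|$ has symbol $\sim|\xi|^{-4}$ in $\R^4$ on mean-zero data).

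With these in hand I apply Lemma~\ref{JNlemma} with $S=S_1$. Using Lemma~\ref{M+S1inverse} and $S_1P=PS_1=0$ one gets $B_\pm(\lambda)=\lambda^2T_2+\widetilde O_2(\lambda^{2+})$, hence $B_\pm(\lambda)^{-1}=\lambda^{-2}T_2^{-1}+\widetilde O_2(\lambda^{0-})$, where the most singular part of the error is a real, $\pm$-independent multiple of $\log\lambda$ (coming only from the $a_2\log\lambda$ coefficient of $g_2^\pm$) and the first $\pm$-dependent term is, up to a constant, the $\lambda$-independent finite-rank operator $T_2^{-1}(S_1vG_2vS_1)T_2^{-1}$. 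Feeding this into Lemma~\ref{JNlemma} and using $(M^\pm+S_1)^{-1}S_1=S_1+\widetilde O_2(\lambda^{2-})$ (the $\widetilde g_1^\pm D_0PS_1$ term vanishes) yields
$$
M^\pm(\lambda)^{-1}=\lambda^{-2}T_2^{-1}+(\log\lambda)\Lambda+\Gamma^\pm+\widetilde O_2(\lambda^{2-}),
$$
with $T_2^{-1}$ and $\Lambda$ sandwiched by $S_1$, $\Lambda$ real and $\pm$-independent, and $\Gamma^\pm$ $\lambda$-independent. The decisive point is that the two pieces that are too singular to integrate against $\lambda e^{it\lambda^2}$, namely the $\lambda^{-2}$ term and the $\log\lambda$ term, are real and $\pm$-independent, so they cancel in the spectral density $R_V^+(\lambda^2)-R_V^-(\lambda^2)$; what survives from $M^+(\lambda)^{-1}-M^-(\lambda)^{-1}$ is the $\lambda$-independent finite-rank operator $\Gamma^+-\Gamma^-=c\,T_2^{-1}(S_1vG_2vS_1)T_2^{-1}$ plus $\widetilde O_2(\lambda^{2-})$.

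To conclude I substitute into the tail and expand via \eqref{alg fact} and the weighted resolvent expansion \eqref{resolv wtd}. At each $S_1$-interface one uses $[1]\,v\,S_1=0$ and $[1]\,VG_0v\,S_1=-(\int V\psi)[\cdots]=0$ to kill every logarithmically singular $g_1^\pm(\lambda)$ factor adjacent to $S_1$; consequently $R_0^\pm VR_0^\pm vS_1$ and $S_1vR_0^\pm VR_0^\pm$ are $\widetilde O_2(1)$ with leading terms $G_0VG_0vS_1$ and $S_1vG_0VG_0$, which by the eigenfunction structure are finite-rank and bounded $L^1\to L^\infty$. Then: (i) the term of \eqref{alg fact} in which the $+/-$ difference hits $M^\pm(\lambda)^{-1}$ produces, at leading order, the $\lambda$-independent finite-rank operator $A_1=G_0VG_0vS_1(\Gamma^+-\Gamma^-)S_1vG_0VG_0$, whose kernel is a combination of products of zero-energy eigenfunctions, contributing $O(1/t)$ after two integrations by parts; (ii) the term in which the difference hits a free resolvent is, after using $R_0^+-R_0^-=c\lambda^2+\la x\ra^{\f12}\la y\ra^{\f12}\widetilde O_2(\lambda^{\f52})$ and the interface cancellations (so that the dangerous product $\lambda^{-2}\cdot\lambda^2[1]VG_0vS_1$ vanishes), of size $\la x\ra^{\f12}\la y\ra^{\f12}\widetilde O_2(\lambda^{\f12})$, contributing $O(t^{-\f54})=O(t^{-1-})$ between $L^{1,\f12}$ and $L^{\infty,-\f12}$ by Lemma~\ref{lem:fauxIBP}; (iii) the remaining errors (the $\widetilde O_2(\lambda^{2-})$ remainder of $M^\pm(\lambda)^{-1}$, the weighted $\widetilde O_2(\lambda^{\f52})$ resolvent tails, the stand-alone $(M^++S_1)^{-1}-(M^-+S_1)^{-1}=\widetilde O_2(\lambda^{2})$ contribution, and the higher-order finite-rank corrections) all contribute $O(t^{-1-})$ in the same weighted spaces. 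Convergence of the spatial integrals follows from Lemma~\ref{EG:Lem} and $|V(x)|\les\la x\ra^{-8-}$, which also underlies the expansions used here and the Hilbert--Schmidt bounds on $vG_jv$. (The same scheme gives Theorem~\ref{thm:evalcancel}: if additionally $\int x_jV\psi\,dx=0$ then $S_1vG_2vS_1=0$, so $\Gamma^+-\Gamma^-=0$ and the $1/t$ term disappears.)

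I expect the main obstacle to be the bookkeeping in the expansion of $M^\pm(\lambda)^{-1}$ and in substituting it into \eqref{bs tail}: one must (a) establish the invertibility of $T_2$ on $S_1L^2$, and (b) verify both that $\lambda^{-2}$ and $\log\lambda$ are the only pieces too singular to integrate and that they are $\pm$-independent (hence cancel in $R_V^+-R_V^-$), while checking that the orthogonality $\int V\psi=0$ genuinely propagates through the nested products to remove every $g_1^\pm(\lambda)$-singularity at an $S_1$-interface and thereby upgrade $A_1$ from logarithmically weighted spaces to $L^1\to L^\infty$. Once these are in place the remaining estimates are routine applications of Lemmas~\ref{log decay2} and \ref{lem:fauxIBP} and the oscillatory-integral bounds already in the text.
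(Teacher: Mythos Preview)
Your approach is essentially the paper's: insert the expansion of $M^\pm(\lambda)^{-1}$ from Proposition~\ref{prop:Minv2} into the tail \eqref{bs tail}, split the $\pm$ difference via \eqref{alg fact}, and bound each piece with Lemmas~\ref{lem:IBP} and \ref{lem:fauxIBP}. Two remarks. First, the remainder in your expansion of $M^\pm(\lambda)^{-1}$ should be $\widetilde O_2(\lambda^{0+})$, not $\widetilde O_2(\lambda^{2-})$: the $\widetilde O_2(\lambda^{2+})$ correction in $(M^\pm+S_1)^{-1}S_1$ multiplied by the $\lambda^{-2}D_2$ piece of $B_\pm^{-1}$ already lands at order $\lambda^{0+}$. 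This is harmless, since $\widetilde O_2(\lambda^{0+})$ still yields $O(t^{-1-})$ via Lemma~\ref{lem:fauxIBP}. Second, you go slightly further than the paper in case (ii) by invoking the orthogonality $\int V\psi\,dx=0$ (equivalently $1VG_0vS_1=0$, a correct consequence of $PS_2=0$) to kill the would-be constant term $c\,1VG_0vD_2vG_0VG_0$; the paper simply leaves that piece inside $A_1$ without noting it vanishes. Your parenthetical about Theorem~\ref{thm:evalcancel} is only the first step of that argument: having $\Gamma^+-\Gamma^-=0$ removes the $t^{-1}$ term, but upgrading the remaining $O(t^{-1-})$ weighted bound to an unweighted $|t|^{-2}$ requires the substantially finer analysis of Section~\ref{sec:t-2 decay}.
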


Despite the fact that the spectral measure is more
singular as $\lambda \to0$, the lack of resonances
greatly simplifies our expansions for $M^\pm(\lambda)^{-1}$.
Much of this simplification follows from the fact that
$S_1=S_2$, which by \eqref{PS2} shows that $PS_1=0$.  This eliminates 
many of the terms containing powers of $\log\lambda$ in the
expansion of the spectral measure as $\lambda \to 0$.
 
To understand the expansion for
$M^{\pm}(\lambda)^{-1}$ in this case we need  more terms in the expansion of $(M^{\pm}(\lambda)+S_1)^{-1}$ than was provided Lemma~\ref{M+S1inverse}.
From Lemma~\ref{lem:M_exp}, specifically 
\eqref{M2 exp}, we have by a Neumann series expansion
\begin{align}
	(M^{\pm}(\lambda)&+S_1)^{-1}\nn \\
	&=D_0[\mathbbm 1
	+\widetilde g_1^{\pm}(\lambda)PD_0+
	\lambda^2 vG_1vD_0+g_2^{\pm}(\lambda)vG_2vD_0+\lambda^4
	vG_3vD_0+M_2^{\pm}(\lambda)D_0]^{-1}\nn\\
	&=D_0-\widetilde g_1^{\pm}(\lambda) D_0PD_0-\lambda^2
	D_0vG_1vD_0+(\widetilde g_1^{\pm}(\lambda))^2 D_0PD_0PD_0
	\label{MS eval}\\
	&+\lambda^2 \widetilde g_1^{\pm}(\lambda)[D_0PD_0vG_1vD_0
	+D_0vG_1vD_0PD_0]-g_2^{\pm}(\lambda)D_0vG_2vD_0\nn\\
	&-\lambda^4	D_0vG_3vD_0+D_0E_2^{\pm}(\lambda)D_0\nn
\end{align}
with $E_2^{\pm}(\lambda)=\widetilde O_1(\lambda^{4+})$. 

In the case of a resonance of the second kind, we recall
that $S_1=S_2$.  
By  Lemma~\ref{vG1v kernel} below the operator $S_1vG_1vS_1$ is invertible on $S_1L^2$ (which is $S_2L^2$ in this case). We define  $D_2=(S_1vG_1vS_1)^{-1}$ as an operator on
$S_2L^2(\R^4)$.  Noting that $D_2=S_1D_2S_1$, the operator
is finite rank and hence absolutely bounded.

\begin{prop}\label{prop:Minv2}

	If there is a resonance of the second kind at zero,
	then for small $\lambda$
\begin{equation} \label{Minv2}
		M^{\pm}(\lambda)^{-1}=-\frac{D_2}{\lambda^2}
		+\frac{g_2^{\pm}(\lambda)}{\lambda^4}	K_1+K_2+\widetilde O_2(\lambda^{0+})
\end{equation}
	where $K_1, K_2$ are $\lambda$ independent, finite rank operators.

\end{prop}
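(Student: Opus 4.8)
The plan is to invert $M^{\pm}(\lambda)$ with the Jensen--Nenciu identity (Lemma~\ref{JNlemma}), taking $A=M^{\pm}(\lambda)$ and $S=S_1$, exactly as in Section~\ref{sec:first}, but now exploiting the extra cancellation available when $S_1=S_2$. Lemma~\ref{M+S1inverse} (which holds whenever zero is not regular) guarantees that $M^{\pm}(\lambda)+S_1$ is invertible for small $\lambda$, so
\[
M^{\pm}(\lambda)^{-1}=(M^{\pm}(\lambda)+S_1)^{-1}+(M^{\pm}(\lambda)+S_1)^{-1}S_1\,B_{\pm}(\lambda)^{-1}\,S_1(M^{\pm}(\lambda)+S_1)^{-1},
\]
where $B_{\pm}(\lambda)=S_1-S_1(M^{\pm}(\lambda)+S_1)^{-1}S_1$, provided $B_{\pm}(\lambda)$ is invertible on $S_1L^2(\R^4)$. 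Since $\beta$ is large, the Neumann series behind \eqref{MS eval} may be run keeping enough terms that the remainder there satisfies $E_2^{\pm}(\lambda)=\widetilde O_2(\lambda^{4+})$; this two-derivative remainder is what we will need once we divide by $\lambda^2$ twice below.

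First I would compute $B_{\pm}(\lambda)$. Sandwiching \eqref{MS eval} between copies of $S_1$ and using $S_1D_0=D_0S_1=S_1$ together with the identity $PS_1=S_1P=0$ — which holds because $S_1=S_2$, by \eqref{PS2} — every term of \eqref{MS eval} carrying a factor of $P$, in particular all the terms bearing $\widetilde g_1^{\pm}(\lambda)$, $(\widetilde g_1^{\pm}(\lambda))^2$ or $\lambda^2\widetilde g_1^{\pm}(\lambda)$, is annihilated. What remains is
\[
B_{\pm}(\lambda)=-\lambda^2 S_1vG_1vS_1-g_2^{\pm}(\lambda)S_1vG_2vS_1-\lambda^4 S_1vG_3vS_1+\widetilde O_2(\lambda^{4+}),
\]
the overall sign being read off from the signs in \eqref{MS eval}. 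This is precisely where the second-kind case is simpler than the first-kind one: no logarithmic factor $\widetilde g_1^{\pm}(\lambda)$ ever enters $B_{\pm}(\lambda)$, so its leading behaviour is the genuine power $\lambda^2$ rather than $\lambda^2\log\lambda$.

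Next I would invert $B_{\pm}(\lambda)$ on $S_1L^2(\R^4)$. Factoring out $-\lambda^2$ and noting that $\lambda^{-2}g_2^{\pm}(\lambda)=\widetilde O_2(\lambda^{2-})=o(1)$ and $\lambda^2=o(1)$, while $S_1vG_1vS_1$ is invertible on $S_1L^2$ with inverse $D_2$ by Lemma~\ref{vG1v kernel}, a Neumann series gives both the invertibility of $B_{\pm}(\lambda)$ and
\[
B_{\pm}(\lambda)^{-1}=-\frac{1}{\lambda^2}\Big[D_2+\lambda^{-2}g_2^{\pm}(\lambda)\,\Gamma_1+\lambda^2\,\Gamma_2+\widetilde O_2(\lambda^{2+})\Big],
\]
with $\Gamma_1=-D_2(S_1vG_2vS_1)D_2$ and $\Gamma_2=-D_2(S_1vG_3vS_1)D_2$; these are $\lambda$-independent, finite rank, and hence absolutely bounded since $S_1$ is finite rank. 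Reassembling via the Jensen--Nenciu identity, using $(M^{\pm}(\lambda)+S_1)^{-1}=D_0+\widetilde O_2(\lambda^{2-})$ from \eqref{M plus Sv2} and $(M^{\pm}(\lambda)+S_1)^{-1}S_1=S_1+\widetilde O_2(\lambda^{2-})$ — where once more the vanishing of the $P$-term (because $PS_1=0$) keeps this as $S_1$ plus a genuine $O(\lambda^{2-})$ correction rather than a term carrying $\log\lambda$ — the $D_2/\lambda^2$ piece of $B_{\pm}(\lambda)^{-1}$ produces the leading $-D_2/\lambda^2$; the $\lambda^{-2}g_2^{\pm}(\lambda)\Gamma_1$ piece produces $\frac{g_2^{\pm}(\lambda)}{\lambda^4}K_1$; and the $\lambda^2\Gamma_2$ piece, together with $D_0$ and the $O(1)$ cross terms, collects into the $\lambda$-independent finite rank operator $K_2$. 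Every remaining term is $\widetilde O_2(\lambda^{0+})$, which is \eqref{Minv2}.

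The main obstacle is the two-derivative bookkeeping rather than any conceptual difficulty: each division by $\lambda^2$ in $B_{\pm}(\lambda)^{-1}$ costs two orders of $\lambda$ in the remainder's exponent, so the remainders in \eqref{MS eval} and in the two Neumann series must all be of the improved form $\widetilde O_2$ of a power strictly larger than the one being divided out — this is why one needs $E_2^{\pm}(\lambda)=\widetilde O_2(\lambda^{4+})$ and the bracket remainder $\widetilde O_2(\lambda^{2+})$, rather than merely $\widetilde O_2(\lambda^4)$ and $\widetilde O_2(\lambda^2)$. This is afforded by the hypothesis $|V(x)|\les\la x\ra^{-8-}$, which lets one carry the expansions of Lemma~\ref{lem:M_exp} far enough; the remaining ingredients — the algebra of the two Neumann series, the cancellations induced by $PS_1=0$, and the finite-rankness of $D_2$, $K_1$, $K_2$ — are routine.
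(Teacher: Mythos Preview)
Your proof is correct and follows exactly the approach the paper has in mind: the paper itself just cites Proposition~4.2 of \cite{EGG} and remarks that the extra derivative in the $\widetilde O_2(\lambda^{0+})$ error comes for free once one uses the two-derivative error bounds in Lemma~\ref{M+S1inverse} and \eqref{MS eval}, which is precisely what you do.

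One small caution: when you write that ``$D_0$ and the $O(1)$ cross terms collect into the $\lambda$-independent finite rank operator $K_2$'', note that $D_0=(T+S_1)^{-1}$ is a bounded invertible operator on $L^2$ and hence \emph{not} finite rank. The correct conclusion is that $K_2$ is $\lambda$-independent and absolutely bounded (by Lemma~\ref{d0bounded}); the paper's statement overstates this as well. It is harmless here because $K_2$, being real and $\lambda$-independent, cancels in $M^{+}(\lambda)^{-1}-M^{-}(\lambda)^{-1}$ and is absorbed into $\widetilde O_2(\lambda^{0-})$ whenever $M^{\pm}(\lambda)^{-1}$ appears alone in the proof of Theorem~\ref{thm:res2}.
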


We note that the statement and proof of this Proposition are found in 
\cite{EGG}, see Proposition~4.2 with an error term of
$\widetilde O_1(\lambda^{0+})$.  
Here, we need the extra derivative, but we note the
same proof follows noting that the error term from Lemma~\ref{M+S1inverse} has two
derivatives.  In \cite{EGG} only one derivative was needed to get the $t^{-1}$ bound,
we wish to gain more time decay from its contribution which necessitates the spatial weights.
The fact that $K_1,K_2$ are finite rank operators follows from the fact that $S_1$ is finite rank.

\begin{proof}[Proof of Theorem~\ref{thm:res2}]

The proof follows by bounding
the contribution of Proposition~\ref{prop:Minv2}
to the Stone formula, \eqref{stone}.  We to use cancellation between the `+' and `-'
terms in
$$
R_0^+VR_0^+vM^+(\lambda)^{-1}vR_0^+VR_0^+
- R_0^-VR_0^-vM^-(\lambda)^{-1}vR_0^-VR_0^-.
$$
As with resonances of the first kind, we use
the algebraic fact~\eqref{alg fact}.
Two kinds of terms occur in this decomposition, one featuring
the difference $M^+(\lambda)^{-1} - M^-(\lambda)^{-1}$ and
those containing a difference of free resolvents.
For the first case we use Proposition~\ref{prop:Minv2} and that   $g_2^+(\lambda) - g_2^-(\lambda) = c\lambda^4$
to obtain
\begin{align}\label{B diff r2}
	M^+(\lambda)^{-1} - M^-(\lambda)^{-1}
	=cK_1+\widetilde O_2(\lambda^{0+}).
\end{align}
Recalling \eqref{resolv wtd}, we can write
$R_0^\pm(\lambda^2)=G_0
	+\la x\ra^{\f12}\la y\ra^{\f12}
	\widetilde O_2(\lambda^{2-})$
and consider the most singular terms this difference
contributes, i.e.,
\begin{align*}
	G_0VG_0v K_1 vG_0VG_0 +\la x\ra^{\f12}\la y\ra^{\f12}	\widetilde O_2(\lambda^{0+}).
\end{align*}
The time decay of $t^{-1}$ for the first term follows from
Lemma~\ref{lem:IBP} 
and the bound of $\la x\ra^{\f12}\la y\ra^{\f12} t^{-1-}$
for the error term follows from \ref{lem:fauxIBP}.
An analysis of the spatial integrals as in
Theorem~\ref{thm:res1} noting
that $K_1$ is finite rank and hence absolutely bounded finishes the argument.
For the terms of the second kind the difference  of `+' and
`-' terms in \eqref{alg fact} acts on one of the resolvents.
As usual, the most delicate case is of the form
\begin{align}\label{eq:2ndworst}
	(R_0^+(\lambda^2)-R_0^-(\lambda^2))VR_0^+(\lambda^2)v 
	[\eqref{Minv2}]v R_0^+(\lambda^2)VR_0^+(\lambda^2).
\end{align}
Using \eqref{resolv wtd}, we have
$[R_0^+-R_0^-](\lambda^2)(x,y)=c\lambda^2+\la x\ra^{\f12}\la y\ra^{\f12}\widetilde O_2(\lambda^{\f52})$.
We then write $M^\pm(\lambda)=-D_2/\lambda^2+\widetilde O_2(\lambda^{0-})$ to see
$$
	\eqref{eq:2ndworst}=-cVG_0vD_2vG_0VG_0+
	\la x\ra^{\f12}\la y\ra^{\f12}\widetilde O_2(\lambda^{0+}).	
$$
Using Lemma~\ref{lem:IBP}, we see that the first term
contributes $t^{-1}$ to the Stone formula, while the
second term is bounded by $\la x\ra^{\f12}\la y\ra^{\f12} t^{-1-}$ using Lemma~\ref{lem:fauxIBP}.
The remaining terms 
can be bounded similarly. 
The spatial integrals are controlled as in the case of
a resonance of the first kind in 
Theorem~\ref{thm:res1}.

\end{proof}

\section{Resonance of the third kind}   \label{sec:third}
In this section we prove  Theorem~\ref{thm:main} in the case of a resonance of the third
kind, that is when $S_1\neq 0$, $S_2\neq 0$ and $S_1-S_2\neq 0$.  Recall that this means
there are both a zero energy resonance and a zero energy eigenvalue.
   In particular,
we prove
\begin{theorem}\label{thm:res3}
	
	Suppose that $|V(x)|\les \la x\ra^{-8-}$.
	If there is a resonance of the third kind at zero,
	then for $t>2$, 
	$$
		e^{itH}\chi(H) P_{ac}(H)=\varphi(t) A_0+
		O(1/t)A_1+O((t\log t)^{-1})A_2+ O(t^{-1}(\log t)^{-2})A_3+O(t^{-1-})A_4
	$$
	with $A_0,A_1:L^1\to L^\infty$,
	$A_2:L^{1}_w\to L^{\infty}_{w^{-1}}$ finite rank operators, $A_3:L^1\to L^\infty$
	and $A_4:L^{1,\f12}\to L^{\infty,-\f12}$.

\end{theorem}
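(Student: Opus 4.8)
The plan is to run the templates of Sections~\ref{sec:first} and~\ref{sec:second} simultaneously, since a resonance of the third kind exhibits both the logarithmic ``resonance tower'' of Theorem~\ref{thm:res1} and the $\lambda^{-2}$ ``eigenvalue'' singularity of Theorem~\ref{thm:res2}. As before, by \eqref{bs finite}--\eqref{bs tail} and Lemma~\ref{bsprop} the finite Born terms contribute $O(|t|^{-2})$, so it suffices to bound the contribution of
\[
	[R_0^+ V R_0^+ v M^+(\lambda)^{-1} v R_0^+ V R_0^+ - R_0^- V R_0^- v M^-(\lambda)^{-1} v R_0^- V R_0^-](x,y)
\]
to the Stone formula \eqref{stone}. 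Thus everything reduces to a sufficiently detailed expansion of $M^\pm(\lambda)^{-1}$ near $\lambda=0$ carrying two $\lambda$-derivatives.

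To produce that expansion I would apply the Jensen--Nenciu lemma (Lemma~\ref{JNlemma}) twice. First with $A=M^\pm(\lambda)$ and $S=S_1$: Lemma~\ref{M+S1inverse} handles $(M^\pm(\lambda)+S_1)^{-1}$ (it is stated with two derivatives, which is what we need here), and by \eqref{B1} one has $B_\pm(\lambda)=-\widetilde g_1(\lambda)S_1PS_1-\lambda^2 S_1vG_1vS_1+\widetilde O_2(\lambda^{2+})$ on $S_1L^2$. Since $S_2$ is exactly the kernel of $T_1=S_1PS_1$ and $T_1$ is invertible on $(S_1-S_2)L^2$, I would apply Lemma~\ref{JNlemma} a second time with $A=B_\pm(\lambda)$ and $S=S_2$, using $PS_2=S_2P=0$ together with the invertibility of $S_2vG_1vS_2$ on $S_2L^2$, which is what makes $D_2:=(S_2vG_1vS_2)^{-1}$ a finite-rank, absolutely bounded operator. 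The expected outcome is an expansion of the schematic form
\[
	M^\pm(\lambda)^{-1}=-\frac{D_2}{\lambda^2}+\frac{g_2^\pm(\lambda)}{\lambda^4}K_1+f^\pm(\lambda)\,S+K_2+f_1^\pm(\lambda)K_3+f_2^\pm(\lambda)K_4+\widetilde O_2\big((\log\lambda)^{-3}\big),
\]
where $f^\pm(\lambda)$ is as in \eqref{f defn}, $f_j^\pm(\lambda)=\widetilde O_2((\log\lambda)^{-j})$, and $S$, $D_2$, $K_1,\dots,K_4$ are $\lambda$-independent finite-rank (hence absolutely bounded) operators, finiteness of rank coming from $S_1,S_2$ being finite rank.

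Inserting this into \eqref{stone}, I would argue exactly as in the proofs of Theorems~\ref{thm:res1} and~\ref{thm:res2}: use \eqref{alg fact} to split into the case where the $+/-$ difference lands on a free resolvent and the case where it lands on $M^\pm(\lambda)^{-1}$. In the former, \eqref{R0diff} gives $R_0^+-R_0^-=c\lambda^2+\la x\ra^{\f12}\la y\ra^{\f12}\widetilde O_2(\lambda^{\f52})$, and the extra $\lambda^2$ either kills the $\lambda^{-2}$ in $D_2$ (leaving a finite-rank operator like $VG_0vD_2vG_0V$, contributing $O(1/t)$ by Lemma~\ref{lem:IBP}) or upgrades a log term. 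In the latter, the $-D_2/\lambda^2$ and $g_2^\pm(\lambda)/\lambda^4$ pieces are $\pm$-independent up to $c\lambda^4$ and hence cancel or become $O(1)$, while $f^+-f^-=\widetilde O_1((\lambda\log\lambda)^{-2})$ survives; by Lemma~\ref{log decay} its contribution is the $\varphi(t)A_0$ term with $A_0$ finite rank (the analogue of $P_r$, no longer rank one). The $(\log\lambda)^{-1}$, $(\log\lambda)^{-2}$ and $(\log\lambda)^{-3}$ pieces give, via Lemma~\ref{log decay2}, the $O(1/t)A_1$, $O((t\log t)^{-1})A_2$, $O(t^{-1}(\log t)^{-2})A_3$ contributions, with $A_2$ acting between logarithmically weighted spaces because the corresponding operator contains $G_1$ (kernel $\propto \log|x-y|$), handled by splitting $\log=\log^-+\log^+$ as around \eqref{log+ disc}; and the $\la x\ra^{\f12}\la y\ra^{\f12}$-weighted errors give $O(t^{-1-})A_4$ with $A_4:L^{1,\f12}\to L^{\infty,-\f12}$ via Lemma~\ref{lem:fauxIBP}. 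Convergence of the spatial integrals is checked exactly as in the proof of Theorem~\ref{thm:res1}, using $|V(x)|\les\la x\ra^{-8-}$, Lemma~\ref{EG:Lem}, and absolute boundedness of $S_1,S_2,D_2$ and the $K_j$.

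The hard part will be the bookkeeping in the double Jensen--Nenciu iteration, i.e.\ pinning down exactly which powers of $\lambda$ and $\log\lambda$ multiply which finite-rank operator. Three points must be watched: (i) every term of size $\lambda^{-2}$ (or $\lambda^{-4}g_2^\pm(\lambda)$) must come with a $\pm$-independent coefficient, so that it cancels in $M^+-M^-$ or is annihilated by the $\lambda^2$ from a free-resolvent difference --- otherwise one does not even obtain an $L^2$-bounded operator; (ii) the genuinely $\pm$-dependent singular part must be no worse than $f^\pm(\lambda)\sim(\lambda^2\log\lambda)^{-1}$, so that $f^+-f^-$ is integrable against $\lambda\,d\lambda$ near $0$ after one integration by parts (this is where the algebraic identities $S_1=-S_1vG_0w$, $PS_2=0$, and the invertibility of $T_1$ on $(S_1-S_2)L^2$ are essential); and (iii) two $\lambda$-derivatives must survive on every error term, which together with the need for various $vG_jv$ to be Hilbert--Schmidt and to absorb the spatial weights $\la\cdot\ra^{\f12}$ in \eqref{resolv wtd} accounts for the hypothesis $\beta>8$ --- the same ``eigenvalue tax'' already present in Theorem~\ref{thm:res2}. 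Once the expansion is nailed down, the remaining oscillatory-integral estimates are routine reuses of Lemmas~\ref{log decay}, \ref{log decay2}, \ref{lem:IBP} and \ref{lem:fauxIBP}.
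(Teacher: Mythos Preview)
Your proposal is correct and follows the same template as the paper: reduce to \eqref{bs tail} via Lemma~\ref{bsprop}, produce a detailed expansion of $M^\pm(\lambda)^{-1}$, feed it through \eqref{alg fact}, and invoke Lemmas~\ref{log decay}, \ref{log decay2}, \ref{lem:IBP}, \ref{lem:fauxIBP} together with the spatial bounds around \eqref{log+ disc}. The one organizational difference is how $B_\pm(\lambda)$ is inverted on $S_1L^2$.

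You propose a second application of Lemma~\ref{JNlemma} with $A=B_\pm(\lambda)$ and $S=S_2$. The paper instead writes $S_1=S_2+\Gamma$ and applies the Feshbach formula to the $2\times2$ block matrix of $B_\pm(\lambda)$ with respect to $S_1L^2=\Gamma L^2\oplus S_2L^2$; the leading block $A^\pm(\lambda)$ in \eqref{B fesh} is inverted explicitly, yielding $A^\pm(\lambda)^{-1}=\tilde f^\pm(\lambda)S+D_2/\lambda^2$ with $S$ the rank-at-most-two operator in \eqref{S defn}, and a Neumann series handles the remainder. The two routes encode the same linear algebra, but Feshbach is cleaner here because it separates from the outset the $\tilde f^\pm$-singularity (living on $\Gamma L^2$) from the $\lambda^{-2}$-singularity (living on $S_2L^2$); in your route $(B_\pm(\lambda)+S_2)^{-1}$ already carries the $1/\widetilde g_1^\pm$ blow-up in its $\Gamma$-block, so the subsequent bookkeeping is more entangled, exactly as you anticipate in your closing paragraph. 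The paper's resulting expansion (Proposition~\ref{pwave exp3}) matches your schematic form, except that its error is $\widetilde O_2(\lambda^{0+})$ rather than your $\widetilde O_2((\log\lambda)^{-3})$; this is strictly stronger but irrelevant for the theorem, since either error lands in the $A_3$ or $A_4$ bucket.
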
	
	
The expansion in \eqref{MS eval} remains valid, but in this
section we do not have that $S_1P=0$.  Using
\eqref{M2 exp} in Lemma~\ref{lem:M_exp},
we have
\begin{align}
	B^{\pm}(\lambda)&= \widetilde g_1^\pm(\lambda)
	S_1PS_1+\lambda^2 S_1vG_1vS_1 -(\widetilde g_1^\pm(\lambda))^2 S_1PD_0PS_1\nn\\ & \qquad-
	\lambda^2 \widetilde g_1^\pm(\lambda)[S_1PD_0vG_1vS_1
	+S_1vG_1vD_0PS_1] +g_2^\pm(\lambda) S_1vG_2vS_1\nn\\
	&\qquad +\lambda^4 S_1vG_3vS_1+ \widetilde O_2(\lambda^{6-})\label{B3 gammas}
	\\& =: \widetilde g_1^\pm(\lambda)
	S_1PS_1+\lambda^2 S_1vG_1vS_1 +(\widetilde g_1^\pm(\lambda))^2 \Gamma_1 +
	\lambda^2 \widetilde g_1^\pm(\lambda)\Gamma_2 +g_2^\pm(\lambda)\Gamma_3 \nn \\
	&\qquad +\lambda^4 \Gamma_4 + \widetilde O_2(\lambda^{6-}).\nn
\end{align}

 Note that, since $S_2\neq 0$ the kernel of $S_1PS_1$ is non-trivial.  We, therefore, use Feshbach
formula to invert $B^{\pm}(\lambda)$. To do that, we define the operator $\Gamma$ by $S_1=S_2+\Gamma$ and express 
 $B^{\pm}(\lambda)$ with respect
to the decomposition $S_1L^2(\R^4)=S_2L^2(\R^4)\oplus
\Gamma L^2(\R^4)$.
 We also define the finite rank operator $S$ by
\begin{align}\label{S defn}
		S:=\left[
		\begin{array}{cc}
			\Gamma & -\Gamma vG_1vD_2\\
			-D_2vG_1v\Gamma &
			D_2vG_1v\Gamma vG_1v	D_2
		\end{array}
		\right].
\end{align}

  We note that the operator $A_0$ in the statement of Theorem~\ref{thm:res3} has rank at most two. This follows
from the expansions detailed below and the fact that $S$ has rank at most two.  As in the 
previous cases, we give a refinement of the expansion in \cite{EGG}.

\begin{lemma}

In the case of a resonance of the third kind 
we have for small $\lambda$
\begin{align}\label{B-13}
	B^{\pm}(\lambda)^{-1}& =\tilde f^{\pm}(\lambda) S+\frac{D_2}{\lambda^2}+\frac{g_2^\pm(\lambda)}{\lambda^4} F_1+F_2+f_1(\lambda) F_3 + f_2(\lambda)F_4+ \widetilde O_2 ( \lambda^{0+}).
\end{align}
Here $F_j$ are $\lambda$ independent absolutely bounded operators, $\tilde f^+(\lambda)
=(\lambda^2(a\log \lambda +z))^{-1}$ with $a\in \R \setminus \{0\}$ and $z\in \mathbb C \setminus \R$,
with $\tilde f^-(\lambda)=\overline{\tilde f^+(\lambda)}$, 
and $f_j(\lambda)=\widetilde O_2((\log \lambda)^{-j})$.

\end{lemma}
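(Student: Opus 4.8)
The plan is to invert $B^{\pm}(\lambda)$ on $S_1L^2(\R^4)$ by a second application of the Feshbach/Jensen--Nenciu reduction (Lemma~\ref{JNlemma}), this time with the projection $S_2$. Set $\Gamma:=S_1-S_2$ and work in the orthogonal splitting $S_1L^2(\R^4)=S_2L^2(\R^4)\oplus\Gamma L^2(\R^4)$. Two structural facts drive the computation: first, $PS_2=0$ by \eqref{PS2} (hence $S_2P=S_2PS_2=0$), which annihilates every $\widetilde g_1^{\pm}(\lambda)$- and $(\widetilde g_1^{\pm}(\lambda))^2$-term from the $S_2L^2\to S_2L^2$ corner and from the off-diagonal corners; second, $\Gamma P\Gamma$ is invertible on $\Gamma L^2(\R^4)$, seen by noting that $T_1+S_2=S_1PS_1+S_2$ is block-diagonal in this splitting with corners $S_2$ and $\Gamma P\Gamma$, so invertibility of $(T_1+S_2)^{-1}$ on $S_1L^2$ forces that of $\Gamma P\Gamma$. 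Together with the invertibility of $S_2vG_1vS_2$ on $S_2L^2(\R^4)$ (Lemma~\ref{vG1v kernel}), which defines $D_2:=(S_2vG_1vS_2)^{-1}$, these make the two-step reduction go through.

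Carrying it out: from \eqref{B3 gammas} and $S_2P=0$, the block form of $B^{\pm}(\lambda)$ has $S_2L^2$-corner $a(\lambda)=\lambda^2S_2vG_1vS_2+g_2^{\pm}(\lambda)S_2vG_2vS_2+\lambda^4S_2vG_3vS_2+\widetilde O_2(\lambda^{6-})$, $\Gamma L^2$-corner $d(\lambda)=\widetilde g_1^{\pm}(\lambda)\,\Gamma P\Gamma+\lambda^2\Gamma vG_1v\Gamma+\widetilde O_2(\lambda^{4-})$, and off-diagonal corners $\lambda^2S_2vG_1v\Gamma+\widetilde O_2(\lambda^{4-})$ and its adjoint. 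Since $\widetilde g_1^{\pm}(\lambda)=\|V\|_1\lambda^2(a_1\log\lambda+z_1)$ with $a_1\in\R\setminus\{0\}$, $z_1\in\mathbb{C}\setminus\R$, I would factor $\widetilde g_1^{\pm}(\lambda)$ out of $d(\lambda)$ and invert by a Neumann series, getting $d(\lambda)^{-1}=\widetilde g_1^{\pm}(\lambda)^{-1}(\Gamma P\Gamma)^{-1}+(\text{lower order})$, whose leading part has exactly the form $\tilde f^{\pm}(\lambda)\cdot(\text{fixed operator})$ once the $\lambda$-independent scalars are folded into the $a,z$ defining $\tilde f^{\pm}$. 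Next, form the Schur complement $a(\lambda)-b(\lambda)d(\lambda)^{-1}c(\lambda)$; its correction $bd^{-1}c$ carries an extra factor $\lambda^4\widetilde g_1^{\pm}(\lambda)^{-1}=\widetilde O_2(\lambda^2/\log\lambda)$, hence is subleading relative to $\lambda^2S_2vG_1vS_2$, so the Schur complement is invertible for small $\lambda$, and dividing by $\lambda^2$ and running another Neumann series gives its inverse as $\lambda^{-2}D_2-(g_2^{\pm}(\lambda)/\lambda^{4})D_2vG_2vD_2+(\text{lower order})$, using $g_2^{\pm}(\lambda)/\lambda^4=a_2\log\lambda+z_2$.

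Finally I would assemble $B^{\pm}(\lambda)^{-1}$ via the block inversion
\[\begin{pmatrix}a&b\\c&d\end{pmatrix}^{-1}=\begin{pmatrix}(a-bd^{-1}c)^{-1}&-(a-bd^{-1}c)^{-1}bd^{-1}\\-d^{-1}c(a-bd^{-1}c)^{-1}&d^{-1}+d^{-1}c(a-bd^{-1}c)^{-1}bd^{-1}\end{pmatrix}\]
and sort the output by order of decay. The most singular term $\lambda^{-2}D_2$ is the leading Schur-complement inverse; the $\tilde f^{\pm}(\lambda)$-sized piece is built from the leading term of $d^{-1}$, from the off-diagonal entries $-(a-bd^{-1}c)^{-1}bd^{-1}\sim-\tilde f^{\pm}(\lambda)D_2vG_1v\Gamma$ and its adjoint, and from the second-order Schur term $\sim\tilde f^{\pm}(\lambda)D_2vG_1v\Gamma vG_1vD_2$, and these recombine into precisely the finite-rank operator $S$ of \eqref{S defn}; the $(g_2^{\pm}(\lambda)/\lambda^4)F_1$ term is the $g_2^{\pm}S_2vG_2vS_2$-contribution to the Schur-complement inverse noted above; and the remaining $F_2$, $f_1(\lambda)F_3$, $f_2(\lambda)F_4$ and $\widetilde O_2(\lambda^{0+})$ pieces come from expanding every factor to the required order, with the $f_j(\lambda)$ taken to be the full $\widetilde O_2((\log\lambda)^{-j})$ functions of $\log\lambda$ produced by the various Neumann inversions (not the monomials $(\log\lambda)^{-j}$), so that the whole $\log\lambda$-tail is absorbed and the genuine remainder comes only from the $\lambda$-power corrections in \eqref{Mexp3}. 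Two $\lambda$-derivatives are carried throughout, using that $g_1^{\pm},g_2^{\pm}$ and their reciprocals differentiate like division by $\lambda$ while powers of $\log\lambda$ stay bounded, and that $S_1,S_2,D_0,D_2,\Gamma_1,\dots,\Gamma_4$ are all finite rank or absolutely bounded, so $S$ and the $F_j$ are absolutely bounded.

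The hard part is the bookkeeping in this last step: tracking which product of order-$\lambda^{j}(\log\lambda)^{k}$ factors lands in which term of the stated expansion, and verifying that the two-derivative bounds survive the repeated Neumann inversions and products of expansions so that the remainder is genuinely $\widetilde O_2(\lambda^{0+})$. This is where the refined sixth-order resolvent expansion \eqref{Mexp3} — and hence the hypothesis $\beta>8$ — is used, as well as the rank-one structure of $\Gamma$, which collapses $\Gamma P\Gamma$ and the other $\Gamma(\cdot)\Gamma$ operators appearing in $d(\lambda)$ to scalars and thereby lets $\tilde f^{\pm}$ factor out cleanly and the operator $S$ take the explicit form \eqref{S defn}.
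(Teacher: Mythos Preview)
Your approach is correct but genuinely different from the paper's. The paper first isolates the leading part
\[
A^{\pm}(\lambda)=\widetilde g_1^{\pm}(\lambda)S_1PS_1+\lambda^2 S_1vG_1vS_1
\]
and inverts it \emph{exactly} by the Feshbach formula, taking the Schur complement in the $\Gamma$ corner (i.e.\ inverting the $S_2$ block first, which after dividing by $\lambda^2$ is the $\lambda$-independent operator $S_2vG_1vS_2$ with inverse $D_2$). This yields the closed form $A^{\pm}(\lambda)^{-1}=\tilde f^{\pm}(\lambda)S+\lambda^{-2}D_2$ directly, with $S$ as in \eqref{S defn}. Then a single Neumann expansion $B^{-1}=A^{-1}-A^{-1}(B-A)A^{-1}+\widetilde O_2(\lambda^{0+})$ handles the remainder, organized via the cancellations $D_2\Gamma_1=\Gamma_1D_2=D_2\Gamma_2D_2=0$ and the scalar identities \eqref{fgproperties}. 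You instead apply the block inversion to the full $B^{\pm}(\lambda)$ and take the Schur complement in the opposite corner, inverting the $\lambda$-dependent $\Gamma$ block $d(\lambda)$ first. This works, but it forces nested Neumann series, and in particular your Schur complement $s=a-bd^{-1}c$ carries a $1/h_{stu}(\lambda)$ correction whose Neumann inversion produces terms of size $\lambda^{-2}(\log\lambda)^{-k}$ for every $k\geq 1$---none of which are individually $\widetilde O_2(\lambda^{0+})$. These resum geometrically (because $\Gamma$ is rank one, so $D_2XD_2XD_2=cD_2XD_2$), and the resummed coefficient is again of the form $(\lambda^2(a\log\lambda+z'))^{-1}$; you allude to this with the rank-one remark but do not make the resummation step explicit. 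The paper's ordering avoids this issue entirely because the first inversion ($D_2$) is $\lambda$-free and the Schur complement in the one-dimensional $\Gamma$ corner is automatically a scalar. Minor point: the relevant expansion here is \eqref{Mexp2} (which already needs $v\les\langle x\rangle^{-4-}$, i.e.\ $|V|\les\langle x\rangle^{-8-}$), not \eqref{Mexp3}.
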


\begin{proof}

Here we use the fact that $S_2P=PS_2=0$ to
see that the two smallest terms of $B^{\pm}(\lambda)$
with respect to $\lambda$, see \eqref{B3 gammas}, may be written in the block form
\begin{align}\label{B fesh}
	A^{\pm}(\lambda):=\lambda^2 \left[
	\begin{array}{cc}
		\frac{\widetilde g_1^{\pm}(\lambda)}
		{\lambda^2}\Gamma P\Gamma +   
		\Gamma vG_1v\Gamma &
		\Gamma vG_1vS_2 \\
		S_2vG_1v\Gamma &
		S_2vG_1vS_2
	\end{array}
	\right].
\end{align}


Then, by the Feshbach formula we have
\begin{align}
	A^{\pm}(\lambda)^{-1}
	&=\frac{1}{\lambda^2 h^{\pm}(\lambda)}
	\left[
	\begin{array}{cc}
		\Gamma & -\Gamma vG_1vD_2\\
		-D_2vG_1v\Gamma &
		D_2vG_1v\Gamma vG_1v	D_2
	\end{array}
	\right]
	+\frac{D_2}{\lambda^2}\\ \nn
	&=:\tilde f^{\pm}(\lambda) S+\frac{D_2}{\lambda^2}.
\end{align}
Here $\tilde{f}^{\pm}:= (\lambda^2 [a\log\lambda +z])^{-1} $ for some $a \in \R \setminus \{0\}$ and $z \in \mathbb{C}\setminus \R $. One can see \cite{EGG} for the details of this inversion.

By a Neumann series expansion, we obtain
\begin{align*}
	B^{\pm}(\lambda)^{-1}& =A^{\pm}(\lambda)^{-1}
	[\mathbbm 1+(B^{\pm}(\lambda)-A^{\pm}(\lambda))A^{\pm}
	(\lambda)^{-1}]^{-1}\\
	&=A^{\pm}(\lambda)^{-1}-A^{\pm}(\lambda)^{-1}[B^{\pm}(\lambda)-A^{\pm}(\lambda)] A^{\pm}(\lambda)^{-1} +\widetilde O_2(\lambda^{0+}).
\end{align*}
We note that $D_2S_1P=D_2S_2P=0$.  Therefore, the
operators $\Gamma_j$ in the expansion of $B^\pm(\lambda)$
in \eqref{B3 gammas} satisfy
$$
\Gamma_1 D_2=D_2\Gamma_1 =D_2\Gamma_2D_2 =0.
$$
Further recalling \eqref{gi def} for  $\widetilde{g_1}^\pm(\lambda)$ 
and $g_2^\pm(\lambda)$, for small
$\lambda$ we have 
\begin{equation} \label{fgproperties}
\begin{aligned}
\tilde f^\pm(\lambda)\widetilde{g_1}^\pm(\lambda)
& =\f{\widetilde{g_1}^\pm(\lambda)}{\lambda^2 (a\log \lambda +z)}= c_1+\f{z_1}{a\log \lambda +z}, \\
 \frac{\tilde f^\pm(\lambda)}{\lambda^2}  g_2^\pm(\lambda)&=\f{g_2^\pm(\lambda)}{\lambda^4(a\log \lambda +z)}
= c_2+\f{z_2}{a\log \lambda +z},\\
\widetilde f^\pm(\lambda) \lambda^2& =\f 1{a\log\lambda+z}  \\
 [\tilde f^\pm(\lambda)]^2 g_2^\pm(\lambda)&= \f{c_3}{a \log\lambda +z} +\f{z_3}{(a \log\lambda +z)^2} ,
\end{aligned}
\end{equation}
for some (unimportant) constants $c_j, z_j$,
establishes the claim.
\end{proof}

Using Lemma~\ref{JNlemma} and \eqref{B-13}, we have

\begin{prop}\label{pwave exp3}

	If there is a resonance of the third
	kind at zero, then for small $\lambda$
	\begin{align*}
		M^{\pm}(\lambda)^{-1} =  \tilde f^\pm(\lambda) S_1SS_1+\frac{D_2}{\lambda^2}
+\frac{g_2^\pm(\lambda)}{\lambda^4} D_2\Gamma_3D_2+ K_1+f_1(\lambda) K_2 + f_2(\lambda)K_3
+ \widetilde O_2 ( \lambda^{0+}),
	\end{align*}
where the operators are $\lambda$ independent and finite rank except for the error term.
\end{prop}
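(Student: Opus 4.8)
The plan is to apply Lemma~\ref{JNlemma} with $A=M^\pm(\lambda)$ and $S=S_1$, exactly as in the proof of Proposition~\ref{pwave exp}. This gives
$$
M^{\pm}(\lambda)^{-1} =(M^\pm(\lambda)+S_1)^{-1}
	+(M^\pm(\lambda)+S_1)^{-1}S_1B_{\pm}(\lambda)^{-1}S_1
	(M^\pm(\lambda)+S_1)^{-1},
$$
so the task is to substitute the expansion \eqref{B-13} for $B^\pm(\lambda)^{-1}$ and the Neumann-series expansion \eqref{MS eval} for $(M^\pm(\lambda)+S_1)^{-1}$, then multiply everything out and sort the resulting terms by their $\lambda$-behavior. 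First I would record the ``sandwich'' identities $S_1(M^\pm(\lambda)+S_1)^{-1} = S_1D_0\cdots = S_1 + \widetilde{O}_2(\lambda^{2-})$ and its transpose, together with the key absorptions $S_1D_0 = D_0 S_1 = S_1$, $S_1 D_2 = D_2 S_1 = D_2$, and $D_2 S_1 P = D_2 S_2 P = 0$ used already in the lemma above. Because $B^\pm(\lambda)^{-1}$ is flanked on both sides by $S_1(M^\pm(\lambda)+S_1)^{-1}$, every factor $S$, $D_2$, $F_j$ gets conjugated by $S_1$, which is what produces $S_1SS_1$, $D_2$ (already $=S_1D_2S_1$), $D_2\Gamma_3 D_2$ (the $S_1$-conjugate of $F_1$), and the finite-rank $\lambda$-independent operators $K_1,K_2,K_3$ collecting the $\widetilde{O}_2(1)$, $f_1(\lambda)$ and $f_2(\lambda)$ pieces respectively.

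The second step is the bookkeeping on error terms. The first summand $(M^\pm(\lambda)+S_1)^{-1}$ is $D_0+\widetilde O_2(\lambda^{2-})$ by \eqref{M plus Sv2}, which is absorbed into $K_1$ up to the stated error. For the double product, the most singular factor in $B^\pm(\lambda)^{-1}$ is $D_2/\lambda^2$, of size $\widetilde O_2(\lambda^{-2})$; this must be paired with the corrections of size $\widetilde O_2(\lambda^{2-})$ coming from $(M^\pm(\lambda)+S_1)^{-1}-D_0$ on either side, and one checks $\widetilde O_2(\lambda^{-2})\cdot \widetilde O_2(\lambda^{2-}) = \widetilde O_2(\lambda^{0-})$ — but the leading $D_0$-flanked copy is exactly $D_2/\lambda^2$ since $D_2 = S_1 D_2 S_1$ and $S_1 D_0 = S_1$, and the genuinely next-order cross terms involving $\widetilde g_1^\pm(\lambda)D_0PD_0$, $\lambda^2 D_0 v G_1 v D_0$ from \eqref{MS eval} hitting $D_2/\lambda^2$ either vanish (by $D_2 S_1 P = 0$ when the $PD_0$ factor meets $D_2$ through $S_1$) or land in $\widetilde O_2(\lambda^{0+})$ or feed the $f_j(\lambda)K_j$ terms. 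The term $g_2^\pm(\lambda)\lambda^{-4}F_1$ is of size $\widetilde O_2((\log\lambda)^{-1}\lambda^{-2})$; its $D_0$-flanked leading copy is $g_2^\pm(\lambda)\lambda^{-4}D_2\Gamma_3 D_2$, and its cross terms with the $\widetilde O_2(\lambda^{2-})$ corrections are $\widetilde O_2(\lambda^{0-}(\log\lambda)^{-1})\subset \widetilde O_2(\lambda^{0+})$. Finally the $\tilde f^\pm(\lambda)S$ piece, of size $\widetilde O_2(\lambda^{-2}(\log\lambda)^{-1})$, contributes $\tilde f^\pm(\lambda)S_1 S S_1$ at leading order plus cross terms that, using $S=S_1SS_1$ and the powers of $\widetilde g_1^\pm$ available (cf. \eqref{fgproperties}), reduce to combinations $f_1(\lambda)K_j + f_2(\lambda)K_j + \widetilde O_2(\lambda^{0+})$. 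All operators $S$, $D_2$, $\Gamma_3$, the $F_j$ and hence the $K_j$ are finite rank because $S_1$ (and $S_2$) are finite rank, so they are absolutely bounded.

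The main obstacle I expect is not any single estimate but the combinatorial care needed in Step two: one must be sure that when the singular scalar factors $\lambda^{-2}$, $g_2^\pm(\lambda)\lambda^{-4}$, $\tilde f^\pm(\lambda)$ meet the non-leading pieces of $(M^\pm(\lambda)+S_1)^{-1}$ (which carry only the gain $\lambda^{2-}$, not $\lambda^{2+}$), the products still fall into the claimed classes $\widetilde O_2((\log\lambda)^{-1})$, $\widetilde O_2((\log\lambda)^{-2})$, or $\widetilde O_2(\lambda^{0+})$ — and in particular that the dangerous $\lambda^{-2}\cdot\lambda^{2-}=\lambda^{0-}$ and $\lambda^{-2}(\log\lambda)^{-1}\cdot\lambda^{2-}$ combinations are in fact killed by the projection identities $D_2 S_1 P = 0$, $S = S_1 S S_1$, rather than merely being small. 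This is where taking two $\lambda$-derivatives (rather than one, as in \cite{EGG}) matters, since the error term in Lemma~\ref{M+S1inverse} must be differentiated twice; the bound $\widetilde O_2(\lambda^{2-})$ there is exactly what is needed. Once that accounting is done, collecting like scalar coefficients gives the stated five explicit terms plus the $\widetilde O_2(\lambda^{0+})$ remainder, and finite rank of all but the error follows immediately.
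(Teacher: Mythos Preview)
Your approach is essentially the same as the paper's: apply Lemma~\ref{JNlemma} with $A=M^\pm(\lambda)$ and $S=S_1$, substitute \eqref{B-13} for $B^\pm(\lambda)^{-1}$ and the expansion \eqref{M plus S} (or its refinement) for $(M^\pm(\lambda)+S_1)^{-1}$, and sort terms using the projection identities. The paper's own proof is the single sentence ``Using Lemma~\ref{JNlemma} and \eqref{B-13}, we have,'' so your write-up in fact supplies the bookkeeping the paper omits.

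One minor slip: you write that $g_2^\pm(\lambda)\lambda^{-4}F_1$ is of size $\widetilde O_2((\log\lambda)^{-1}\lambda^{-2})$, but from \eqref{gi def} one has $g_2^\pm(\lambda)/\lambda^4 = a_2\log\lambda + z_2 = \widetilde O_2(\lambda^{0-})$, which \emph{grows} logarithmically. This does not harm your argument, since (i) $F_1 = D_2\Gamma_3 D_2$ has $S_2$ on both sides, so the cross term with $\widetilde g_1^\pm D_0 P D_0$ vanishes via $PS_2=0$, and (ii) the cross term with $\lambda^2 D_0 vG_1 v D_0$ gives $\lambda^2 \cdot \widetilde O_2(\lambda^{0-}) = \widetilde O_2(\lambda^{2-}) \subset \widetilde O_2(\lambda^{0+})$; but you should correct the stated size.
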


\begin{proof}[Proof of Theorem~\ref{thm:res3}]

We note that the expansion of  $M^{\pm}(\lambda)^{-1}$ is a sum of terms similar to the ones 
encountered in
Propositions~\ref{pwave exp} and \ref{prop:Minv2}.
Accordingly, the methods used in the proofs of Theorems~\ref{thm:res1} and
\ref{thm:res2} apply with only minor adjustments.

\end{proof}

\section{Eigenvalue only and $P_eVx=0$}\label{sec:t-2 decay}

We consider the evolution when there is a resonance of the third kind, that is an eigenvalue by not resonance at
zero energy, and extra
cancellation between the eigenfunctions and the potential. In particular, we show that
the evolution satisfies the same $|t|^{-2}$ dispersive bound as an operator from 
$L^1$ and $L^\infty$ as
the free evolution.  This bound is motivated by the work in \cite{GGodd,GGeven} which
proved such bounds in higher dimensions $n\geq 5$.  We 
note that the techniques of \cite{GGodd,GGeven} are not
sufficient to obtain the $|t|^{-2}$ bound when $n=4$.   In dimensions $n>4$, one has the
expansion $R_0^\pm(\lambda^2)=G_0+O(\lambda^2)$, in 
dimension $n=4$ we instead have $R_0^\pm(\lambda^2)=G_0+O(\lambda^2(1+\log (\lambda|x-y|)))$.  This small difference introduces
many technical challenges which we  overcome
in this section. 

For the purpose of obtaining this bound, one needs much longer expansion for $M^{\pm}(\lambda)^{-1}$.

\begin{lemma}\label{M+S1inverse long}

	Suppose that zero is not a regular point of the spectrum of  $H=-\Delta+V$, and let $S_1$ be the corresponding
	Riesz projection. Then for   sufficiently small $\lambda_1>0$, the operators
	$M^{\pm}(\lambda)+S_1$ are invertible for all $0<\lambda<\lambda_1$ as bounded operators on $L^2(\R^4)$.
	Further, one has 
	\begin{align}\nn
        (M^{\pm}(\lambda)+S_1)^{-1}&=
        D_0-\widetilde g_1^\pm(\lambda)D_0PD_0-\lambda^2D_0vG_1vD_0
        -g_2^\pm(\lambda)D_0vG_2vD_0\\
        &-\lambda^4 D_0vG_3vD_0
        -g_3^\pm (\lambda)D_0vG_4vD_0-\lambda^6 D_0vG_5vD_0\nn \\
        &+(\widetilde g_1^\pm(\lambda))^2
        D_0PD_0PD_0
        +\lambda^2 \widetilde g_1^\pm(\lambda)
        [D_0PD_0vG_1vD_0+D_0vG_1vD_0PD_0]\nn\\
        &+\lambda^4 \widetilde g_1^\pm(\lambda) [D_0PD_0vG_3vD_0+D_0vG_3vD_0PD_0]\nn\\
        &+\widetilde g_1^\pm(\lambda)g_2^\pm (\lambda)[D_0PD_0vG_2vD_0+D_0vG_2vD_0PD_0]
        +\lambda^4 D_0vG_1vD_0vG_1vD_0\nn\\
        &-(\widetilde g_1^\pm(\lambda))^3D_0PD_0PD_0PD_0
        +\lambda^2 g_2^\pm (\lambda)[D_0vG_1vD_0vG_2vD_0\label{M plus S long}\\
        &+    D_0vG_2vD_0vG_1vD_0]
        -\lambda^2 (\widetilde g_1^\pm(\lambda))^2[D_0PD_0PD_0vG_1vD_0\nn\\
        &+D_0PD_0vG_1vD_0PD_0+D_0vG_1vD_0PD_0PD_0]\nn\\
        &-\lambda^4 \widetilde g_1^\pm (\lambda)
        [D_0PD_0vG_1vD_0vG_1vD_0
        +D_0vG_1vD_0vG_1vD_0PD_0\nn\\
        &+D_0vG_1vD_0PD_0vG_1vD_0]
        -\lambda^6 D_0vG_1vD_0vG_1vD_0vG_1vD_0
        +\widetilde O_2(\lambda^{6+})\nn
	    \end{align}
	as an absolutely bounded operator on $L^2(\R^4)$ 
	provided $v(x)\lesssim \langle x\rangle^{-6-}$.

\end{lemma}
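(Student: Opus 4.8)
The invertibility of $M^\pm(\lambda)+S_1$ on $L^2(\R^4)$ for $0<\lambda<\lambda_1$ is already the content of Lemma~\ref{M+S1inverse} — it rests only on the invertibility of $T+S_1$ together with the fact that $M_0^\pm(\lambda):=M^\pm(\lambda)-T\to 0$ in operator norm as $\lambda\to0$ — so the genuinely new content is just the longer expansion \eqref{M plus S long}. The plan is to write $D_0=(T+S_1)^{-1}$ and, exactly as in \eqref{MS eval}, to use
$$
(M^\pm(\lambda)+S_1)^{-1}=\big((T+S_1)+M_0^\pm(\lambda)\big)^{-1}=\sum_{k\ge0}(-1)^k D_0\big(M_0^\pm(\lambda)D_0\big)^k,
$$
the Neumann series converging in $\mathcal B(L^2)$ for $\lambda$ small since $\|M_0^\pm(\lambda)\|_{HS}\les\lambda^{2-}$. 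Into this I would feed the expansion of $M_0^\pm(\lambda)$ coming from \eqref{resolv expansion4},
$$
M_0^\pm(\lambda)=\|V\|_1 g_1^\pm(\lambda)P+\lambda^2 vG_1v+g_2^\pm(\lambda)vG_2v+\lambda^4 vG_3v+g_3^\pm(\lambda)vG_4v+\lambda^6 vG_5v+M_3^\pm(\lambda),
$$
where, under the hypothesis $v(x)\les\la x\ra^{-6-}$, rerunning the estimate in the proof of Lemma~\ref{lem:M_exp} (splitting into the regimes $\lambda|x-y|\ll1$ and $\lambda|x-y|\gtrsim1$, using the remainder in \eqref{resolv expansion4} in the first and \eqref{R0high} with $\alpha$ large in the second) improves the Hilbert--Schmidt bound on the remainder to $M_3^\pm(\lambda)=\widetilde O_2(\lambda^{6+})$.

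Next I would multiply out and collect by $\lambda$-order, assigning order $2$ to the blocks $g_1^\pm(\lambda)P$ and $\lambda^2vG_1v$, order $4$ to $g_2^\pm(\lambda)vG_2v$ and $\lambda^4vG_3v$, and order $6$ to $g_3^\pm(\lambda)vG_4v$ and $\lambda^6vG_5v$. Every product $D_0(\cdots)D_0(\cdots)D_0\cdots$ whose constituent orders sum to at most $6$ is kept explicitly; everything else — in particular every product containing a factor $M_3^\pm(\lambda)$ — goes into the error term. The point that needs care here is that logarithmic factors must not be discarded prematurely: $(\widetilde g_1^\pm(\lambda))^3\sim\lambda^6(\log\lambda)^3=\widetilde O_2(\lambda^{6-})$, and likewise $\lambda^2(\widetilde g_1^\pm)^2$, $\lambda^4\widetilde g_1^\pm$, $\widetilde g_1^\pm g_2^\pm$, $\lambda^2 g_2^\pm$ are all only $\widetilde O_2(\lambda^{6-})$, so every such term survives in \eqref{M plus S long}; only genuinely higher-order products fall below $\widetilde O_2(\lambda^{6+})$. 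The two-derivative bookkeeping is automatic: differentiating a scalar factor $g_j^\pm$ or $\lambda^{2j}$ costs at most one power of $\lambda$, consistent with the $\widetilde O_2$ conventions, and $M_3^\pm(\lambda)$ already carries two derivatives. Carrying out the collection reproduces \eqref{M plus S long}.

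It then remains to check that every displayed term but the error is absolutely bounded. This follows from Lemma~\ref{d0bounded} ($D_0$ is absolutely bounded), from $P$ being rank one, and from each $vG_jv$ with $1\le j\le5$ being Hilbert--Schmidt: its kernel is dominated by $\la x\ra^{-6-}|x-y|^{j}\big(1+\big|\log|x-y|\big|\big)\la y\ra^{-6-}$, which is square-integrable when $v\les\la x\ra^{-6-}$. Since a composition of absolutely bounded operators is absolutely bounded, every explicit term on the right of \eqref{M plus S long} is absolutely bounded, uniformly for $0<\lambda<\lambda_1$.

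The step I expect to be the main obstacle is the verification that the residual is really $\widetilde O_2(\lambda^{6+})$ and not merely $\widetilde O_2(\lambda^{6-})$. This forces one both to keep \emph{every} product carrying a logarithmic factor whose underlying power is at most $6$ (which is why the expansion is this long), and to upgrade the remainder $M_3^\pm(\lambda)$ produced by \eqref{resolv expansion4} from the $\widetilde O_2(\lambda^{6-})$ bound of Lemma~\ref{lem:M_exp} to $\widetilde O_2(\lambda^{6+})$ — precisely the point where the stronger decay $v\les\la x\ra^{-6-}$ enters, through the interplay between the $\lambda|x-y|\ll1$ and $\lambda|x-y|\gtrsim1$ regimes.
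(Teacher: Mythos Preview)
Your approach is correct and is exactly what the paper does: feed the expansion \eqref{Mexp3} of Lemma~\ref{lem:M_exp} into a Neumann series for $(T+S_1+M_0^\pm(\lambda))^{-1}$ and collect all products of total order at most~$6$. One small correction: the ``upgrade'' you worry about at the end is not actually needed, since the bound in \eqref{Mexp3} already reads $\|\lambda^{-6+j-}\partial_\lambda^j M_3^\pm(\lambda)\|_{HS}\les 1$, which is precisely $M_3^\pm(\lambda)=\widetilde O_2(\lambda^{6+})$ --- the minus sign sits on the \emph{negative} exponent, so the remainder is already strictly smaller than $\lambda^6$ under the hypothesis $v(x)\les\la x\ra^{-6-}$.
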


\begin{proof}
 The proof uses the expansion \eqref{M3 exp} for $M^{\pm}(\lambda)$ up to terms of size $\lambda ^{6}$
 with error term $M_3^\pm(\lambda)$,
 along with a Neumann series expansion that considers
 up to the `$x^3$' term. 
\end{proof}

\begin{rmk} \label{rem:cancel}

When there is an eigenvalue only, we take advantage of
the facts that $S_1=S_2$, $S_1D_0=D_0S_1=S_1$ and
$S_1P=PS_1=0$.  The effect of this is that the leading
terms in $S_1(M+S_1)^{-1}S_1$ containing
only $\widetilde g_1^\pm(\lambda)$, $(\widetilde g_1^\pm(\lambda))^2$, $(\widetilde g_1^\pm(\lambda))^3$,
$\lambda^2 \widetilde g_1^\pm(\lambda)$, 
$\widetilde g_1^\pm(\lambda) g_2^\pm(\lambda)$ and the
$\lambda^4 \widetilde g_1^\pm(\lambda) [D_0PD_0vG_3vD_0+D_0vG_3vD_0PD_0]$ all vanish. 

\end{rmk}


This observation allows us to prove the following.
	     
\begin{lemma} Suppose there is a resonance of the second kind at zero and $P_eVx=0$. If $v(x)\les \la x \ra ^{-6} $ then we have the following expansion. 
 \begin{align}\label{eq:Bin cancel} 
  B^{\pm}(\lambda) ^{-1} = {\f {D_2}{\lambda^2} } +B_1+ {\f{g_3^{\pm}(\lambda)}{\lambda^4}} B_2 +{\f{g_2^{\pm}(\lambda)}{\lambda^2}} B_3+ \widetilde{g}_1^{\pm}( \lambda) B_4 +\lambda^2 B_5+  \widetilde O_2(\lambda^{2+})  
      \end{align}
  where $B_i$ are absolutely bounded operators with real-valued kernels.
     \end{lemma}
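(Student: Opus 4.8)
The plan is to invert $B^\pm(\lambda)$ under the additional orthogonality $P_eVx=0$, working from the block-form expansion of $B^\pm(\lambda)$ in the resonance-of-the-second-kind setting (so $S_1=S_2$ and $PS_1=0$), but carrying one more order of accuracy than in Section~\ref{sec:second}. First I would write out $B^\pm(\lambda)=S_1-S_1(M^\pm(\lambda)+S_1)^{-1}S_1$ using the long expansion of $(M^\pm(\lambda)+S_1)^{-1}$ from Lemma~\ref{M+S1inverse long}. Invoking $S_1D_0=D_0S_1=S_1$ and $S_1P=PS_1=0$, every term in that expansion containing a factor of $\widetilde g_1^\pm(\lambda)$ adjacent to an $S_1$ collapses (Remark~\ref{rem:cancel}), so that
\begin{align*}
	B^\pm(\lambda)&=\lambda^2 S_1vG_1vS_1+g_2^\pm(\lambda)S_1vG_2vS_1+\lambda^4\big(S_1vG_3vS_1-S_1vG_1vD_0vG_1vS_1\big)\\
	&\quad+g_3^\pm(\lambda)S_1vG_4vS_1+\lambda^2 g_2^\pm(\lambda)\big(S_1vG_1vD_0vG_2vS_1+S_1vG_2vD_0vG_1vS_1\big)+\cdots
\end{align*}
up to an error of size $\widetilde O_2(\lambda^{6+})$ (after pulling off leading powers). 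The key new input is that the extra orthogonality $P_eVx=0$ kills another block of terms: I would show, via the identification of $S_1L^2$ with (a $w$-weighted image of) the eigenspace and the representations $S_1=-S_1vG_0w=-wG_0vS_1$, that $S_1vG_2vS_1=0$ (and similarly that the $G_2$-type cross terms vanish). Concretely, the kernel of $G_2$ is a constant multiple of $|x-y|^2=|x|^2-2x\cdot y+|y|^2$; the $|x|^2,|y|^2$ pieces are annihilated because $S_1vG_0w$ maps into functions orthogonal to $V1$ via $P_eV1=0$, and the bilinear $x\cdot y$ piece is annihilated by $P_eVx=0$ on each side. This is the precise analogue of the cancellations exploited in \cite{GGodd,GGeven}.

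With those vanishing statements in hand, the leading term of $B^\pm(\lambda)$ on $S_1L^2$ is $\lambda^2 S_1vG_1vS_1=\lambda^2 D_2^{-1}$, which is invertible on $S_1L^2$ by Lemma~\ref{vG1v kernel}. I would then invert by a Neumann series: write $B^\pm(\lambda)=\lambda^2 D_2^{-1}\big(\mathbbm 1+\lambda^2 D_2 R^\pm(\lambda)\big)$ where $R^\pm(\lambda)$ collects the remaining terms — $g_2^\pm(\lambda)/\lambda^2$ times a finite-rank operator (the surviving $G_2$-free $g_2$ contributions, if any, and the $\lambda^2 g_2^\pm$ cross terms divided down), $g_3^\pm(\lambda)/\lambda^2$ times a finite-rank operator, $\widetilde g_1^\pm(\lambda)$ times a finite-rank operator (arising from products such as $(\widetilde g_1^\pm)^2$ and $\lambda^2\widetilde g_1^\pm$ divided by $\lambda^2$, which the double cancellation does not fully remove at this order), a $\lambda^2$ term, and an $\widetilde O_2(\lambda^{4+})$ remainder. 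Expanding $\big(\mathbbm 1+\lambda^2 D_2 R^\pm(\lambda)\big)^{-1}=\mathbbm 1-\lambda^2 D_2 R^\pm(\lambda)+\widetilde O_2(\lambda^{4+}\cdot\text{slowly varying})$ and multiplying by $\lambda^{-2}D_2$ produces exactly
$$
	B^\pm(\lambda)^{-1}=\frac{D_2}{\lambda^2}+B_1+\frac{g_3^\pm(\lambda)}{\lambda^4}B_2+\frac{g_2^\pm(\lambda)}{\lambda^2}B_3+\widetilde g_1^\pm(\lambda)B_4+\lambda^2 B_5+\widetilde O_2(\lambda^{2+}),
$$
where each $B_i$ is a finite composition of $D_2$, $S_1$, $v$, $G_j$ and $D_0$, hence finite rank (so absolutely bounded) with real-valued kernel; the normalizations of $g_2^\pm,g_3^\pm$ by $\lambda^2,\lambda^4$ are bookkeeping to match the powers appearing after dividing by the leading $\lambda^2$. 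Throughout one must track that each coefficient, after differentiating in $\lambda$ up to twice, still obeys the claimed symbol bounds; this uses the $\widetilde O_2$ calculus for $g_j^\pm(\lambda)$ and the convention that differentiation of $\lambda^j$-type quantities costs a power of $\lambda$, exactly as in the proof of Lemma~\ref{lem:M_exp}.

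The main obstacle I anticipate is twofold. First, verifying the cancellation $S_1vG_2vS_1=0$ (and the matching cross-term cancellations) rigorously: one has to justify that $S_1L^2$ really is spanned by $wv$-images of the eigenfunctions so that the hypotheses $P_eV1=0$, $P_eVx=0$ translate into $\la S_1v\cdot, V1\ra=0$ and $\la S_1v\cdot, Vx_j\ra=0$, and to handle the fact that $G_2$ has an unbounded kernel $|x-y|^2$ so that these pairings are only conditionally meaningful — this is where the decay hypothesis $v(x)\les\la x\ra^{-6}$ is consumed, guaranteeing the relevant moments are finite. Second, bookkeeping the Neumann series so that no term is dropped that is larger than $\widetilde O_2(\lambda^{2+})$: because $\widetilde g_1^\pm(\lambda)=\lambda^2(a_1\log\lambda+z_1)$ contains a $\log\lambda$, products like $\lambda^2\widetilde g_1^\pm(\lambda)\cdot\lambda^{-2}$ are $\widetilde O_2(\lambda^{2}\log\lambda)=\widetilde O_2(\lambda^{2-})$, which is \emph{larger} than the claimed error and must be displayed explicitly as the $\widetilde g_1^\pm(\lambda)B_4$ term — so care is needed to separate the genuinely negligible pieces from the logarithmically-enhanced ones. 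Once those two points are handled, assembling the stated expansion is routine.
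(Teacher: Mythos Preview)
Your approach is the paper's: expand $B^\pm(\lambda)$ via Lemma~\ref{M+S1inverse long}, use $S_1P=PS_1=0$ to kill most $\widetilde g_1^\pm$-terms, prove the $G_2$-cancellation from $P_eVx=0$ and $P_eV1=0$, then invert by a Neumann series around $\lambda^2 S_1vG_1vS_1$. Your cancellation $S_1vG_2vS_1=0$ is correct and is equivalent to the paper's $D_2vG_2vD_2=0$ (since $D_2$ is invertible on $S_1L^2$).

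Two bookkeeping points to fix. First, the $G_2$-cross terms $S_1vG_1vD_0vG_2vS_1+S_1vG_2vD_0vG_1vS_1$ do \emph{not} vanish: the orthogonality only annihilates $G_2$ when it is flanked on both sides by $S_1$, and here a $D_0$ intervenes. This is harmless---those terms are exactly what populate $B_3$---but your parenthetical later already hedges this, so just drop the vanishing claim. Second, your identification of the source of $\widetilde g_1^\pm B_4$ is off: the terms $(\widetilde g_1^\pm)^2$ and $\lambda^2\widetilde g_1^\pm$ in \eqref{M plus S long} \emph{do} die under $S_1(\,\cdot\,)S_1$ because each has $P$ adjacent to an outer $D_0$. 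The surviving $\widetilde g_1^\pm$-contribution comes from $-\lambda^4\widetilde g_1^\pm D_0vG_1vD_0PD_0vG_1vD_0$, where $P$ sits between inner $D_0$'s and is shielded from $S_1$; after dividing by the leading $\lambda^2$ this produces $\widetilde g_1^\pm D_2C_4D_2$.
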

  \begin{proof}
  Note that  by the identities $S_1D_0=D_0S_1=S_1$ and
$S_1P=PS_1=0$ in Remark~\ref{rem:cancel} many terms in \eqref{M plus S long} cancels and recalling that by
Lemma~\ref{vG1v kernel} the operator
$D_2=(S_1vG_1vS_1)^{-1}$ is bounded when $S_1=S_2$, we obtain
  \begin{align*} 
        B^{\pm}(\lambda)^{-1}& = [
        -\lambda^2S_1vG_1vS_1
        -g_2^\pm(\lambda)S_1vG_2vS_1 \\
         & \hspace{10mm}+ \lambda^4 C_1+g_3^\pm (\lambda)C_2+\lambda^2 g_2^{\pm} (\lambda) C_3 + \lambda^4 \widetilde{g}_1^{\pm}(\lambda) C_4 + \lambda^6 C_5 +\widetilde O_2(\lambda^{6+})]^{-1} \\  
         & = -\lambda^{-2} D_2  [ \mathbbm 1 + \lambda^{-2} g_2^\pm(\lambda)S_1vG_2vS_1D_2+ \lambda^2 C_1D_2\\
         & \hspace{10mm} +\lambda^{-2} g_3^\pm (\lambda)C_2D_2+ g_2^{\pm} (\lambda) C_3D_2 + \lambda^2 \widetilde{g}_1^{\pm}(\lambda) C_4D_2 + \lambda^4 C_5D_2 +\widetilde O_2(\lambda^{4+}) ]^{-1}   .
	\end{align*}
Here the operators $C_i$'s can be written explicitly, however, in our analysis it is enough to know that the decay assumption on $v(x)$ ensures the boundedness of their Hilbert-Schmidt norms.
 
To effectively invert the above expression in a 
Neumann series, we first recall  $D_2= S_1D_2S_1$ and $S_1=-wG_0vS_1$. Also, by Lemma~\ref{lem:eigenspace} we have $ P_e=G_0vS_2D_2S_2vG_0$. 

Using these and noting that in this section $S_1=S_2$ , we have 
 \begin{align} \label{eq D1}  D_2=S_1D_2S_1= wG_0vS_1D_2S_1vG_0w = w P_e w.
  \end{align} 
Further assuming $P_eVx=0$, recalling that
$G_2(x,y)=|x-y|^2=(x-y)\cdot(x-y)$, we have
   \begin{multline} \label{P_eVx cancel}
      D_2vG_2vD_2= w P_eV[x^2 -2x\cdot y + y^2] V P_e w \\ = wP_eV x^2
1V P_ew - 2wP_eV x \cdot yV P_ew + wP_eV 1y^2V P_ew =0 
   \end{multline} 
Note that using \eqref{P_eVx cancel} the term $\lambda^{-2} g_2^\pm(\lambda)D_1S_1vG_2vS_1$ is zero. Hence, we obtain  \eqref{eq:Bin cancel}. 
   \end{proof}
\begin{prop}\label{prop:Minvcanc}
 Assume $P_eVx=0$. If there is a resonance of the second kind at zero, then for small $\lambda$ we have
\begin{align} \label{eq:M inverse cancel}
M^{\pm}(\lambda)^{-1} = -\f{D_1}{\lambda^2} + M_1+ {\f{g_3^{\pm}(\lambda)}{\lambda^4}} M_2 +{\f{g_2^{\pm}(\lambda)}{\lambda^2}} M_3+ \widetilde{g}_1^{\pm}( \lambda) M_4 +\lambda^2 M_5+  \widetilde O_2(\lambda^{2+})
     \end{align}
where $M_i$ are $\lambda$ independent and finite rank operators.  
  \end{prop}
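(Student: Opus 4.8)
The plan is to invert $M^{\pm}(\lambda)$ via the Jensen--Nenciu identity, Lemma~\ref{JNlemma}, with $A=M^{\pm}(\lambda)$ and $S=S_1$. This is legitimate: Lemma~\ref{M+S1inverse long} supplies the invertibility of $M^{\pm}(\lambda)+S_1$ on $L^2(\R^4)$ together with the long expansion \eqref{M plus S long} (remainder $\widetilde O_2(\lambda^{6+})$), and the preceding lemma supplies the invertibility of $B^{\pm}(\lambda)$ on $S_1L^2$ together with the expansion \eqref{eq:Bin cancel}. One then has
\begin{equation*}
	M^{\pm}(\lambda)^{-1}=(M^{\pm}(\lambda)+S_1)^{-1}+(M^{\pm}(\lambda)+S_1)^{-1}S_1\,B^{\pm}(\lambda)^{-1}\,S_1\,(M^{\pm}(\lambda)+S_1)^{-1},
\end{equation*}
so the proof reduces to substituting \eqref{M plus S long} and \eqref{eq:Bin cancel}, multiplying out, and collecting the monomials by their scalar prefactors: the prefactors that survive are $1$, $\lambda^{-2}$, $\lambda^{2}$, $\widetilde g_1^{\pm}(\lambda)$, $g_2^{\pm}(\lambda)\lambda^{-2}$ and $g_3^{\pm}(\lambda)\lambda^{-4}$, while everything finer lands in $\widetilde O_2(\lambda^{2+})$ in the differentiation-as-division convention of Section~\ref{sec:exp}.

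The first step is to simplify the one-sided products $S_1(M^{\pm}(\lambda)+S_1)^{-1}$ and $(M^{\pm}(\lambda)+S_1)^{-1}S_1$. Here the resonance-of-the-second-kind hypothesis enters through $S_1=S_2$, $S_1D_0=D_0S_1=S_1$ and, crucially, $S_1P=PS_1=0$ (Remark~\ref{rem:cancel}): every monomial of \eqref{M plus S long} carrying a factor $P$ adjacent to the outer $D_0$ is annihilated, so that what survives is $S_1$ at order $1$, then $-\lambda^{2}S_1vG_1vD_0$, $-g_2^{\pm}(\lambda)S_1vG_2vD_0$, and corrections of size $\widetilde O_2(\lambda^{4-})$, and symmetrically on the right. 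Substituting these together with \eqref{eq:Bin cancel} into the Jensen--Nenciu formula and multiplying out, one then collects by scalar prefactor. The $\lambda^{-2}$ piece gives the stated leading term $-D_1/\lambda^2$, where $D_1$ is the operator $D_2=(S_1vG_1vS_1)^{-1}=wP_ew$ of \eqref{eq:Bin cancel} and \eqref{eq D1}; the explicit $B_1,\dots,B_5$ of \eqref{eq:Bin cancel} conjugated by $D_0$, the cross-products of the $\lambda^{-2}$ of $B^{\pm}(\lambda)^{-1}$ against the $\lambda^{2}$- and $g_2^{\pm}$-order corrections of the two sandwiching factors, and the $\lambda$-independent, $\widetilde g_1^{\pm}(\lambda)$- and $\lambda^{2}$-order terms of the bare summand $(M^{\pm}(\lambda)+S_1)^{-1}$, together produce $M_1$, $g_3^{\pm}(\lambda)\lambda^{-4}M_2$, $g_2^{\pm}(\lambda)\lambda^{-2}M_3$, $\widetilde g_1^{\pm}(\lambda)M_4$ and $\lambda^{2}M_5$; and everything else carries a scalar of class $\widetilde O_2(\lambda^{2+})$ --- for instance $g_2^{\pm}$, $g_3^{\pm}$, $(\widetilde g_1^{\pm})^{2}$, $(\widetilde g_1^{\pm})^{3}$, $\lambda^{2}g_2^{\pm}$, $\lambda^{4}$, $\lambda^{6}$, or an order-$\widetilde O_2(\lambda^{6+})$ correction divided by $\lambda^{2}$ --- and drops into the error term.

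The step I expect to be the main obstacle is the bookkeeping: one must verify that the $\lambda^{-2}$ carried by $B^{\pm}(\lambda)^{-1}$, multiplied against the correction terms of $(M^{\pm}(\lambda)+S_1)^{-1}$, never yields anything worse than the six named scalar types. This is exactly why the \emph{long} expansion \eqref{M plus S long} is needed rather than the shorter ones of Lemma~\ref{M+S1inverse}: its $\widetilde O_2(\lambda^{6+})$ remainder survives division by $\lambda^{2}$ as $\widetilde O_2(\lambda^{2+})$. One must also check that the logarithmically singular products combine into precisely $\widetilde g_1^{\pm}$, $g_2^{\pm}/\lambda^{2}$ and $g_3^{\pm}/\lambda^{4}$, spawning no new scalars. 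The hypothesis $P_eVx=0$ is not invoked again here: it has already done its work inside \eqref{eq:Bin cancel}, through the vanishing $D_2vG_2vD_2=0$ of \eqref{P_eVx cancel}, which is the only place a $g_2^{\pm}/\lambda^{4}$ term could otherwise have entered. Finally, the $\lambda$-singular and logarithmic operator coefficients all factor through the finite-rank projection $S_1$ and are therefore finite rank; the only $\lambda$-independent operator coefficients that fail to be finite rank are $D_0$ (in the $\lambda^{0}$ term) and $D_0vG_1vD_0$ (in the $\lambda^{2}$ term), together with their higher-order analogues, all carried over unchanged from the bare summand $(M^{\pm}(\lambda)+S_1)^{-1}$; these are the leading terms of the regular-case expansion of $M^{\pm}(\lambda)^{-1}$, whose contribution to the Stone formula \eqref{stone} is $O(|t|^{-2})$ by the remark at the end of Section~\ref{sec:exp}. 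The algebra is otherwise entirely parallel to the inversions already carried out in Propositions~\ref{pwave exp} and \ref{prop:Minv2}.
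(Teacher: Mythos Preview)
Your approach is exactly the paper's: apply Lemma~\ref{JNlemma}, feed in the long expansion \eqref{M plus S long} and the inverse \eqref{eq:Bin cancel}, simplify the one-sided products $(M^\pm+S_1)^{-1}S_1$ and $S_1(M^\pm+S_1)^{-1}$ using $S_1P=PS_1=0$, and multiply out. The paper's proof is literally this, stated in one line after displaying the two one-sided products.

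One point needs tightening. You truncate the one-sided products at $S_1-\lambda^2 S_1vG_1vD_0-g_2^\pm(\lambda)S_1vG_2vD_0$ with remainder ``corrections of size $\widetilde O_2(\lambda^{4-})$''. Taken literally this is not enough: after multiplication by the leading $D_2/\lambda^2$ of $B^\pm(\lambda)^{-1}$, a $\widetilde O_2(\lambda^{4-})$ remainder becomes $\widetilde O_2(\lambda^{2-})$, which does not fit into the claimed $\widetilde O_2(\lambda^{2+})$ error. The paper therefore carries the one-sided expansion one step further, keeping the $\lambda^4$- and $\lambda^2\widetilde g_1^\pm$-order pieces explicit (namely $-\lambda^4 S_1vG_3vD_0$, $\lambda^2\widetilde g_1^\pm S_1vG_1vD_0PD_0$, and $\lambda^4 S_1vG_1vD_0vG_1vD_0$, and symmetrically on the other side) with remainder $\widetilde O_2(\lambda^{4+})$. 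Divided by $\lambda^2$, those explicit pieces land in $M_4$ and $M_5$, while the genuine remainder gives $\widetilde O_2(\lambda^{2+})$. Your third paragraph shows you see exactly this issue; it is only a matter of stating the one-sided expansion to the correct depth rather than calling the $\lambda^4$-order pieces ``corrections''.

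Your closing observation that the bare summand $(M^\pm+S_1)^{-1}$ contributes $D_0$ and $-\lambda^2 D_0vG_1vD_0$, which are not finite rank, is a fair reading of the proposition's wording; the paper's proof does not comment on it. As you note, for the application in Theorem~\ref{thm:evalcancel} only absolute boundedness is needed, and the real $\lambda$-independent pieces cancel in $M^+(\lambda)^{-1}-M^-(\lambda)^{-1}$.
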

  \begin{proof} 
Using the expansion \eqref{M plus S long}  together with the fact that $PS_1=S_1P=0$ we have 

\begin{align*}
\begin{split}
(M^{\pm}(\lambda)+S_1)S_1&= S_1-\lambda^2 D_0vG_1vS_1-g_2^{\pm}(\lambda) D_0vG_2vS_1 - \lambda^4 D_0vG_3vS_1 \\
& +\lambda^2g_1^{\pm}(\lambda) D_0PD_0vG_1vS_1+\lambda^4 D_0vG_1D_0 vG_1 S_1 +\widetilde O_2(\lambda^{4+}),
\end{split}
\end{align*}
\begin{align*}
\begin{split}
S_1(M^{\pm}(\lambda)+S_1)&= S_1-\lambda^2 S_1vG_1vD_0-g_2^{\pm}(\lambda) S_1vG_2vD_0 - \lambda^4 S_1vG_3vD_0\\
& +\lambda^2g_1^{\pm}(\lambda)S_1PD_0vG_1vD_0+\lambda^4 S_1vG_1D_0 vG_1 D_0 +\widetilde O_2(\lambda^{4+}).
\end{split}
\end{align*}
The assertion follows by applying Lemma~\ref{JNlemma}.  

\end{proof}

To prove the main Theorem, we need a few
lemmas.
The following variation of stationary phase from \cite{Sc2} will be useful in the analysis.

\begin{lemma}\label{stat phase}

	Let $\phi'(\lambda_0)=0$ and $1\leq \phi'' \leq C$.  Then,
  	\begin{multline*}
    		\bigg| \int_{-\infty}^{\infty} e^{it\phi(\lambda)} a(\lambda)\, d\lambda \bigg|
    		\lesssim \int_{|\lambda-\lambda_0|<|t|^{-\frac{1}{2}}} |a(\lambda)|\, d\lambda\\
    		+|t|^{-1} \int_{|\lambda-\lambda_0|>|t|^{-\frac{1}{2}}} \bigg( \frac{|a(\lambda)|}{|\lambda-\lambda_0|^2}+
    		\frac{|a'(\lambda)|}{|\lambda-\lambda_0|}\bigg)\, d\lambda.
  	\end{multline*}

\end{lemma}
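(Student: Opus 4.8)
The plan is to carry out the classical stationary/non-stationary phase dichotomy, splitting the integral at the scale $|\lambda-\lambda_0|\sim |t|^{-1/2}$ at which the phase $t\phi$ turns from essentially constant to genuinely oscillatory. The only structural input needed is the two-sided bound $|\lambda-\lambda_0|\leq |\phi'(\lambda)|\leq C|\lambda-\lambda_0|$, which is immediate from the hypotheses: since $\phi'(\lambda_0)=0$ and $1\leq \phi''\leq C$, one has $\phi'(\lambda)=\int_{\lambda_0}^\lambda \phi''(s)\,ds$, so $|\phi'(\lambda)|\geq |\lambda-\lambda_0|$ and likewise $|\phi'(\lambda)|\leq C|\lambda-\lambda_0|$.

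First I would fix a radius $\delta\in[|t|^{-1/2},2|t|^{-1/2}]$ and write $\int_{\R} e^{it\phi}a\,d\lambda=\int_{|\lambda-\lambda_0|<\delta}+\int_{|\lambda-\lambda_0|>\delta}$. On the inner piece I bound $|e^{it\phi}|\leq 1$ to obtain $\int_{|\lambda-\lambda_0|<\delta}|a(\lambda)|\,d\lambda$; since $\delta\leq 2|t|^{-1/2}$, the part over the annulus $|t|^{-1/2}<|\lambda-\lambda_0|<2|t|^{-1/2}$ is absorbed into the second term of the claimed bound, because there $t|\lambda-\lambda_0|^2\lesssim 1$ and hence $\int_{\text{annulus}}|a|\lesssim t^{-1}\int_{|\lambda-\lambda_0|>|t|^{-1/2}}|a|/|\lambda-\lambda_0|^2$. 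On the outer piece I integrate by parts once via $e^{it\phi}=(it\phi')^{-1}\partial_\lambda e^{it\phi}$. This produces (a) boundary terms at $\lambda=\lambda_0\pm\delta$ of size $|a(\lambda_0\pm\delta)|/(t\delta)$; (b) boundary terms at $\pm\infty$, which vanish (in the application $a$ is compactly supported by the cutoff $\chi$, and in general one assumes enough decay that the right-hand side is finite); and (c) the integral $-\int_{|\lambda-\lambda_0|>\delta}e^{it\phi}\,\partial_\lambda\!\big(a/(it\phi')\big)\,d\lambda$. For (c) I expand $\partial_\lambda\big(a/(it\phi')\big)=a'/(it\phi')-a\phi''/(it(\phi')^2)$ and use $|\phi'|\geq|\lambda-\lambda_0|$ and $|\phi''|\leq C$ to dominate the integrand by $t^{-1}\big(|a'|/|\lambda-\lambda_0|+C|a|/|\lambda-\lambda_0|^2\big)$, which reproduces exactly the second term of the statement.

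The only point requiring care is the endpoint term (a), which I would handle by not committing to a single $\delta$ but instead averaging the whole inequality over $\delta\in[|t|^{-1/2},2|t|^{-1/2}]$. Since the left-hand side is independent of $\delta$, averaging costs nothing there, while $\frac{1}{|t|^{-1/2}}\int_{|t|^{-1/2}}^{2|t|^{-1/2}}\frac{|a(\lambda_0+\delta)|}{t\delta}\,d\delta\lesssim t^{-1}\int_{|t|^{-1/2}}^{2|t|^{-1/2}}\frac{|a(\lambda_0+\delta)|}{\delta^2}\,d\delta$ because $\delta^{-1}\sim|t|^{1/2}$ on that interval, and the identical bound holds at the endpoint $\lambda_0-\delta$; both are dominated by the second term of the claimed estimate. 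Averaging does not disturb the other contributions: the outer integral in (c) over $\{|\lambda-\lambda_0|>\delta\}$ is dominated, uniformly in $\delta\geq|t|^{-1/2}$, by the same integral over $\{|\lambda-\lambda_0|>|t|^{-1/2}\}$, and the inner piece over $\{|\lambda-\lambda_0|<\delta\}\subseteq\{|\lambda-\lambda_0|<2|t|^{-1/2}\}$ was already treated. Summing the three contributions yields the stated inequality.

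I expect the endpoint analysis above — the averaging device that controls $|a(\lambda_0\pm\delta)|/(t\delta)$ without imposing extra hypotheses on $a$ — to be the one genuinely delicate step; everything else is a single routine integration by parts together with the elementary estimates on $\phi'$ coming from $\phi''\in[1,C]$.
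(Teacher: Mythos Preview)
Your proof is correct. The paper does not actually prove this lemma --- it simply quotes it from Schlag \cite{Sc2} --- so there is no argument in the paper to compare against; that said, your split at scale $|t|^{-1/2}$, single integration by parts on the outer region using $|\phi'(\lambda)|\geq |\lambda-\lambda_0|$, and averaging in $\delta$ to dispose of the boundary term is exactly the standard route to this estimate.
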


Define $\mathcal{G}_n(\pm\lambda,|x-y|)$ to be kernel of n-dimensional free resolvent operator $R_0^{\pm}(\lambda^2)$.  Then we recall Lemma~2.1 in
\cite{EG1},

\begin{lemma} \label{lem:reduction} For $n\geq 2$, the following recurrence relation holds. 
$$  \Big(\f1{\lambda} \frac{d}{d\lambda} \Big)\mathcal{G}_n(\pm\lambda,|x-y|) = \f 1{2\pi}\mathcal{G}_{n-2} (\pm\lambda,|x-y|).$$
\end{lemma}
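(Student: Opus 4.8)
The plan is to establish the recurrence by a direct computation from the closed form of the free resolvent kernel in terms of Hankel functions, together with the standard index–lowering relation for cylinder functions. Recall that in $\R^n$ the kernel of $R_0^\pm(\lambda^2)$ is
$$
	\mathcal{G}_n(\pm\lambda,r)=\pm\frac{i}{4}\Big(\frac{\lambda}{2\pi r}\Big)^{\mu}H_\mu^\pm(\lambda r),\qquad \mu:=\frac{n-2}{2},\quad r=|x-y|,
$$
with $H_\mu^\pm=J_\mu\pm iY_\mu$. This is the general–dimension version of \eqref{resolv def} (the case $n=4$, $\mu=1$); it may be taken as the definition coming from the Fourier representation of $R_0^\pm(\lambda^2)$, and for $n=1,3$ one checks directly that it collapses to the elementary kernels $\pm\frac{i}{2\lambda}e^{\pm i\lambda r}$ and $\frac{e^{\pm i\lambda r}}{4\pi r}$.

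First I would rewrite the kernel in the variable $z=\lambda r$ by separating the powers of $\lambda$ and $r$:
$$
	\mathcal{G}_n(\pm\lambda,r)=\pm\frac{i}{4}(2\pi)^{-\mu}\,r^{-2\mu}\,z^{\mu}H_\mu^\pm(z).
$$
Next, at fixed $r$ one has $\frac{d}{d\lambda}=r\,\frac{d}{dz}$, and since $z=\lambda r$ also $\frac1\lambda=\frac{r}{z}$, so that $\frac1\lambda\frac{d}{d\lambda}=r^2\cdot\frac1z\frac{d}{dz}$. Applying this to the displayed formula and using the cylinder–function identity $\frac1z\frac{d}{dz}\big[z^{\mu}H_\mu^\pm(z)\big]=z^{\mu-1}H_{\mu-1}^\pm(z)$ — valid for $J_\mu$ and $Y_\mu$, hence for $H_\mu^\pm$ (see \cite{AS}) — gives
$$
	\frac1\lambda\frac{d}{d\lambda}\mathcal{G}_n(\pm\lambda,r)=\pm\frac{i}{4}(2\pi)^{-\mu}\,r^{-2\mu+2}\,z^{\mu-1}H_{\mu-1}^\pm(z)=\pm\frac{i}{4}(2\pi)^{-\mu}\,r^{-2(\mu-1)}\,z^{\mu-1}H_{\mu-1}^\pm(z).
$$
Finally I would match this with the right–hand side of the claim: since $\mu-1=\frac{(n-2)-2}{2}$ is exactly the Hankel index attached to dimension $n-2$, and $(2\pi)^{-\mu}=(2\pi)^{-1}(2\pi)^{-(\mu-1)}$, the last expression equals $\frac1{2\pi}\mathcal{G}_{n-2}(\pm\lambda,r)$, which is the assertion.

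There is essentially no analytic content here: the only point needing care — the ``main obstacle,'' such as it is — is keeping track of the powers of $\lambda$ and $r$ under the substitution $z=\lambda r$, so that the shifted Hankel index $\mu-1$ is correctly identified with the dimension $n-2$ and the prefactor collapses to exactly $\frac1{2\pi}$. (One could alternatively avoid the Hankel representation and deduce the identity by induction on parity from the explicit odd–dimensional kernels together with the recursive structure of $\mathcal{G}_n$, but the computation above is the shortest route.) As a consistency check, $\frac1\lambda\frac{d}{d\lambda}\frac{e^{\pm i\lambda r}}{4\pi r}=\pm\frac{i\,e^{\pm i\lambda r}}{4\pi\lambda}=\frac1{2\pi}\Big(\pm\frac{i}{2\lambda}e^{\pm i\lambda r}\Big)$, which is the $n=3\to n=1$ instance.
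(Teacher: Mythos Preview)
Your proof is correct. The paper does not give its own proof of this lemma but simply recalls it from \cite{EG1}; your direct computation via the Hankel representation and the standard Bessel recurrence $\frac{1}{z}\frac{d}{dz}\big[z^{\mu}H_\mu^\pm(z)\big]=z^{\mu-1}H_{\mu-1}^\pm(z)$ is exactly the natural argument and matches what one finds there.
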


To prove Theorem~\ref{thm:evalcancel}, we need to bound
the contribution of
$$
R_0^+VR_0^+vM^+(\lambda)^{-1}vR_0^+VR_0^+
- R_0^-VR_0^-vM^-(\lambda)^{-1}vR_0^-VR_0^-
$$
to the Stone formula, \eqref{stone}.  There are a number
of terms that arise when considering the difference with
\eqref{alg fact}, which we bound in a series of Lemmas.

\begin{lemma}\label{lem:awful}

Under the assumptions of Theorem~\ref{thm:evalcancel},
we have the bound
\begin{multline*}
	\sup_{x,y\in \R^4}\bigg| \int_{\R^{16}}\int_0^\infty
	e^{it\lambda^2} \lambda \chi(\lambda)
	\frac{[R_0^+-R_0^-](\lambda^2)(x,z_1)}{\lambda^2}V(z_1)G_0(z_1,z_2)v(z_2)D_2(z_2,z_3)v(z_3)\\
	[R_0^+(\lambda^2)-G_0](z_3,z_4)V(z_4)R_0^+(\lambda^2)(z_4,y) \, d\lambda
	\, dz_1\, dz_2\, dz_3\, dz_4 \bigg| \les |t|^{-2}.
\end{multline*}

\end{lemma}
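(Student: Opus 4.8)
The plan is to substitute the small-$\lambda$ resolvent expansions into the integrand, use the orthogonality hypotheses to delete the one contribution whose time decay would otherwise be only $|t|^{-2}\log|t|$, and bound the remaining oscillatory $\lambda$-integral by stationary phase. First I would record the structure of the leading resolvent difference. From \eqref{resolv def}--\eqref{H0}, $[R_0^+-R_0^-](\lambda^2)(x,z_1)=\f{i\lambda}{4\pi|x-z_1|}J_1(\lambda|x-z_1|)$; since $J_1$ is entire and $z\mapsto J_1(z)/z$ is smooth and bounded together with its scaled derivatives, this refines \eqref{R0diff} to
$$\f{[R_0^+-R_0^-](\lambda^2)(x,z_1)}{\lambda^2}=c_0+\la x\ra^{\f12}\la z_1\ra^{\f12}\,\widetilde O_2(\lambda^{\f12}),$$
with $c_0$ a constant \emph{independent of $x$ and $z_1$}, and with the remainder moreover $\widetilde O_2$ of a negative power of $|x-z_1|$ when $\lambda|x-z_1|\gtrsim1$; hence the weight $\la x\ra^{\f12}$ is active only when $\la x\ra\les\lambda^{-1}+\la z_1\ra$. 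The essential gain is that the principal term contributes nothing depending on $x$.

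The cancellation is next. Since we are in the case $S_1=S_2$, \eqref{eq D1} gives $D_2=wP_ew$, so the operator with kernel $v(z_2)D_2(z_2,z_3)v(z_3)$ is $VP_eV$. Inserting \eqref{resolv wtd} for the middle factor, $[R_0^+(\lambda^2)-G_0](z_3,z_4)=g_1^+(\lambda)+\lambda^2G_1(z_3,z_4)+\la z_3\ra^{\f12}\la z_4\ra^{\f12}\widetilde O_2(\lambda^{\f52})$: the spatially constant term $g_1^+(\lambda)$ produces a factor $(P_eV\indicator)(z_2)=0$ by the hypothesis $\int V\psi\,dx=0$, and this is the unique term carrying $\log\lambda$ without a matching power of $\lambda$. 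For $\lambda^2G_1(z_3,z_4)=c\lambda^2\log|z_3-z_4|$ one Taylor expands in $z_3$ and uses $P_eV\indicator=0$ and $P_eVx=0$ to kill the zeroth and first order parts, gaining $\la z_4\ra^{-2}$ for $|z_4|\gtrsim|z_3|$ (the near-diagonal part of the logarithm being locally integrable); the $\widetilde O_2(\lambda^{\f52})$ term and the remaining expansion of the lagging resolvent $R_0^+(\lambda^2)(z_4,y)=G_0(z_4,y)+g_1^+(\lambda)+\lambda^2G_1(z_4,y)+\ldots$ only supply further powers of $\lambda$, and the $\log^+|y|$ entering through $G_1(z_4,y)$ comes with the constraint $|z_4-y|\les\lambda^{-1}$, so $\log^+|y|\les\log(\lambda^{-1})$.

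After these reductions the $\lambda$-integrand is $\lambda^2$ times a symbol $m(\lambda,x,y)$ that is $\widetilde O_2(1)$ uniformly in $x,y$, plus remainders of the form $\la x\ra^{\f12}\widetilde O_2(\lambda^{\f52})$ (possibly with logarithms) whose weights are tied to the frequency constraints above. For the principal part I would integrate by parts once in $\lambda$ using $\lambda e^{it\lambda^2}=(2it)^{-1}\partial_\lambda e^{it\lambda^2}$; the boundary terms vanish, the term with $\chi'$ decays like $|t|^{-N}$, and the two surviving terms $\lambda\chi m$ and $\lambda^2\chi\,\partial_\lambda m$ are each $O(|t|^{-1})$ by Lemma~\ref{stat phase} with $\phi(\lambda)=\lambda^2$ (using $\int e^{it\lambda^2}(\textrm{symbol})\,d\lambda=O(|t|^{-1/2})$), giving $O(|t|^{-2})$ overall. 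For the $\la x\ra^{\f12}$-weighted remainders I split at $\lambda=|t|^{-1/2}$: on $[0,|t|^{-1/2}]$ a trivial estimate gives $\la x\ra^{\f12}|t|^{-9/4}$, which is $\les|t|^{-2}$ because the remainder is non-negligible only for $\la x\ra\les\lambda^{-1}\les|t|^{1/2}$ there, and on $[|t|^{-1/2},\lambda_1]$ two integrations by parts produce $|t|^{-2}$, the weight again being controlled by the range of $\lambda$ on which it is active. Throughout, the spatial integrals $dz_1\cdots dz_4$ converge uniformly because $|V|\les\la x\ra^{-12-}$ absorbs all polynomial and logarithmic weights via Lemma~\ref{EG:Lem}, and $P_e$ has an eigenfunction kernel decaying faster than any polynomial.

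I expect the delicate point to be this last bookkeeping: obtaining a genuine $\sup_{x,y}$ bound in the presence of the unavoidable weights $\la x\ra^{\f12}$ (from the imaginary part of the leading resolvent) and $\log^+|y|$ (from $G_1(z_4,y)$). The resolution, used repeatedly above, is that every such weight is tethered to a localization $\lambda\les\la x\ra^{-1}$ (respectively $\lambda\les|y|^{-1}$) and therefore costs only a small power of $|t|$, which is covered by the decay the reductions have produced in excess of $|t|^{-2}$; it is precisely to enable this — and the uniform convergence of the $z_4$-integral coming from $G_1(z_3,z_4)$ — that the second orthogonality condition $P_eVx=0$ is needed beyond the first.
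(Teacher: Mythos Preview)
Your overall strategy—expand the three resolvent factors, use $P_eV1=0$ to kill the $g_1^+(\lambda)$ contribution from $[R_0^+-G_0](z_3,z_4)$, and then integrate by parts—is the same one the paper follows. But the execution breaks down at exactly the point you flag as delicate, and the resolution you propose is not correct.

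The weights $\la x\ra^{\f12}$ and $\la y\ra^{\f12}$ in the $\widetilde O_2(\lambda^{\f52})$ error of \eqref{resolv wtd} are \emph{not} tethered to a localization $\lambda\les\la x\ra^{-1}$. They come from the regime $\lambda|x-z_1|\gtrsim 1$ (respectively $\lambda|z_4-y|\gtrsim 1$): there $R_0^\pm$ carries the oscillatory kernel $\frac{\lambda}{|x-z_1|}e^{\pm i\lambda|x-z_1|}\omega_\pm(\lambda|x-z_1|)$, and two $\lambda$-derivatives bring down $|x-z_1|^2$, producing $\lambda^{\f12}|x-z_1|^{\f12}\les\lambda^{\f12}\la x\ra^{\f12}\la z_1\ra^{\f12}$. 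So the constraint is $\lambda\gtrsim|x-z_1|^{-1}$, the opposite of what you assert. On $[|t|^{-1/2},\lambda_1]$ there is no mechanism by which two integrations by parts against $e^{it\lambda^2}$ alone can absorb $\la x\ra^{\f12}$ or $\la y\ra^{\f12}$, and your estimate on $[0,|t|^{-1/2}]$ uses the false localization. The same objection applies to your treatment of $G_1(z_4,y)$: as a term in the global expansion \eqref{resolv wtd}, it is not confined to $|z_4-y|\les\lambda^{-1}$.

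The paper's proof confronts this directly. After one integration by parts it does a case analysis on where the $\lambda$-derivative lands, and in each case it splits the leading and lagging resolvents at $\lambda|x-z_1|\sim 1$, $\lambda|z_4-y|\sim 1$. On the high-frequency pieces it does \emph{not} bound by a symbol class; instead it keeps the phase $e^{\pm i\lambda|x-z_1|}$ (or $e^{\pm i\lambda|z_4-y|}$) explicit and applies stationary phase (Lemma~\ref{stat phase}) with the combined phase $t\lambda^2\pm\lambda r$, or invokes Lemma~12 of \cite{Sc2}, or the reduction Lemma~\ref{lem:reduction} which converts $\partial_\lambda R_0^+(z_4,y)$ into a two-dimensional resolvent and gains integrability that way. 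It is this incorporation of the resolvent phases into the oscillatory integral that gives uniformity in $x,y$; a pure symbol estimate cannot. Two smaller points: the condition $P_eVx=0$ is not used in this lemma (only $P_eV1=0$ is, to delete $g_1^+$; $P_eVx=0$ enters elsewhere, via \eqref{P_eVx cancel}); and zero-energy eigenfunctions in $\R^4$ decay like $|x|^{-2}$, not faster than every polynomial.
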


\begin{proof}

	The proof follows from a delicate case analysis.
	The integrand can be seen to be of small enough in
	the spectral variable $\lambda$ to allow for one
	integration by parts without a boundary term.
	The $\lambda$ integral is now equal to
	\begin{align}
		\frac{1}{2it} \int_0^\infty e^{it\lambda^2} \chi(\lambda) \bigg\{&
		\bigg[\frac{\partial_\lambda (R_0^+-R_0^-)(\lambda^2)}{\lambda^2}VG_0VP_eV[R_0^+(\lambda^2)-G_0]VR_0^+(\lambda^2)	\bigg]\label{term1}	\\
		&+\bigg[\frac{(R_0^+-R_0^-)(\lambda^2)}{\lambda^2}VG_0VP_eV\partial_\lambda[R_0^+(\lambda^2)-G_0]VR_0^+(\lambda^2)\bigg]\label{term2}\\
		&+\bigg[\frac{(R_0^+-R_0^-)(\lambda^2)}{\lambda^2}VG_0VP_eV[R_0^+(\lambda^2)-G_0]V
		\partial_\lambda R_0^+(\lambda^2)\bigg]\label{term3}\\
		&-2\bigg[\frac{(R_0^+-R_0^-)(\lambda^2)}{\lambda^3}VG_0VP_eV[R_0^+(\lambda^2)-G_0]V
		R_0^+(\lambda^2)\bigg]\label{term4}
		\bigg\}\, d\lambda.
	\end{align}
	When the derivative acts on $\chi(\lambda)$, it can be
	controlled as in \eqref{term4}.
	We first consider \eqref{term1}, using 
	$R_0^+(\lambda^2)-G_0=g_1^+(\lambda)+\lambda^2 G_1
	+\la x\ra^{\f12}\la y\ra^{\f12} \widetilde O_2(\lambda^{\f52})$ on the inner resolvent, and Lemma~\ref{lem:reduction}
	yields $\partial_\lambda [R_0^+-R_0^-](\lambda^2)(x,z_1)=c\lambda  J_0(\lambda |x-z_1|)$, we need only bound
	\begin{align*}
		\frac{1}{t} \int_0^\infty e^{it\lambda^2} \chi(\lambda) &
		\bigg[\frac{\lambda J_0(\lambda |x-z_1|)}{\lambda^2}VG_0VP_eV[g_1^+(\lambda)+\lambda^2 G_1 +\widetilde O(\lambda^{\f52})]VR_0^+(\lambda^2)	\bigg].
	\end{align*}
	Here $J_0$ is the  Bessel function of order zero. Since $P_eV1=0$, the term containing $g_1^+(\lambda)$
	is immediately zero.  Writing $R_0^\pm(\lambda^2)(x,y)=G_0+(1+\log^-|x-y|)\widetilde O_1(\lambda^{2-})$  for the lagging free resolvent,
	we then need only bound
	\begin{align*}
		\frac{1}{t} \int_0^\infty e^{it\lambda^2} \chi(\lambda) &
		\lambda J_0(\lambda |x-z_1|)VG_0VP_eV G_1 VG_0+(1+\log^-|z_4-y|)\widetilde O_1(\lambda^{1+})	
		\, d\lambda
	\end{align*}
	The first term can be bounded by $|t|^{-2}$ uniformly
	 in $x,y$ by Lemma~12 of \cite{Sc2}.  The second
	 term can be bounded by Lemma~\ref{lem:fauxIBP} using
	 the observation $J_0(\lambda |x-z_1|)=\widetilde{O}_1(1)$.
	 
	For \eqref{term2}, we use that
	$\partial_\lambda [R_0^+(\lambda^2)-G_0]=c\lambda (\log \lambda +1)+2\lambda G_1+\la x\ra^{\f12}
	\la y\ra^{\f12} 	\widetilde O(\lambda^{\f32})$.
	The first term can be safely ignored due to
	$P_eV1=0$ to consider
	\begin{multline*}
		\frac{1}{t} \int_0^\infty e^{it\lambda^2} \chi(\lambda) 
		\frac{(R_0^+-R_0^-)(\lambda^2)}{\lambda^2}VG_0VP_eV\\
		[\lambda G_1+\la z_3\ra^{\f12} \la z_4 \ra^{\f12}
		\widetilde O(\lambda^{\f32})]V[G_0+
		(1+\log^-|z_4-y|)\widetilde{O}_1(\lambda^{2-})]
		\, d\lambda.
	\end{multline*}
	By writing
	\begin{align*}
		[R_0^+-R_0^-](\lambda^2)(x,z_1)=\left\{
		\begin{array}{ll}
			c_1 \lambda^2+ \widetilde O_1(\lambda^4 |x-z_1|^2) & \lambda |x-z_1|\ll 1\\
			\frac{\lambda}{|x-z_1|}e^{\pm i\lambda |x-z_1|} \widetilde O((1+\lambda |x-z_1|)^{-\f12}) & \lambda |x-z_1| \gtrsim 1
		\end{array}
		\right.,
	\end{align*}
	one can employ an approach as in the Born series.
	  First, for $\lambda |x-z_1|\ll 1 $,
	the $\lambda$ integral is of the form
	\begin{align*}
		\frac{1}{t}\int_0^\infty e^{it\lambda^2}\lambda 
		\chi(\lambda)[1+\widetilde O_1(\lambda^{0+})+
		\widetilde O_1(\lambda^2 |x-z_1|^2)]\chi(\lambda |x-z_1|)\, d\lambda.
	\end{align*}
	The first two terms are easily seen to be bounded
	by $|t|^{-2}$ by integration by parts.  For the
	last term, we note that the error term is supported
	on the set $\lambda \les |x-z_1|^{-1}$ to integrate
	by parts and bound by
	\begin{align*}
		\frac{|x-z_1|^2}{|t|^2}\int_0^{|x-z_1|^{-1}} \lambda \, d\lambda \les \frac{1}{|t|^2}.
	\end{align*}
	For $\lambda |x-z_1|\gtrsim 1$, we note that the bound follows as in Lemmas~3.6 and 3.8 of
	\cite{GGeven} by using Lemma~\ref{stat phase}.
	The bound follows as long as we can write the
	resulting integral in the form
	$$
		\int_0^\infty e^{it\lambda^2 \pm i\lambda |x-z_1|}
		a(\lambda) \, d\lambda \qquad \textrm{with} 
		\qquad 
		|a(\lambda)|\les \frac{\lambda^{\f12}}{|x-z_1|^{\f12}}, \qquad
		|a'(\lambda)|\les \frac{1}{\lambda^{\f12}|x-z_1|^{\f12}}.
	$$
	Using $|x-z_1|^{-1}\les \lambda$, these can be 
	established from the bounds above.
	
	For \eqref{term3}, we use Lemma~\ref{lem:reduction}
	and $P_eV1=0$ to bound
	\begin{multline*}
		\frac{1}{t} \int_0^\infty e^{it\lambda^2} \lambda \chi(\lambda) 
		\frac{(R_0^+-R_0^-)(\lambda^2)}{\lambda^2}VG_0VP_eV\\
		[\lambda^2 G_1+\la z_3\ra^{\f12} \la z_4 \ra^{\f12}
		\widetilde O_2(\lambda^{\f52})]VR_2(\lambda^2)(z_4,y)
		\, d\lambda
	\end{multline*}
	where $R_2$ denotes  the two-dimensional 
	Schr\"odinger free resolvent.
	Since 
	$$
		\frac{(R_0^+-R_0^-)(\lambda^2)}{\lambda^2}
		=\widetilde O_1(1),
	$$
	we are left to bound
	\begin{align*}
		\frac{1}{t} \int_0^\infty e^{it\lambda^2} \lambda \chi(\lambda) 
		\widetilde O_1(\lambda^3) R_2(\lambda^2)(z_4,y)
		\, d\lambda.
	\end{align*}
	When $\lambda |z_4-y|\gtrsim 1$, the analysis 
	of Lemma~12 in \cite{Sc2} yields the desired bound.
	When $\lambda |z_4-y|\ll 1$, we note that
	\begin{align*}
		|[J_0+iY_0](\lambda |z_4-y|)|\les 1+ |\log \lambda| +\log^-|z_4-y|, \qquad 
		|\partial_\lambda [J_0+iY_0](\lambda |z_4-y|)|&\les \frac{1}{\lambda},
	\end{align*}
	which allows us to integrate by parts a second time
	without growth in $x$ or $y$.
	
	For the final term, \eqref{term4}, we again need
	to consider cases based on the size of
	$\lambda |x-z_1|$ and consider
	\begin{multline*}
		\frac{1}{t} \int_0^\infty e^{it\lambda^2}  \chi(\lambda) \bigg[
		(R_0^+-R_0^-)(\lambda^2)VG_0VP_eV G_1VG_0 \widetilde O_1(\lambda)+ (1+\log^- |z_4-y|) \widetilde O_1(\lambda^{1+})\bigg]
		\, d\lambda
	\end{multline*}	
	The bound for the first term follows as in the bound
	for \eqref{term2}, while the bound for the
	second term follows from Lemma~\ref{lem:fauxIBP}.

\end{proof}

The following bound is proved similarly.
\begin{lemma}\label{lem:awful2}

Under the assumptions of Theorem~\ref{thm:evalcancel},
we have the bound
\begin{multline*}
	\sup_{x,y\in \R^4}\bigg| \int_{\R^{16}}\int_0^\infty
	e^{it\lambda^2} \lambda \chi(\lambda)
	\frac{[R_0^+-R_0^-](\lambda^2)(x,z_1)}{\lambda^2}V(z_1)[R_0^+(\lambda^2)-G_0](z_1,z_2)\\v(z_2)D_2(z_2,z_3)v(z_3)
	G_0(z_3,z_4)V(z_4)R_0^+(\lambda^2)(z_4,y) \, d\lambda
	\, dz_1\, dz_2\, dz_3\, dz_4 \bigg| \les |t|^{-2}.
\end{multline*}

\end{lemma}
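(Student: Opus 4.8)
The plan is to run the proof of Lemma~\ref{lem:awful} essentially verbatim, the only structural change being that the subtracted resolvent $R_0^+(\lambda^2)-G_0$ now sits at the position $(z_1,z_2)$, to the left of $D_2$, rather than to its right. Since $\tfrac{[R_0^+-R_0^-](\lambda^2)(x,z_1)}{\lambda^2}=\widetilde O_1(1)$, $[R_0^+(\lambda^2)-G_0](z_1,z_2)=\widetilde O_1(\lambda^{2-})$, and $R_0^+(\lambda^2)(z_4,y)=\widetilde O_1(1+\log^-|z_4-y|)$, together with the factor of $\lambda$ from the Stone formula \eqref{stone}, the integrand vanishes to order $\lambda^{3-}$ at $\lambda=0$. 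Hence writing $e^{it\lambda^2}\lambda=\tfrac1{2it}\partial_\lambda e^{it\lambda^2}$ and integrating by parts once in $\lambda$ produces no boundary term and replaces $\lambda\chi(\lambda)$ by $\tfrac1{2it}$ times a $\lambda$-derivative acting on one of four factors: $\partial_\lambda\big(\tfrac{R_0^+-R_0^-}{\lambda^2}\big)$ (two pieces, including the $\lambda^{-3}$ piece), $\partial_\lambda[R_0^+(\lambda^2)-G_0]$, $\partial_\lambda R_0^+(\lambda^2)(z_4,y)$, or $\partial_\lambda\chi$; the last of these is handled exactly like the $\lambda^{-3}$ piece, as in \eqref{term4}.

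\textbf{The benign terms.} For every piece in which $R_0^+(\lambda^2)-G_0$ is undifferentiated I would insert \eqref{resolv wtd}, $[R_0^+(\lambda^2)-G_0](z_1,z_2)=g_1^+(\lambda)+\lambda^2G_1(z_1,z_2)+\la z_1\ra^{1/2}\la z_2\ra^{1/2}\widetilde O_2(\lambda^{5/2})$. The $g_1^+(\lambda)$ term carries the constant kernel, so it produces the integral $\int v(z_2)D_2(z_2,z_3)\,dz_2$; writing $D_2=wP_ew$ (valid in the eigenvalue-only case by \eqref{eq D1}) this equals $w(z_3)\int V(z_2)P_e(z_2,z_3)\,dz_2=0$ by the orthogonality hypothesis $P_eV1=0$, so it vanishes identically. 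For the $\lambda^2G_1$ term I would use Lemma~\ref{lem:reduction} to write $\partial_\lambda[R_0^+-R_0^-](\lambda^2)(x,z_1)=c\lambda J_0(\lambda|x-z_1|)$ and split the lagging resolvent as $R_0^+(\lambda^2)(z_4,y)=G_0+(1+\log^-|z_4-y|)\widetilde O_1(\lambda^{2-})$; the leading contribution then has the form $\int_0^\infty e^{it\lambda^2}\chi(\lambda)\lambda J_0(\lambda|x-z_1|)\,VG_1vD_2vG_0VG_0\,d\lambda$, which is $O(|t|^{-2})$ uniformly in $x,y$ by Lemma~12 of \cite{Sc2}, while the remaining terms are $\widetilde O_1(\lambda^{1+})$ (times a harmless $\log^-$) and are controlled by Lemma~\ref{lem:fauxIBP} after noting $J_0(\lambda|x-z_1|)=\widetilde O_1(1)$; the spatially weighted $\widetilde O_2(\lambda^{5/2})$ remainder is absorbed likewise. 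The piece with the derivative on the lagging resolvent reduces, via Lemma~\ref{lem:reduction}, to an integral involving the two-dimensional free resolvent $R_2^\pm(\lambda^2)(z_4,y)$, handled by splitting $\lambda|z_4-y|\gtrsim1$ (Lemma~12 of \cite{Sc2}) versus $\lambda|z_4-y|\ll1$, where the logarithmic behavior of $[J_0+iY_0](\lambda|z_4-y|)$ still permits a second integration by parts without spatial growth.

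\textbf{The delicate term and the main obstacle.} For the piece with the derivative on $R_0^+(\lambda^2)-G_0$ I would use $\partial_\lambda[R_0^+(\lambda^2)-G_0]=c\lambda(1+\log\lambda)+2\lambda G_1+\la\cdot\ra^{1/2}\la\cdot\ra^{1/2}\widetilde O_2(\lambda^{3/2})$; the $c\lambda(1+\log\lambda)$ term again carries the constant kernel and is killed by $P_eV1=0$ exactly as above, leaving only the $\lambda G_1$ term and the weighted remainder. Here, as in the treatment of \eqref{term2} in Lemma~\ref{lem:awful}, the subtle point is the factor $\tfrac{[R_0^+-R_0^-](\lambda^2)(x,z_1)}{\lambda^2}$: on $\lambda|x-z_1|\ll1$ it equals $c_1+\widetilde O_1(\lambda^{0+})+\widetilde O_1(\lambda^2|x-z_1|^2)$ and one integrates by parts once more, using the support restriction $\lambda\les|x-z_1|^{-1}$ for the last term, to obtain $|t|^{-2}$; on $\lambda|x-z_1|\gtrsim1$ one must abandon the small-argument expansion and use the oscillatory representation $\tfrac{\lambda}{|x-z_1|}e^{\pm i\lambda|x-z_1|}\widetilde O((1+\lambda|x-z_1|)^{-1/2})$, then apply Lemma~\ref{stat phase} to the phase $t\lambda^2\pm\lambda|x-z_1|$, checking $|a(\lambda)|\les\lambda^{1/2}|x-z_1|^{-1/2}$ and $|a'(\lambda)|\les\lambda^{-1/2}|x-z_1|^{-1/2}$ and exploiting $|x-z_1|^{-1}\les\lambda$ in that regime. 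This stationary-phase regime, uniform in $x$, is the main obstacle; everything else is bookkeeping. Finally, the spatial integrals over $z_1,\dots,z_4$ converge uniformly in $x,y$ by the decay $|V(x)|\les\la x\ra^{-12-}$, the absolute boundedness of $D_2$, and Lemma~\ref{EG:Lem}, with the $\log^\pm$ contributions of $G_1$ treated exactly as in the proof of Theorem~\ref{thm:res1}.
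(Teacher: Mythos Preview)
Your proposal is correct and takes exactly the approach the paper intends: the paper states only that Lemma~\ref{lem:awful2} ``is proved similarly'' to Lemma~\ref{lem:awful}, and you have correctly carried out that adaptation, identifying that the $g_1^+(\lambda)$ contribution is now killed by $1VP_e=0$ (the adjoint form of $P_eV1=0$) rather than $P_eV1=0$ directly, with all four post-integration-by-parts terms handled by the same tools (Lemma~12 of \cite{Sc2}, Lemma~\ref{lem:fauxIBP}, Lemma~\ref{lem:reduction}, and the stationary-phase Lemma~\ref{stat phase}).
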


Then, we have

\begin{prop}\label{lem:awful3}

Under the assumptions of Theorem~\ref{thm:evalcancel},
we have the bound
\begin{multline*}
	\sup_{x,y\in \R^4}\bigg| \int_{\R^{16}}\int_0^\infty
	e^{it\lambda^2} \lambda \chi(\lambda)
	\frac{[R_0^+-R_0^-](\lambda^2)(x,z_1)}{\lambda^2}V(z_1)G_0(z_1,z_2)v(z_2)D_2(z_2,z_3)v(z_3)\\
	G_0(z_3,z_4)V(z_4)R_0^+(\lambda^2)(z_4,y) \, d\lambda
	\, dz_1\, dz_2\, dz_3\, dz_4 \bigg| \les |t|^{-2}.
\end{multline*}

\end{prop}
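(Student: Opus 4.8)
The plan is to use the orthogonality hypotheses $P_eV1=P_eVx=0$ to extract all of the required $\lambda$-decay from the outer factor $[R_0^+-R_0^-](\lambda^2)(x,z_1)$, after first collapsing the middle block $vD_2v$ onto the zero-energy eigenprojection. Fix an orthonormal basis $\{\psi_k\}$ of $\mathrm{Null}\,H$. Since $D_2=wP_ew$ by \eqref{eq D1} and $vw=wv=V$, one has $v(z_2)D_2(z_2,z_3)v(z_3)=V(z_2)P_e(z_2,z_3)V(z_3)$, i.e.\ $vD_2v=VP_eV$. Because $(-\Delta+V)\psi_k=0$ is equivalent to $(I+G_0V)\psi_k=0$, we get $G_0V\psi_k=-\psi_k$, hence $G_0VP_e=P_eVG_0=-P_e$, and therefore $VG_0\,(vD_2v)\,G_0V=VG_0VP_eVG_0V=VP_eV$. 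This reduces the quantity in the proposition to
\begin{equation*}
	\sum_k\int_0^\infty e^{it\lambda^2}\lambda\chi(\lambda)\,a_k(\lambda,x)\,b_k(\lambda,y)\,d\lambda,\qquad
	a_k(\lambda,x):=\int_{\R^4}\frac{[R_0^+-R_0^-](\lambda^2)(x,z)}{\lambda^2}(V\psi_k)(z)\,dz,
\end{equation*}
with $b_k(\lambda,y):=\int_{\R^4}(V\psi_k)(z)R_0^+(\lambda^2)(z,y)\,dz$; here $V\psi_k$ decays rapidly since $\psi_k$ is an eigenfunction and $\beta$ is large, and the hypotheses read $\int V\psi_k=\int z_jV\psi_k=0$.

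The heart of the argument is the bound $a_k(\lambda,x)=\widetilde O_2(\lambda^2)$ uniformly in $x\in\R^4$. I would use the exact identity $[R_0^+-R_0^-](\lambda^2)(x,z)=c\lambda^2\,\Phi(\lambda^2|x-z|^2)$, where $\Phi(u)=2J_1(\sqrt u)/\sqrt u$ is entire with $|\Phi(u)|\les1$ and $|\Phi^{(j)}(u)|\les(1+u)^{-j}$ for $j\ge1$, and Taylor-expand $\Phi(\lambda^2|x-z|^2)$ in $z$ about $z=0$ to second order with integral remainder. Integrated against $V\psi_k$, the constant term vanishes by $\int V\psi_k=0$ and the term linear in $z$ by $\int z_jV\psi_k=0$, so $a_k(\lambda,x)$ equals $c$ times the integrated remainder, which these bounds control by $\lambda^2\int_{\R^4}|z|^2|V\psi_k(z)|\,dz\les\lambda^2$, with no $x$-dependence; the $\lambda$-derivatives follow the same way after noting that $\partial_\lambda$ converts $\Phi^{(j)}(\lambda^2|x-z|^2)$ into $\lambda^{-1}$ times an entire function of the same type.

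For the lagging factor I would use $V\psi_k=\Delta\psi_k$ together with $R_0^+(\lambda^2)(-\Delta)=I+\lambda^2R_0^+(\lambda^2)$ to write $b_k(\lambda,\cdot)=R_0^+(\lambda^2)V\psi_k=-\psi_k-\lambda^2R_0^+(\lambda^2)\psi_k$. The $-\psi_k(y)$ piece factors out of the spatial integral, and since $a_k=\widetilde O_2(\lambda^2)$ uniformly and $\psi_k$ is bounded, two integrations by parts in $\lambda$ give a bound of $|t|^{-2}$ uniformly in $x,y$. For the remaining piece $-\lambda^2R_0^+(\lambda^2)\psi_k$ I would interchange the $\lambda$- and $z$-integrals and split $R_0^+(\lambda^2)(z,y)$ by \eqref{R0repr} into the near piece $|z-y|^{-2}\rho_-(\lambda|z-y|)$ and the far piece $|z-y|^{-1}\lambda e^{i\lambda|z-y|}\rho_+(\lambda|z-y|)$. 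The near piece is handled by integrating by parts twice, keeping track of the boundary term from the support of $\rho_-$ (which appears only for $|z-y|\gtrsim1$, where the weight $|z-y|^{-2}$ is harmless). On the support of $\rho_+$ one has $|z-y|^{-1}\les\lambda$, so after trading spatial decay for powers of $\lambda$ the amplitude $\lambda^3a_k(\lambda,x)\lambda|z-y|^{-1}\rho_+(\lambda|z-y|)$ carries a large power of $\lambda$, and integration by parts against the non-stationary phase $t\lambda^2+\lambda|z-y|$ (equivalently, Lemma~\ref{stat phase}) yields a bound $\les|t|^{-2}|z-y|^{-3/2}$. In each term the residual $z$-integral $\int_{\R^4}|\psi_k(z)|(\cdots)\,dz$ converges uniformly in $y$ because $\psi_k(z)=O(\la z\ra^{-4})$, which is forced by the two vanishing moments of $V\psi_k$.

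I expect the main obstacle to be precisely the treatment of the lagging resolvent: in Lemmas~\ref{lem:awful} and~\ref{lem:awful2} one of the inner resolvents is $R_0^+(\lambda^2)-G_0$ and already supplies an $O(\lambda^2)$ gain, whereas here both inner resolvents are $G_0$, so all of the $\lambda$-smallness must come from the double moment cancellation applied to $[R_0^+-R_0^-](\lambda^2)(x,z_1)$ — in particular $P_eVx=0$, not merely $P_eV1=0$, is essential — and the polynomially weighted expansion \eqref{resolv wtd} cannot be invoked for the lagging resolvent without introducing spatial growth incompatible with the $\sup_{x,y}$ statement, which is exactly why the near/far decomposition and the phase analysis are needed there.
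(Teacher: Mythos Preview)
Your proposal is correct and takes a genuinely different route from the paper's.

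The paper splits the lagging resolvent as $R_0^+=G_0+(R_0^+-G_0)$. The $G_0$ piece is dispatched by citing a lemma from \cite{GGeven}. For the $R_0^+-G_0$ piece the paper uses only the single cancellation $P_eV1=0$, applied \emph{symmetrically} to the leading and lagging factors via \eqref{eqn:PV1trick}: it subtracts off $z_1$- and $z_4$-independent quantities and introduces the auxiliary functions $F$, $G$, $\widetilde G^\pm$ (Lemmas~\ref{lem:FG bounds}, \ref{lem:Gtilde}), then carries out a four-case analysis (low--low, high--low, low--high, high--high) in Lemmas~\ref{lem:lowlow}, \ref{lem:highlow}, \ref{lem:highlow2}, \ref{lem:highhigh}.

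You instead treat the two sides asymmetrically. On the leading side you spend \emph{both} moments $P_eV1=0$ and $P_eVx=0$ at once, Taylor-expanding $\Phi(\lambda^2|x-z|^2)$ in $z$ to get $a_k=\widetilde O_2(\lambda^2)$ \emph{uniformly in $x$}---this is strictly stronger than what one moment alone yields, and it is exactly the extra $\lambda^2$ that lets two integrations by parts succeed. On the lagging side you use the algebraic identity $R_0^+V\psi_k=-\psi_k-\lambda^2R_0^+\psi_k$ together with the improved decay $\psi_k=O(\la z\ra^{-4})$ (itself forced by the two vanishing moments) to reduce to a simple near/far split of $R_0^+$ against $\psi_k$; the far piece works because the phase $t\lambda^2+\lambda|z-y|$ is non-stationary and the amplitude vanishes to high order at $\lambda=0$. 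This sidesteps the auxiliary functions and the four-case analysis entirely, but at the price of being tied to the full hypothesis $P_eVx=0$, whereas the paper's Lemmas~\ref{lem:lowlow}--\ref{lem:highhigh} go through with only $P_eV1=0$. One small correction: your stated decay $|\Phi^{(j)}(u)|\les(1+u)^{-j}$ fails for $j\ge2$ (in fact $\Phi''(u)\sim u^{-7/4}$ for large $u$); what your Hessian argument actually needs is that $u\mapsto u^j\Phi^{(j)}(u)$ is bounded with derivatives $O((1+u)^{-1})$, and that is true.
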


To prove this proposition, we write
\begin{multline*}
	\frac{R_0^+-R_0^-}{\lambda^2}VG_0vD_2vG_0VR_0^+\\=
	\frac{R_0^+-R_0^-}{\lambda^2}VG_0vD_2vG_0VG_0+
	\frac{R_0^+-R_0^-}{\lambda^2}VG_0vD_2vG_0V[R_0^+-G_0].
\end{multline*}
The first term is handled by the machinery set up in
\cite{GGeven}, specifically Lemma~4.5.  The second term
requires more care.  We introduce some ideas and
techniques inspired by the two-dimenstional treatment  from \cite{Sc2,EG2}.  We first note that
\begin{align*}
	\frac{R_0^+-R_0^-}{\lambda^2}VG_0vD_2vG_0V[R_0^+-G_0]
	=\frac{R_0^+-R_0^-}{\lambda^2}VP_eV[R_0^+-G_0]
\end{align*}
and when $P_eV1=0$, we have
\begin{multline}\label{eqn:PV1trick}
	\int_{\R^8}\frac{[R_0^+-R_0^-](\lambda^2)(x,z_1)}{\lambda^2}VP_eV[R_0^+-G_0](z_4,y) \, dz_1\, dz_4\\
	=\int_{\R^8}\frac{\big\{[R_0^+-R_0^-](\lambda^2)(x,z_1)-f(\lambda, x)\big\}}{\lambda^2}VP_eV\big\{[R_0^+-G_0](z_4,y)-g(\lambda, y)
	\big\} \, dz_1\, dz_4
\end{multline}
for any funtions $f,g$ that are independent of 
$z_1$ and $z_4$ respectively.  To utilize this cancellation, we must consider the different behavior
for the resolvents for small and large arguments.  We
begin by noting \eqref{resolv def} and \eqref{H0} to see
\begin{align}
	[R_0^+-R_0^-](\lambda^2)(x,y)=c \frac{\lambda}{|x-y|}
	J_1(\lambda|x-y|)=c\lambda^2 \frac{J_1(\lambda|x-y|)}{\lambda |x-y|}:=\lambda^2
	A(\lambda|x-y|)\label{eqn:Adefn}
\end{align}
In particular, we use \eqref{eqn:PV1trick} to subtract
off $\lambda^2 A(\lambda(1+|x|))$.  We define
\begin{align}
	G(\lambda,p,q):=A(\lambda p)\chi(\lambda p)-A(\lambda q) \chi(\lambda q).
\end{align}
To control the contribution of $R_0^+-G_0$, recalling
\eqref{H0}, we defined $G(\lambda, p,q)$ to
control the contribution of
 $J_1$, we now turn to the contribution of
$Y_1$.  We have
\begin{align*}
	[R_0^+(\lambda^2)(x,y)-G_0(x,y)]
	=\frac{i \lambda }{8\pi |x-y|}J_1(\lambda|x-y|)-
	\frac{\lambda^2}{8\pi}\bigg[
	 \frac{Y_1(\lambda |x-y|)}{\lambda |x-y|}-\frac{2}{\pi \lambda^2 |x-y|^2}\bigg].
\end{align*}
This leads us to define the function
\begin{align}
	F(\lambda, p, q)=\chi(\lambda p)\bigg[\frac{Y_1(\lambda p)}{\lambda p}+\frac{2}{\pi \lambda^2 p^2}
	\bigg] -\chi(\lambda q)\bigg[\frac{Y_1(\lambda q)}{\lambda q}+\frac{2}{\pi \lambda^2 q^2}
	\bigg].
\end{align}
In addition, we define $k(x,y):=1+\log^+|y|+\log^-|x-y|$.

\begin{lemma}\label{lem:FG bounds}

	Let $p:=|x-y|$ and $q=1+|x|$, then for
	$0<\lambda<2\lambda_1 \ll 1$,
	\begin{align*}
		|\partial_\lambda ^k G(\lambda,p,q)|&\les
		\lambda^{1-k} |p-q|\les
		\lambda^{1-k} \la y \ra, \qquad k=0,1,2,\\
		|\partial_\lambda^k F(\lambda, p, q)|&\les  
		\lambda^{-k} k(x,y), \qquad k=0,1,2.
	\end{align*}

\end{lemma}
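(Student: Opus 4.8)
The plan is to exploit that both $G$ and $F$ have the form $\Phi(\lambda p)-\Phi(\lambda q)$ for a single one-variable function $\Phi$ built from a Bessel expression times the cutoff $\chi$, and to extract the advertised factor from this difference structure. The two geometric facts we use repeatedly are $p=|x-y|\le|x|+|y|\le(1+|x|)(1+|y|)=q(1+|y|)$ and $q=1+|x|\le 1+|x-y|+|y|=1+p+|y|$. The first gives $|p-q|\lesssim \langle y\rangle$; taking logarithms and splitting into the regimes $p\ge 1$ and $p<1$ (in the latter using $\log p=-\log^-|x-y|$ and $|x|\le p+|y|<1+|y|$) they also yield $|\log(p/q)|\lesssim k(x,y)=1+\log^+|y|+\log^-|x-y|$. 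These are exactly the right-hand sides in the two claimed estimates.

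For $G$: with $A(z)=c\,J_1(z)/z$ we have $G(\lambda,p,q)=h(\lambda p)-h(\lambda q)$, $h:=A\chi$. Since $\chi$ is supported in $[-2\lambda_1,2\lambda_1]$ and $J_1(z)/z$ is even and real-analytic near $0$ (see \eqref{J0 def}), $h\in C_c^\infty(\R)$ with all derivatives bounded. By the fundamental theorem of calculus and two substitutions, $G=\lambda(p-q)\int_0^1 h'(\lambda r_t)\,dt$ with $r_t=q+t(p-q)$ lying between $p$ and $q$. Differentiating under the integral, $\partial_\lambda^j\int_0^1 h'(\lambda r_t)\,dt=\int_0^1 r_t^{\,j}\,h^{(1+j)}(\lambda r_t)\,dt$; on the support of $h^{(1+j)}$ the argument $\lambda r_t$ is $\le 2\lambda_1$, so there $r_t\le 2\lambda_1/\lambda$ and this derivative is $\lesssim\lambda^{-j}$. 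Writing $G=(p-q)\cdot\big(\lambda\int_0^1 h'(\lambda r_t)\,dt\big)$ and applying the Leibniz rule then gives $|\partial_\lambda^k G|\lesssim \lambda^{1-k}|p-q|$ for $k=0,1,2$, which is the first claim.

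For $F$: set $B(z):=\frac{Y_1(z)}{z}+\frac{2}{\pi z^2}$, so that $F(\lambda,p,q)=\widetilde B(\lambda p)-\widetilde B(\lambda q)$ with $\widetilde B:=\chi B$. Substituting \eqref{J0 def} and \eqref{Y0 def2} into the definition of $B$, the $-\tfrac{2}{\pi z}$ terms cancel and $B(z)=\tfrac1\pi\log(z/2)+b_1+\widetilde O_2(z^2\log z)$ for small $z$; hence $|B(z)|\lesssim 1+|\log z|$ and $|B^{(k)}(z)|\lesssim z^{-k}$ for $k=1,2$, and therefore $|\widetilde B(z)|\lesssim 1+|\log z|$ while $|\widetilde B^{(k)}(z)|\lesssim z^{-k}\mathbf 1_{\{0<z\le 2\lambda_1\}}$ for $k=1,2$ (the $\chi$-derivative terms are supported near $\lambda_1$, where $B$ and $B'$ are bounded). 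For $k=1,2$ the chain rule gives $\partial_\lambda^k F=p^k\widetilde B^{(k)}(\lambda p)-q^k\widetilde B^{(k)}(\lambda q)$, and each term is $\lesssim p^k(\lambda p)^{-k}=\lambda^{-k}\le \lambda^{-k}k(x,y)$. For $k=0$, write $F=\int_{\lambda q}^{\lambda p}\widetilde B'(s)\,ds$ and use $|\widetilde B'(s)|\lesssim s^{-1}$ on $(0,2\lambda_1]$ (and $\equiv 0$ beyond): the integral is bounded by the logarithm of the ratio of its endpoints, truncated to the support of $\widetilde B'$, which in every configuration is $\lesssim |\log(p/q)|\lesssim k(x,y)$ by the geometric estimates above.

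\textbf{Main obstacle.} The only delicate point is the bookkeeping against the cutoffs: when $\lambda p$ and $\lambda q$ lie on opposite sides of $2\lambda_1$, one of the two terms in $F$ (or in $G$) vanishes while the other does not, so the clean identities above must be replaced by a short case analysis on the position of $\lambda p,\lambda q$ relative to $2\lambda_1$. It is precisely here that the sharp form of the weight $k(x,y)$, together with the explicit relations $p=|x-y|$, $q=1+|x|$, is needed; a cruder weight would not close. Everything else reduces to routine differentiation combined with the Bessel-function expansions already recorded in Section~\ref{sec:exp}.
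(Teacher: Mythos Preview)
Your proposal is correct and follows essentially the same strategy as the paper: write each of $G,F$ as a difference $\Phi(\lambda p)-\Phi(\lambda q)$ for a one-variable function built from the Bessel expansions and the cutoff, then extract the advertised factors. For $G$ the paper applies the mean value theorem to $z^k g^{(k)}(z)$ (with $g=A\chi$), which is equivalent to your integral representation $G=\lambda(p-q)\int_0^1 h'(\lambda r_t)\,dt$ together with the support trick $r_t\le 2\lambda_1/\lambda$. For $F$ at $k=1,2$ both arguments simply bound each term by $\lambda^{-k}$.

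The one place your organization differs is the $k=0$ bound for $F$. The paper writes $|F(\lambda)|\le |F(0^+)|+\int_0^{2\lambda_1}|\partial_\lambda F|\,d\lambda$ and then must carefully exploit cancellation of the leading $\tfrac1{\pi\lambda}$ contributions from the $p$ and $q$ terms (since individually they are not integrable at $\lambda=0$). Your approach---writing $F=\int_{\lambda q}^{\lambda p}\widetilde B'(s)\,ds$ and integrating in the Bessel argument rather than in $\lambda$---sidesteps this: the interval of integration never touches $0$, so the bound $|\widetilde B'(s)|\lesssim s^{-1}$ gives $|\log(p/q)|\lesssim k(x,y)$ directly. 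This is a cleaner route to the same endpoint. The short case analysis you flag (when $\lambda p,\lambda q$ straddle $2\lambda_1$) is handled just as you indicate, by noting the truncated integral is dominated by the full logarithmic ratio.
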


\begin{proof}

	We begin with the bounds for $G(\lambda, p,q)$.
	We note that
	by the asymptotic expansion in \eqref{J0 def}, we
	have $g(\lambda p)=A(\lambda p) \chi(\lambda p)=1+(\lambda p)^2+\widetilde O_4((\lambda p)^2)$  Thus, $g'(z)=O(1)$, and by
	the mean value theorem, we see
	\begin{align*}
		|G(\lambda, p,q)|=|g(\lambda p)-g(\lambda q)|\les 
		\lambda |p-q| |g'(c)|\les \lambda |p-q|.
	\end{align*}
	For the derivatives, we note that for $k=1,2$,
	\begin{align*}
		|\partial_\lambda^k G(\lambda, p,q)|&=
		|p^k g^{(k)}(\lambda p)-q^k g^{(k)}(\lambda q)|
		=\frac{|(\lambda p)^k g^{(k)}(\lambda p)-(\lambda q)^k g^{(k)}(\lambda q)|}{\lambda^k}
	\end{align*}
	Again, by \eqref{J0 def}, we have $|\partial_z[z^k g^{(k)}](z)|=O(1)$ and then the mean value theorem
	we have 
	\begin{align*}
		|(\lambda p)^k g^{(k)}(\lambda p)-(\lambda q)^k g^{(k)}(\lambda q)|\les \lambda \big|p-q\big| \big|\partial_z
		[z^k g^{(k)}(z)]\big|\les \lambda |p-q|
	\end{align*}

	We now turn to the bounds for $F(\lambda, p,q)$.
	The bounds follow by the expansion \eqref{Y0 def}.  Note
	that adding the term $2(\pi \lambda^2 p^2)^{-1}$ 
	exactly cancels out the singular term.  We thus have
	\begin{align*}
		\chi(\lambda p)\bigg[\frac{Y_1(\lambda p)}{\lambda p}+\frac{2}{\pi \lambda^2 p^2}\bigg]&=
		\frac{1}{\pi}\log(\lambda p/2)+b_1 \lambda p -
		\frac{1}{8\pi} (\lambda p)^2 \log (\lambda p)+
		b_2 (\lambda p)^2\\
		&\qquad + \widetilde O((\lambda p)^4 \log (\lambda p))
		:=b(\lambda p).
	\end{align*}
	From this expansion, we can see that
	\begin{align}\label{Fat0}
		F(0+,p,q)=\log \bigg(\frac{|x-y|}{1+|x|}
		\bigg)+c \les k(x,y).
	\end{align}
	Similar to the two dimensional case considered in
	\cite{EG2}, we have
	\begin{align*}
		|\partial_\lambda F(\lambda, p,q)|=|p\chi'(\lambda p)
		b (\lambda p)-q\chi'(\lambda q) b(\lambda q)
		+\partial_\lambda b(\lambda p)\chi(\lambda p)-
		\partial_\lambda b(\lambda q) \chi(\lambda q)|.
	\end{align*}
	The bound of $\lambda^{-1}k(x,y)$ can be seen by using
	$\chi'(z)$ is supported on $z\approx 1$ and
	\begin{align*}
		|p\chi'(\lambda p)b (\lambda p)|\les \frac{1}{\lambda}
		|z\chi'(z)(1+\log z)|\les \frac{1}{\lambda}.
	\end{align*}
	Further, one has $	|b'(\lambda p)|\les \f1\lambda$.
	To bound $F$, we note \eqref{Fat0} allows us to bound
	\begin{align}
		\int_0^{2\lambda_1} |\partial_\lambda F(\lambda, p,q)|
		\, d\lambda &\les  \int_0^{2\lambda_1}
		|p\chi'(\lambda p)b (\lambda p)|+|q\chi'(\lambda q) b(\lambda q)| \, d\lambda \\
		&+ \int_0^{2\lambda_1}
		|[\partial_\lambda b(\lambda p)]\chi(\lambda p)-
		[\partial_\lambda b(\lambda q)] \chi(\lambda q)|\, d\lambda.\label{Fboundint}
	\end{align}
	The first line is seen to be bounded by the previous
	discussion.  For the second line, we note that
	$$
		\partial_{\lambda}b(\lambda p)=\frac{1}{\pi \lambda}+ b_1p+ O(
		p(\lambda p)^{1-} )=\frac{1}{\pi \lambda}+O
		\bigg(\frac{p^{\f12}}{\lambda^{\f12}}
		\bigg).
	$$
	Thus, we can bound \eqref{Fboundint} by
	\begin{align*}
		\int_0^{2\lambda_1} \frac{1}{\lambda}[\chi(\lambda p)-\chi(\lambda q)]
		+ \chi(\lambda p) \frac{p^{\f12}}{\lambda^{\f12}}
		+ \chi(\lambda q) \frac{q^{\f12}}{\lambda^{\f12}}\, d\lambda
		\les k(x,y).
	\end{align*}
	The first integrand is bounded since
	$\chi(\lambda p)-\chi(\lambda q)$ is supported on the
	set $[\frac{\lambda_1}{2p},2\frac{\lambda_1}{q} ]$, while the
	remaining pieces follow by integration using that
	$\lambda \les p^{-1}$ on the support of 
	$\chi(\lambda p)$. 
	
	For the second derivative, we note that
	\begin{align*}
		|\partial_\lambda^2 F(\lambda, p,q)|&=
		|p^2\chi''(\lambda p)
		b (\lambda p)-q^2\chi''(\lambda q) b(\lambda q)
		+p[\partial_\lambda b(\lambda p)]\chi'(\lambda p)-
		q[ \partial_\lambda b(\lambda q)] \chi'(\lambda q)\\
		&+[\partial^2_\lambda b(\lambda p)]\chi(\lambda p)-
		[\partial^2_\lambda b(\lambda p)]  \chi(\lambda q)|
	\end{align*}
	By multiplying and dividing by $\lambda$, and using
	that $|z^2 b(z)|,|zb(z)|\les 1$ for the terms with
	two derivatives on $b$, we note that since
	$\lambda p \leq 2\lambda_1\ll 1$, we have
	\begin{align*}
		|\partial^2_\lambda b(\lambda p)|\les p^2 +p^2 |\log(\lambda p)|
		\les \frac{1}{\lambda^{2}}
		+\frac{\lambda^2 p^2|\log (\lambda p)|}{\lambda^2}\les \frac{1}{\lambda^2}.
	\end{align*}

\end{proof}

\begin{lemma}\label{lem:lowlow}

	Under the assumptions of Theorem~\ref{thm:evalcancel},
	we have the bound
	\begin{multline*}
		\sup_{x,y\in \R^4}\bigg| \int_{\R^{16}}
		\int_0^\infty e^{it\lambda^2}\lambda \chi(\lambda) \frac{R_0^+-R_0^-}{\lambda^2}(\lambda^2)(x,z_1)
		\chi(\lambda |x-z_1|)\\
		VP_eV[R_0^+(\lambda^2)-G_0](z_4,y) 
		\chi(\lambda |z_4-y|)
		\, d\lambda \, dz_1\, dz_2\, dz_3\, dz_4\bigg| \les |t|^{-2}.
	\end{multline*}

\end{lemma}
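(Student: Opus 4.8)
The plan is, by means of the cancellation $P_eV1=0$, to reduce the displayed quantity to a scalar oscillatory $\lambda$-integral whose amplitude vanishes to sufficiently high order at $\lambda=0$ to absorb two integrations by parts, and then to close the spatial integrals by the same devices used for Theorem~\ref{thm:res1}. In the region cut out by $\chi(\lambda|x-z_1|)$, $\chi(\lambda|z_4-y|)$ and the outer $\chi(\lambda)$, both resolvent arguments are $\ll1$, so no oscillatory factor $e^{\pm i\lambda r}$ survives and plain integration by parts — rather than the stationary phase Lemma~\ref{stat phase} needed in the high-argument regimes — suffices.

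First I would use $P_eV1=0$ in both the $z_1$- and the $z_4$-integration: since $P_e$ is self-adjoint with real kernel and $P_eV1=0$, any $z_1$-independent summand of the leading factor, and any $z_4$-independent summand of the lagging one, integrate to zero against $V(z_1)P_e(z_1,z_4)V(z_4)$, exactly as in \eqref{eqn:PV1trick}. Using \eqref{eqn:Adefn}, $\lambda^{-2}[R_0^+-R_0^-](\lambda^2)(x,z_1)\chi(\lambda|x-z_1|)$ may then be replaced by $G(\lambda,|x-z_1|,1+|x|)$. For the lagging factor I would split $[R_0^+(\lambda^2)-G_0](z_4,y)$ by \eqref{resolv def}, \eqref{H0} into its $J_1$-part, a multiple of $\lambda^2A(\lambda|z_4-y|)$, and its regular $Y_1$-part, by \eqref{Y0 def2} a multiple of $\lambda^2\big(\tfrac{Y_1(\lambda|z_4-y|)}{\lambda|z_4-y|}+\tfrac{2}{\pi\lambda^2|z_4-y|^2}\big)$; multiplying by $\chi(\lambda|z_4-y|)$ and subtracting the $z_4$-independent pieces (replace $|z_4-y|$ by $1+|y|$) converts these to $c_1\lambda^2G(\lambda,|z_4-y|,1+|y|)$ and $c_2\lambda^2F(\lambda,|z_4-y|,1+|y|)$. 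Thus it suffices to estimate, uniformly in $x,y$, the integral over $z_1,z_4$ of $|V(z_1)||P_e(z_1,z_4)||V(z_4)|$ against $\big|\int_0^\infty e^{it\lambda^2}\lambda\chi(\lambda)\,G(\lambda,|x-z_1|,1+|x|)\,\lambda^2H(\lambda,|z_4-y|,1+|y|)\,d\lambda\big|$ for $H\in\{G,F\}$.

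Writing $a(\lambda)$ for the amplitude, Lemma~\ref{lem:FG bounds} yields $|\partial_\lambda^kG(\lambda,|x-z_1|,1+|x|)|\les\lambda^{1-k}\la z_1\ra$ (since $\big||x-z_1|-1-|x|\big|\le\la z_1\ra$ uniformly in $x$), $|\partial_\lambda^k[\lambda^2G(\lambda,|z_4-y|,1+|y|)]|\les\lambda^{3-k}\la z_4\ra$, and $|\partial_\lambda^k[\lambda^2F(\lambda,|z_4-y|,1+|y|)]|\les\lambda^{2-k}\big(1+\log^+|z_4|+\log^-|z_4-y|\big)$ for $k=0,1,2$; by the Leibniz rule $|\partial_\lambda^ka(\lambda)|\les\lambda^{4-k}\,\la z_1\ra\,\big(\la z_4\ra+1+\log^+|z_4|+\log^-|z_4-y|\big)$, $k=0,1,2$, the $H=F$ term being the more delicate. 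Integrating by parts twice through $e^{it\lambda^2}=(2it\lambda)^{-1}\partial_\lambda e^{it\lambda^2}$ produces no boundary terms — the right endpoint lies outside $\mathrm{supp}\,\chi$, and $\lambda^{-1}a(\lambda)$, $\lambda^{-1}\partial_\lambda(\lambda^{-1}a(\lambda))$ vanish as $\lambda\to0^+$ — and leaves $t^{-2}\int_0^{2\lambda_1}\big|\partial_\lambda\big(\lambda^{-1}\partial_\lambda(\lambda^{-1}a(\lambda))\big)\big|\,d\lambda$, whose integrand is $\les\lambda^{-2}|a''|+\lambda^{-3}|a'|+\lambda^{-4}|a|\les\la z_1\ra\big(\la z_4\ra+1+\log^+|z_4|+\log^-|z_4-y|\big)$. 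So the $\lambda$-integral is $O\big(t^{-2}\la z_1\ra(\la z_4\ra+1+\log^+|z_4|+\log^-|z_4-y|)\big)$, and the surrounding spatial integral is finite uniformly in $x,y$: $|V|\les\la\cdot\ra^{-12-}$ absorbs $\la z_1\ra$, $\la z_4\ra$, $\log^+|z_4|$; $\log^-|z_4-y|$ is locally $L^2$ in $z_4$ uniformly in $y$ (as in the bound $\sup_y\|v(\cdot)(1+|y-\cdot|^{-0-})\|_2\les1$ from the proof of Theorem~\ref{thm:res1}); and $P_e=G_0vD_2vG_0$ is finite rank, hence absolutely bounded, with its $G_0$-kernels handled via Lemma~\ref{EG:Lem} as there.

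The main obstacle is the second step: one must check that the single moment condition $P_eV1=0$, applied in two variables, genuinely produces an amplitude vanishing to order $\lambda^4$ at the origin — the $F$-contribution gains only $\lambda^4$, one power short of the purely $J_1$-type ($G$) terms, which is precisely the minimum that still permits two clean integrations by parts with no boundary contributions — and that the logarithmic weight $\log^-|z_4-y|$ thrown off by $F$ is absorbed uniformly in $y$. Apart from this, the argument is a routine adaptation of the estimates in the proofs of Theorem~\ref{thm:res1} and Lemma~\ref{lem:awful}.
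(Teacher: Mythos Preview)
Your proposal is correct and follows essentially the same route as the paper's proof: use $P_eV1=0$ via \eqref{eqn:PV1trick} to replace the two resolvent factors by $G(\lambda,|x-z_1|,1+|x|)$ and $\lambda^2[G+F](\lambda,|z_4-y|,1+|y|)$, invoke Lemma~\ref{lem:FG bounds} to see the amplitude is $\widetilde O_2(\lambda^4)$ times $\la z_1\ra(k(z_4,y)+\la z_4\ra)$, integrate by parts twice, and close the spatial integrals by the absolute boundedness of $P_e$ together with the decay of $V$. Your write-up is slightly more explicit about the integration-by-parts mechanics and the precise form of the logarithmic weight, but the argument is the same.
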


\begin{proof}

	Considering the $\lambda$ integral, and \eqref{eqn:PV1trick}, we can replace
	$[R_0^+-R_0^-](\lambda^2)(x,z_1)\chi(\lambda |x-z_1|)$
	with $\lambda^2 G(\lambda, p_1,q_1)$ with $p_1=|x-z_1|$ and
	$q_1=1+|x|$. We can also replace
	$[R_0^+(\lambda^2)-G_0](z_4,y) \chi(\lambda |z_4-y|)$
	with $\lambda^2 G(\lambda, p_2,q_2)+\lambda^2 F(\lambda, p_2,q_2)$ with $p_2=|z_4-y|$ and
	$q_2=1+|y|$.  Thus, we are lead to bound the integral
	\begin{align*}
		\int_0^\infty e^{it\lambda^2} \lambda^3 \chi(\lambda) G(\lambda, p_1, q_1)[F(\lambda, p_2,q_2)
		+G(\lambda, p_2,q_2)]\, d\lambda
	\end{align*}
	By the bounds in Lemma~\ref{lem:FG bounds}, we can
	express this integral as 
	\begin{align*}
		[k(z_4,y)+\la z_4\ra ] \la z_1\ra \int_0^\infty e^{it\lambda^2}
		\chi(\lambda) \widetilde O_2(\lambda^{4})\, d\lambda \les \frac{[k(z_4,y)+\la z_4\ra ]\la z_1\ra}{|t|^2}.
	\end{align*}
	The $\lambda$ smallness allows us to integrate by parts
	twice without boundary terms to gain the $|t|^{-2}$ 
	time decay.

	We close the argument by bounding the spatial integrals
	by
	$$
		\sup_{x,y \in \R^4}
		\| \la \cdot \ra V\|_{L^2} \||P_e|\|_{L^2\to L^2} 
		\|\la \cdot \ra k(\cdot, y) V\|_{L^2}\les 1.
	$$

\end{proof}

For when the Bessel functions are supported on a
large argument, we recall that asymptotics \eqref{JYasymp2} and define the functions
\begin{align}\label{Gtilde def}
	\widetilde G^\pm (\lambda, p, q)=\widetilde \chi(\lambda p) \tilde w_{\pm}(\lambda p)-
	e^{\pm i \lambda (p-q)}\widetilde \chi(\lambda q)
	\tilde w_{\pm}(\lambda q),	\qquad
	\tilde w_{\pm}(z)=\widetilde O(z^{-\f32}).
\end{align}
Here we have absorbed the $\lambda/|x-y|$ from 
\eqref{resolv def} to the asymptotic expansion.
This allows us to write
\begin{align*}
	[R_0^+-R_0^-](\lambda^2)(p)\widetilde \chi(\lambda p)
	-[R_0^+-R_0^-](\lambda^2)(q)\widetilde \chi(\lambda q)
	=\lambda^2 \bigg[e^{i\lambda p}\widetilde G^+(\lambda, p, q)+e^{-i\lambda p}\widetilde G^-(\lambda, p, q)
	\bigg].
\end{align*}

\begin{lemma}\label{lem:Gtilde}

	For any $0\leq \tau \leq 1$, we have the bounds
	\begin{align*}
		|\widetilde G^\pm (\lambda, p, q)|&\les
		(\lambda |p-q|)^{\tau} \bigg(\frac{\widetilde \chi(\lambda p)}{|\lambda p|^{\frac{3-\tau}{2}}}+\frac{\widetilde \chi(\lambda q)}{|\lambda q|^{\frac{3-\tau}{2}}}\bigg),\\
		|\partial_\lambda \widetilde G^\pm (\lambda, p, q)|&\les
		(|p-q|+\lambda^{-1} )\bigg(\frac{\widetilde \chi(\lambda p)}{|\lambda p|^{\f32}}+\frac{\widetilde \chi(\lambda q)}{|\lambda q|^{\f32}}\bigg),\\
		|\partial_\lambda^2 \widetilde G^\pm (\lambda, p, q)|&\les
		(|p-q|+\lambda^{-1})^2 \bigg(\frac{\widetilde \chi(\lambda p)}{|\lambda p|^{\f32}}+\frac{\widetilde \chi(\lambda q)}{|\lambda q|^{\f32}}\bigg).
	\end{align*}

\end{lemma}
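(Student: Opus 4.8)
The plan is to reduce all three estimates to a single interpolation bound for differences of the shape $g(\lambda p)-e^{\pm i\lambda(p-q)}g(\lambda q)$, where $g$ ranges over functions decaying like $\widetilde O(z^{-\f32})$ for $z\gtrsim 1$. Writing $h_\pm(z)=\widetilde\chi(z)\widetilde w_\pm(z)$, the definition \eqref{Gtilde def} says exactly $\widetilde G^\pm(\lambda,p,q)=h_\pm(\lambda p)-e^{\pm i\lambda(p-q)}h_\pm(\lambda q)$, and Leibniz's rule together with $\widetilde w_\pm=\widetilde O(z^{-\f32})$ gives $|h_\pm^{(k)}(z)|\les|z|^{-\f32-k}$ on the support of $h_\pm$ (the derivatives of $\widetilde\chi$ only act on a bounded transition region, where they are harmless). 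The key claim I would isolate is: for any $g$ supported on $\{z\gtrsim 1\}$ with $|g^{(k)}(z)|\les|z|^{-\f32-k}$ there, and any $0\le\tau\le1$,
\begin{equation}\label{Gaux}
	|g(\lambda p)-e^{\pm i\lambda(p-q)}g(\lambda q)|\les (\lambda|p-q|)^\tau\Big(\tfrac{\widetilde\chi(\lambda p)}{|\lambda p|^{(3-\tau)/2}}+\tfrac{\widetilde\chi(\lambda q)}{|\lambda q|^{(3-\tau)/2}}\Big).
\end{equation}
The first assertion of the lemma is then \eqref{Gaux} with $g=h_\pm$.

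To prove \eqref{Gaux} I would split
\[
	g(\lambda p)-e^{\pm i\lambda(p-q)}g(\lambda q)=\big(g(\lambda p)-g(\lambda q)\big)+g(\lambda q)\big(1-e^{\pm i\lambda(p-q)}\big),
\]
bound the second summand by $|1-e^{i\theta}|\les\min(1,|\theta|)\les|\theta|^\tau$ together with $|g(\lambda q)|\les|\lambda q|^{-\f32}\le|\lambda q|^{-(3-\tau)/2}$ (as $\lambda q\gtrsim1$ on the support), and handle the first summand by two cases. If $\lambda p\approx\lambda q$, say $\tfrac12\lambda q\le\lambda p\le2\lambda q$, then $\lambda|p-q|\les\lambda p$, and the mean value theorem with the \emph{improved} decay $|g'(z)|\les|z|^{-\f52}$ gives $|g(\lambda p)-g(\lambda q)|\les\lambda|p-q|\,|\lambda p|^{-\f52}$; trading $(\lambda|p-q|)^{1-\tau}$ against $|\lambda p|^{1-\tau}$ converts this into $(\lambda|p-q|)^\tau|\lambda p|^{-(3+\tau)/2}$, which is admissible. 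If instead $\lambda p$ and $\lambda q$ are far apart, say $\lambda p\le\tfrac12\lambda q$, then $\lambda|p-q|\gtrsim\lambda q\gtrsim1$, so bounding $|g(\lambda p)|$ and $|g(\lambda q)|$ separately and absorbing the extra $\lambda$-smallness into $(\lambda|p-q|)^\tau$ suffices. Degenerate cases (one of $\widetilde\chi(\lambda p),\widetilde\chi(\lambda q)$ vanishing, or both arguments in the transition region) collapse to a single term and are routine.

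For the derivative bounds I would differentiate the product form of $\widetilde G^\pm$ directly:
\[
	\partial_\lambda\widetilde G^\pm=\tfrac1\lambda\big[H_\pm(\lambda p)-e^{\pm i\lambda(p-q)}H_\pm(\lambda q)\big]\mp i(p-q)e^{\pm i\lambda(p-q)}h_\pm(\lambda q),\qquad H_\pm(z):=zh_\pm'(z),
\]
and, one more derivative, $\partial_\lambda^2\widetilde G^\pm=\tfrac1{\lambda^2}[\mathcal H_\pm(\lambda p)-e^{\pm i\lambda(p-q)}\mathcal H_\pm(\lambda q)]\mp\tfrac{2i(p-q)}{\lambda}e^{\pm i\lambda(p-q)}H_\pm(\lambda q)+(p-q)^2e^{\pm i\lambda(p-q)}h_\pm(\lambda q)$, with $\mathcal H_\pm(z):=z^2h_\pm''(z)$. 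Since $H_\pm$ and $\mathcal H_\pm$ again satisfy $|H_\pm^{(k)}(z)|,|\mathcal H_\pm^{(k)}(z)|\les|z|^{-\f32-k}$ on their support, I would apply \eqref{Gaux} with $\tau=0$ to each ``leading'' difference and estimate the terms carrying explicit factors of $p-q$ using $|h_\pm(\lambda q)|,|H_\pm(\lambda q)|\les\widetilde\chi(\lambda q)|\lambda q|^{-\f32}$. This gives $|\partial_\lambda\widetilde G^\pm|\les(\lambda^{-1}+|p-q|)\big(\tfrac{\widetilde\chi(\lambda p)}{|\lambda p|^{3/2}}+\tfrac{\widetilde\chi(\lambda q)}{|\lambda q|^{3/2}}\big)$ and $|\partial_\lambda^2\widetilde G^\pm|\les(\lambda^{-2}+\lambda^{-1}|p-q|+|p-q|^2)\big(\tfrac{\widetilde\chi(\lambda p)}{|\lambda p|^{3/2}}+\tfrac{\widetilde\chi(\lambda q)}{|\lambda q|^{3/2}}\big)$, and $\lambda^{-2}+\lambda^{-1}|p-q|+|p-q|^2\asymp(\lambda^{-1}+|p-q|)^2$ finishes the proof.

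The hard part will be the comparable‑argument case of \eqref{Gaux}: a crude mean value estimate using only the decay rate $|z|^{-\f32}$ loses half a power and does not close the interpolation, so it is essential to exploit that $g'$ decays like $|z|^{-\f52}$ (equivalently, that $zg'(z)$ is still of class $\widetilde O(z^{-\f32})$), which is precisely the input that lets the factor $(\lambda|p-q|)^{1-\tau}$ be converted into a gain in $|\lambda p|$. Everything else — the separated‑terms case and the behaviour of $\widetilde\chi$ near its transition region — is bookkeeping.
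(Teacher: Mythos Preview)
Your proposal is correct and follows essentially the same route as the paper. Both arguments split $\widetilde G^\pm$ as $(c(\lambda p)-c(\lambda q))+(1-e^{\pm i\lambda(p-q)})c(\lambda q)$, exploit the improved decay $|c'(z)|\les|z|^{-5/2}$ on the first difference via the mean value theorem, and bound the second term using $|1-e^{i\theta}|\les|\theta|$; the paper then interpolates between the $\tau=0$ and $\tau=1$ endpoints whereas you carry out the interpolation directly inside the case analysis, and your comparable/far-apart dichotomy replaces the paper's split on whether $\lambda q<1<\lambda p$, but these are purely organizational differences. For the derivatives, both approaches rewrite $\partial_\lambda^k\widetilde G^\pm$ in terms of $zh'(z)$ and $z^2h''(z)$ (which inherit the $\widetilde O(z^{-3/2})$ bounds) plus explicit $(p-q)$-terms, and then apply the $\tau=0$ estimate.
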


\begin{proof}

	We prove the case for $\widetilde G^+$ and ignore the
	superscript.  The bound
	\begin{align}\label{eqn:Gtrivial}
		|\widetilde G(\lambda,p,q)|\les 
		\bigg(\frac{\widetilde \chi(\lambda p)}{|\lambda p|^{\f32}}+\frac{\widetilde \chi(\lambda q)}{|\lambda q|^{\f32}}\bigg)
	\end{align}
	is clear.  To gain $\lambda$ smallness, we 
	define the function
	$c(s)=\widetilde \chi(s) w(s)$ which satisfies
	$|c^{(k)}(s)|\les \widetilde \chi(s) |s|^{-\f32-k}$ to write
	\begin{align}
		\widetilde G(\lambda,p,q)=c(\lambda p)-c(\lambda q)
		+\big(1-e^{i\lambda (p-q)}\big)c(\lambda q).
	\end{align}
	The second summand is easily seen to be bounded by
	$$
		\lambda |p-q| |c(\lambda q)|\les \lambda |p-q|
		\frac{\widetilde \chi(\lambda q)}{|\lambda q|^{\f32}}.
	$$
	For the first term, without loss of generality we
	assume that $p>q$ to see
	\begin{align*}
		|c(\lambda p)-c(\lambda q)|=\bigg| \int_{\lambda q}^{\lambda p} c'(s)\, ds\bigg| \les 
		\int_{\lambda q}^{\lambda p} \widetilde \chi(s)|s|^{-\f52}\, ds.
	\end{align*}
	In the case that $1<\lambda q<\lambda p$, this integral
	is bounded by
	$$
		\lambda |p-q| \frac{\widetilde \chi(\lambda q)}
		{|\lambda q|^{\f32}}.
	$$
	In the case that $\lambda q<1<\lambda p$, we bound
	as
	\begin{align*}
		\int_{\lambda q}^{\lambda p} \widetilde \chi(s)|s|^{-\f52}\, ds &\les \widetilde \chi(\lambda p) \int_1^{\lambda p} s^{-\f52}\, ds
		\les \widetilde \chi(\lambda p) \frac{(\lambda p)^{\f32}-1}{|\lambda p|^{\f32}}
		\les \widetilde \chi(\lambda p) \frac{(\lambda p)^{\f32}-(\lambda q)^{\f32}}{|\lambda p|^{\f32}}\\
		&\les \widetilde \chi(\lambda p)\frac{\lambda |p-q|}{|\lambda p|},
	\end{align*}
	where the last bound follows by
	$$
		|b^{\f32}-a^{\f32}|\les \int_{a}^b \sqrt{s}\, ds
		\les |b-a| \sqrt b.
	$$
	Since $\lambda p, \lambda q\gtrsim 1$, the dominant
	term is the bound
	$$
		\lambda |p-q|\bigg(\frac{\widetilde \chi(\lambda p)}{|\lambda p|}+ \frac{\widetilde \chi(\lambda q)}{|\lambda q|}
		\bigg).
	$$
	Interpolating between this and the trivial bound
	\eqref{eqn:Gtrivial}, one obtains the desired bound.
	
	We now turn to derivatives, rather than rewriting 
	$\widetilde G$ we use \eqref{Gtilde def} directly to see
	\begin{align*}
		|\partial_\lambda \widetilde G(\lambda, p,q)|
		&=|pc'(\lambda p)-i(p-q)e^{i\lambda (p-q)}c(\lambda q)-e^{i\lambda (p-q)}qc'(\lambda q)|\\
		&\les  \frac{c_1(\lambda p)+c_1(\lambda q)}{\lambda}+ |p-q|c(\lambda q)
	\end{align*}
	Here $c_1(s):=sc'(s)$ satisfies the same bounds as
	$c(s)$.  This suffices to prove the desired bound for
	the first derivative.  For the second derivative, we
	again use \eqref{Gtilde def} to see
	\begin{align*}
		|\partial_\lambda^2 \widetilde G(\lambda, p,q)|
		&\les|p^2c''(\lambda p)+(p-q)^2c(\lambda q)+(p-q)qc'(\lambda q)+q^2c'(\lambda q)|\\
		&\les  \frac{c_2(\lambda p)+c_2(\lambda q)}{\lambda^2}+ |p-q|^2c(\lambda q)+
		\frac{|p-q|}{\lambda} c_1(\lambda q).
	\end{align*}	
	With $c_2(s):=s^2c''(s)$ satisfies the same bounds as
	$c(s)$.  This establishes the desired bound.

\end{proof}

\begin{lemma}\label{lem:highlow}

	Under the assumptions of Theorem~\ref{thm:evalcancel},
	we have the bound
	\begin{multline*}
	\sup_{x,y\in \R^4}\bigg| 
		\int_{\R^{16}}
		\int_0^\infty e^{it\lambda^2}\lambda 
		\chi(\lambda) \frac{R_0^+-R_0^-}{\lambda^2}(\lambda^2)(x,z_1)
		\widetilde \chi(\lambda |x-z_1|)\\
		VP_eV[R_0^+(\lambda^2)-G_0](z_4,y) 
		\chi(\lambda |z_4-y|)
		\, d\lambda \, dz_1\, dz_2\, dz_3\, dz_4\bigg| \les |t|^{-2}.
	\end{multline*}

\end{lemma}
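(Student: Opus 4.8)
The plan is to run the argument of Lemma~\ref{lem:lowlow}, the new ingredient being that the large-argument factor $\frac{R_0^+-R_0^-}{\lambda^2}(\lambda^2)(x,z_1)\widetilde\chi(\lambda|x-z_1|)$ now carries an oscillatory factor $e^{\pm i\lambda|x-z_1|}$, so the $\lambda$-integral becomes a genuine stationary-phase integral rather than a pure Gaussian one. First we would use $P_eV1=0$ together with \eqref{eqn:PV1trick} to pass to the ``differenced'' resolvents: writing $p_1=|x-z_1|$, $q_1=1+|x|$, $p_2=|z_4-y|$, $q_2=1+|y|$, the representation following \eqref{Gtilde def} replaces the large-argument factor by $\lambda^2[e^{i\lambda p_1}\widetilde G^+(\lambda,p_1,q_1)+e^{-i\lambda p_1}\widetilde G^-(\lambda,p_1,q_1)]$, while, exactly as in Lemma~\ref{lem:lowlow}, the factor $[R_0^+(\lambda^2)-G_0](z_4,y)\chi(\lambda p_2)$ is replaced by $c\lambda^2[G(\lambda,p_2,q_2)+F(\lambda,p_2,q_2)]$. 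The $\lambda$-integral is then of the form $\int_0^\infty e^{i(t\lambda^2\pm\lambda p_1)}a(\lambda)\,d\lambda$ with $a(\lambda)=\lambda^3\chi(\lambda)\widetilde G^\pm(\lambda,p_1,q_1)[G+F](\lambda,p_2,q_2)$, and Lemma~\ref{lem:Gtilde} (with $\tau=0$), Lemma~\ref{lem:FG bounds}, and the fact that $\widetilde\chi(\lambda p_1)$ forces $\lambda p_1\gtrsim1$ on $\mathrm{supp}\,a$ give
\[
 |a^{(j)}(\lambda)|\les\lambda^{\frac32-j}p_1^{-\frac32}\big(\langle z_4\rangle+k(z_4,y)\big),\qquad j=0,1,2,
\]
with $a$ supported in $\lambda\in[c\,p_1^{-1},2\lambda_1]$ (and analogously with $q_1$ replacing $p_1$).

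Next I would split according to the phase. For $t\lambda^2+\lambda p_1$ there is no critical point and $|\partial_\lambda(t\lambda^2+\lambda p_1)|=2t\lambda+p_1\gtrsim t\lambda$ on $\mathrm{supp}\,a$, so two integrations by parts (no boundary terms, since $a$ is smooth and compactly supported away from $0$) bound this contribution by $t^{-2}\int_{c/p_1}^{2\lambda_1}\lambda^{-5/2}p_1^{-3/2}(\langle z_4\rangle+k)\,d\lambda\les t^{-2}(\langle z_4\rangle+k(z_4,y))$. For $t\lambda^2-\lambda p_1$ the critical point is $\lambda_0=p_1/(2t)$. If $p_1\les\sqrt t$, then $\lambda_0$ lies to the left of $\mathrm{supp}\,a$, where $|2t\lambda-p_1|\gtrsim t\lambda$, and the same double integration by parts gives $t^{-2}(\langle z_4\rangle+k)$. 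If $p_1\gtrsim\sqrt t$, I would cut $a$ smoothly into the pieces $|\lambda-\lambda_0|\les\lambda_0$ and $|\lambda-\lambda_0|\gtrsim\lambda_0$; on the latter $|2t\lambda-p_1|\gtrsim t\max(\lambda,\lambda_0)$ and two integrations by parts again yield $t^{-2}(\langle z_4\rangle+k)$, while on the former Lemma~\ref{stat phase} applies: its near-stationary term is $\les t^{-1/2}\sup_{|\lambda-\lambda_0|<t^{-1/2}}|a|\les t^{-1/2}\lambda_0^{3/2}p_1^{-3/2}(\langle z_4\rangle+k)=t^{-1/2}(2t)^{-3/2}(\langle z_4\rangle+k)\les t^{-2}(\langle z_4\rangle+k)$, and its remaining term is $\les t^{-1}\int_{t^{-1/2}<|\lambda-\lambda_0|\les\lambda_0}\big(|a||\lambda-\lambda_0|^{-2}+|a'||\lambda-\lambda_0|^{-1}\big)\,d\lambda\les t^{-1}(\lambda_0^{3/2}t^{1/2}+\lambda_0^{1/2})p_1^{-3/2}(\langle z_4\rangle+k)\les t^{-2}(\langle z_4\rangle+k)$, again after inserting $\lambda_0=p_1/(2t)$ and using $p_1\gtrsim\sqrt t$.

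Finally, I would carry out the remaining $z_1$- and $z_4$-integrations as in Lemma~\ref{lem:lowlow}: bounding $|\widetilde G^\pm(\lambda,p_1,q_1)|\les1$ on $\mathrm{supp}\,a$, what survives is dominated by $\sup_{x,y}\|\langle\cdot\rangle V\|_{L^2}\||P_e|\|_{L^2\to L^2}\|\langle\cdot\rangle k(\cdot,y)V\|_{L^2}$, which is finite uniformly in $x,y$ since $|V(x)|\les\langle x\rangle^{-12-}$ absorbs every polynomial weight, $P_e$ is finite rank, and $k(z_4,y)$ only contributes a locally integrable $\log^-|z_4-y|$ singularity together with a $\log^+$ growth that the decay of $V$ controls. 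The main obstacle is the stationary-phase step when $p_1\gtrsim\sqrt t$: one must balance the amplitude decay $(\lambda p_1)^{-3/2}$ against the location $\lambda_0=p_1/(2t)$ of the critical point, and it is precisely the support restriction $\lambda|x-z_1|\gtrsim1$ forced by $\widetilde\chi$ that makes the off-critical integrations by parts converge and delivers the full $|t|^{-2}$ bound.
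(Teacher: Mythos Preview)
Your argument is correct and leads to the same $|t|^{-2}$ bound, but it takes a somewhat different route from the paper's proof.  The paper proceeds by integrating by parts once to reach the form $\frac{1}{t}\int_0^\infty e^{it\phi_\pm(\lambda)}a(\lambda)\,d\lambda$ with $\phi_\pm(\lambda)=\lambda^2\pm\lambda p_2 t^{-1}$, and then invokes Lemma~3.8 of \cite{EG2} as a black box: that lemma supplies the remaining factor of $t^{-1}$ as soon as one verifies $|a(\lambda)|\les\lambda^{1/2}(p_1^{-1/2}+p_2^{-1/2})$ and $|a'(\lambda)|\les\lambda^{-1/2}(p_1^{-1/2}+p_2^{-1/2})$, which follows from Lemmas~\ref{lem:FG bounds} and \ref{lem:Gtilde}.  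You instead carry out the stationary-phase analysis by hand: two integrations by parts when the phase has no critical point on $\mathrm{supp}\,a$, and Lemma~\ref{stat phase} with a near/far decomposition otherwise.  Your approach is more self-contained and does not require consulting \cite{EG2}; the paper's approach is shorter because the technical work is outsourced.

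Two small points to tighten.  First, differentiating $\widetilde G^\pm(\lambda,p_1,q_1)$ brings down factors of $|p_1-q_1|\les\langle z_1\rangle$ (from the internal phase $e^{\pm i\lambda(p_1-q_1)}$ in the $q_1$ piece), so your bounds on $a^{(j)}$ should carry an extra $\langle z_1\rangle^j$; this is harmless since $|V(z_1)|\les\langle z_1\rangle^{-12-}$ absorbs it.  Relatedly, your parenthetical ``analogously with $q_1$ replacing $p_1$'' is cleanest if you split $\widetilde G^\pm$ into its two summands and run the argument separately on each with the appropriate phase $e^{\pm i\lambda p_1}$ or $e^{\pm i\lambda q_1}$, since with $\tau=0$ you are not exploiting any cancellation between them.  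Second, in the far-stationary integral $\int_{t^{-1/2}<|\lambda-\lambda_0|\les\lambda_0}|a'||\lambda-\lambda_0|^{-1}\,d\lambda$ there is a logarithmic factor $\log(\lambda_0 t^{1/2})=\log(p_1/\sqrt t)$ that you suppressed; it is controlled because $p_1^{-1}\log(p_1/\sqrt t)\les t^{-1/2}$ on the range $p_1\gtrsim\sqrt t$, but it should be mentioned.
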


\begin{proof}

	As in the proof of Lemma~\ref{lem:lowlow}, we employ
	the functions $F$, $G$ and $\widetilde G$ as needed.
	We will prove the bound for $F$ in place of 
	$[R_0^+(\lambda^2)-G_0]$ as this is larger in $\lambda$.
	We define $p_1:=\max(|x-z_1|, 1+|x|)$ and
	$p_2:=\min(|x-z_1|, 1+|x|)$.
	Accordingly, we see to bound
	\begin{align*}
		\int_0^\infty e^{it\lambda^2}
		\lambda^3 \chi(\lambda) e^{\pm i\lambda p_2} \widetilde G(\lambda, p_1, p_2)F(\lambda, q_1,q_2)\, d\lambda.
	\end{align*}
	The $\lambda$ smallness of the integrand allows us to
	integrate by parts once without boundary terms to 
	bound with
	\begin{align*}
	\frac{1}{t}	\int_0^\infty e^{it\lambda^2}
	\partial_\lambda \bigg[
	\lambda^2 \chi(\lambda) e^{\pm i\lambda p_2} \widetilde G(\lambda, p_1, p_2)F(\lambda, q_1,q_2)\bigg]\, d\lambda	
	=\frac{1}{t} \int_0^\infty e^{it\phi_\pm (\lambda)}
	a(\lambda) \, d\lambda	.
	\end{align*}
	Here $\phi_\pm(\lambda)=\lambda^2\pm \lambda p_2t^{-1}$.
	In Lemma~3.8 of \cite{EG2}, using Lemma~\ref{stat phase}
	it is proven that
	\begin{align*}
		\bigg| \int_0^\infty e^{it\phi_\pm (\lambda)}
			a(\lambda) \, d\lambda	\bigg| \les \frac{1}{|t|},
	\end{align*}
	provided
	\begin{align*}
		|a(\lambda)|\les \chi(\lambda) \lambda^{\f12} \bigg(
		\frac{\widetilde \chi(\lambda p_1)}{p_1^{\f12}}+
		\frac{\widetilde \chi(\lambda p_2)}{p_2^{\f12}}\bigg),
		\qquad
		|a'(\lambda)|\les \chi(\lambda) \lambda^{-\f12} \bigg(
		\frac{\widetilde \chi(\lambda p_1)}{p_1^{\f12}}+
		\frac{\widetilde \chi(\lambda p_2)}{p_2^{\f12}}\bigg).
	\end{align*}
	Since we have
	\begin{align*}
		a(\lambda)=
		e^{\mp i\lambda p_2}\partial_\lambda \bigg[
		\lambda^2 \chi(\lambda) e^{\pm i\lambda p_2} \widetilde G(\lambda, p_1, p_2)F(\lambda, q_1,q_2)\bigg].
	\end{align*}
	The bounds of Lemma~\ref{lem:FG bounds} and 
	\ref{lem:Gtilde} give us
	\begin{align*}
		a(\lambda)=p_2 \widetilde O_1(\lambda^2)\widetilde
		G(\lambda, p_2,p_1)+\widetilde O_1(\lambda )\widetilde G(\lambda, p_1,p_2)+\widetilde O(\lambda^2) \partial_\lambda \widetilde
		G(\lambda, p_2,p_1)
	\end{align*}
	satisfies the desired bounds. 
	
\end{proof}

\begin{lemma}\label{lem:highhigh}

	Under the assumptions of Theorem~\ref{thm:evalcancel},
	we have the bound
	\begin{multline*}
		\sup_{x,y\in \R^4}\bigg| 
		\int_{\R^{16}}
		\int_0^\infty e^{it\lambda^2}\lambda 
		 \chi(\lambda) \frac{R_0^+-R_0^-}{\lambda^2}(\lambda^2)(x,z_1)
		\widetilde \chi(\lambda |x-z_1|)\\
		VP_eV[R_0^+(\lambda^2)-G_0](z_4,y) 
		\widetilde\chi(\lambda |z_4-y|)
		\, d\lambda \, dz_1\, dz_2\, dz_3\, dz_4\bigg| \les |t|^{-2}.
	\end{multline*}

\end{lemma}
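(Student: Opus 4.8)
The plan is to repeat the scheme used in Lemma~\ref{lem:highlow}, now with \emph{both} resolvent factors supported in the large-argument regime. First I would use the cancellation identity \eqref{eqn:PV1trick}, which is available since $P_eV1=0$: from $[R_0^+-R_0^-](\lambda^2)(x,z_1)\widetilde\chi(\lambda|x-z_1|)$ subtract the $z_1$-independent quantity obtained by replacing $|x-z_1|$ with $q_1:=1+|x|$, and from $[R_0^+(\lambda^2)-G_0](z_4,y)\widetilde\chi(\lambda|z_4-y|)$ subtract the analogous $z_4$-independent quantity with $|z_4-y|$ replaced by $q_2:=1+|y|$. Setting $p_1:=|x-z_1|$, $p_2:=|z_4-y|$ and using \eqref{resolv def}, \eqref{H0}, \eqref{JYasymp2} together with \eqref{Gtilde def} and the identity displayed just before the statement of Lemma~\ref{lem:Gtilde}, the first centered difference equals $\lambda^2\big(e^{i\lambda p_1}\widetilde G^+(\lambda,p_1,q_1)+e^{-i\lambda p_1}\widetilde G^-(\lambda,p_1,q_1)\big)$; the second splits into a Hankel part of the same $\widetilde G^+$-type (oscillating at $\pm p_2$, or internally at $\pm q_2$) plus the leftover contribution of $G_0$, which is a \emph{non-oscillatory} difference with decay in $\lambda p_2$ even faster than $\widetilde G^\pm$ — in both cases the bounds of Lemma~\ref{lem:Gtilde} apply. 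Cancelling the explicit powers of $\lambda$ against the $\lambda^{-2}$ and $\lambda$ already present, one is reduced to estimating, uniformly in $x,y$, a finite sum of integrals of the form
\begin{equation*}
	\int_0^\infty e^{it\lambda^2}\,\lambda^3\chi(\lambda)\,e^{i\lambda(\epsilon_1a_1+\epsilon_2a_2)}\,\widetilde G^{\epsilon_1}(\lambda,p_1,q_1)\,\widetilde G^{\epsilon_2}(\lambda,p_2,q_2)\,d\lambda,
\end{equation*}
where $\epsilon_1,\epsilon_2\in\{+,-\}$, $a_1\in\{p_1,q_1\}$, $a_2\in\{p_2,q_2\}$ (with the degenerate case $a_2=0$ for the $G_0$ piece), multiplied by the kernel $V(z_1)G_0(z_1,z_2)v(z_2)D_2(z_2,z_3)v(z_3)G_0(z_3,z_4)V(z_4)$ and integrated in $z_1,\dots,z_4$. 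Since $D_2=wP_ew$, this kernel equals $V(z_1)P_e(z_1,z_4)V(z_4)$ after integrating out $z_2,z_3$; and since by Lemma~\ref{lem:Gtilde} with $\tau=1$ one has $|\widetilde G^\pm(\lambda,p_j,q_j)|\les|p_j-q_j|\,p_j^{-1}\widetilde\chi(\lambda p_j)$ (plus the analogous $q_j$-term) with $|p_1-q_1|\les\la z_1\ra$, $|p_2-q_2|\les\la z_4\ra$, the spatial integral converges absolutely under $|V(x)|\les\la x\ra^{-12-}$ by Lemma~\ref{EG:Lem}.

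Next I would integrate by parts once in $\lambda$: $\widetilde\chi(\lambda p_1)\widetilde\chi(\lambda p_2)$ kills the boundary term at $\lambda=0$ and $\chi(\lambda)$ at the upper endpoint, leaving $\frac1t\int_0^\infty e^{it\phi(\lambda)}a(\lambda)\,d\lambda$ with $\phi(\lambda)=\lambda^2+\frac{\epsilon_1a_1+\epsilon_2a_2}{t}\lambda$, so $\phi''\equiv2$. When $\epsilon_1a_1+\epsilon_2a_2\ge0$ there is no stationary point on $(0,\infty)$ and a second integration by parts produces the gain $t^{-1}$ for this integral; when $\epsilon_1a_1+\epsilon_2a_2<0$ there is a genuine stationary point and I would invoke the stationary-phase estimate Lemma~\ref{stat phase}, exactly as in Lemmas~3.6 and~3.8 of \cite{GGeven} and Lemma~3.8 of \cite{EG2}. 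In either case the $\lambda$-integral is $\les t^{-1}$ once one verifies amplitude bounds of the form
\begin{equation*}
	|a(\lambda)|\les\chi(\lambda)\,\lambda^{\f12}\Big(\frac{\widetilde\chi(\lambda p_1)}{p_1^{\f12}}+\frac{\widetilde\chi(\lambda p_2)}{p_2^{\f12}}\Big),\qquad |a'(\lambda)|\les\chi(\lambda)\,\lambda^{-\f12}\Big(\frac{\widetilde\chi(\lambda p_1)}{p_1^{\f12}}+\frac{\widetilde\chi(\lambda p_2)}{p_2^{\f12}}\Big),
\end{equation*}
up to harmless spatial weights $\la z_1\ra^{0+},\la z_4\ra^{0+}$ absorbed into the potentials. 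These are obtained by differentiating $\lambda^2\chi(\lambda)\,\widetilde G^{\epsilon_1}(\lambda,p_1,q_1)\widetilde G^{\epsilon_2}(\lambda,p_2,q_2)$ (after centering the phase), using the derivative bounds of Lemma~\ref{lem:Gtilde} and the fact that $\widetilde\chi(\lambda p)$ forces $\lambda^{-1}\les p$ and $p^{-1}\les\lambda$ on its support, which is what turns the naive size of $a$ into the stated $\lambda^{\f12}$-control; this is the four-dimensional counterpart of the computations in the proofs of Lemmas~\ref{lem:lowlow} and~\ref{lem:highlow}. Summing the finitely many sign/frequency terms and restoring the $\frac1t$ prefactor yields the claimed $|t|^{-2}$ bound.

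The step I expect to be the main obstacle is the amplitude estimate just described, carried out uniformly in the spatial variables. Before the $(\lambda|p-q|)^\tau$-smallness of $\widetilde G$ is exploited the amplitude is too large as $\lambda\to0$, and one has to trade this smallness against the constraint $\lambda p\gtrsim1$ coming from $\widetilde\chi(\lambda p)$ in precisely the right proportion; in addition one must keep track of the internal oscillation frequencies $q_1-p_1$ and $q_2-p_2$ hidden inside $\widetilde G^\pm$ (these are $O(\la z_1\ra)$, $O(\la z_4\ra)$, hence contribute only harmless spatial weights, but they must be handled), of the case distinction according to the sign of $\epsilon_1a_1+\epsilon_2a_2$ — in particular the borderline situation where the stationary point $\lambda_0=-(\epsilon_1a_1+\epsilon_2a_2)/(2t)$ lies near the lower edge $\max(p_1^{-1},p_2^{-1})$ of the support of $\chi\,\widetilde\chi\,\widetilde\chi$, where Lemma~\ref{stat phase} must be applied with care — and of the integrability of the weights $\la z_1\ra,\la z_4\ra$ against $V\otimes|P_e|\otimes V$, which is exactly what necessitates the hypothesis $|V(x)|\les\la x\ra^{-12-}$. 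None of these ingredients is genuinely new: they are the four-dimensional analogues of the two-dimensional stationary-phase arguments of \cite{Sc2,EG2} and of the estimates in \cite{GGeven}. What makes this case heavier than Lemmas~\ref{lem:highlow} and~\ref{lem:lowlow} is that here a $\widetilde G$-type factor — with its intrinsic oscillation and only $(\lambda p)^{-\frac{3-\tau}{2}}$ decay — appears on \emph{both} sides, instead of being paired with the tamer $F$ or with $G_0$.
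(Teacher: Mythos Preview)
Your framework is the same as the paper's: subtract the spatially constant pieces via $P_eV1=0$, replace the high-argument resolvents by $\widetilde G^\pm$, integrate by parts once, then seek a further gain of $t^{-1}$ from the remaining oscillatory integral. Two points, however, need correction.

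\medskip

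\textbf{Amplitude bounds.} After one integration by parts the amplitude $a(\lambda)$ is a \emph{product} of two $\widetilde G$-type factors, so the correct bound is
\[
|a(\lambda)|\les \chi(\lambda)\,\la z_1\ra\,\la z_4\ra
\Big(\tfrac{\widetilde\chi(\lambda p_1)}{p_1^{1/2}}+\tfrac{\widetilde\chi(\lambda q_1)}{q_1^{1/2}}\Big)
\Big(\tfrac{\widetilde\chi(\lambda p_2)}{p_2^{1/2}}+\tfrac{\widetilde\chi(\lambda q_2)}{q_2^{1/2}}\Big),
\qquad
|a'(\lambda)|\les \la z_1\ra^2\la z_4\ra^2\,\lambda^{-1}\,(\cdots)(\cdots),
\]
not a single bracket with a $\lambda^{1/2}$ prefactor. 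The weights are genuine powers $\la z_j\ra$, $\la z_j\ra^2$ (coming from $|p_j-q_j|\les\la z_j\ra$ in Lemma~\ref{lem:Gtilde}), not $\la z_j\ra^{0+}$; this is exactly why the hypothesis on $V$ is as strong as $\la x\ra^{-12-}$.

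\medskip

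\textbf{The opposing-phase case.} The real gap is your treatment of the case $\epsilon_1a_1+\epsilon_2a_2$ small (opposing signs with $a_1\approx a_2$). Here the stationary point $\lambda_0$ is near $0$, well inside the support of $a$, and Lemma~\ref{stat phase} with the amplitude bounds above does \emph{not} yield $t^{-1}$ uniformly in $x,y$: the near-critical contribution $\int_{|\lambda-\lambda_0|<t^{-1/2}}|a|$ is only of size $t^{-1/2}p^{-1/2}q^{-1/2}$, with no relation between $p,q$ and $t$ to close the estimate. Saying ``Lemma~\ref{stat phase} applied with care'' does not resolve this.

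The paper handles this case differently. Writing the extracted frequencies as $p_2,q_2$ (the minima of the two pairs), it distinguishes three regimes: if $q_2\gg p_2$ or $q_2\ll p_2$ then $|q_2-p_2|$ is comparable to the larger one and the integral reduces to the single-$\widetilde G$ situation of Lemma~\ref{lem:highlow}. In the genuinely new regime $q_2\approx p_2$ the paper abandons stationary phase and integrates by parts \emph{twice} against $e^{it\lambda^2}$, leaving the small-frequency factor $e^{i\lambda(q_2-p_2)}$ inside the amplitude. The point is that with two $\widetilde G$ factors one has enough $(\lambda p)^{-3/2}(\lambda q)^{-3/2}$ decay to absorb the $|q_2-p_2|^k\lambda^{k}$ growth produced by differentiating $e^{i\lambda(q_2-p_2)}$ (using $|q_2-p_2|\les p_2\approx q_2$ and $\lambda p_2\gtrsim 1$), and one checks directly that the resulting $\lambda$-integral is $O(1)$ via a short case analysis in $k=0,1,2$. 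This step is the heart of the lemma and is missing from your proposal.
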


\begin{proof}

We note that the contribution of $G_0(z_4,y) \widetilde\chi(\lambda |z_4-y|)=\widetilde O_2(\lambda^2)$.
Thus, it's contribution may be bounded by
\begin{align*}
	\int_0^\infty e^{it\lambda^2}\lambda 
	 \chi(\lambda) \frac{R_0^+-R_0^-}{\lambda^2}(\lambda^2)(x,z_1)
	\widetilde \chi(\lambda |x-z_1|)\widetilde O_2(\lambda^2)  	\, d\lambda. 	
\end{align*}
This can be bounded by $|t|^{-2}$ as in the proof of
Lemma~\ref{lem:highlow} when the auxiliary function
$F(\lambda,p,q)$ is used.

Assume that $t>0$ and recall \eqref{resolv def} and \eqref{JYasymp2}
\begin{align*}
	R_0^\pm(\lambda^2)(x,y)\chi(\lambda |x-y|)
	&=\pm\frac{i}{4}\frac{\lambda}{2\pi |x-y|} H_1^\pm(\lambda|x-y|)\chi(\lambda |x-y|)\\
	&=\frac{\lambda}{|x-y|}e^{\pm i\lambda |x-y|}\omega_\pm (\lambda |x-y|) .
\end{align*}
While for the difference of resolvents we have both the `+' and `-' phases,
\begin{align*}
	[R_0^+-R_0^-](\lambda^2)(x,y)\chi(\lambda |x-y|)
	&=\frac{\lambda}{|x-y|}\bigg[e^{ i\lambda |x-y|}\omega_+ (\lambda |x-y|)+e^{- i\lambda |x-y|}\omega_- (\lambda |x-y|)
	\bigg] .
\end{align*}
To employ the auxiliary functions $\widetilde{G}$ we denote $p_1= \max(|x-z_1|,1+|x|)$, $p_2=\min(|x-z_1|,1+|x|)$, $q_1= \max(|y-z_4|,1+|y|)$ and $q_2=\min(|y-z_4|,1+|y|)$. Assuming  $p_1,p_2,q_1,q_2 > 0$ we can exchange $[R_0^+-R_0^-](\lambda^2)(x,z_1)$ by  sum of two terms 
$\lambda^2 e^{\pm i\lambda p_2} \widetilde{G}^\pm(\lambda,p_1,p_2)$ and $[R_0^+(\lambda^2)-G_0](z_4,y)$ by $\lambda^2 e^{i\lambda q_2}\widetilde{G}^+(\lambda,q_1,q_2)+\widetilde O_2(\lambda^2)$. As a result, we need to establish
\begin{align*}
	\int_{\R^{16}}
	\int_0^\infty e^{i t \phi_{\pm}(\lambda)}\lambda^3 
	 \chi(\lambda) \widetilde{G}^\pm(\lambda,p_1,p_2)
	VP_eV \widetilde{G}^+(\lambda,q_1,q_2)  \, d\lambda \,
	d\vec z\les t^{-2},
\end{align*}
where 
$$\phi_{\pm}(\lambda)=\lambda^2 + \lambda \frac{q_2\pm p_2 }{t}.$$ 

We consider first when the phase is $\phi_+$.
Note that the powers of $\lambda$ allow us to  integrate by parts once without boundary terms to obtain
\begin{align*}
	\frac{1}{t}	\int_0^\infty e^{it\lambda^2}
	\partial_\lambda \bigg[\lambda^2 \chi(\lambda) e^{ i\lambda (q_2 + p_2)}\widetilde{G}^\pm (\lambda,p_1,p_2)
	\widetilde{G}^+(\lambda,q_1,q_2)\bigg]\, d\lambda	
	=\frac{1}{t} \int_0^\infty e^{it\phi_\pm (\lambda)}
	a(\lambda) \, d\lambda.	
	\end{align*}
It is now enough to show 	
\begin{align} \label{whtp}
    \Big|  \int_0^\infty e^{it\phi_\pm (\lambda)}
	a(\lambda) \, d\lambda \Big| \les {\f 1 t}. 
\end{align}

To do that we need to determine the upper bounds for $|a(\lambda)|$ and $|a^\prime(\lambda)|$. 
We have 
    \begin{align} \label{a bound}
    \begin{split}
		|a(\lambda)| & \les  \partial_{\lambda} \big[\lambda^2  \chi(\lambda) e^{i\lambda (p_2+q_2)}\widetilde{G}^+(\lambda,p_1,p_2)\widetilde{G}^+(\lambda,q_1,q_2) \big] \\
		& \les \chi(\lambda) \la z_1 \ra \la z_4 \ra \bigg(
		\frac{\widetilde \chi(\lambda p_1)}{p_1^{\f12}}+
		\frac{\widetilde \chi(\lambda q_2)}{q_2^{\f12}}\bigg) \bigg(
		\frac{\widetilde \chi(\lambda q_1)}{q_1^{\f12}}+
		\frac{\widetilde \chi(\lambda q_2)}{q_2^{\f12}}\bigg).
    \end{split}	
    \end{align}
           
   If the derivative acts on one of the $\widetilde{G}(\lambda,p_1, p_2)$, using the 
   bounds of Lemma~\ref{lem:Gtilde} we have
   \begin{align*}
   		\chi(\lambda)\lambda^2 &(|p_1-p_2|+\lambda^{-1})
   		\bigg(
   		\frac{\widetilde \chi(\lambda p_1)}{|\lambda p_1|^{\f32}}+
   		\frac{\widetilde \chi(\lambda q_2)}{|\lambda p_2|^{\f32}}\bigg) \bigg(
   		\frac{\widetilde \chi(\lambda q_1)}{|\lambda q_1|^{\f32}}+
   		\frac{\widetilde \chi(\lambda q_2)}{|\lambda q_2|^{\f32}}\bigg)\\
   		&\les \chi(\lambda)\lambda \la z_1 \ra 
   		\bigg(
   		\frac{\widetilde \chi(\lambda p_1)}{|\lambda p_1|^{\f12}}+
   		\frac{\widetilde \chi(\lambda q_2)}{|\lambda p_2|^{\f12}}\bigg) \bigg(
   		\frac{\widetilde \chi(\lambda q_1)}{|\lambda q_1|^{\f12}}+
   		\frac{\widetilde \chi(\lambda q_2)}{|\lambda q_2|^{\f12}}\bigg),
   \end{align*} 
   where we used that $\lambda p_j, \lambda q_j \gtrsim 1$
   and $|p_1-p_2|\les \la z_1 \ra$.  The desired bound 
   then follows by cancelling the $\lambda$ in the numerator
   with $\lambda^{\f12}$ in the denominator of each factor.
   The argument is identical for $\widetilde{G}(\lambda,q_1, q_2)$.  Similar bounds hold
   if the derivative acts on $\lambda^2$ or the cut-off
   function $\chi(\lambda)$, noting that $|\chi'(\lambda)|\approx \lambda^{-1}$ on the range of
   $\lambda$ under consideration.
   
   On the other hand, if the derivative acts on $e^{i\lambda(p_2+q_2)}$ in \eqref{a bound} we use
\begin{multline*}
      \lambda^2 p_2 \ \widetilde{G}(\lambda,p_1,p_2)\widetilde{G}(\lambda,q_1,q_2) 
      = \lambda (\lambda p_2) \bigg(\frac{\widetilde \chi(\lambda p_1)}{|\lambda p_1|^{\f 32}}
      +\frac{\widetilde \chi(\lambda p_2)}{|\lambda p_2|^{\f32}}\bigg) 
      \bigg(\frac{\widetilde \chi(\lambda q_1)}{|\lambda q_1|^{\f 32}}
      +\frac{\widetilde \chi(\lambda q_2)}{|\lambda q_2|^{\f32}}\bigg) \\
       \les \lambda  
       \bigg(\frac{\widetilde \chi(\lambda p_1)}{|\lambda p_1|^{\f1 2}}
       +\frac{\widetilde \chi(\lambda p_2)}{|\lambda p_2|^{\f12}}\bigg) 
        \bigg(\frac{\widetilde \chi(\lambda q_1)}{|\lambda q_1|^{\f 12}}
       +\frac{\widetilde \chi(\lambda q_2)}{|\lambda q_2|^{\f12}}\bigg)      \\
       \les \bigg(\frac{\widetilde \chi(\lambda p_1)}{ p_1^{\f1 2}}
       +\frac{\widetilde \chi(\lambda p_2)}{ p_2^{\f12}}\bigg) 
       \bigg(\frac{\widetilde \chi(\lambda q_1)}{ q_1^{\f 12}}
       +\frac{\widetilde \chi(\lambda q_2)}{ q_2^{\f12}}\bigg) .                       
   \end{multline*}
Here we used $p_2\leq p_1$. An identical argument holds for
$q_2$.
Similarly, one can obtain the first derivative of $a(\lambda)$ as 
 \begin{align} \label{a derivative}
		|a'(\lambda)| \les\la z_1 \ra^2 \la z_4\ra ^2  \chi(\lambda) \lambda^{-1}\bigg(
		\frac{\widetilde \chi(\lambda p_1)}{p_1^{\f12}}+
		\frac{\widetilde \chi(\lambda q_2)}{q_2^{\f12}}\bigg) \bigg(
		\frac{\widetilde \chi(\lambda q_1)}{q_1^{\f12}}+
		\frac{\widetilde \chi(\lambda q_2)}{q_2^{\f12}}\bigg).
\end{align}	
This can be seen by noting that the bound in Lemma~\ref{lem:Gtilde} show that if $0<\lambda \ll 1$, 
\begin{align*}
	|\partial_\lambda^k \widetilde G^\pm (\lambda, p_1,p_2)|
	\les \bigg(\frac{\la p_1-p_2\ra}{\lambda} \bigg)^k
	\bigg(\frac{\widetilde \chi(\lambda p_1)}{|\lambda p_1|^{\f32}}+\frac{\widetilde \chi(\lambda p_2)}{|\lambda p_2|^{\f32}}\bigg), \qquad k=0,1,2.
\end{align*}
		
The desired bound, \eqref{whtp}, follows as in Lemma~3.10 of \cite{EG2}. 

We now turn to the case of $\phi_-(\lambda)$ which has
opposing phases.  We wish to reduce to previously considered
cases as much as possible.  We note that if $\f12 q_2> p_2$,
we have $q_2-p_2 \approx q_2$, this allows us effectively
reduce to an integral of the form
\begin{align*}
	\int_0^\infty e^{i t \lambda^2+i\lambda a_2}\lambda^3 
	 \chi(\lambda) \widetilde{G}^-(\lambda,p_1,p_2)
	 \widetilde{G}^+(\lambda,q_1,q_2)  \, d\lambda.
\end{align*}
This can be controlled as in the proof of
Lemma~\ref{lem:highlow} to get the desired bound since
the constant (in $\lambda$)
$a_2$ satisfies the same bounds as $q_2$.  The
bounds on $\widetilde G^-(\lambda, p_1, p_2)$ are
bounded by those
used in $F(\lambda, p_1, p_2)$ in this proof.

In the case that $q_2< \f12 p_2$, we have $q_2-p_2\approx -p_2$, this allows us effectively
reduce to an integral of the form
\begin{align*}
	\int_0^\infty e^{i t \lambda^2-i\lambda b_2}\lambda^3 
	 \chi(\lambda) \widetilde{G}^-(\lambda,p_1,p_2)
	 \widetilde{G}^+(\lambda,q_1,q_2)  \, d\lambda.
\end{align*}  
This also can be controlled as in the proof of
Lemma~\ref{lem:highlow} to get the desired bound using that
the constant
$b_2$ satisfies the same bounds as $p_2$.

We now consider the final case in which 
$q_2\approx p_2$, 
and we cannot effectively reduce to the previous cases.
In this case, we need to bound an integral of the form
\begin{align*}
	\int_0^\infty e^{i t \lambda^2+i\lambda (q_2-p_2)}\lambda^3 
	 \chi(\lambda) \widetilde{G}^-(\lambda,p_1,p_2)
	 \widetilde{G}^+(\lambda,q_1,q_2)  \, d\lambda.
\end{align*} 
In the previous cases, we do not integrate by parts twice
to avoid spatial weights.  In this case, we use the
$\lambda$ smallness to integrate by parts twice.
We need to
bound
\begin{align}\label{eqn:IBP nasty}
	\int_0^\infty e^{it\lambda^2} e^{i\lambda (q_2-p_2)}
	b(\lambda) \, d\lambda.
\end{align}
Using the bounds in Lemma~\ref{lem:Gtilde},
$b(\lambda)$ is a function that is supported on 
$[0,2\lambda_1)$ that satisfies
\begin{align}\label{eqn:IBP b bounds}
	|\partial_\lambda^k b(\lambda)| \les \lambda^{3-k}
	\bigg(\frac{\widetilde \chi(\lambda p_1)}{|\lambda p_1|^{\f32}}+\frac{\widetilde \chi(\lambda p_2)}{|\lambda p_2|^{\f32}}\bigg)
	\bigg(\frac{\widetilde \chi(\lambda q_1)}{|\lambda q_1|^{\f32}}+\frac{\widetilde \chi(\lambda q_2)}{|\lambda q_2|^{\f32}}\bigg), \qquad k=0,1,2.
\end{align}
Thus, upon integrating by parts twice, we have
\begin{align*}
	|\eqref{eqn:IBP nasty}|&\les \frac{1}{t^2}+
	\frac{1}{t^2} \int_0^\infty \bigg|\bigg(\frac{\partial}{\partial \lambda}
	\frac{1}{\lambda} \bigg)^{2}
	\bigg( e^{i\lambda (q_2-p_2)} b(\lambda) \bigg) 
	\bigg| \, d\lambda.
\end{align*}
The boundary term occurs if the first derivative when
integrating by parts acts on $b(\lambda)$, then to set up
the second integration by parts there is an effective
loss of three powers of $\lambda$.  We then note that
\eqref{eqn:IBP b bounds} gives us that
$|\lambda^{-2}b'(\lambda)|\les 1$.  We now move to control
the integral.  By absorbing the division by $\lambda$ into
$b(\lambda)$, we have to bound
\begin{align}
	\int_0^\infty \sum_{k=0}^2 |p_2-q_2|^k \lambda^{k-1}
	\chi(\lambda)
	\bigg(\frac{\widetilde \chi(\lambda p_1)}{|\lambda p_1|^{\f32}}+\frac{\widetilde \chi(\lambda p_2)}{|\lambda p_2|^{\f32}}\bigg)
	\bigg(\frac{\widetilde \chi(\lambda q_1)}{|\lambda q_1|^{\f32}}+\frac{\widetilde \chi(\lambda q_2)}{|\lambda q_2|^{\f32}}\bigg)\, d\lambda.
\end{align}
When $k=0$, the integral is seen to be bounded
by  
\begin{align*}
	\int_0^\infty \lambda^{-1}
	\chi(\lambda)
	\bigg(\frac{\widetilde \chi(\lambda p_1)}{|\lambda p_1|^{\f32}}+\frac{\widetilde \chi(\lambda p_2)}{|\lambda p_2|^{\f32}}\bigg)
	\bigg(\frac{\widetilde \chi(\lambda q_1)}{|\lambda q_1|^{\f32}}+\frac{\widetilde \chi(\lambda q_2)}{|\lambda q_2|^{\f32}}\bigg)\, d\lambda
	\les \int_{\R} \frac{\widetilde \chi(\lambda p_2)}{\lambda^{\f 52} p_2^{\f32}}\, d\lambda \les 1.
\end{align*}
Here we used the crude bound of a constant for all the
terms involving $q_1,q_2$, and using that $p_2\leq p_1$.
If
$k=1$, we recall that $p_2\leq p_1$, $q_2\leq q_1$ and
$p_2\approx q_2$.  We use the bound $|p_2-q_2|\les p_2$,
and bound the terms containing $q_1,q_2$ with a constant,
to see
\begin{multline*}
	\int_0^\infty |p_2-q_2|
	\chi(\lambda)
	\bigg(\frac{\widetilde \chi(\lambda p_1)}{|\lambda p_1|^{\f32}}+\frac{\widetilde \chi(\lambda p_2)}{|\lambda p_2|^{\f32}}\bigg)
	\bigg(\frac{\widetilde \chi(\lambda q_1)}{|\lambda q_1|^{\f32}}+\frac{\widetilde \chi(\lambda q_2)}{|\lambda q_2|^{\f32}}\bigg)\, d\lambda\\
	\les \int_0^\infty \frac{p_2\widetilde \chi(\lambda p_1)}{|\lambda p_1|^{\f32}}+
	\frac{\widetilde \chi(\lambda p_2)}
	{\lambda^{\f32} |p_2|^{\f12}}\, d\lambda
	\les \int_0^\infty 
	\frac{\widetilde \chi(\lambda p_2)}
	{\lambda^{\f32} |p_2|^{\f12}}\, d\lambda \les 1.
\end{multline*}
Finally, if $k=2$, we seek to bound
\begin{align*}
	\int_0^\infty |p_2-q_2|^2 \lambda
	\chi(\lambda)
	\bigg(\frac{\widetilde \chi(\lambda p_1)}{|\lambda p_1|^{\f32}}+\frac{\widetilde \chi(\lambda p_2)}{|\lambda p_2|^{\f32}}\bigg)
	\bigg(\frac{\widetilde \chi(\lambda q_1)}{|\lambda q_1|^{\f32}}+\frac{\widetilde \chi(\lambda q_2)}{|\lambda q_2|^{\f32}}\bigg)\, d\lambda.
\end{align*}
This time, we may not ignore
any terms with a crude bound of a constant.  Instead,
we use the dominating terms and $p_2\approx q_2$
to bound with
\begin{align*}
	\int_0^\infty p_2q_2 \lambda
	\bigg(\frac{\widetilde \chi(\lambda p_2)}{|\lambda p_2|^{\f32}}\frac{\widetilde \chi(\lambda q_2)}{|\lambda q_2|^{\f32}}\bigg)\, d\lambda
	\les \int_0^\infty \frac{1}{\lambda^2}
	\bigg(\frac{\widetilde \chi(\lambda p_2)}{p_2^{\f12}}\frac{\widetilde \chi(\lambda q_2)}{ q_2^{\f12}}\bigg)\, d\lambda\les \int_{\R} \frac{\widetilde \chi(\lambda p_2)}
	{\lambda^2 p_2}\, d\lambda \les 1.
\end{align*}
 
\end{proof}

It is now a simple matter to prove
\begin{lemma}\label{lem:highlow2}

	Under the assumptions of Theorem~\ref{thm:evalcancel},
	we have the bound
	\begin{multline*}
		\sup_{x,y\in \R^4}\bigg| 
		\int_{\R^{16}}
		\int_0^\infty e^{it\lambda^2}\lambda 
		\chi(\lambda) \frac{R_0^+-R_0^-}{\lambda^2}(\lambda^2)(x,z_1)
		\chi(\lambda |x-z_1|)\\
		VP_eV[R_0^+(\lambda^2)-G_0](z_4,y) 
		\widetilde \chi(\lambda |z_4-y|)
		\, d\lambda \, dz_1\, dz_2\, dz_3\, dz_4\bigg| \les |t|^{-2}.
	\end{multline*}

\end{lemma}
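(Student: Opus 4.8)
The plan is to recognize that this ``low--high'' configuration is the mirror image of the ``high--low'' configuration treated in Lemma~\ref{lem:highlow}, with the roles of the leading and lagging resolvents interchanged, and to recycle that argument.

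First I would split the lagging factor as $[R_0^+(\lambda^2)-G_0](z_4,y)\widetilde\chi(\lambda|z_4-y|)=R_0^+(\lambda^2)(z_4,y)\widetilde\chi(\lambda|z_4-y|)-G_0(z_4,y)\widetilde\chi(\lambda|z_4-y|)$. On the support of $\widetilde\chi(\lambda|z_4-y|)$ one has $|z_4-y|^{-2}\les\lambda^2$, so $G_0(z_4,y)\widetilde\chi(\lambda|z_4-y|)=\widetilde O_2(\lambda^2)$; combined with the leading factor, and after using $P_eV1=0$ through \eqref{eqn:PV1trick} to replace $\frac{R_0^+-R_0^-}{\lambda^2}(\lambda^2)(x,z_1)\chi(\lambda|x-z_1|)=A(\lambda|x-z_1|)\chi(\lambda|x-z_1|)$ by $G(\lambda,|x-z_1|,1+|x|)$ (which is $\widetilde O_2(\lambda)$ carrying a weight $\la z_1\ra$ by Lemma~\ref{lem:FG bounds}), the resulting $\lambda$-integral is small enough in $\lambda$ to be bounded by $O(|t|^{-2})$ after two integrations by parts, exactly as in the analogous reductions in the proofs of Lemmas~\ref{lem:lowlow} and \ref{lem:highlow}; one may also simply quote Lemma~\ref{lem:fauxIBP}.

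For the main term I would use \eqref{eqn:Adefn} to write the leading factor as $A(\lambda|x-z_1|)\chi(\lambda|x-z_1|)$ and \eqref{resolv def}--\eqref{JYasymp2} to write $R_0^+(\lambda^2)(z_4,y)\widetilde\chi(\lambda|z_4-y|)=\lambda^2 e^{i\lambda|z_4-y|}\widetilde\chi(\lambda|z_4-y|)\widetilde w_+(\lambda|z_4-y|)$ with $\widetilde w_+(z)=\widetilde O(z^{-3/2})$. Since $A(0)\neq0$ the leading factor does not vanish at $\lambda=0$, so the cancellation $P_eV1=0$ is essential: using \eqref{eqn:PV1trick} on both sides I would subtract the base point term $A(\lambda(1+|x|))\chi(\lambda(1+|x|))$ to produce $G(\lambda,p_1,\bar q_1)$ with $p_1=|x-z_1|$, $\bar q_1=1+|x|$, and likewise subtract the corresponding base point term from the lagging factor to produce $\lambda^2 e^{i\lambda q_2}\widetilde G^+(\lambda,q_1,q_2)$ with $q_1=\max(|z_4-y|,1+|y|)$, $q_2=\min(|z_4-y|,1+|y|)$. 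The $\lambda$-integral is then
$$\int_0^\infty e^{it\lambda^2+i\lambda q_2}\lambda^3\chi(\lambda)\,G(\lambda,p_1,\bar q_1)\,\widetilde G^+(\lambda,q_1,q_2)\,d\lambda,$$
and the $\lambda^3$ prefactor allows one integration by parts with no boundary term, reducing matters to $\frac1t\int_0^\infty e^{it\phi_+(\lambda)}a(\lambda)\,d\lambda$ with $\phi_+(\lambda)=\lambda^2+\lambda q_2/t$. Using $|\partial_\lambda^k G(\lambda,p_1,\bar q_1)|\les\lambda^{1-k}\la z_1\ra$ from Lemma~\ref{lem:FG bounds} and the bounds on $\widetilde G^+$ from Lemma~\ref{lem:Gtilde}, the amplitude $a$ satisfies $|a(\lambda)|\les\chi(\lambda)\lambda^{1/2}(\widetilde\chi(\lambda q_1)q_1^{-1/2}+\widetilde\chi(\lambda q_2)q_2^{-1/2})$ and $|a'(\lambda)|\les\chi(\lambda)\lambda^{-1/2}(\widetilde\chi(\lambda q_1)q_1^{-1/2}+\widetilde\chi(\lambda q_2)q_2^{-1/2})$ up to spatial weights in $\la z_1\ra$ and $\la z_4\ra$ that the decay of $V$ absorbs; Lemma~\ref{stat phase}, applied as in Lemma~3.8 of \cite{EG2} and as reused in the proof of Lemma~\ref{lem:highlow}, then gives $|\int_0^\infty e^{it\phi_+(\lambda)}a(\lambda)\,d\lambda|\les|t|^{-1}$, hence the $\lambda$-integral is $O(|t|^{-2})$. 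The spatial integrals converge since $|V(z)|\les\la z\ra^{-12-}$ makes $\sup_{x,y\in\R^4}\|\la\cdot\ra^{2}V\|_{L^2}\||P_e|\|_{L^2\to L^2}\|\la\cdot\ra^{2}V\|_{L^2}\les1$.

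The hard part, to the extent the lemma has one (the statement is advertised as ``simple''), is purely bookkeeping: choosing the base points so that the $P_eV1=0$ cancellation applies on both sides, and checking that the amplitude surviving one integration by parts still meets the stationary phase hypotheses with only spatial weights the decay of $V$ can absorb. There is no new analytic difficulty beyond Lemmas~\ref{lem:lowlow}--\ref{lem:highhigh}; this case is the reflection of Lemma~\ref{lem:highlow} with the leading and lagging resolvents swapped.
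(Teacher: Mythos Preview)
Your proposal is correct and matches what the paper intends: it explicitly calls this lemma ``a simple matter'' and gives no proof, relying on the symmetry with Lemma~\ref{lem:highlow}. Your treatment---splitting off $G_0\widetilde\chi=\widetilde O_2(\lambda^2)$, then pairing the low leading factor $G(\lambda,|x-z_1|,1+|x|)$ with the high lagging factor $\lambda^2 e^{i\lambda q_2}\widetilde G^+(\lambda,q_1,q_2)$, integrating by parts once, and invoking the stationary phase bound from Lemma~3.8 of \cite{EG2}---is exactly the mirror of that argument, and is in fact marginally easier since only the single phase $e^{+i\lambda q_2}$ appears and $G$ is one power of $\lambda$ smaller than the $F$ used in Lemma~\ref{lem:highlow}.
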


We can now prove Proposition~\ref{lem:awful3}.

\begin{proof}[Proof of Proposition~\ref{lem:awful3}]

	The bound follows from the bounds in 
	Lemmas~\ref{lem:lowlow}, \ref{lem:highlow2}, 
	\ref{lem:highlow} and \ref{lem:highhigh}.  The ample
	decay of $V$ more than suffices to ensure that the
	spatial integrals are bounded as in Theorem~\ref{thm:res1}.

\end{proof}

We now proceed to the proof of Theorem~\ref{thm:evalcancel}.
We note that the assumption $P_eV1=0$ is already
satisfied when there is a resonance of the second kind at zero. 
\begin{proof}[Proof of Theorem~\ref{thm:evalcancel}]
As in the previous sections, we need to understand the contribution of  \eqref{eq:M inverse cancel} to the Stone formula. Thanks to the algebraic fact \eqref{alg fact} we have three cases to consider; the case when the `+/-' difference acts on $M^{\pm}(\lambda)^{-1}$, the case when the difference acts on an inner resolvent, and the case when the difference acts on the leading/lagging resolvent. 

We first consider when the `+/-' difference acts on
$M^\pm(\lambda)^{-1}$.  By Proposition~\ref{prop:Minvcanc},
we have
 $$ M^{+}(\lambda)^{-1} - M^{-}(\lambda)^{-1}= \lambda^2 M_6 + \widetilde O _2(\lambda ^{2+}) $$
for an absolutely bounded operator $M_6$. The $\lambda$
smallness this brings to
$$
R_0^+VR_0^+vM^+(\lambda)^{-1}vR_0^+VR_0^+
- R_0^-VR_0^-vM^-(\lambda)^{-1}vR_0^-VR_0^-
$$
allows us to consider this under the framework of the
Born Series.  The analysis in Lemma~3.8 of \cite{GGeven}
can be applied to show that the contribution of this
difference to the Stone formula can be bounded by
$|t|^{-2}$ uniformly in $x,y$.

When the `+/-' difference acts on an inner resolvent,
with some work, we can again reduce this to integrals
bounded in the analysis of the Born series.  Consider,
the following as a representative term
\begin{align}\label{innerdiff}
	R_0^-(\lambda^2)V[R_0^+-R_0^-](\lambda^2)vM^{+}(\lambda)^{-1}vR_0^+(\lambda)VR_0^{+}(\lambda^2)
\end{align}
Here we note that we can express $[R_0^{+}(\lambda^2)-R_0^{-}(\lambda^2)]=c\lambda^2 + \widetilde{O}_2(\lambda^4 |z_j-z_{j+1}|^2)$, by using the
small argument expansion of the Bessel function
\eqref{J0 def}, while if $\lambda |z_j-z_{j+1}|\gtrsim 1$, one employs \eqref{R0high} with $\alpha=\f72-k$ for
$k=0,1,2$.  Then, writing
$M^+(\lambda)^{-1}=-D_2/\lambda^2+\widetilde O(1)$, we 
have
\begin{align*}
	[R_0^+-R_0^-](\lambda^2)vM^{+}(\lambda)^{-1}&=
	(c\lambda^2 + \widetilde{O}_2(\lambda^4 |z_j-z_{j+1}|^2)) v\big[-\frac{D_2}{\lambda^2}+
	\widetilde{O}(1)\big]\\
	&=-c1vD_2+\la z_j\ra^{2} \la z_{j+1}\ra^{2}
	\widetilde O_2(\lambda^2)=\la z_j\ra^{2} \la z_{j+1}\ra^{2}\widetilde O_2(\lambda^2).
\end{align*}
The last equality holds due to the identity $vD_2=vS_1D_2S_1=VP_ew$ we have $1vD_2=1VP_ew=0$.
The growth in the inner spatial variables $z_j,z_{j+1}$
can be absorbed by the decay of the potential functions
$V$ and $v$ respectively.
This again allows us to use the analysis of the Born
series in Lemma~3.8 of \cite{GGeven} to bound its
contribution to the Stone formula by $|t|^{-2}$ 
uniformly in $x,y$.  

Finally, we consider when the `+/-' difference acts
on a leading free resolvent.  By symmetry, the calculations are identical if the difference acts on
the lagging free resolvent.  We first note that
by Proposition~\ref{prop:Minvcanc}, we have
$M^+(\lambda)^{-1}=-D_2/\lambda^2 +\widetilde O_2(1)$.  
When the error term $\widetilde O_2(1)$
is substituted into
\begin{align}
	[R_0^+-R_0^-]VR_0^+vM^+(\lambda)^{-1}vR_0^+VR_0^+
\end{align}
the desired bound again falls under the framework of
the analysis of the Born series, this time using
Lemma~3.6 of \cite{GGeven}.  We now consider only the
contribution of $-D_2/\lambda^2$.

We need only consider the contribution of
\begin{align}
	\frac{R_0^+-R_0^-}{\lambda^2}&VR_0^+vD_2vR_0^+VR_0^+
	\nn\\
	&=\frac{R_0^+-R_0^-}{\lambda^2}V[R_0^+-G_0]vD_2v[R_0^+-G_0]VR_0^+\label{likebs}\\
	&+\frac{R_0^+-R_0^-}{\lambda^2}V[R_0^+-G_0]vD_2vG_0VR_0^+\label{A}\\
	&+\frac{R_0^+-R_0^-}{\lambda^2}VG_0vD_2v[R_0^+-G_0]VR_0^+\label{B}\\
	&+\frac{R_0^+-R_0^-}{\lambda^2}VG_0vD_2vG_0VR_0^+
	\label{C}
\end{align}
The smallness of $R_0^+-G_0$ occuring at `inner resolvents' in \eqref{likebs} allows us to bound this
term as in the Born series.
The remaining terms are bounded by
Lemmas~\ref{lem:awful}, \ref{lem:awful2} and
Proposition~\ref{lem:awful3}.

\end{proof}

\section{The Klein-Gordon Equation with a Potential}\label{sec:KG}

In this section we prove the bounds in Theorem~\ref{thm:KG} that control the solution of a perturbed Klein-Gordon equation.  We note that much of our
analysis is greatly simplified due to the expansions and analysis performed in previous
sections in the context of the Schr\"odinger operator.  Much of the analysis of the oscillatory integrals
proceeds similarly with the multipliers 
$\frac{\sin(t\sqrt{\lambda^2+m^2})}{\sqrt{\lambda^2+m^2}}\, \lambda$
and $\cos(t\sqrt{\lambda^2+m^2})\, \lambda$ in place of the multiplier $e^{it\lambda^2}\lambda$.

We employ the following consequence of
the classical Van der Corput lemma, see for example \cite{Stein}.
\begin{lemma}\label{lem:vdc}
	
	If $\phi:[0,1]\to \R$ obeys the bound 
	$|\phi^{(2)}(\lambda)|\geq c t>0$ for all $\lambda\in[0,1]$, and if $\psi:[0,1]\to \mathbb C$ is
	such that $\psi '\in L^1([0,1])$, then
	$$
	\bigg|\int_0^1 e^{i\phi(\lambda)}\psi(\lambda) \, d\lambda\bigg| \les t^{-\f12} \bigg\{
	|\psi(1)|+\int_0^1 |\psi^\prime (\lambda)|\, 
	d\lambda 	\bigg\}.
	$$

\end{lemma}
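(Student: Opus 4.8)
The plan is to reduce the statement to the classical second–derivative Van der Corput estimate with unit amplitude, and then to recover an arbitrary $\psi$ by an integration–by–parts (Abel summation) argument carried out in integral form.

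First I would record the unit–amplitude bound: if $\phi\in C^2([a,b])$ with $|\phi''(\lambda)|\geq\mu>0$ on $[a,b]\subseteq[0,1]$, then $\bigl|\int_a^b e^{i\phi(\lambda)}\,d\lambda\bigr|\les \mu^{-1/2}$ with an \emph{absolute} implied constant, independent of $a$, $b$, and $\mu$; this is the standard $k=2$ case of Van der Corput (see \cite{Stein}), and I would include its short proof for completeness. Since $\phi''$ is continuous and nowhere zero it has constant sign, so we may assume $\phi''\geq\mu$; then $\phi'$ is strictly increasing and has at most one zero $\lambda_0$ in $[a,b]$ (if there is none, take $\lambda_0$ to be the endpoint at which $|\phi'|$ is smallest). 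On $I_\delta:=[a,b]\cap[\lambda_0-\delta,\lambda_0+\delta]$ the integral is estimated trivially by $|I_\delta|\leq 2\delta$. On the complementary set one has $|\phi'(\lambda)|\geq\mu\delta$ and $\phi'$ monotone; integrating by parts against $i\phi'e^{i\phi}$ produces boundary terms of size $\les(\mu\delta)^{-1}$ together with the integral of $\bigl|(1/\phi')'\bigr|$, which by monotonicity of $\phi'$ telescopes to a quantity $\les(\mu\delta)^{-1}$. Taking $\delta=\mu^{-1/2}$ balances the two contributions and gives $\les\mu^{-1/2}$.

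Next I would pass to a general $\psi$. Since $\psi'\in L^1([0,1])$, $\psi$ is absolutely continuous and $\psi(\lambda)=\psi(1)-\int_\lambda^1\psi'(s)\,ds$; inserting this and applying Fubini yields
\[
\int_0^1 e^{i\phi(\lambda)}\psi(\lambda)\,d\lambda
=\psi(1)\int_0^1 e^{i\phi(\lambda)}\,d\lambda
-\int_0^1\psi'(s)\Bigl(\int_0^s e^{i\phi(\lambda)}\,d\lambda\Bigr)\,ds.
\]
For each $s\in[0,1]$ the inner integral runs over $[0,s]\subseteq[0,1]$, on which $|\phi''|\geq ct$ by hypothesis, so the unit–amplitude bound gives $\bigl|\int_0^s e^{i\phi}\,d\lambda\bigr|\les(ct)^{-1/2}\les t^{-1/2}$ \emph{uniformly in $s$}. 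Substituting this into the identity above gives
\[
\Bigl|\int_0^1 e^{i\phi(\lambda)}\psi(\lambda)\,d\lambda\Bigr|
\les t^{-1/2}|\psi(1)|+t^{-1/2}\int_0^1|\psi'(s)|\,ds,
\]
which is the claimed estimate.

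The argument is essentially bookkeeping, and I expect no serious obstacle. The two points that require a little care are that, since $\psi'$ is only $L^1$ and not continuous, the displayed identity must be obtained via its Fubini form rather than from a classical integration–by–parts formula; and that the constant in the unit–amplitude Van der Corput bound is genuinely independent of the subinterval $[0,s]$, so that it may legitimately be pulled out of the $s$–integral. Both are routine.
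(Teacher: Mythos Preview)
Your argument is correct and is essentially the standard proof found in \cite{Stein}. The paper does not prove this lemma at all; it simply states it as a consequence of the classical Van der Corput lemma and refers to \cite{Stein}, so there is nothing to compare beyond noting that you have supplied the details the paper omits.
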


In particular, we employ this lemma with the phase
$\phi(\lambda)=\pm t\sqrt{\lambda^2+m^2}+\lambda \nu$
for some $\nu\in \R$.  In this case, we have
$$
|\phi''(\lambda)|=t\frac{m^2}{(m^2+\lambda^2)^{\f32}}
\geq \frac{t}{m}, \qquad m>0.
$$
Accordingly, this lemma is quite useful when analyzing
the Klein-Gordon with non-zero mass $m^2$, but an
alternative approach is required for the  wave equation,
when $m^2=0$.

Recalling \eqref{resolvid1}, we need separate analysis
for the contribution of the finite Born series, 
\eqref{bs finite}, and the singular portion of the
expansion which is sensitive to the existence of 
zero-energy resonances and eigenvalues, \eqref{bs tail}.

To control the first term in the Born series, we note
\begin{lemma}\label{KGbs1}
	
	One has the bound
	\begin{align*}
		\bigg|\int_0^\infty \frac{\sin(t\sqrt{\lambda^2+m^2})}
		{\sqrt{\lambda^2+m^2}}
		\lambda \chi(\lambda)
		[R_0^+-R_0^-](\lambda^2)(x,y)\, d\lambda\bigg| 
		\les |t|^{-\f32}, \\
		\bigg|\int_0^\infty \cos(t\sqrt{\lambda^2+m^2})
		\lambda \chi(\lambda)
		[R_0^+-R_0^-](\lambda^2)(x,y)\, d\lambda\bigg| 
		\les |t|^{-\f32},
	\end{align*}
	uniformly in $x,y$.
	
\end{lemma}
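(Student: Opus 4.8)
The plan is to use that both multipliers $\f{\lambda}{\sqrt{\lambda^2+m^2}}\sin(t\sqrt{\lambda^2+m^2})$ and $\lambda\cos(t\sqrt{\lambda^2+m^2})$ are exact $\lambda$-derivatives, to integrate by parts once so as to extract a factor $|t|^{-1}$, and then to bound the remaining oscillatory integral by $|t|^{-1/2}$ using Lemma~\ref{lem:vdc}. We may assume $t>2$ and, by symmetry, $t>0$; for $|t|\leq 2$ the claimed bound follows at once from \eqref{resolv def}, \eqref{H0}, the elementary estimate $|J_1(z)|\les z(1+z)^{-3/2}$ (which yields $|[R_0^+-R_0^-](\lambda^2)(x,y)|\les \lambda^2$ uniformly), and the support of $\chi$.

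For the sine operator we use $\f{\lambda}{\sqrt{\lambda^2+m^2}}\sin(t\sqrt{\lambda^2+m^2})=-\f1t\,\partial_\lambda\cos(t\sqrt{\lambda^2+m^2})$ and integrate by parts. There are no boundary terms: $\chi$ and its derivatives vanish at the upper endpoint, while $[R_0^+-R_0^-](\lambda^2)(x,y)\to 0$ as $\lambda\to0$. This leaves $\f1t\int_0^\infty \cos(t\sqrt{\lambda^2+m^2})\,\partial_\lambda\big(\chi(\lambda)[R_0^+-R_0^-](\lambda^2)(x,y)\big)\,d\lambda$. For the cosine operator we instead use $\lambda\cos(t\sqrt{\lambda^2+m^2})=\f1t\sqrt{\lambda^2+m^2}\,\partial_\lambda\sin(t\sqrt{\lambda^2+m^2})$ and integrate by parts in the same way, again with vanishing boundary terms. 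By Lemma~\ref{lem:reduction} applied to the four-dimensional kernel, together with the fact that the two-dimensional free resolvent has kernel $\pm\f i4 H_0^\pm(\lambda|x-y|)$, we have $\partial_\lambda[R_0^+-R_0^-](\lambda^2)(x,y)=c\,\lambda\, J_0(\lambda|x-y|)$. Hence, after the integration by parts, it suffices to establish
\begin{align*}
	\Big|\int_0^\infty e^{\pm it\sqrt{\lambda^2+m^2}}\,c(\lambda)\,d\lambda\Big|\les |t|^{-\f12}\qquad\text{uniformly in }x,y\in\R^4,
\end{align*}
where $c(\lambda)$ is smooth, supported in $[0,2\lambda_1]$, and equal to a smooth $x,y$-independent factor (a product of $\chi$ or $\chi'$ with one of $1,\ \sqrt{\lambda^2+m^2},\ \lambda/\sqrt{\lambda^2+m^2}$) times one of $[R_0^+-R_0^-](\lambda^2)(x,y)$ or $\lambda J_0(\lambda|x-y|)$.

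To prove the displayed bound, split $c(\lambda)$ with a smooth cut-off in the variable $\lambda|x-y|$. On the region $\lambda|x-y|\les 1$ the small-argument expansion \eqref{J0 def} (and the corresponding one for $J_0$) shows that $c(\lambda)$ is smooth in $\lambda$ with $\int_0^{2\lambda_1}|c'(\lambda)|\,d\lambda\les 1$ uniformly in $x,y$; Lemma~\ref{lem:vdc} applied with the phase $\phi(\lambda)=\pm t\sqrt{\lambda^2+m^2}$, for which $\phi''(\lambda)=\pm\f{tm^2}{(\lambda^2+m^2)^{3/2}}$ so $|\phi''|\gtrsim t$ on $[0,2\lambda_1]$, then gives $|t|^{-1/2}$. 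On the region $\lambda|x-y|\gtrsim 1$ we use \eqref{JYasymp2} (and its analogue for $H_0^\pm$) to write the Bessel factor occurring in $c(\lambda)$ — namely $J_0(\lambda|x-y|)$, or the $J_1(\lambda|x-y|)$ hidden inside $[R_0^+-R_0^-]$ — as $e^{i\lambda|x-y|}\omega_+(\lambda|x-y|)+e^{-i\lambda|x-y|}\omega_-(\lambda|x-y|)$, with $|\omega_\pm^{(j)}(z)|\les (1+z)^{-1/2-j}$, and we absorb the two exponentials into the phase. The resulting phases $\phi(\lambda)=\pm t\sqrt{\lambda^2+m^2}\pm\lambda|x-y|$ \emph{still} satisfy $|\phi''(\lambda)|=\f{tm^2}{(\lambda^2+m^2)^{3/2}}\gtrsim t$, \emph{uniformly in} $|x-y|$, since the linear term $\pm\lambda|x-y|$ contributes nothing to $\phi''$; and the remaining amplitude has $L^1$ derivative bounded uniformly in $x,y$, the factor $|x-y|$ produced when $\partial_\lambda$ falls on $\omega_\pm$ being compensated by the decay in $\lambda|x-y|\gtrsim 1$, and the derivative of the cut-off being supported on $\lambda\approx|x-y|^{-1}$ and contributing $O(1)$. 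Lemma~\ref{lem:vdc} again yields $|t|^{-1/2}$, and combining with the $\f1t$ from the integration by parts gives the claim.

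The step I expect to be the main obstacle is this uniformity in $|x-y|$ on the region $\lambda|x-y|\gtrsim 1$. A direct application of the Van der Corput estimate to an amplitude still containing $J_k(\lambda|x-y|)$ fails, since that factor oscillates with frequency $\sim|x-y|$ and its $\lambda$-derivatives therefore grow in $|x-y|$. Transferring the oscillation $e^{\pm i\lambda|x-y|}$ into the phase repairs this, and it does so with no loss in the $|t|^{-1/2}$ decay precisely because the added term $\pm\lambda|x-y|$ is linear in $\lambda$, hence invisible to $\phi''$, which stays $\asymp t$ however large $|x-y|$ may be. The remaining verifications — the vanishing of the boundary terms and the $x,y$-uniform $L^1$ bounds on the amplitude derivatives — are routine given \eqref{resolv def}, \eqref{J0 def} and \eqref{JYasymp2}.
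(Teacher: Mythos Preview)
Your proof is correct and follows essentially the same approach as the paper: integrate by parts once to extract $|t|^{-1}$, split according to the size of $\lambda|x-y|$, and apply the Van der Corput estimate (Lemma~\ref{lem:vdc}) to the remaining integral, moving the oscillation $e^{\pm i\lambda|x-y|}$ into the phase in the large-argument regime and noting that the linear term does not affect $\phi''$. Your use of Lemma~\ref{lem:reduction} to identify $\partial_\lambda[R_0^+-R_0^-]$ with $c\lambda J_0(\lambda|x-y|)$ is a clean shortcut the paper does not invoke explicitly, but the resulting bounds are the same.
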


This reflects the natural dispersive decay rate of
$|t|^{-\f32}$ for a  wave-like equation in 
$\R^4$.  

\begin{proof}
	
	The bound is established by integrating by parts
	once, then using Lemma~\ref{lem:vdc}.  We need
	to consider two cases, based on the size of
	$\lambda |x-y|$.  We consider the first integral
	by writing $\sin(z)=\frac{1}{2i}(e^{iz}-e^{-iz})$,
	the second integral follows similarly using that
	$\sqrt{\lambda^2+m^2}=\widetilde O(1)$ on the support of
	$\chi(\lambda)$.
	
	In the first case, if $\lambda |x-y|\ll 1$, we have
	from \eqref{resolv wtd},
	\begin{align}\label{R0diffeps}
		R_0^+-R_0^-(\lambda^2)(x,y)=c\lambda^2 +\widetilde O_2(\lambda^2 (\lambda|x-y|)^\epsilon), \qquad
		0\leq \epsilon <2.
	\end{align}
	Using $\partial_\lambda e^{\pm it\sqrt{\lambda^2+m^2}}
	=e^{\pm it\sqrt{\lambda^2+m^2}}\frac{\pm it\lambda}{\sqrt{\lambda^2+m^2}},$ 
	we need to 
	control
	\begin{multline*}
		\bigg|\int_0^\infty 
		\frac{e^{\pm it\sqrt{\lambda^2+m^2}}}
		{\sqrt{\lambda^2+m^2}}
		\lambda \chi(\lambda)
		[R_0^+-R_0^-](\lambda^2)(x,y) \chi(\lambda |x-y|)
		\, d\lambda\bigg|\\
		=\frac{1}{t}\bigg|\int_0^\infty e^{\pm it\sqrt{\lambda^2+m^2}}
		\partial_\lambda \big(
		\chi(\lambda)
		[R_0^+-R_0^-](\lambda^2)(x,y)\big)\, d\lambda\bigg|.
	\end{multline*}
	From the expansion \eqref{R0diffeps}, there are no
	boundary terms when integrating by parts.  
	Using \eqref{R0diffeps}, we note that differentiation
	in $\lambda$ is comparable to division by $\lambda$
	we can apply Lemma~\ref{lem:vdc} with
	$\psi(\lambda)=\chi(\lambda)[2c\lambda+\widetilde O_1(\lambda (\lambda|x-y|)^\epsilon)]+\chi^\prime(\lambda) [c\lambda^2 +\widetilde O_2(\lambda^2 (\lambda|x-y|)^\epsilon)]$, then
	$\psi(\lambda)=\chi(\lambda)
	\widetilde O_1(\lambda)$, and $\psi(1)=0$,
	$\psi^\prime \in L^1([0,1])$.
	
	When $\lambda|x-y|\gtrsim 1$, we do not employ any
	of the cancellation between $R_0^+$ and
	$R_0^-$, instead we use the representation
	\eqref{JYasymp2} to bound 
	\begin{align*}
		\bigg|\int_0^\infty 
		\frac{e^{\pm it\sqrt{\lambda^2+m^2}}}
		{\sqrt{\lambda^2+m^2}}
		\lambda \chi(\lambda)
		\frac{\lambda e^{i\lambda |x-y|}\omega_+(\lambda |x-y|)}{|x-y|}\, d\lambda\bigg|
	\end{align*}
	Here we consider the `+' phase in \eqref{JYasymp2},
	the `-' phase is handled identically.  As before,
	we can integrate by parts once without boundary 
	terms and bound
	$$
	\frac{1}{t}\int_0^\infty 
	e^{\pm it\sqrt{\lambda^2+m^2}+i\lambda |x-y|}
	a(\lambda)\, d\lambda, \qquad
	a(\lambda)=e^{-i\lambda |x-y|}\partial_\lambda \bigg(\chi(\lambda)
	\frac{\lambda e^{i\lambda |x-y|}\omega_+(\lambda |x-y|)}{|x-y|}\bigg)
	$$
	One can see that $|a(\lambda)|\les \lambda^{\f12}|x-y|^{-\f12}$ and 
	$|a^\prime(\lambda)|\les (\lambda|x-y|)^{-\f12}$.  Thus, using Lemma~\ref{lem:vdc},
	we can bound with
	$$
	  \frac{1}{|t|^{\f32}} \int_0^\infty |a'(\lambda)|\, d\lambda \les 
	  \frac{1}{|t|^{\f32}} \int_0^1 \frac{\lambda^{-\f12}}{|x-y|^{\f12}} \, d\lambda
	  \les \frac{1}{|t|^{\f32} |x-y|^{\f12}}
	$$
	Noting that, on the support of $\chi(\lambda)$ if
	$\lambda |x-y|\gtrsim 1$, then $|x-y|\gtrsim 1$.  
	
\end{proof}

For the remaining terms of the Born series, we have
the following bound.

\begin{prop}\label{KGbsprop}
	
	If $|V(x)|\les \la x\ra^{-\f52-}$, then 
	for any $\ell \in \mathbb N\cup \{0\}$, we have 
	\begin{multline*}
		\sup_{x,y\in \R^4}\bigg|
		\int_0^\infty  \bigg(\cos(t\sqrt{\lambda^2+m^2})+\frac{\sin(t\sqrt{\lambda^2+m^2})}{\sqrt{\lambda^2+m^2}}
		\bigg)\\\lambda \chi(\lambda)\bigg[\sum_{k=0}^{2\ell+1}(-1)^k\big\{
		R_0^+(VR_0^+)^k
		-R_0^-(VR_0^-)^k\big\}\bigg](\lambda^2)(x,y)\,
		d\lambda\bigg| \les |t|^{-\f32}.
	\end{multline*}
	
\end{prop}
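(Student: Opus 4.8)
The plan is to reduce both Klein--Gordon multipliers to a common model and then run the argument of Lemma~\ref{KGbs1} through the finite Born series. Writing $\cos(t\sqrt{\lambda^2+m^2})=\tfrac12\big(e^{it\sqrt{\lambda^2+m^2}}+e^{-it\sqrt{\lambda^2+m^2}}\big)$ and, since $m^2>0$, using that $(\lambda^2+m^2)^{-1/2}=\widetilde O(1)$ on the support of $\chi$, one has $\frac{\sin(t\sqrt{\lambda^2+m^2})}{\sqrt{\lambda^2+m^2}}\,\lambda = \lambda\, m_0(\lambda)\big(e^{it\sqrt{\lambda^2+m^2}}-e^{-it\sqrt{\lambda^2+m^2}}\big)$ for a smooth symbol $m_0=\widetilde O(1)$; thus in every case it suffices to bound integrals of the form $\int_0^\infty e^{\pm it\sqrt{\lambda^2+m^2}}\,\lambda\,\chi(\lambda)\,m_0(\lambda)\,[\text{Born term}](\lambda)\,d\lambda$ uniformly in $x,y$. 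Applying the algebraic identity \eqref{alg fact} to each summand $R_0^\pm(VR_0^\pm)^k$ reduces matters to terms in which the `$+/-$' difference falls on a single free resolvent, the remaining resolvents forming a product $R_0^\pm V\cdots V R_0^\pm$ of fixed sign.

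The mechanism producing $|t|^{-3/2}$ is the one in Lemma~\ref{KGbs1}: integrate by parts once in $\lambda$ via $\partial_\lambda e^{\pm it\sqrt{\lambda^2+m^2}}=\frac{\pm it\lambda}{\sqrt{\lambda^2+m^2}}e^{\pm it\sqrt{\lambda^2+m^2}}$, which simultaneously absorbs the $\lambda$ in the measure and yields a factor $t^{-1}$, then apply the Van der Corput bound Lemma~\ref{lem:vdc} to the resulting integral $\frac1t\int e^{it\phi(\lambda)}a(\lambda)\,d\lambda$ with $\phi(\lambda)=\pm t\sqrt{\lambda^2+m^2}+\lambda\nu$, where $\nu\in\R$ collects the linear-in-$\lambda$ phases $\pm|z_j-z_{j+1}|$ contributed by those free resolvents whose argument is large. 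Since $|\phi''(\lambda)|=t\,m^2(\lambda^2+m^2)^{-3/2}\geq c_m t$ on the support of $\chi$, independently of $\nu$, Lemma~\ref{lem:vdc} supplies the further $t^{-1/2}$. To set this up I would, following \eqref{R0repr}, split each free resolvent kernel $R_0^\pm(\lambda^2)(z_j,z_{j+1})$ (via a smooth partition of unity) according to whether $\lambda|z_j-z_{j+1}|\lesssim 1$ or $\gtrsim 1$. Because the splitting is $\lambda$-dependent, for each fixed spatial configuration only the small-argument pieces survive near $\lambda=0$; there the expansions \eqref{resolv expansion1}--\eqref{resolv expansion2} together with \eqref{R0high} exhibit $R_0^\pm(\lambda^2)$ as $G_0$ plus an error vanishing to order $\lambda^{2-}$ as $\lambda\to0$ and carrying at most a logarithmic spatial weight, so the `$+/-$' difference of such a piece vanishes to the same order. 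This smallness guarantees that there is no boundary term at $\lambda=0$ after the integration by parts (the endpoint at $\lambda=1$ is harmless since $\chi$ vanishes there). On the large-argument pieces one uses \eqref{JYasymp2}, writing $R_0^\pm(\lambda^2)(z_j,z_{j+1})=\frac{\lambda}{|z_j-z_{j+1}|}e^{\pm i\lambda|z_j-z_{j+1}|}\omega_\pm(\lambda|z_j-z_{j+1}|)$ with $|\omega_\pm^{(k)}(z)|\les(1+|z|)^{-1/2-k}$; the two signs carry opposite linear phases, which are absorbed into $\phi$.

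It then remains to check that the amplitude $a(\lambda)$ obtained after one integration by parts satisfies $|a(\lambda)|+|a'(\lambda)|\in L^1_\lambda([0,2\lambda_1])$ uniformly in $x,y$ once integrated against the potentials. The only growth arises from differentiating the oscillatory factors $e^{\pm i\lambda|z_j-z_{j+1}|}$ on the large-argument resolvents (a cost of $|z_j-z_{j+1}|$) against the decay $\frac{\lambda}{|z_j-z_{j+1}|}(\lambda|z_j-z_{j+1}|)^{-1/2}=\lambda^{1/2}|z_j-z_{j+1}|^{-1/2}$ supplied by each such resolvent, together with the at-worst-logarithmic or $|z_j-z_{j+1}|^{0+}$ spatial weights from the small-argument errors. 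All of this is absorbed by the hypothesis $|V(z_j)|\les\la z_j\ra^{-5/2-}$ at each vertex $z_j$, together with the local integrability of $|z_j-z_{j+1}|^{-2}$-type singularities, by convolution estimates of the kind already used in Lemma~\ref{lem:locL2}, Lemma~\ref{EG:Lem} and the proof of Theorem~\ref{thm:res1}; since the chain has only finitely many ($2\ell+2$) resolvents, the resulting bound on the iterated-resolvent kernel is uniform in $x,y$ and in the structure of the term.

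The main obstacle is precisely this large-argument bookkeeping: one must verify that the $|z_j-z_{j+1}|^{-1/2}$ losses from the Hankel asymptotics and the $|z_j-z_{j+1}|$ growth created by the single integration by parts balance exactly against the $\la z_j\ra^{-5/2-}$ decay of $V$, while keeping $a,a'\in L^1_\lambda$ so that Lemma~\ref{lem:vdc} applies; the threshold $\beta=5/2$ is the natural one for which each large-argument resolvent, after paying the cost of that integration by parts, still leaves an integrable kernel. This is also where the restriction to $m^2>0$ is essential: when $m^2=0$ the phase $\phi$ is linear in $\lambda$, $\phi''\equiv 0$, and no stationary-phase gain is available, which is why Theorem~\ref{thm:KG} is stated for $m^2>0$ and the wave-equation bounds carry the extra factor of $t$ noted after it.
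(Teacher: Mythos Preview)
Your proposal is correct and follows essentially the same route as the paper: reduce to $e^{\pm it\sqrt{\lambda^2+m^2}}$, integrate by parts once against that oscillation (absorbing the $\lambda$ from the measure and picking up $t^{-1}$), then apply the Van der Corput bound of Lemma~\ref{lem:vdc} with phase $\phi(\lambda)=\pm t\sqrt{\lambda^2+m^2}+\lambda\nu$, whose second derivative is bounded below by $c_m t$ independently of $\nu$.

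The only organizational difference is that you invoke \eqref{alg fact} first to place the $+/-$ difference on a single resolvent and then split each factor by the size of $\lambda r_j$, whereas the paper reverses these steps: it partitions the index set $\{1,\dots,k+1\}=J\cup J^*$ according to whether $\lambda r_j\gtrsim 1$ or $\lambda r_j\ll 1$, observes that if $J\neq\emptyset$ the factor $\lambda/r_j$ already kills the boundary term (so no cancellation is needed), and uses the $+/-$ difference only when $J=\emptyset$. Both arrangements lead to the same amplitude bounds and the same spatial estimate, carried out in the paper via the iteration \[\int_{\R^4}\frac{\la z_\ell\ra^{-5/2-}}{|z_{\ell-1}-z_\ell|^{1/2}}\Big(\frac{1}{|z_\ell-z_{\ell+1}|^{3/2}}+\frac{1}{|z_\ell-z_{\ell+1}|^2}\Big)\,dz_\ell\les \la z_{\ell-1}-z_{\ell+1}\ra^{-1/2},\] which is the concrete form of the Lemma~\ref{EG:Lem} application you allude to and is exactly what pins down the threshold $\beta>5/2$. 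One small point worth making explicit in your write-up: once the large-argument phases are absorbed into $\phi$, differentiation of the remaining amplitude is comparable to division by $\lambda$, so $|a'(\lambda)|\les \lambda^{-1/2}(\cdots)$ is integrable on $[0,\lambda_1]$; this is what makes Lemma~\ref{lem:vdc} applicable after only one integration by parts.
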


\begin{proof}
	
	We note that the case of $k=0$ was handled separately
	in Lemma~\ref{KGbs1}.  To handle $k\geq1$, we recall \eqref{H0} and the asymptotic expansions, 
	\eqref{Y0 def}, \eqref{Y0 def2} and \eqref{JYasymp2} to write
	\begin{align*} 
		R_0^{\pm}(\lambda^2)(x,y)=\left\{\begin{array}{ll}
			\frac{c}{|x-y|^2}+\widetilde O_2(\lambda^{\f32}|x-y|^{-\f12})
			& \lambda |x-y|\ll 1\\
			\frac{\lambda}{|x-y|}(e^{ i\lambda |x-y|} \omega_+(\lambda|x-y|)+
			e^{- i\lambda |x-y|} \omega_-(\lambda|x-y|)) & \lambda |x-y|\gtrsim 1
		\end{array}
		\right.
	\end{align*}
	When we encounter $[R_0^+-R_0^-](\lambda^2)(x,y)$, there is cancellation 
	when $\lambda |x-y|\ll 1$, but no useful cancellation can be found when
	$\lambda |x-y|\gtrsim 1$.  Thus when $\lambda |x-y|\gtrsim 1$, the same
	asymptotics apply, with slightly different functions $\omega_\pm$ that
	satisfy the same bounds.  
	
	Recall that we have  
		\begin{align}
		[R_0^+-R_0^-](\lambda^2)(x,y)&=c\lambda^2 +\widetilde O_2(\lambda^2(\lambda |x-y|)^{\epsilon}) \qquad 0\leq \epsilon <2, \qquad \lambda |x-y|\ll 1 \nn\\
		&=\widetilde O_2(\lambda^{\f 32}|x-y|^{-\f12})
	\end{align}

	Let $J\cup J^*=\{1,2\, \dots, k+1\}$ be a partition.
	Omitting the potentials for the moment, we 
	need to control the contribution of
	\begin{align*}
		(R_0^{\pm})^{k+1}=\prod_{j\in J}\frac{\lambda}{r_j}\left( e^{i\lambda r_j}
		\omega_{+}(\lambda r_j)
		+e^{- i\lambda r_j}\omega_{-}(\lambda r_j) \right)
		\prod_{i\in J^*}\left(\frac{1}{r_i^2}+\widetilde O_2(\lambda^{\f32}r_i^{-\f12})\right)
	\end{align*}
	where $r_j=|z_{j-1}-z_j|$ are the differences of the inner
	spatial variables with $z_0=x$ and $z_{k+1}=y$.  We note that for $j\in J$ we have 
	the support condition that $\lambda r_j\gtrsim 1$ and
	for $i\in J^*$, we have $\lambda r_i \ll 1$.
	As the different phases for the
	large $\lambda r_j$ contributions do not matter for
	our analysis, we will abuse notation slightly and write $e^{\pm i \prod_J \lambda r_j}$ to 
	indicate a sum over all
	possible combinations of positive and negative phases in 
	the product.
	
	We note that we need only use the difference of the `+'
	and `-' phases when $J=\{\emptyset\}$.  For the remaining
	cases, we can estimate each term separately without
	relying on any cancellation.
	We first consider when $J\neq \{\emptyset\}$.  We wish
	to bound 
	\begin{align*}
		\int_0^\infty e^{\pm i t\sqrt{\lambda^2+m^2}}
		\bigg(\lambda+ \frac{\lambda}{\sqrt{\lambda^2+m^2}}\bigg)
		\chi(\lambda) \prod_{j\in J}\frac{\lambda}{r_j}\left( e^{\pm i\lambda r_j}\omega_{\pm}(\lambda r_j)\right)
		\prod_{i\in J^*}\left(\frac{1}{r_i^2}+\widetilde O_2(\lambda^{\f32}r_i^{-\f12})\right)
		\, d\lambda .
	\end{align*}
	Since $J\neq \{\emptyset\}$, we can integrate by parts 
	once without boundary terms to bound
	\begin{multline*}
		\frac{1}{t}\int_0^\infty e^{\pm i t\sqrt{\lambda^2+m^2}}\partial_\lambda \bigg\{
		\big(1+ \sqrt{\lambda^2+m^2}\big)
		\chi(\lambda) \prod_{j\in J}\frac{\lambda}{r_j}\left( e^{\pm i\lambda r_j}\omega_{\pm}(\lambda r_j)\right)\\
		\prod_{i\in J^*}\left(\frac{1}{r_i^2}+\widetilde O_2(\lambda^{\f32}r_i^{-\f12})\right)\bigg\}
		\, d\lambda .
	\end{multline*}
	Thus, we wish to control integrals of the form
	\begin{align}\label{kgbsintegral}
		\frac{1}{t}\int_0^\infty e^{\pm i t\sqrt{\lambda^2+m^2}\pm i \lambda \prod_J r_j}
		\psi(\lambda)\, d\lambda.
	\end{align}
	To control $\psi$, we note the following bounds:
	\begin{align*}
		\chi(\lambda), \sqrt{\lambda^2+m^2}&=\widetilde{O}(1), \qquad
		\bigg|\frac{e^{\pm i \lambda r_j}\lambda}{r_j}
		\omega_{\pm}(\lambda r_j)\bigg| \les
		\frac{\lambda^{\f12}}{r_j^{\f32}}, \qquad
		\bigg|\frac{1}{r_i^2}+\widetilde O_2(\lambda^{\f32}r_i^{-\f12})\bigg| \les \frac{1}{r_i^2},\\
		\bigg|\partial_\lambda \bigg(\frac{e^{\pm i \lambda r_j}\lambda}{r_j}
		\omega_{\pm}(\lambda r_j)\bigg)\bigg| &\les
		\frac{\lambda^{\f12}}{r_j^{\f12}},\qquad
		\bigg|\partial_\lambda \bigg(\frac{1}{r_i^2}+\widetilde O_2(\lambda^{\f32}r_i^{-\f12})\bigg)\bigg|  =\widetilde O_1\bigg(\frac{\lambda^{\f12}}{r_i^{\f12}}\bigg).
	\end{align*}
	Where we used that $O_2(\lambda^{\f32}r_i^{-\f12})\les
	r_i^{-2}$ when $\lambda r_i \ll 1$.
	So that 
	\begin{align}
		|\psi(\lambda)|&\les \sum_{\ell=1}^{k+1} 
		\frac{\lambda^{\f12}}{r_\ell^{\f12}} \bigg[
		\prod_{\ell\neq j\in J} \frac{\lambda^{\f12}}{r_j^{\f32}} \prod_{i\in J^*}
		\frac{1}{r_i^2}  +	\prod_{j\in J} \frac{\lambda^{\f12}}{r_j^{\f32}} 
		\prod_{\ell \neq i\in J^*}\frac{1}{r_i^2}
		\bigg].
	\end{align}
	Since we have factored out the high energy phases
	$e^{\pm i\lambda r_j}$, differentiation of $\psi$ is
	comparable to division by $\lambda$, so
	\begin{align}
		|\partial_\lambda \psi(\lambda)|&\les \sum_{\ell=1}^{k+1} 
		\frac{\lambda^{-\f12}}{r_\ell^{\f12}} \bigg[
		\prod_{\ell\neq j\in J} \frac{\lambda^{\f12}}{r_j^{\f32}} \prod_{i\in J^*}
		\frac{1}{r_i^2}  +	\prod_{j\in J} \frac{\lambda^{\f12}}{r_j^{\f32}} 
		\prod_{\ell \neq i\in J^*}\frac{1}{r_i^2}
		\bigg].
	\end{align}
	Lemma~\ref{lem:vdc} shows that
	$$
	\eqref{kgbsintegral}\les \frac{1}{|t|^{\f32}}\sum_{\ell=1}^{k+1}
	\frac{1}{r_\ell^{\f12}} \bigg[
	\prod_{\ell\neq j\in J} \frac{1}{r_j^{\f32}} \prod_{i\in J^*}
	\frac{1}{r_i^2}  +	\prod_{j\in J} \frac{1}{r_j^{\f32}} \prod_{\ell \neq i\in J^*}
	 \frac{1}{r_i^2}\bigg]
	$$
	when $J\neq \{\emptyset\}$.
	
	On the other hand, when $J=\{\emptyset\}$ we need to 
	use the difference of + and - resolvents to be able to
	integrate by parts without boundary terms.  In this
	case, we need to control integrals of the form
	\begin{align*}
		\frac{1}{t}\int_0^\infty e^{\pm i t\sqrt{\lambda^2+m^2}}\partial_\lambda \bigg\{
		\big(1+ \sqrt{\lambda^2+m^2}\big)
		\chi(\lambda) \sum_{\ell=1}^{k+1}	
		\widetilde O_2(\lambda^{\f32}r_\ell^{-\f12})\prod_{\ell \neq i}
		\left(\frac{1}{r_i^2}+\widetilde O_2(\lambda^{\f32}r_i^{-\f12})\right)\bigg\}
		\, d\lambda 
	\end{align*}
	Again, we gain the extra $|t|^{-\f12}$ decay by using
	Lemma~\ref{lem:vdc}.  In this case, again using that 
	$\widetilde O_2(\lambda^{\f32}r_i^{-\f12})\les r_i^{-2}$ when $\lambda r_i \ll 1$, we have
	\begin{align*}
		|\psi(\lambda)|&\les \sum_{\ell=1}^{k+1} \frac{\lambda^{\f12}}{r_\ell^{\f12}}
		\sum_{\ell \neq i}\frac{1}{r_i^2},\qquad
		|\partial_\lambda\psi(\lambda)|\les \sum_{\ell=1}^{k+1} \frac{\lambda^{-\f12}}{r_\ell^{\f12}}
		\sum_{\ell \neq i}\frac{1}{r_i^2}.
	\end{align*}
	So that, in this case,
	$$
	\eqref{kgbsintegral}\les \frac{1}{|t|^{\f32}}\sum_{\ell=1}^{k+1} \frac{1}{r_\ell^{\f12}}
	\sum_{\ell \neq i}\frac{1}{r_i^2}.
	$$
	To see the necessary decay assumptions on the potential,
	we have to ensure that
	\begin{align}\label{spatial int}
		\bigg|
		\int_{\R^{4k}}\sum_{\ell=1}^{k+1}
		\frac{1}{r_\ell^{\f12}} \bigg[
		\prod_{\ell\neq j\in J} \frac{1}{r_j^{\f32}} \prod_{i\in J*}
		\frac{1}{r_i^2}  +	\prod_{j\in J} \frac{1}{r_j^{\f32}} \prod_{\ell \neq i\in J*}\frac{1}{r_i^2}
		\bigg]	\prod_{m=1}^k V(z_m) \, d\vec z \, \bigg |,
	\end{align}
	with $d\vec z=dz_1\, dz_2\, \dots\, dz_k$ is bounded
	uniformly in $x,y$ for every choice of partitions
	$J$ and $J^*$.  Using
	Lemma~\ref{EG:Lem}, we note that
	\begin{multline}\label{zoneint}
		\int_{\R^4}\frac{\la z_\ell\ra^{-\f52-}}{|z_{\ell-1}-z_{\ell}|^{\f12}}
		\bigg(\frac{1}{|z_{\ell}-z_{\ell+1}|^{\f32}}+
		\frac{1}{|z_{\ell}-z_{\ell+1}|^{2}}\bigg) \, dz_\ell\\
		\les \la z_{\ell-1}-z_{\ell+1}\ra^{-\f 12}
		\les \frac{1}{|z_{\ell-1}-z_{\ell+1}|^{\f12}}.
	\end{multline}
	So that, upon integrating in $z_\ell$, we pass forward
	a decay of size $|z_{\ell-1}-z_{\ell+1}|^{-\f12}$, which
	allows us to iterate the bound in \eqref{zoneint} until
	we have 
	$$
	\sup_{x,y\in \R^4}  \eqref{spatial int} \les 
	\sup_{x,y\in \R^4} \la x-y\ra^{-\f 12}\les 1,
	$$
	as desired.

\end{proof}

We note that, in our application, we need only establish
the bound in Proposition~\ref{KGbsprop} for $\ell=1$.
That is, we need only bound the first three terms of
the Born series.  To accomplish this, one can lower
the assumptions on the potential to $|V(x)|\les \la x\ra^{-\f94-}$, 
which is slightly less restrictive than
we assume in the statement of Proposition~\ref{KGbsprop}.

\begin{proof}[Proof of Theorem~\ref{thm:KG}]

We need only bound
\begin{multline}
	\sup_{x,y\in \R^4}\bigg|
	\int_0^\infty  \bigg(\cos(t\sqrt{\lambda^2+m^2})+\frac{\sin(t\sqrt{\lambda^2+m^2})}{\sqrt{\lambda^2+m^2}}
	\bigg)\lambda \chi(\lambda)\\
	[R_0^{+}(\lambda^2)VR_0^+(\lambda^2)vM^+(\lambda)^{-1}vR_0^+(\lambda^2)VR_0^{+}(\lambda^2)\\
	-R_0^{-}(\lambda^2)VR_0^-(\lambda^2)vM^-(\lambda)^{-1}vR_0^-(\lambda^2)VR_0^{-}(\lambda^2)
	]\, d\lambda\bigg|.
\end{multline}
We proceed as in the proofs of Theorems~\ref{thm:res1}, 
\ref{thm:res2} and \ref{thm:res3}, using the oscillatory
bounds in Lemmas~\ref{lem:KGlog}, \ref{lem:KGalpha},
\ref{KGlog decay2} and \ref{KGno lambda} in place of
Lemmas~\ref{log decay}, \ref{log decay2}, and
\ref{lem:fauxIBP} respectively.

Again, a sharper bound requires an interpolation with the
results in \cite{EGG}.  In considering the wave equation,
we have growth at a rate of $t/\log t$ for the sine
operator due to the bound
$$
\bigg|\int_0^\infty \sin(t\lambda)
\chi(\lambda)
\mathcal E(\lambda)\, d\lambda\bigg| 
\les \frac{t}{\log t}, \qquad t>2
$$	
when $\mathcal E(\lambda)=\widetilde O_1((\lambda \log\lambda)^{-2})$.  This can be replaced with the
bound of Lemma~\ref{lem:KGlog} for the desired bound
of $1/(\log t)$ for the Klein-Gordon using the ideas
and methods illustrated above. 

\end{proof}


	
	

For the wave equation, we need to bound integrals of the
form
\begin{align*}
	\int_0^\infty e^{\pm i t\lambda}\lambda
	\bigg(1+ \frac{1}{\lambda}\bigg)
	\chi(\lambda) \prod_{j\in J}\frac{\lambda}{r_j}\left( e^{\pm i\lambda r_j}\omega_{\pm}(\lambda r_j)\right)
	\prod_{i\in J^*}\left(\frac{1}{r_i^2}+\widetilde O_2(\lambda^{\f32}r_i^{-\f12})\right).
	\, d\lambda 
\end{align*}
Here, one cannot use the Van der Corput
lemma, Lemma~\ref{lem:vdc}.  Instead, one must use an
integration by parts argument and case analysis based
on the size of $t\pm r_j$ compared to $t$.  This
can be done as in the case when $n=2$ considered in
Section~4 of \cite{Gr2}.  The most
delicate case will be when $1$ or $k+1\in J$.  In that
case, we can safely integrate by parts once.  
Without loss of generality, we assume $t>0$.  Consider
the worst case for the index $j=1$, when the high
energy contributes the `-' phase.  The analysis for $j=k+1$ is identical.  When
$t-r_1\leq t/2$, we have that $t\leq r_1$, and we need to
bound
\begin{align*}
	\frac{1}{t}\int_0^\infty \bigg| 
	\chi(\lambda) \frac{\lambda^{\f12}}{r_1^{\f12}} \prod_{1\neq j\in J}\frac{\lambda^{\f12}}{r_j^{\f32}}
	\prod_{i\in J^*}\left(\frac{1}{r_i^2}+\widetilde O_2(\lambda^{\f32}r_i^{-\f12})\right)\bigg|
	\, d\lambda \les \frac{1}{t^{\f32}}\prod_{1\neq j\in J}\frac{1}{r_j^{\f32}}
	\prod_{i\in J^*}\frac{1}{r_i^2}.
\end{align*}
On the other hand, if $t-r_1\geq t/2$, we can integrate by
parts against the phase $e^{i\lambda (t-r_1)}$ to gain
a time decay of $|t|^{-2}$.  Interpolating between that
bound and the bound of $|t|^{-1}$ one can get from the
above integral gets the desired $|t|^{-\f32}$ decay rate.  If $j=1,k+1\notin J$, one can
integrate by parts twice without a case analysis to get
a bound of $|t|^{-2}$.

This analysis will require that $|V(x)|\les \la x\ra^{-3-}$.  As in the analysis of the Born series
for the Schr\"odinger evolution, this
can be seen by examing the case when two derivatives 
act on a phase $e^{\pm i\lambda r_j}$ when integrating by
parts, and we need to bound the integral
$$
\int_{\R^4}\frac{\la z_j \ra^{\f12} V(z_j)}{|z_{j-1}-z_j|^{\f32}}\, dz_j.
$$
This can be bounded by a constant, uniformly in $z_{j-1}$ provided
$|V(x)|\les \la x\ra^{-3-}$.  We leave
the remaining details to the interested reader.

We conclude this section by remarking that the bounds
proven for the Klein-Gordon allow us to conclude similar
bounds for wave equation with the unfortunate growth 
in $t$ for the sine operator as seen from the estimate
above used in \cite{EGG}.  We do not investigate the
high-energy dispersive bounds, as this requires a much
different approach and requires smoothness
on the potential and initial data, see \cite{CV}.  We suspect that high
energy bounds for the Klein-Gordon should follow from
the bounds for the wave equation.  Similar issues in, for example, Kato
smoothing estimates are discussed in \cite{D}.
High energy weighted $L^2$ bounds for the Klein-Gordon
were proven in \cite{KK}
in two spatial dimensions, we believe a similar analysis can be
performed in four spatial dimensions.  
Our low energy $L^1\to L^\infty$ bounds imply the weighted $L^2$ estimates 
on $L^{2,\sigma}\to L^{2,-\sigma'}$ for any $\sigma, \sigma'>2$.

\section{Spectral Theory and Integral Estimates}\label{sec:spec}
We repeat the characterization of the spectral subspaces
of $L^2(\R^4)$ and their relation to the invertibility
of operators in our resolvent expansions performed in \cite{EGG} for completeness.  
We omit the proofs.
The results below are essentially
Lemmas 5--7 of \cite{ES2} modified to suit four
spatial dimensions.  In addition, we give proofs of some integral estimates that are
used in the preceding analysis.

\begin{lemma}\label{lem:S1char}

	Suppose $|V(x)|\les \la x\ra^{-4}$.  Then 
	$f\in S_1L^2\setminus\{0\}$
	if and only if $f=wg$ for some $g\in L^{2,0-}\setminus\{0\}$ such that
	$$
		(-\Delta+V)g=0
	$$
	holds in the sense of distributions.
	
\end{lemma}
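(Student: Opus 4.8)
The plan is to exploit the self-adjointness of $T=U+vG_0v$ and reduce membership in $S_1L^2$ to an integral equation. Since $T$ is self-adjoint, its Riesz projection $S_1$ is the orthogonal projection onto $\ker T$, so $f\in S_1L^2\setminus\{0\}$ exactly when $f\neq 0$ and $Tf=0$. Multiplying $Uf+vG_0vf=0$ by $U$ and using $U^2=1$, $w=Uv$ and $vw=V$, this is equivalent to
\begin{equation}\label{eq:S1intform}
	f=-wG_0vf .
\end{equation}
Thus the lemma is the assertion that \eqref{eq:S1intform} (for $f\in L^2$) holds with $f\neq0$ if and only if $f=wg$ for some $g\in L^{2,0-}(\R^4)\setminus\{0\}$ with $(-\Delta+V)g=0$ in the sense of distributions.

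For the forward implication I would, given $f$ solving \eqref{eq:S1intform}, set $g:=-G_0vf$, so that $f=wg$ is immediate from \eqref{eq:S1intform}. The decay $|V(x)|\les\la x\ra^{-4}$ forces $v(x)\les\la x\ra^{-2}$; together with $f\in L^2$ this gives $vf\in L^1(\R^4)$, and the standard $L^2$-based mapping bounds for the Riesz potential $G_0=(-\Delta)^{-1}$ (of order $2$ in dimension $4$; cf.\ Lemma~\ref{EG:Lem} and the integral estimates proven below) yield $g=-G_0vf\in L^{2,0-}(\R^4)$. Since $vf=v(wg)=Vg$, equation \eqref{eq:S1intform} becomes $g=-G_0Vg$; applying $-\Delta$ and using $-\Delta G_0=I$ on distributions gives $(-\Delta+V)g=0$. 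Finally $g\equiv0$ would force $f=wg=0$, so $g\not\equiv0$ whenever $f\neq0$.

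For the converse I would start from $g\in L^{2,0-}$ with $(-\Delta+V)g=0$. As above the decay of $V$ gives $Vg\in L^1(\R^4)$, so $h:=G_0Vg$ is well defined, satisfies $h(x)=O(|x|^{-2})$ as $|x|\to\infty$, and $-\Delta h=Vg=-(-\Delta)g$; hence $g+h$ is harmonic on $\R^4$. The key point, and the main obstacle, is a Liouville-type argument showing $g+h\equiv0$, i.e.\ $g=-G_0Vg$: one upgrades $g$ to a classical (locally bounded) solution by elliptic regularity, observes that the weighted bound $g\in L^{2,0-}$ rules out polynomial growth, and concludes that the entire harmonic function $g+h$ is bounded, hence constant, hence zero since $h\to0$ at infinity. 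Once $g=-G_0Vg=-G_0v(wg)$ is in hand, set $f:=wg$; then $f\in L^2$ because $w(x)\les\la x\ra^{-2}$ and $g\in L^{2,0-}$, and $g=-G_0vf$, so $f=wg=-wG_0vf$, which is \eqref{eq:S1intform}; therefore $Tf=0$ and $f\in S_1L^2$, while $f=0$ would give $g=-G_0vf=0$, so $f\neq0$ when $g\not\equiv0$. Everything apart from the Liouville step is bookkeeping with the identities $U^2=1$, $w=Uv$, $vw=V$ and the mapping properties of $G_0$, so controlling the growth at infinity of the distributional solution $g$ well enough to force $g+G_0Vg\equiv0$ is where the weighted hypothesis on $g$ and the decay of $V$ are both essential.
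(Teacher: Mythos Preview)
The paper omits the proof of this lemma, referring instead to Lemmas~5--7 of \cite{ES2} adapted to four dimensions; your proposal is exactly that standard argument and is correct. One small refinement: in the Liouville step you phrase the conclusion as ``bounded, hence constant,'' but the cleaner route is to note that $g+h$ is a harmonic tempered distribution (since both $g$ and $h$ lie in $L^{2,0-}$), hence a polynomial, and a nonzero polynomial cannot belong to $L^{2,0-}(\R^4)$---this avoids having to argue boundedness directly.
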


Recall that $S_2$ is the projection onto the kernel of $S_1PS_1$.
Note that for $f\in S_2L^2$, since $S_1,S_2$ and $P$ are projections
and hence self-adjoint we have
\begin{align}\label{PS2}
	0=\la S_1PS_1 f, f\ra=\la Pf,Pf\ra=\|Pf\|^2_2
\end{align}
Thus $PS_2=S_2P=0$.

\begin{lemma}\label{lem:resonance}

	Suppose $|V(x)|\les \la x\ra^{-4-}$.  Then 
	$f\in S_2L^2\setminus\{0\}$
	if and only if $f=wg$ for some $g\in L^{2}\setminus\{0\}$ such that
	$$
		(-\Delta+V)g=0
	$$
	holds in the sense of distributions.

\end{lemma}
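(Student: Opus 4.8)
The plan is to prove both directions by combining the characterization of $S_1L^2$ in Lemma~\ref{lem:S1char} with the orthogonality relation $PS_2=S_2P=0$ recorded in \eqref{PS2}. If $f\in S_2L^2\setminus\{0\}$, then since $S_2L^2\subseteq S_1L^2$ we may first apply Lemma~\ref{lem:S1char} to write $f=wg$ with $g\in L^{2,0-}\setminus\{0\}$ solving $(-\Delta+V)g=0$ distributionally, equivalently $g=-G_0Vg$ with $G_0$ as in \eqref{G0 def}. The extra information carried by $S_2$ is that $Pf=0$; since $P=\|V\|_1^{-1}v\la v,\cdot\ra$ and $vw=Uv^2=V$, this says precisely that $\int_{\R^4}V(x)g(x)\,dx=0$, the integral being absolutely convergent because $|V|\les\la x\ra^{-4-}$ and $g\in L^{2,0-}$ force $Vg\in L^1$. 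Conversely, given $g\in L^2\setminus\{0\}$ with $(-\Delta+V)g=0$, the function $f=wg$ lies in $L^2$ (as $w$ is bounded) and hence in $S_1L^2$ by Lemma~\ref{lem:S1char}; it then suffices to show $\int Vg=0$, for then $Pf=0$, $S_1PS_1f=S_1Pf=0$, and $f\in S_2L^2$.

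Thus the entire statement reduces to showing, for a distributional solution $g=-G_0Vg$ that is locally bounded (by elliptic regularity), the equivalence between $g\in L^2(\R^4)$ and $\int_{\R^4}Vg\,dx=0$. First I would establish pointwise decay of $g$ by a bootstrap starting from $g\in L^{2,0-}$: the identity $g(x)=c\int|x-y|^{-2}V(y)g(y)\,dy$ together with $|V(y)|\les\la y\ra^{-4-}$ and the mapping properties of the kernel $|x-y|^{-2}$ (the convolution estimates of this section, cf. Lemma~\ref{EG:Lem}) give successively that $g$ is bounded, that $g(x)\to0$, and after one more iteration the ``resonance rate'' $|g(x)|\les\la x\ra^{-2}$. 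Writing $|x-y|^{-2}=|x|^{-2}+\bigl(|x-y|^{-2}-|x|^{-2}\bigr)$, using $\bigl||x-y|^{-2}-|x|^{-2}\bigr|\les|y|\,|x|^{-3}$ for $|y|\le|x|/2$ and, for $|y|>|x|/2$, the crude bound $|V(y)|\les\la x\ra^{-2}\la y\ra^{-2-}$, one obtains
$$
	g(x)=\frac{c}{|x|^2}\int_{\R^4}V(y)g(y)\,dy+O\bigl(\la x\ra^{-3}\bigr).
$$
If $\int Vg=0$ the leading term vanishes, so $|g(x)|\les\la x\ra^{-3}$, which is square integrable on $\R^4$ since $\int_{|x|>1}|x|^{-6}\,dx<\infty$; hence $g\in L^2$. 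If instead $\int Vg\neq0$, the leading term $c|x|^{-2}\int Vg$ is genuinely present and is \emph{not} in $L^2(\R^4)$ (because $\int_{|x|>1}|x|^{-4}\,dx=\infty$) while the remainder is, so $g\notin L^2$. This proves both implications simultaneously.

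The main obstacle is this decay analysis: one must verify that the orthogonality condition $\int Vg=0$ cancels \emph{exactly} the borderline $|x|^{-2}$ behavior --- which sits in $L^{2,0-}\setminus L^2$ precisely because $n=4$ --- and that the remainder decays strictly past the $L^2$ threshold. This requires a careful splitting of the $y$-integration into near, intermediate, and far zones relative to $x$, using the full strength of $|V|\les\la x\ra^{-4-}$ to absorb the logarithmic losses that would otherwise appear at the endpoint; these are exactly the convolution estimates proved for the other lemmas of Section~\ref{sec:spec}. Everything else --- the identity $Pf=0\Leftrightarrow\int Vg=0$, the boundedness of $w$, and the reduction to the scalar integral equation --- is routine. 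The argument parallels Lemmas~5--7 of \cite{ES2} and the corresponding higher-dimensional statements, specialized to four dimensions.
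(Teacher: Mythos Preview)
The paper omits the proof entirely, deferring to \cite{EGG} and noting that the argument is that of Lemmas~5--7 of \cite{ES2} adapted to four dimensions; your proposal is exactly that argument, carried out correctly. Your reduction to the equivalence $g\in L^2 \Leftrightarrow \int Vg=0$ via the expansion $g(x)=c|x|^{-2}\int Vg\,dy+O(\la x\ra^{-3})$ is the standard route, and the decay bootstrap and zone-splitting you describe are the right tools.
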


\begin{corollary}\label{ranks}
Suppose $|V(x)|\les \la x\ra^{-4-}$. Then 
$$\textrm{Rank}(S_1)\leq \textrm{ Rank}(S_2)+1. $$
\end{corollary}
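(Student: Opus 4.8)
The plan is to exploit the single structural fact that the operator $P=\|V\|_1^{-1}v\la v,\cdot\ra$ occurring in $T_1=S_1PS_1$ has rank one. First I would verify that $P$ is in fact the orthogonal projection onto $\mathrm{span}\{v\}$: it is visibly self-adjoint, and $P^2=P$ follows from $\la v,v\ra=\int_{\R^4}|V|=\|V\|_1$, so $P^2f=\|V\|_1^{-2}\la v,v\ra\, v\la v,f\ra=Pf$. In particular $P$ has rank one, hence so does $T_1=S_1PS_1$ when regarded as an operator on $S_1L^2(\R^4)$, since its range is contained in $S_1\,\mathrm{span}\{v\}$.

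Next I would record the finite-dimensionality that makes a rank count legitimate: as observed in the remarks following Definition~\ref{resondef}, $S_1$ is a finite-rank projection (because $T=U+vG_0v$ is a compact perturbation of the invertible operator $U$, so the Fredholm alternative applies), and therefore $S_1L^2(\R^4)$ is a finite-dimensional space of dimension $\mathrm{Rank}(S_1)$. Moreover $T_1$ is self-adjoint and positive semidefinite on $S_1L^2(\R^4)$, so its Riesz projection $S_2$ coincides with the orthogonal projection onto $\ker\big(T_1\big|_{S_1L^2}\big)$; in particular $\mathrm{Rank}(S_2)=\dim\ker\big(T_1\big|_{S_1L^2}\big)$.

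Finally I would apply the rank–nullity theorem to $T_1$ acting on the finite-dimensional space $S_1L^2(\R^4)$:
$$
\mathrm{Rank}(S_1)=\dim S_1L^2=\dim\ker\big(T_1\big|_{S_1L^2}\big)+\mathrm{rank}\big(T_1\big|_{S_1L^2}\big)\le \mathrm{Rank}(S_2)+1,
$$
where the last inequality uses $\mathrm{rank}\big(T_1\big|_{S_1L^2}\big)\le 1$ from the first step. This is exactly the asserted bound. I do not expect any genuine obstacle here: the entire content is that $S_1PS_1$ is at most rank one, so removing its (nontrivial part of the) range from $S_1L^2$ drops the dimension by at most one. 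The only two small points needing a sentence each are the verification that $P$ (hence $S_1PS_1$) is rank one, and the identification of $\ker T_1$ with the range of the projection $S_2$, which is immediate from self-adjointness of $T_1$ and the definition of $S_2$ as a Riesz projection.
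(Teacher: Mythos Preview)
Your proof is correct and is essentially the standard argument (the paper omits the proof, deferring to \cite{ES2,EGG}); the entire content is indeed the rank--nullity theorem applied to the rank-one operator $T_1=S_1PS_1$ on the finite-dimensional space $S_1L^2$. One small remark: your argument does not actually use the decay hypothesis $|V(x)|\les\la x\ra^{-4-}$ beyond $v\in L^2$ (so that $P$ is a genuine rank-one projection and $S_1$ is finite rank), which is consistent with the fact that this corollary is a purely algebraic consequence of the definitions.
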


\begin{lemma}\label{vG1v kernel}

	If $|V(x)|\les \la x\ra^{-5-}$, then the kernel of 
	$S_2vG_1vS_2=\{0\}$ on $S_2L^2$.

\end{lemma}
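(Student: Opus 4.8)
The plan is to show that the finite-rank operator $S_2vG_1vS_2$ is injective on $S_2L^2(\R^4)$ (equivalently, has trivial kernel there) by identifying its quadratic form with a genuine $L^2$-norm of the associated eigenfunction. Suppose $\phi\in S_2L^2$ with $S_2vG_1vS_2\phi=0$. Since $S_2\phi=\phi$ and $S_2$ is self-adjoint, this forces $\la G_1v\phi,v\phi\ra=0$. By Lemma~\ref{lem:resonance} we may write $\phi=wg$ with $g\in L^2(\R^4)$ solving $(-\Delta+V)g=0$, i.e. $g=-G_0Vg$; set $h:=v\phi=vwg=Vg$. Because $PS_2=0$ (see \eqref{PS2}) we have $\la v,\phi\ra=0$, hence $\int_{\R^4}h\,dx=\int_{\R^4}Vg\,dx=0$. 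The hypothesis $|V(x)|\les\la x\ra^{-5-}$ makes $g$ bounded and $h=Vg$ rapidly decaying (and $g=-G_0h\in L^2$), which is what legitimizes the integrations by parts below.

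The computational heart of the argument is the identity $\la G_1h,h\ra=\|G_0h\|_2^2$, valid whenever $\int h=0$ and $h$ decays sufficiently. This rests on $-\Delta G_0=I$ (i.e. $G_0=(-\Delta)^{-1}$) and $-\Delta G_1=G_0$, the latter obtained by matching powers of $\lambda$ in the expansion \eqref{resolv expansion2} against the resolvent identity $(-\Delta-\lambda^2)R_0^{\pm}(\lambda^2)=I$ (the constant-kernel term $g_1^{\pm}(\lambda)$ is annihilated by $-\Delta$). First I would observe that $\int h=0$ lets one write $G_1h(x)=-\tfrac1{8\pi^2}\int\log\tfrac{|x-y|}{|x|}\,h(y)\,dy$, which together with the decay of $h$ shows $G_1h$ and $\nabla(G_1h)$ decay at infinity; then Green's identity gives
$$
\la G_1h,h\ra=\la G_1h,-\Delta(G_0h)\ra=\la -\Delta(G_1h),G_0h\ra=\la G_0h,G_0h\ra=\|G_0h\|_2^2.
$$
Since $G_0h=G_0Vg=-g$, this reads $\la G_1v\phi,v\phi\ra=\|g\|_2^2$. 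Combining with the first step yields $\|g\|_2=0$, so $g=0$ and $\phi=wg=0$, proving the claim.

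The step I expect to be the main obstacle is making the Green's-identity manipulation rigorous: $G_1$ has a logarithmically growing kernel, so the formal object ``$G_0G_0h$'' is not directly well defined, and one must use the mean-zero condition $\int h=0$ — that is, precisely the fact $PS_2=0$ — to cancel the logarithm and produce the decay needed to integrate by parts without boundary contributions. Quantifying the decay of $g$ (hence of $h=Vg$) via $g=-G_0Vg$ together with $\int Vg=0$, and checking that the boundary terms on spheres of radius $R$ vanish as $R\to\infty$, is the routine-but-delicate portion; this is also exactly where the decay assumption $|V(x)|\les\la x\ra^{-5-}$ is consumed.
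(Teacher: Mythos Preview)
Your proposal is correct. The paper omits the proof of this lemma, deferring to \cite{EGG} (which in turn adapts the three-dimensional argument of \cite{ES2}); the argument there is essentially the one you give. The core is the identity $\la G_1h,h\ra = c\|G_0h\|_2^2$ for mean-zero $h$, combined with $G_0h=G_0Vg=-g$. You obtain the identity via the distributional relation $-\Delta G_1=G_0$ and Green's formula; this is equivalent to (and slightly more hands-on than) the Fourier-side computation $\la G_1h,h\ra = c\int |\hat h(\xi)|^2|\xi|^{-4}\,d\xi$ often used in the literature, where the mean-zero condition $\hat h(0)=0$ is exactly what makes the integral converge near $\xi=0$. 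Your identification of the boundary-term issue as the delicate step is accurate: the decay $g(x)=O(|x|^{-3})$ (from $g=-G_0Vg$ together with $\int Vg=0$) and $G_1h(x)=O(|x|^{-1})$ is what makes the integration by parts legitimate, and this is where the hypothesis $|V(x)|\les\la x\ra^{-5-}$ is consumed.
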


\begin{lemma} \label{lem:eigenspace} 
 
    The projection onto the eigenspace at zero is $G_0vS_2[S_2vG_1vS_2]^{-1}S_2vG_0$.

\end{lemma}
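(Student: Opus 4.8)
The plan is to identify $\mathrm{Null}\,H$ with $S_2L^2(\R^4)$ through the correspondence of Lemma~\ref{lem:resonance}, to compute the Gram matrix of an associated basis of $\mathrm{Null}\,H$, and to recognize it as the matrix of the operator $S_2vG_1vS_2$.

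First I would fix a basis $\{\phi_1,\dots,\phi_N\}$ of $S_2L^2$, which is finite dimensional since $S_2$ is a finite rank projection. By Lemma~\ref{lem:resonance} each $\phi_i=wg_i$ for some $g_i\in\mathrm{Null}\,H\setminus\{0\}$, and $(-\Delta+V)g_i=0$ is equivalent to $g_i=-G_0Vg_i=-G_0v\phi_i$. The assignment $\phi\mapsto-G_0v\phi$ is then a linear bijection from $S_2L^2$ onto $\mathrm{Null}\,H$: it is onto because any $g\in\mathrm{Null}\,H$ equals $-G_0v(wg)$ with $wg\in S_2L^2$, and it is one-to-one because $-G_0v\phi=0$ with $\phi=wg$ forces $g=-G_0Vg=0$, hence $\phi=0$. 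Consequently $\{g_i\}$ is a basis of $\mathrm{Null}\,H$, and the orthogonal projection onto $\mathrm{Null}\,H$ is
\[
  P_e=\sum_{i,j}g_i\,(M^{-1})_{ij}\,\langle g_j,\,\cdot\,\rangle,\qquad M_{ij}=\langle g_i,g_j\rangle ,
\]
where $M$ is the (positive definite) Gram matrix.

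The heart of the argument is to evaluate $M_{ij}$. Using $g_i=-G_0v\phi_i$ and that $G_0,v$ have real symmetric kernels, $M_{ij}=\langle G_0v\phi_i,G_0v\phi_j\rangle=\langle v\phi_i,G_0^2v\phi_j\rangle$, all pairings converging absolutely since $v\phi_i=Vg_i$ decays rapidly. I would then show $\langle v\phi_i,G_0^2v\phi_j\rangle=\langle v\phi_i,G_1v\phi_j\rangle$. Using the explicit kernels \eqref{G0 def}, \eqref{G1 def} and the identity $\Delta\log|x|=2|x|^{-2}$ on $\R^4$, one checks that $G_0^2v\phi_j$ and $G_1v\phi_j$ differ by a harmonic function on $\R^4$ (both amount to applying $G_0$ to $G_0v\phi_j=-g_j$, interpreting $G_1$ through $-\Delta G_1=G_0$). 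This difference is bounded: $G_0g_j$ is bounded since $g_j\in L^2\cap L^\infty$ decays, while $G_1v\phi_j$ is bounded because $\int_{\R^4}Vg_j\,dx=0$ — which holds automatically in $\R^4$, for otherwise $g_j=-G_0Vg_j$ would have an $|x|^{-2}$ tail, contradicting $g_j\in L^2(\R^4)$ — so the leading logarithm in $G_1v\phi_j$ cancels. By Liouville's theorem the difference is a constant $c_j$, and $\langle v\phi_i,c_j\rangle=c_j\int_{\R^4}Vg_i\,dx=0$. Hence $M_{ij}=\langle v\phi_i,G_1v\phi_j\rangle=\langle\phi_i,S_2vG_1vS_2\phi_j\rangle$ (using $\phi_i=S_2\phi_i$), and positivity of $M$ fixes the correct sign normalization. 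Thus $M$ represents $S_2vG_1vS_2$ in the basis $\{\phi_i\}$; it is invertible by Lemma~\ref{vG1v kernel}, and $M^{-1}$ represents $D_2=(S_2vG_1vS_2)^{-1}$.

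Finally I would substitute back: using $\langle G_0v\phi_j,f\rangle=\langle\phi_j,vG_0f\rangle$ and cancelling the two minus signs from $g_i=-G_0v\phi_i$,
\[
  P_ef=\sum_{i,j}G_0v\phi_i\,(M^{-1})_{ij}\,\langle\phi_j,vG_0f\rangle
  =G_0v\Big(\sum_{i,j}\phi_i\,(M^{-1})_{ij}\,\langle\phi_j,\,\cdot\,\rangle\Big)vG_0f ,
\]
and the bracketed operator is exactly $S_2D_2S_2$, giving $P_e=G_0vS_2D_2S_2vG_0=G_0vS_2[S_2vG_1vS_2]^{-1}S_2vG_0$. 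The main obstacle is the interchange $G_0^2\leftrightarrow G_1$ inside the pairing: in four dimensions $G_1$ carries a logarithm and $R_0^\pm$ is only $G_0+\widetilde O(\lambda^2\log\lambda)$, so one genuinely needs the harmonic-difference/Liouville argument together with the orthogonality $P_eV1=0$; everything else is routine linear algebra.
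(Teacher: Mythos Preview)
Your approach is correct and is essentially the standard one alluded to in the paper (which omits the proof and refers to \cite{ES2}): identify $\mathrm{Null}\,H$ with $S_2L^2$ via $g=-G_0v\phi$, compute the Gram matrix $\langle g_i,g_j\rangle=\langle v\phi_i,G_0^2v\phi_j\rangle$, and use $-\Delta G_1=G_0$ together with $P_eV1=0$ to rewrite this as $\langle\phi_i,vG_1v\phi_j\rangle$. Two minor remarks: (i) since both $G_1v\phi_j$ and $G_0^2v\phi_j$ actually decay at infinity (you verified decay of $G_1v\phi_j$; for $G_0^2v\phi_j=-G_0g_j$, note $g_j=O(|x|^{-3})$ by the mean-zero condition), the harmonic difference is identically zero rather than merely constant, so your appeal to $\langle v\phi_i,c_j\rangle=0$ is unnecessary; (ii) your hedge ``positivity of $M$ fixes the correct sign normalization'' is likewise unneeded --- with $G_0=(-\Delta)^{-1}$ having positive kernel and your identity $-\Delta G_1=G_0$, the equality $M_{ij}=\langle\phi_i,vG_1v\phi_j\rangle$ holds on the nose.
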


\subsection{Oscillatory Integral Estimates}

We have the following oscillatory integral
bounds which prove useful in the preceding analysis.
Some of these Lemmas along with their proofs appear 
in Section~6 of \cite{GGodd} or Section~5 of \cite{GGeven},
accordingly we state them without proof.

\begin{lemma}\label{lem:IBP}

	If $k\in \mathbb N_0$, we have the bound
	$$
		\bigg|\int_0^\infty e^{it\lambda^2} \chi(\lambda)
		\lambda^k \, d\lambda\bigg| \les |t|^{-\frac{k+1}{2}}.
	$$

\end{lemma}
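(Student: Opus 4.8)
The plan is to split the $\lambda$-integral at the natural scale $\lambda=|t|^{-1/2}$, bound the low-frequency piece trivially, and extract the decay from the high-frequency piece by repeated integration by parts against $e^{it\lambda^2}$. First, since $\chi$ is supported in a fixed compact set and $0\le\chi\le 1$, the integral is $O(1)$, so the claimed bound is immediate whenever $|t|\le 1$ (as then $|t|^{-(k+1)/2}\ge 1$); hence we may assume $|t|\ge 1$.

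For the low-frequency piece one estimates crudely, using only the support of $\chi$ and $|e^{it\lambda^2}|=1$,
\[
	\Big|\int_0^{|t|^{-1/2}} e^{it\lambda^2}\chi(\lambda)\lambda^k\, d\lambda\Big|
	\le \int_0^{|t|^{-1/2}} \lambda^k\, d\lambda = \frac{1}{k+1}\,|t|^{-\frac{k+1}{2}},
\]
which is already of the desired order. For the remaining piece $\int_{|t|^{-1/2}}^\infty$, it is convenient to substitute $u=\lambda^2$, writing it as $\tfrac12\int_{|t|^{-1}}^\infty e^{itu}\psi(u)\, du$ with $\psi(u):=\chi(\sqrt u)\,u^{(k-1)/2}$. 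This function is supported in a fixed compact subset of $(0,\infty)$ (truncated from below at $u=|t|^{-1}$), and since $\chi\equiv 1$ near the origin, $\psi(u)=u^{(k-1)/2}$ for small $u$, so $|\psi^{(j)}(u)|\les u^{(k-1)/2-j}$ near $u=0$; derivatives landing on $\chi(\sqrt u)$ produce terms supported on $u\approx\lambda_1^2$, which are harmless.

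Now fix an integer $m>(k+1)/2$ and integrate by parts $m$ times against $e^{itu}=\tfrac1{it}\frac{d}{du}e^{itu}$. The boundary contributions at $u=\infty$ vanish by compact support, the boundary contributions at $u=|t|^{-1}$ are of the form $|t|^{-j}\,|\psi^{(j-1)}(|t|^{-1})|\les |t|^{-j}\,|t|^{-\frac{k-1}{2}+(j-1)}=|t|^{-\frac{k+1}{2}}$ for $1\le j\le m$, and one is left with the remainder $|t|^{-m}\int_{|t|^{-1}}^\infty |\psi^{(m)}(u)|\, du$. Because $m>(k+1)/2$, the exponent $(k-1)/2-m$ is strictly less than $-1$, so $\int_{|t|^{-1}}^\infty |\psi^{(m)}(u)|\, du\les |t|^{\,m-\frac{k+1}{2}}$ (the terms supported near $u\approx\lambda_1^2$ contributing merely $O(1)$), and the remainder is bounded by $|t|^{-\frac{k+1}{2}}$ as well. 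Combining the two pieces gives the claim.

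There is no real obstacle here: the only point requiring care is the bookkeeping of the negative powers of $u$ generated by differentiating $\psi$ and the verification that truncation at the scale $|t|^{-1}$ exactly balances the gain of $|t|^{-1}$ per integration by parts. One can equally well avoid the change of variables and integrate by parts directly in $\lambda$, using $e^{it\lambda^2}=\frac{1}{2it\lambda}\frac{d}{d\lambda}e^{it\lambda^2}$, at the cost of slightly bulkier intermediate expressions; the same choice of how many times to integrate by parts and the same split at $\lambda=|t|^{-1/2}$ carry the argument through.
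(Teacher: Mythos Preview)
Your proof is correct. The paper itself does not supply a proof of this lemma, stating only that it appears in Section~6 of \cite{GGodd} or Section~5 of \cite{GGeven}; your argument---splitting at $\lambda=|t|^{-1/2}$, bounding the low-frequency piece trivially, and integrating by parts sufficiently many times on the high-frequency piece after the substitution $u=\lambda^2$---is exactly the standard approach and is in the spirit of those references.
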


\begin{lemma}\label{lem:fauxIBP}

	For a fixed $\alpha>-1$,
	let $f(\lambda)=\widetilde O_{k+1}(\lambda^\alpha)$ be supported on the 
	interval $[0,\lambda_1]$ for some $0<\lambda_1\les 1$.
	Then, if $k$ satisfies $-1<\alpha-2k< 1$ we have
	\begin{align*}
		\bigg|\int_0^\infty e^{it\lambda^2} 
		f(\lambda)\, d\lambda\bigg|&\les |t|^{-\frac{\alpha+1}{2}}.
	\end{align*}
	
\end{lemma}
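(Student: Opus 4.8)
The plan is to establish the bound by the familiar device of cutting the $\lambda$-integral at the scale $\lambda=|t|^{-1/2}$ and handling the two pieces by completely different means. On the short interval $[0,|t|^{-1/2}]$ I would discard the oscillation entirely and estimate
$$
\bigg|\int_0^{|t|^{-1/2}} e^{it\lambda^2} f(\lambda)\, d\lambda\bigg| \les \int_0^{|t|^{-1/2}} \lambda^\alpha\, d\lambda \les |t|^{-\frac{\alpha+1}{2}},
$$
which already matches the target and is finite precisely because $\alpha>-1$. (For $|t|\les 1$ the statement is trivial since $f\in L^1$, so one may assume $|t|$ is large, in particular $|t|^{-1/2}<\lambda_1$.) On the long interval $[|t|^{-1/2},\lambda_1]$ the phase $\lambda^2$ is non-stationary, so the idea is to integrate by parts repeatedly against $e^{it\lambda^2}$.

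For the integration by parts, write $e^{it\lambda^2}=\frac{1}{2it\lambda}\,\partial_\lambda e^{it\lambda^2}$, so that one step replaces the amplitude $f$ by $\mathcal{L}f:=-\frac{1}{2it}\,\partial_\lambda\!\big(\lambda^{-1}f\big)$, together with a boundary term. The structural point is that if $g=\widetilde O_m(\lambda^\gamma)$ on $[0,\lambda_1]$ then $\mathcal{L}g=\widetilde O_{m-1}(\lambda^{\gamma-2})$ with an accompanying gain of $|t|^{-1}$ out front: dividing by $\lambda$ costs one power of $\lambda$ at each differentiation order, and the extra derivative costs one more power and one order of the symbol class. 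Iterating $j$ times turns $f=\widetilde O_{k+1}(\lambda^\alpha)$ into $|t|^{-j}$ times $f_j:=\mathcal{L}^jf=\widetilde O_{k+1-j}(\lambda^{\alpha-2j})$. I would carry out exactly $k+1$ such steps. The boundary contributions at the upper endpoint $\lambda=\lambda_1$ vanish since $f$, and hence each $f_j$, is supported in $[0,\lambda_1]$ (in the applications $f$ carries a smooth cutoff, so this is automatic), while the boundary term created at $\lambda=|t|^{-1/2}$ in the $j$-th step has size
$$
\frac{1}{|t|^{j-1}}\cdot\frac{|f_{j-1}(|t|^{-1/2})|}{|t|\cdot|t|^{-1/2}} \les \frac{|t|^{-\alpha/2+(j-1)}}{|t|^{\,j-1/2}} = |t|^{-\frac{\alpha+1}{2}},
$$
so each of the finitely many boundary terms already obeys the claimed bound.

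It then remains to treat the leftover integral after $k+1$ integrations by parts, namely $|t|^{-(k+1)}\int_{|t|^{-1/2}}^{\lambda_1} e^{it\lambda^2} f_{k+1}(\lambda)\,d\lambda$ with $f_{k+1}=\widetilde O_0(\lambda^{\alpha-2k-2})$. The exponent satisfies $\alpha-2k-2<-1$ because $\alpha-2k<1$, so $\lambda^{\alpha-2k-2}$ is not integrable at the origin, but integrating only over $[|t|^{-1/2},\lambda_1]$ yields
$$
\int_{|t|^{-1/2}}^{\lambda_1}\lambda^{\alpha-2k-2}\,d\lambda \les \big(|t|^{-1/2}\big)^{\alpha-2k-1} = |t|^{-\frac{\alpha-2k-1}{2}},
$$
again using $\alpha-2k-1<0$, and multiplying by $|t|^{-(k+1)}$ gives $|t|^{-(k+1)-\frac{\alpha-2k-1}{2}}=|t|^{-\frac{\alpha+1}{2}}$, which closes the argument. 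I would emphasize that the hypothesis $-1<\alpha-2k<1$ is equivalent to $k<\tfrac{\alpha+1}{2}<k+1$, i.e.\ the target rate $|t|^{-(\alpha+1)/2}$ lies strictly between $|t|^{-k}$ and $|t|^{-(k+1)}$; this is exactly why $k+1$ integrations by parts is the correct number ($k$ of them would gain only $|t|^{-k}$, too little, once the tail integral of $f_k$ converges), and why $k+1$ available derivatives are exactly what is used. Since all three inequalities $\alpha>-1$, $\alpha-2k>-1$, $\alpha-2k<1$ are strict, no logarithmic corrections arise.

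The main obstacle is not conceptual but purely bookkeeping: one must check carefully that $\mathcal{L}$ maps $\widetilde O_m(\lambda^\gamma)$ into $\widetilde O_{m-1}(\lambda^{\gamma-2})$ as stated, that the upper boundary terms genuinely vanish in the intended applications, and that the $k+1$ boundary terms at $\lambda=|t|^{-1/2}$ all have size $\les|t|^{-(\alpha+1)/2}$ uniformly in the step index. None of this is hard, but because the estimate is sharp the exponents must be tracked precisely.
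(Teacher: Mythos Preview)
Your argument is correct and is exactly the standard one: the paper itself omits the proof of this lemma, referring instead to \cite{GGodd,GGeven}, but the method you give is precisely what is carried out there and is the same technique used in the paper for the closely related Lemmas~\ref{log decay2} and~\ref{lem:KGalpha} (split at $|t|^{-1/2}$, bound the small-$\lambda$ piece directly, and integrate by parts the correct number of times on the remainder). Your exponent bookkeeping is accurate, and your remark that $-1<\alpha-2k<1$ pins down $k+1$ as the exact number of integrations by parts needed is the right interpretation of the hypothesis.
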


In addition, we make use of the following oscillatory
integral estimates that allow us to bound the Klein-Gordon and wave equations.

\begin{lemma}\label{lem:KGlog}

 	If $\mathcal E(\lambda)=\widetilde O_1((\lambda \log \lambda)^{-2})$, then  for $m>0$
 	$$
 		\bigg|\int_0^\infty \frac{\sin(t\sqrt{\lambda^2+m^2})}
 		{\sqrt{\lambda^2+m^2}}
 		\lambda \chi(\lambda)
 		\mathcal E(\lambda)\, d\lambda\bigg| 
 		\les \frac{1}{|\log t|}, \qquad t>2.
 	$$
 	Further,
 	$$
 		\bigg|\int_0^\infty \cos(t\sqrt{\lambda^2+m^2})
 		\lambda \chi(\lambda)
 		\mathcal E(\lambda)\, d\lambda\bigg| 
 		\les \frac{1}{|\log t|}, \qquad t>2.
 	$$

\end{lemma}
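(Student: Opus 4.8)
The plan is to adapt, essentially verbatim, the proof of Lemma~\ref{log decay} (Lemma~3.2 of \cite{EGG}), with the Schr\"odinger multiplier $e^{it\lambda^2}$ replaced by the Klein-Gordon multipliers. Two structural facts make this transfer possible. First, since $m>0$ we have $|\cos(t\sqrt{\lambda^2+m^2})|\les 1$ and $\bigl|\tfrac{\sin(t\sqrt{\lambda^2+m^2})}{\sqrt{\lambda^2+m^2}}\bigr|\les \tfrac1m\les 1$ uniformly on the support of $\chi$ --- it is here that the hypothesis $m>0$ is indispensable, the wave multiplier $\sin(t\lambda)/\lambda$ being unbounded as $\lambda\to0$. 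Second, $\lambda$ times either multiplier is $t^{-1}$ times a $\lambda$-derivative of a function that is smooth and bounded on the support of $\chi$:
\begin{align*}
	\cos(t\sqrt{\lambda^2+m^2})\,\lambda=\frac{\sqrt{\lambda^2+m^2}}{t}\,\partial_\lambda \sin(t\sqrt{\lambda^2+m^2}),
	\qquad
	\frac{\sin(t\sqrt{\lambda^2+m^2})}{\sqrt{\lambda^2+m^2}}\,\lambda=-\frac1t\,\partial_\lambda \cos(t\sqrt{\lambda^2+m^2}).
\end{align*}

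First I would split the $\lambda$ integral at $\lambda=t^{-1/2}$. On $[0,t^{-1/2}]$ one cannot exploit oscillation, so I would bound the multiplier by a constant and estimate
$$
	\int_0^{t^{-1/2}}\lambda\,|\mathcal{E}(\lambda)|\,d\lambda\les \int_0^{t^{-1/2}}\frac{d\lambda}{\lambda(\log\lambda)^2}\les \frac1{|\log t|},
$$
and this piece is exactly what produces the claimed decay rate. On $[t^{-1/2},\infty)$ I would use the identities above to integrate by parts once --- all that is permitted since $\mathcal{E}=\widetilde O_1(\cdot)$. There is no boundary contribution at infinity because $\chi$ vanishes there, while the boundary term at $\lambda=t^{-1/2}$ is $\les t^{-1}|\mathcal{E}(t^{-1/2})|\les t^{-1}\cdot t(\log t)^{-2}=(\log t)^{-2}$. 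The surviving integral is of the form $t^{-1}\int_{t^{-1/2}}^\infty|\partial_\lambda(\chi\mathcal{E})|\,d\lambda$ (times the harmless factor $\sqrt{\lambda^2+m^2}$ in the cosine case), and since $|\partial_\lambda(\chi\mathcal{E})|\les \lambda^{-3}(\log\lambda)^{-2}$ away from $\lambda=\lambda_1$ and is bounded and compactly supported near $\lambda=\lambda_1$, this is $\les (\log t)^{-2}$. The cosine case is handled identically, with the roles of $\sin$ and $\cos$ interchanged.

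The one point that needs care is the same one appearing in \cite{EGG} and in the proof of Lemma~\ref{log decay2} above: controlling the $\lambda^{-3}$ singularity created by $\mathcal{E}'$ against the gain $(\log\lambda)^{-2}$, for which one splits $\int_{t^{-1/2}}^{\lambda_1}\lambda^{-3}(\log\lambda)^{-2}\,d\lambda$ at, say, $\lambda=t^{-1/4}$ and observes that the dominant part, coming from $\lambda\approx t^{-1/2}$ where $|\log\lambda|\approx\log t$, is $\les t(\log t)^{-2}$, while the remaining part is $O(t^{1/2})$; after the prefactor $t^{-1}$ this contributes $\les (\log t)^{-2}$. Everything else is routine. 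Finally, for $t$ in any bounded interval the asserted inequality is immediate because the integral is absolutely convergent and uniformly bounded, so it suffices to run the above argument for $t$ large enough that $t^{-1/2}<\lambda_1$.
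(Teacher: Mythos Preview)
Your proof is correct and follows essentially the same approach as the paper: split the integral into a small-$\lambda$ piece bounded trivially using $m>0$, then integrate by parts once on the remaining piece via the identities you wrote down. The paper splits at $\lambda=t^{-1}$ rather than $t^{-1/2}$ and defers the estimate on $\int|\partial_\lambda(\chi\mathcal E)|\,d\lambda$ to the proof of Lemma~\ref{log decay}, but the substance is identical to what you have (and your split point $t^{-1/2}$ makes the boundary-term bookkeeping cleanest).
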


\begin{proof}

	We prove the first bound, the second bound follows
	similarly.
	We divide the integral into two pieces.  First, on
	$0<\lambda<t^{-1}$, we cannot use the oscillation.
	Instead, we use $0<m\leq \sqrt{\lambda^2+m^2}$
	to bound with
 	$$
 		\bigg|\int_0^{t^{-1}} 
 		\frac{1}{\lambda (\log\lambda)^{-2} }
 		\, d\lambda\bigg| 
 		\les \frac{1}{|\log t|}.
 	$$
 	On the remaining piece, where $\lambda \geq t^{-1}$,
 	we write 
 	$$
 		\sin(t\sqrt{\lambda^2+m^2})
 		=-\frac{\sqrt{\lambda^2+m^2}}{\lambda t}
 		\partial_\lambda \cos(t\sqrt{\lambda^2+m^2})
 	$$
 	to facilitate an integration by parts.  So we need
 	to bound
 	\begin{multline*}
 		\int_{t^{-1}}^\infty \frac{\sin(t\sqrt{\lambda^2+m^2})}
 	 	{\sqrt{\lambda^2+m^2}}
 	 	\lambda \chi(\lambda)
 	 	\mathcal E(\lambda)\, d\lambda	=\frac{1}{t}
 	 	\int_{t^{-1}}^\infty -\partial_\lambda \cos(t\sqrt{\lambda^2+m^2}) \chi(\lambda) \mathcal E(\lambda)
 	 	\, d\lambda\\
 	 	=\frac{-\cos(t\sqrt{\lambda^2+m^2})\chi(\lambda)
 	 	\mathcal E(\lambda)}{t} \bigg|^\infty_{t^{-1}}+
 	 	\int_{t^{-1}}^\infty \cos(t\sqrt{\lambda^2+m^2}) \chi(\lambda)\mathcal E(\lambda) \, d\lambda\\
 	 	\les \frac{1}{t}\frac{t}{(\log t)^2}+\int_{t^{-1}}^\infty \big| \partial_\lambda
 	 	\mathcal E(\lambda)\big| \, d\lambda
 	 	\les \frac{1}{(\log t)^2}.
 	\end{multline*}
 	The final integral is bounded as in the proof of
 	Lemma~\ref{log decay}.  For the cosine integral,
 	one uses that $\sqrt{\lambda^2+m^2}\les 1$ on the
 	support of $\chi(\lambda)$.

\end{proof}

\begin{lemma}\label{lem:KGalpha}

 	If $\mathcal E(\lambda)=\widetilde O_2(\lambda^\alpha)$ for some $-1<\alpha<1$, then  for $m>0$
 	$$
 		\bigg|\int_0^\infty \frac{\sin(t\sqrt{\lambda^2+m^2})}
 		{\sqrt{\lambda^2+m^2}}
 		\lambda \chi(\lambda)
 		\mathcal E(\lambda)\, d\lambda\bigg| 
 		\les t^{-1-\f \alpha 2}, \qquad t>2.
 	$$
 	Further,
 	$$
 		\bigg|\int_0^\infty \cos(t\sqrt{\lambda^2+m^2})
 		\lambda \chi(\lambda)
 		\mathcal E(\lambda)\, d\lambda\bigg| 
 		\les t^{-1-\f \alpha 2}, \qquad t>2.
 	$$

\end{lemma}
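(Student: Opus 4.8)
The plan is to prove this as the Klein--Gordon analogue of the Schr\"odinger stationary-phase bound Lemma~\ref{lem:fauxIBP}, exploiting that for fixed $m>0$ the phase $\sqrt{\lambda^2+m^2}$ is, on $\operatorname{supp}\chi$, a smooth function whose derivatives $\partial_\lambda\sqrt{\lambda^2+m^2}=\lambda(\lambda^2+m^2)^{-1/2}$ and $\partial_\lambda^2\sqrt{\lambda^2+m^2}=m^2(\lambda^2+m^2)^{-3/2}$ behave, up to $m$-dependent constants, exactly like those of the Schr\"odinger phase $\lambda^2$. First I would write $\sin(t\sqrt{\lambda^2+m^2})$ and $\cos(t\sqrt{\lambda^2+m^2})$ in terms of $e^{\pm it\sqrt{\lambda^2+m^2}}$ and note that on $\operatorname{supp}\chi$ the multipliers $\sqrt{\lambda^2+m^2}$ and $(\lambda^2+m^2)^{-1/2}$, together with all of their $\lambda$-derivatives, are bounded uniformly for $m$ in a fixed compact subset of $(0,\infty)$. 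Hence, with $\mathcal E=\widetilde O_2(\lambda^\alpha)$, the amplitude of either integral equals $a(\lambda):=\lambda\chi(\lambda)\mathcal E(\lambda)$ times a harmless $\widetilde O(1)$ factor, so $a=\widetilde O_2(\lambda^{\alpha+1})$ is supported in $[0,2\lambda_1]$, and both claims reduce to
\[
	\Big|\int_0^\infty e^{\pm it\sqrt{\lambda^2+m^2}}\,a(\lambda)\,d\lambda\Big|\les t^{-1-\f{\alpha}{2}}.
\]

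To prove this I would split the integral at $\lambda=t^{-1/2}$. On $[0,t^{-1/2}]$ there is no usable oscillation, so the trivial bound $\int_0^{t^{-1/2}}|a(\lambda)|\,d\lambda\les\int_0^{t^{-1/2}}\lambda^{\alpha+1}\,d\lambda\les t^{-(\alpha+2)/2}=t^{-1-\alpha/2}$ suffices, using $\alpha+1>0$. On $[t^{-1/2},2\lambda_1]$ I would integrate by parts twice against the identity $e^{\pm it\sqrt{\lambda^2+m^2}}=\frac{1}{\pm it}\,\frac{\sqrt{\lambda^2+m^2}}{\lambda}\,\partial_\lambda e^{\pm it\sqrt{\lambda^2+m^2}}$. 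The boundary contributions at $2\lambda_1$ vanish since $\chi$ and $\chi'$ do, while the boundary term generated at $\lambda=t^{-1/2}$ by the $j$-th step has size $t^{-j}\lambda^{\alpha-2(j-1)}\big|_{\lambda=t^{-1/2}}=t^{-1-\alpha/2}$ for $j=1,2$. After the second integration by parts the leftover integral is $\les t^{-2}\int_{t^{-1/2}}^{2\lambda_1}\lambda^{\alpha-3}\,d\lambda$; since $\alpha-3<-1$ this is dominated by its lower endpoint and is $\les(t^{-1/2})^{\alpha-2}=t^{1-\alpha/2}$, so this piece is again $\les t^{-1-\alpha/2}$. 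The two integrations by parts use exactly the two $\lambda$-derivative bounds furnished by $\mathcal E=\widetilde O_2(\cdot)$, and derivatives falling on $\chi$ cause no harm since they live on $\lambda\approx\lambda_1$ (indeed there the phase is non-stationary and the contribution is $O(t^{-N})$). For the cosine integral one repeats the argument verbatim with the slightly simpler amplitude $\lambda\chi(\lambda)\mathcal E(\lambda)$, using $\sqrt{\lambda^2+m^2}=\widetilde O(1)$ on $\operatorname{supp}\chi$ only to absorb the derivatives that land on the multiplier.

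The step requiring the most care — and the reason the argument excludes the wave equation $m=0$ — is the bookkeeping of the factor $\lambda^{-1}$ injected by each division by $\partial_\lambda\sqrt{\lambda^2+m^2}=\lambda(\lambda^2+m^2)^{-1/2}$. This is precisely why the split must occur at the Schr\"odinger scale $t^{-1/2}$ (so that $\lambda^{-1}\les t^{1/2}$ at the inner endpoint), and it is controlled for fixed $m>0$ because $(\lambda^2+m^2)^{-1/2}$ is then bounded on $[0,2\lambda_1]$; as $m\to 0$ these constants blow up, mirroring the singular behaviour of $(\lambda^2+m^2)^{-1/2}$ at $\lambda=0$ already noted for the wave equation. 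For $m>0$ fixed, no such difficulty arises and the estimate follows as above, with implicit constants depending on $m$.
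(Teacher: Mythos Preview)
Your proof is correct and follows essentially the same approach as the paper: split at $\lambda=t^{-1/2}$, bound the inner piece trivially using $\alpha+1>0$, and on the outer piece integrate by parts twice against the identity $e^{\pm it\sqrt{\lambda^2+m^2}}=\frac{1}{\pm it}\frac{\sqrt{\lambda^2+m^2}}{\lambda}\partial_\lambda e^{\pm it\sqrt{\lambda^2+m^2}}$, tracking the boundary terms at $\lambda=t^{-1/2}$ and bounding the residual integral $t^{-2}\int_{t^{-1/2}}\lambda^{\alpha-3}\,d\lambda$ by its lower endpoint since $\alpha-3<-1$. Your writeup is in fact somewhat more careful than the paper's (which contains a minor typo in the second boundary term), explicitly noting that the multipliers $(\lambda^2+m^2)^{\pm 1/2}$ are $\widetilde O(1)$ on $\operatorname{supp}\chi$ and that derivatives landing on $\chi$ are harmless.
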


\begin{proof}

	We prove the first bound, the second bound follows
	similarly.
	We divide the integral into two pieces.  First, on
	$0<\lambda<t^{-\f12}$, we cannot use the oscillation.
	Instead, we use $0<m\leq \sqrt{\lambda^2+m^2}$
	to bound with
 	$$
 		\bigg|\int_0^{t^{-\f12}} 
 		\lambda^{1+\alpha}
 		\, d\lambda\bigg| 
 		\les t^{-1-\f \alpha 2}.
 	$$
 	On the remaining piece, where $\lambda \geq t^{-\f12}$,
 	we need to integration by parts twice.  So we need
 	to bound
 	\begin{multline*}
 		\frac{\mathcal E(t^{-\f12})}{t}+
 		\frac{\partial_\lambda \mathcal E(t^{-\f12})}{t^{-\f32}}+\frac{1}{t^2}
 		\int_{t^{-\f12}}^\infty \partial_\lambda\bigg(
 		\frac{\sqrt{\lambda^2+m^2}}{\lambda}\partial_\lambda \mathcal E(\lambda)
 		\bigg)\, d\lambda\\ \les t^{-1-\f \alpha 2}+
 		\frac{1}{t^2}\int_{t^{-\f12}}^\infty \lambda^{\alpha -3}\, d\lambda
 		 \les t^{-1-\f \alpha 2}
 	\end{multline*}
 	as desired.

\end{proof}

\begin{lemma}\label{KGlog decay2}

	If $\mathcal E(\lambda)=\widetilde O_2((\log \lambda)^{-k})$, then  for $k\geq 2$,
	$$
		\bigg|\int_0^\infty \frac{\sin(t\sqrt{\lambda^2+m^2})}
		 		{\sqrt{\lambda^2+m^2}}
		\lambda \chi(\lambda)
		\mathcal E(\lambda)\, d\lambda\bigg| 
		\les \frac{1}{t(\log t)^{k-1}}, \qquad t>2.
	$$
   Further,
 	$$
 		\bigg|\int_0^\infty \cos(t\sqrt{\lambda^2+m^2})
 		\lambda \chi(\lambda)
 		\mathcal E(\lambda)\, d\lambda\bigg| 
 		\les \frac{1}{t(\log t)^{k-1}}, \qquad t>2.
 	$$

\end{lemma}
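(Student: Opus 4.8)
The plan is to follow the template of the proof of Lemma~\ref{log decay2}, replacing the Schr\"odinger oscillatory factor $e^{it\lambda^2}$ by the Klein-Gordon multipliers and using two integrations by parts. The key algebraic observation is that, since $m>0$,
$$
	\frac{\sin(t\sqrt{\lambda^2+m^2})}{\sqrt{\lambda^2+m^2}}\,\lambda
	= -\frac1t\,\partial_\lambda \cos(t\sqrt{\lambda^2+m^2}),
	\qquad
	\cos(t\sqrt{\lambda^2+m^2})\,\lambda
	= \frac{\sqrt{\lambda^2+m^2}}{t}\,\partial_\lambda \sin(t\sqrt{\lambda^2+m^2}),
$$
so a first integration by parts is essentially free, and that on the support of $\chi$ one has $m\les\sqrt{\lambda^2+m^2}\les 1$, hence $|\sqrt{\lambda^2+m^2}/\lambda|\les \lambda^{-1}$ and $|\partial_\lambda(\sqrt{\lambda^2+m^2}/\lambda)| = m^2\lambda^{-2}(\lambda^2+m^2)^{-1/2}\les \lambda^{-2}$. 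I would first split each integral at $\lambda = t^{-1/2}$ and carry out the argument for the sine multiplier; the cosine case is identical once the first identity above is replaced by the second, the only change being the harmless bounded weight $\sqrt{\lambda^2+m^2}$.

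On $0<\lambda<t^{-1/2}$ there is no oscillation to exploit, so I would bound directly, using $|\sin(t\sqrt{\lambda^2+m^2})|\les 1$ and $(\sqrt{\lambda^2+m^2})^{-1}\les m^{-1}\les 1$, reducing this piece to $\int_0^{t^{-1/2}}\lambda |\mathcal E(\lambda)|\,d\lambda \les \int_0^{t^{-1/2}}\lambda(\log\lambda)^{-k}\,d\lambda$. This is exactly the integral estimated in the proof of Lemma~\ref{log decay2}, where it is bounded by $t^{-1}(\log t)^{-(k-1)}$; this is the term that will dominate the final bound.

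On $\lambda > t^{-1/2}$ I would use the first identity to integrate by parts once (there is no boundary term at infinity because of the support of $\chi$); this produces a boundary term at $\lambda = t^{-1/2}$ of size $\les |\mathcal E(t^{-1/2})|/t \les t^{-1}(\log t)^{-k}$ together with $\frac1t\int_{t^{-1/2}}^\infty \cos(t\sqrt{\lambda^2+m^2})\,\partial_\lambda(\chi\mathcal E)\,d\lambda$. Into the latter I would substitute the second identity and integrate by parts again; the new boundary term at $t^{-1/2}$ involves $\sqrt{\lambda^2+m^2}/\lambda$ and $\partial_\lambda(\chi\mathcal E)$ evaluated there, of sizes $\approx t^{1/2}$ and $\approx t^{1/2}(\log t)^{-k}$, so with the $t^{-2}$ prefactor it is $\les t^{-1}(\log t)^{-k}$. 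The remaining integral is $\frac1{t^2}\int_{t^{-1/2}}^\infty \sin(t\sqrt{\lambda^2+m^2})\,\partial_\lambda\!\big(\tfrac{\sqrt{\lambda^2+m^2}}{\lambda}\partial_\lambda(\chi\mathcal E)\big)\,d\lambda$; invoking $|\partial_\lambda^j(\chi\mathcal E)|\les \lambda^{-j}(\log\lambda)^{-k}$ for $j=0,1,2$ together with the bounds on $\sqrt{\lambda^2+m^2}/\lambda$ and its derivative, the integrand is $\les \lambda^{-3}(\log\lambda)^{-k}$, and splitting $[t^{-1/2},\infty)$ into $[t^{-1/2},t^{-1/4}]$ and $[t^{-1/4},1/2]$ exactly as in Lemma~\ref{log decay2} bounds this contribution by $t^{-1}(\log t)^{-k}$. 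Collecting all pieces yields the asserted bound $t^{-1}(\log t)^{-(k-1)}$, and the same computation with the roles of $\cos$ and $\sin$ interchanged (and the weight $\sqrt{\lambda^2+m^2}$ carried through) gives the cosine estimate.

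The step I expect to be the main point requiring care --- rather than a genuine obstacle --- is the bookkeeping of the negative powers of $\lambda$ generated by the factors $\sqrt{\lambda^2+m^2}/\lambda$ and the $\lambda$-derivatives of $\chi\mathcal E$ when these are evaluated at $\lambda = t^{-1/2}$ in the boundary terms: each such evaluation produces a factor $\approx t^{1/2}$, and one must verify that in every boundary term these are exactly compensated by the $t^{-2}$ prefactor so that no term exceeds $t^{-1}(\log t)^{-k}$, and likewise that the double-derivative terms in the final integral do not produce a worse power of $\lambda$ than $\lambda^{-3}$.
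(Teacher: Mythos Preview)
Your proposal is correct and follows precisely the approach the paper intends: the paper's proof of this lemma is a single sentence referring back to Lemma~\ref{log decay2} with the Klein--Gordon modifications of Lemmas~\ref{lem:KGlog} and~\ref{lem:KGalpha}, and you have carried out exactly those modifications in detail---splitting at $t^{-1/2}$, using the identities $\frac{\sin(t\sqrt{\lambda^2+m^2})}{\sqrt{\lambda^2+m^2}}\lambda=-\frac1t\partial_\lambda\cos(t\sqrt{\lambda^2+m^2})$ and its cosine analogue for the integrations by parts, and tracking the factor $\sqrt{\lambda^2+m^2}/\lambda$ together with its derivative. The bookkeeping you flag as the main point of care is indeed routine and your accounting of the powers of $t^{1/2}$ at the boundary is accurate.
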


\begin{proof}

	The proof follows as in the proof of Lemma~\ref{log decay2} with the modifications made above in
	Lemmas~\ref{lem:KGlog} and \ref{lem:KGalpha}.

\end{proof}

For completeness, we include the following bound which
follows from a simple integration by parts.

\begin{lemma}\label{KGno lambda}

	One has the bounds
	$$
		\bigg|\int_0^\infty \frac{\sin(t\sqrt{\lambda^2+m^2})}
		 		{\sqrt{\lambda^2+m^2}}
		\lambda \chi(\lambda)\, d\lambda\bigg| , 
		\bigg|\int_0^\infty \cos(t\sqrt{\lambda^2+m^2})
		 \lambda \chi(\lambda)\, d\lambda\bigg|
		\les \frac{1}{t}.
	$$

\end{lemma}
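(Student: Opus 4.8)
The plan is to obtain both bounds from a single integration by parts, using the fact that the weight $\lambda(\lambda^2+m^2)^{-1/2}$ appearing against $\sin(t\sqrt{\lambda^2+m^2})$ (respectively $\cos(t\sqrt{\lambda^2+m^2})$) is, up to the factor $t$, exactly the $\lambda$-derivative of $\cos(t\sqrt{\lambda^2+m^2})$ (respectively $\sin(t\sqrt{\lambda^2+m^2})$). For the sine integral, I would first write
$$
	\int_0^\infty \frac{\sin(t\sqrt{\lambda^2+m^2})}{\sqrt{\lambda^2+m^2}}\lambda\chi(\lambda)\,d\lambda = -\frac{1}{t}\int_0^\infty \chi(\lambda)\,\partial_\lambda \cos(t\sqrt{\lambda^2+m^2})\,d\lambda,
$$
and then integrate by parts. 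The boundary term at $\lambda=\infty$ vanishes since $\chi$ has compact support, while the boundary term at $\lambda=0$ equals $t^{-1}\chi(0)\cos(tm)=O(1/t)$. What remains is $t^{-1}\int_0^\infty \chi'(\lambda)\cos(t\sqrt{\lambda^2+m^2})\,d\lambda$, which is bounded in absolute value by $t^{-1}\|\chi'\|_{L^1}\les t^{-1}$, giving the first estimate.

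For the cosine integral I would use the companion identity $\partial_\lambda \sin(t\sqrt{\lambda^2+m^2}) = t\lambda(\lambda^2+m^2)^{-1/2}\cos(t\sqrt{\lambda^2+m^2})$ to rewrite
$$
	\int_0^\infty \cos(t\sqrt{\lambda^2+m^2})\,\lambda\chi(\lambda)\,d\lambda = \frac{1}{t}\int_0^\infty \sqrt{\lambda^2+m^2}\,\chi(\lambda)\,\partial_\lambda \sin(t\sqrt{\lambda^2+m^2})\,d\lambda,
$$
and integrate by parts once more. Again the $\lambda=\infty$ boundary term vanishes, the $\lambda=0$ boundary term is $-t^{-1}m\,\chi(0)\sin(tm)=O(1/t)$, and the remaining integral carries the factor $\partial_\lambda\big(\sqrt{\lambda^2+m^2}\,\chi(\lambda)\big) = \lambda(\lambda^2+m^2)^{-1/2}\chi(\lambda) + \sqrt{\lambda^2+m^2}\,\chi'(\lambda)$, which is bounded and compactly supported, hence in $L^1$; this again produces an $O(1/t)$ contribution.

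I do not expect any genuine obstacle. The only point requiring minimal care is that both $\sqrt{\lambda^2+m^2}$ and its derivative $\lambda(\lambda^2+m^2)^{-1/2}$ are bounded on the compact support of $\chi$ once $m>0$ is fixed, so no $\lambda\to 0$ singularity or loss of smoothness arises when the derivative in the integration by parts falls on these factors. The implied constants depend on $m$, which is harmless since Theorem~\ref{thm:KG} is stated for a fixed mass $m^2>0$.
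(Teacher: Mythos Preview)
Your proposal is correct and matches the paper's approach exactly: the paper simply states that the bound ``follows from a simple integration by parts'' and gives no further details, and you have supplied precisely that integration by parts, correctly handling the boundary terms and the resulting $L^1$ bounds on $\chi'$ and $\partial_\lambda(\sqrt{\lambda^2+m^2}\,\chi)$.
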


\subsection{Spatial Integral Estimates}

The following bound is needed to show that certain
operators are bounded.  The proof is straight-forward
and can be found in, for example \cite{GV}.

\begin{lemma}\label{EG:Lem}

	Fix $u_1,u_2\in\R^n$ and let $0\leq k,\ell<n$, 
	$\beta>0$, $k+\ell+\beta\geq n$, $k+\ell\neq n$.
	We have
	$$
		\int_{\R^n} \frac{\la z\ra^{-\beta-}}
		{|z-u_1|^k|z-u_2|^\ell}\, dz
		\les \left\{\begin{array}{ll}
		(\frac{1}{|u_1-u_2|})^{\max(0,k+\ell-n)}
		& |u_1-u_2|\leq 1\\
		\big(\frac{1}{|u_1-u_2|}\big)^{\min(k,\ell,
		k+\ell+\beta-n)} & |u_1-u_2|>1
		\end{array}
		\right.
	$$

\end{lemma}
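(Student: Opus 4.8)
The plan is to establish Lemma~\ref{EG:Lem} by a direct decomposition of $\R^n$ governed by the triangle inequality, reducing the estimate to the two elementary bounds $\int_{|w|\le R}|w|^{-a}\,dw\approx R^{n-a}$ (valid for $0\le a<n$) and $\int_{|w|>R}|w|^{-a}\,dw\approx R^{n-a}$ (valid for $a>n$). Set $r:=|u_1-u_2|$. Since both sides of the claimed inequality are symmetric under the exchange $(u_1,k)\leftrightarrow(u_2,\ell)$, it suffices to track the position of $z$ relative to $u_1$, $u_2$ and the origin. Split $\R^n=\Omega_1\cup\Omega_2\cup\Omega_3$ with $\Omega_1=\{|z-u_1|\le r/2\}$, $\Omega_2=\{|z-u_2|\le r/2\}$ and $\Omega_3=\{|z-u_1|>r/2\}\cap\{|z-u_2|>r/2\}$; on $\Omega_1$ one has $|z-u_2|\approx r$, on $\Omega_2$ one has $|z-u_1|\approx r$, and on $\Omega_3$ one has $|z-u_1|\approx|z-u_2|$. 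The hypothesis $k,\ell<n$ guarantees local integrability at each singularity, $k+\ell\ne n$ rules out a logarithmic divergence at the intermediate scale $|z-u_1|\approx r$, and $k+\ell+\beta\ge n$ (read, as is standard with the $\la z\ra^{-\beta-}$ weight, as $k+\ell+\beta>n$) ensures convergence of the tail.

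For the first case $r\le 1$ the weight plays no role: on $\Omega_1$ the integrand is at most $r^{-\ell}|z-u_1|^{-k}$, and integrating over the ball $\{|z-u_1|\le r/2\}$ yields $\les r^{\,n-k-\ell}$, with $\Omega_2$ treated identically. On $\Omega_3$ the integrand is at most $|z-u_1|^{-(k+\ell)}$, and splitting into $\{r/2<|z-u_1|\le 1\}$ (integrable, bounded by $1$ if $k+\ell<n$ and by $r^{\,n-k-\ell}$ if $k+\ell>n$, using $k+\ell\ne n$) and $\{|z-u_1|>1\}$ (where the factor $\la z\ra^{-\beta-}$ together with $k+\ell+\beta>n$ makes the tail finite) gives a bound of $1+r^{\,n-k-\ell}$, which is exactly $r^{-\max(0,\,k+\ell-n)}$.

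For the second case $r>1$ the weight $\la z\ra^{-\beta-}$ is indispensable, and the three terms in $\min(k,\ell,k+\ell+\beta-n)$ correspond respectively to the origin lying near $u_2$, near $u_1$, or far from both. On $\Omega_1$ the integrand is $\les r^{-\ell}|z-u_1|^{-k}\la z\ra^{-\beta-}$; the part with $|z-u_1|\le 1$ contributes $\les r^{-\ell}$, while on the part with $1<|z-u_1|\le r/2$ one sub-splits according to whether $\la z\ra\gtrsim|z-u_1|$ (one then gains a factor $|z-u_1|^{-\beta-}$ and integrates a convergent tail) or $|z|\lesssim|z-u_1|$ (then $|z-u_1|\approx|u_1|\lesssim r$, and integrating $\la z\ra^{-\beta-}$ over a ball of radius $\lesssim|u_1|$ produces $\min(|u_1|^{\,n-\beta-},1)$); in every branch the exponent of $r$ obtained is $\le\min(k,\ell,k+\ell+\beta-n)$, the worst branch being the one forcing $r^{-(k+\ell+\beta-n)}$. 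The region $\Omega_2$ is the same with the roles of $k,\ell$ interchanged, and on $\Omega_3$, where $|z-u_1|\approx|z-u_2|>r/2$, one sub-splits by whether $\la z\ra\gtrsim|z-u_1|$ (a convergent tail in $|z-u_1|^{-(k+\ell+\beta-)}$, giving $r^{\,n-k-\ell-\beta}$) or $|z|\lesssim|z-u_1|$ (then $|u_1|\approx|z-u_1|\gtrsim r$, and integrating the weight over $\{|z|\lesssim|u_1|\}$ gives $|u_1|^{\,n-\beta-}\lesssim r^{\,n-\beta-}$, hence again $r^{\,n-k-\ell-\beta}$). Collecting the cases gives the claim. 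The one genuinely delicate point, and hence the main obstacle, is entirely bookkeeping: one must verify in each of the roughly half-dozen sub-regions that the resulting power of $r$ never exceeds $-\min(k,\ell,k+\ell+\beta-n)$ and that every bound is uniform in the positions of $u_1,u_2$; there are no analytic difficulties beyond the two power-law integrals quoted at the outset, which is why the statement is quoted without proof from \cite{GV}.
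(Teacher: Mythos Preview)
Your argument is correct and is precisely the standard decomposition that underlies this lemma. The paper itself gives no proof here: it simply states that the bound is straightforward and refers to \cite{GV}, so there is nothing to compare against beyond confirming that your approach is the expected one. One small remark: in the case $r\le 1$ you assert that on $\Omega_3\cap\{|z-u_1|>1\}$ the weight $\la z\ra^{-\beta-}$ immediately makes the tail finite, but uniformity in $u_1$ requires the same sub-split by $|z|$ versus $|z-u_1|$ that you carry out carefully in the $r>1$ case; since you clearly know how to do this, it is only a matter of writing it out.
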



\begin{thebibliography}{9}
\bibitem{AS}
Abramowitz, M. and  I.~A. Stegun. \emph{Handbook of mathematical functions with
formulas, graphs, and mathematical tables.} National Bureau of Standards
Applied Mathematics Series, 55. For sale by the Superintendent of Documents,
U.S. Government Printing Office, Washington, D.C. 1964


\bibitem{agmon} Agmon, S. {\em Spectral properties of Schr\"odinger
operators and scattering theory.}
Ann.\ Scuola Norm.\ Sup.\ Pisa Cl.\ Sci. (4) 2 (1975), no.~2, 151--218.


\bibitem{Beals}
Beals, M. \emph{Optimal $L^\infty$ decay for solutions to the wave equation with a potential.}
Comm. Partial Differential Equations 19 (1994), no. 7--8, 1319--1369. 

\bibitem{BS}
Beals, M. and Strauss, W. \emph{$L^p$ estimates for the wave equation with a potential.}
Comm. Partial Differential Equations 18 (1993), no. 7--8, 1365--1397. 

\bibitem{Bec} Beceanu, M. \emph{Dispersive estimates in $\R^33$ with threshold eigenstates and resonances.} Anal. PDE 9 (2016), no. 4, 813--858. 

\bibitem{BeGol}
Beceanu, M. and Goldberg, M.
\emph{Strichartz Estimates and Maximal Operators for the Wave Equation in $\R^3$.}   J. Funct. Anal. 266 (2014), no. 3, 1476--1510. 









\bibitem{CCV}
Cardosa, F., Cuevas, C., and Vodev, G.  \emph{Dispersive estimates for the
Schr\"{o}dinger equation in dimensions four and five}.   Asymptot.\ Anal.\  62  (2009),  no. 3-4, 125--145. 


\bibitem{CV}
Cardosa, F., and Vodev G.
\emph{Optimal Dispersive Estimates for the Wave Equation with Potentials in Dimensions $4\leq n\leq 7$.}
Comm. Partial Differential Equations
37, no. 1, 88--124.

\bibitem{Cucc}
Cuccagna, S.
\emph{On the wave equation with a potential.}
Comm. Partial Differential Equations 25 (2000), no. 7-8, 1549--1565. 

\bibitem{D}
D'Ancona,~P. \emph{Kato smoothing and Strichartz estimates for wave equations with magnetic potentials}. 
Comm. Math. Phys. 335 (2015), no. 1, 1--16. 

\bibitem{DP}
D'Ancona, P. and  Pierfelice, V.
\emph{On the wave equation with a large rough potential.}
J. Funct. Anal. 227 (2005), no. 1, 30--77.


\bibitem{EG1}
Erdo\smash{\u{g}}an, M.~B.  and Green, W.~R. \emph{Dispersive estimates for the Schr\"odinger equation for $C^{\frac{n-3}{2}}$ potentials in odd dimensions}. Int.\ Math.\ Res.\ Notices 2010:13, 2532--2565.

\bibitem{EG2}
Erdo\smash{\u{g}}an, M.~B.  and Green, W.~R. \emph{Dispersive estimates for Schr\"{o}dinger operators in
dimension two with obstructions at zero energy}.  Trans. Amer. Math. Soc. 365 (2013), 6403--6440.

\bibitem{EGG}
Erdo\smash{\u{g}}an, M.~B., Goldberg, M.~J., and Green, W.~R. 
\emph{Dispersive estimates for four dimensional Schr\"odinger and wave equations with obstructions at zero energy},  Comm. PDE. 39:10 (2014), 1936--1964.

\bibitem{ES2} Erdo\smash{\u{g}}an, M.~B., and Schlag W.
\emph{Dispersive estimates for Schr\"{o}dinger operators in the presence of a resonance and/or an eigenvalue at
zero energy in dimension three: I}. Dynamics of PDE 1 (2004), 359--379.



\bibitem{GVw}
Georgiev, V. and Visciglia, N.
\emph{Decay estimates for the wave equation with potential.}
Comm. Partial Differential Equations 28 (2003), no. 7--8, 1325--1369. 


\bibitem{GLS}
Georgescu, V., Larenas, M., and Soffer, A.
\emph{Abstract theory of pointwise decay with applications to wave and Schr\"odinger equations}.  To appear in Annales Henri Poincar\'e.  arXiv:1411.5763


\bibitem{golde}
Goldberg, M. \emph{A Dispersive Bound for Three-Dimensional Schr\"odinger Operators with Zero Energy
Eigenvalues.} Comm. PDE 35 (2010), 1610--1634.

\bibitem{GGodd}
Goldberg, M.~J. and Green, W.~R.
\emph{Dispersive Estimates for Higher Dimensional Schr\"odinger Operators with Threshold Eigenvalues I: The Odd Dimensional Case.}   J. Funct. Anal., 269 (2015), no. 3, 633--682.

\bibitem{GGeven}
Goldberg, M.~J. and Green, W.~R.
\emph{Dispersive Estimates for Higher Dimensional Schr\"odinger Operators with Threshold Eigenvalues II: The Even Dimensional Case}.  To appear in J. Spectral Theory.
arXiv:1409.6328

\bibitem{GGwaveop}
Goldberg, M. and Green, W.~R. \emph{The $L^p$ boundedness of wave operators for Schr\"odinger Operators with threshold eigenvalues}.  Adv. Math., 303 (2016), 360--389. 

\bibitem{GGwaveop4}
Goldberg, M. and Green, W.~R. \emph{On the $L^p$ Boundedness of Wave Operators for Four-Dimensional Schr\"odinger Operators with a Threshold Eigenvalue}.  Preprint, 2016. arXiv:1606.06691

\bibitem{GS}
Goldberg, M., and Schlag, W. \emph{Dispersive estimates for Schr\"{o}dinger
operators in dimensions one and three}. Comm. Math. Phys. vol. 251, no. 1 (2004), 157--178.

\bibitem{GV} Goldberg, M.  and Visan, M.  \emph{A counterexample to dispersive estimates}.
Comm.\ Math.\ Phys.\ 266 (2006), no. 1, 211--238.

\bibitem{Gr} Green, W. \emph{Dispersive estimates for matrix and scalar Schr\"odinger operators in dimension five}. Illinois J. Math. Volume 56, Number 2 (2012), 307-341.

\bibitem{Gr2} Green, W. \emph{Time decay estimates for the wave equation with potential in dimension two}.
Journal of Differential Equations 257 (2014) pp. 868-919.

\bibitem{Jen}
Jensen, A. \emph{Spectral properties of Schr\"odinger operators and time-decay of the wave functions results in $L^2(R^m)$,
$m\geq 5$}. Duke Math. J. 47 (1980), no. 1, 57--80.

\bibitem{Jen2}
Jensen, A. \emph{Spectral properties of Schr\"odinger operators and time-decay of the wave functions. Results in $L^2(R^4)$}.
J. Math. Anal. Appl. 101 (1984), no. 2, 397--422.

\bibitem{JenKat}  Jensen, A., Kato, T. {\em Spectral properties of
Schr\"odinger operators and time--decay of the wave functions.}
Duke Math.\ J.~46  (1979), no.~3, 583--611.

\bibitem{JN}
Jensen, A., and Nenciu, G. \emph{A unified approach to resolvent expansions at thresholds}.
Rev. Mat. Phys. vol. 13, no. 6 (2001), 717--754.

\bibitem{JY4}
Jensen, A., and Yajima, K.
\emph{On $L^p$ boundedness of wave operators for 4-dimensional Schr\"odinger operators with threshold singularities.}
Proc. Lond. Math. Soc. (3) 96 (2008), no. 1, 136--162. 


\bibitem{JSS}  Journ\'e, J.-L.,  Soffer, A., Sogge, C.\ D. {\em Decay
estimates for Schr\"odinger operators.} Comm.\ Pure Appl.\ Math.~44 (1991),
no.~5, 573--604.

\bibitem{KK}
Kopylova,~E.~A., and Komech,~A.~I.
\emph{Long time decay for 2D Klein-Gordon equation.}
J. Funct. Anal. 259 (2010), no. 2, 477--502. 

\bibitem{KS}
Krieger, J. and Schlag, W. \emph{On the focusing critical semi-linear wave equation.} 
Amer. J. Math. 129 (2007), no. 3, 843--913. 



\bibitem{MSW}
Marshall, B., Strauss, W., and Wainger, S.
\emph{Lp--Lq estimates for the Klein-Gordon equation.}
J. Math. Pures Appl. (9) 59 (1980), no. 4, 417--440. 


\bibitem{Mur} Murata, M. {\em Asymptotic expansions in time for solutions of Schr\"odinger-type
equations} J.\ Funct.\ Anal.~49 (1) (1982), 10--56.

\bibitem{Rauch} Rauch, J. {\em Local decay of scattering solutions to
Schr\"odinger's equation.} Comm.\ Math.\ Phys.~61  (1978), no.~2,
149--168.

\bibitem{RS1}
Reed, M., and B.~Simon. \emph{Methods of Modern Mathematical Physics I: Functional
Analysis, IV: Analysis of Operators}, Academic Press, New York, NY, 1972.

\bibitem{RodSch}  Rodnianski, I., Schlag, W. {\em Time decay for solutions of Schr\"odinger
equations with rough and time-dependent potentials.} Invent.\ Math.~155 (2004), no.~3, 451--513.

\bibitem{Scs}
Schlag, W. \emph{Dispersive estimates for Schr\"odinger operators: a survey}.
Mathematical aspects of nonlinear dispersive equations, 255--285, Ann. of Math. Stud., 163,
Princeton Univ. Press, Princeton, NJ, 2007.

\bibitem{Sc2}
Schlag, W. \emph{Dispersive estimates for Schr\"{o}dinger operators in
dimension two.} Comm. Math. Phys. 257 (2005), no. 1, 87--117.


\bibitem{Stein}
Stein, E. \emph{Harmonic analysis real-variable methods, orthogonality, and oscillatory integrals}.
Princeton Univ.  Press, Princeton, NJ, 1993.

\bibitem{Top}
Toprak, E. \emph{A weighted estimate for two dimensional Schr\"odinger, matrix Schr\"odinger and wave equations with resonance of first kind at zero energy}.  To appear in J. Spectral Theory.  	arXiv:1509.03204

\bibitem{Wed}
Weder, R. \emph{$L^p - L^{p'}$ estimates for the Schr\"{o}dinger equation on the line and inverse scattering for the nonlinear Schr\"{o}dinger equation with a potential}. J. Funct. Anal. 170 (2000), no. 1, 37--68.

\bibitem{YajEven}
K. Yajima, \emph{The $W^{k,p}$-continuity of
wave operators for Schr\"odinger operators. III. Even dimensional cases,}
J. Math. Sci. Univ. Tokyo
2 (1995) 311--346.

\bibitem{Yaj}
Yajima, K. \emph{Dispersive estimate for Schr\"odinger equations with threshold resonance and eigenvalue.}
Comm. Math. Phys. 259 (2005), 475--509.

\bibitem{YajNew}
Yajima, K. \emph{Wave Operators for Schr\"odinger Operators with Threshold Singuralities, Revisited}.  Preprint, 
arXiv:1508.05738.

\bibitem{YajNew2}
Yajima, K. \emph{Remarks on $L^p$-Boundedness of Wave Operators for 
Schr\"odinger Operators with Threshold Singularities}.
Documenta Math. 21 (2016) 391--443.

\end{thebibliography}
\end{document}